  \def\({}\def\){}%
  \def\[{}\def\]{}%
\def\Hy@Warning#1{}
\pgfplotsset{compat=1.18}
 \newtheorem{thm}{Theorem}[section]
\newtheorem{theorem}[thm]{Theorem}
\newtheorem{fact}[thm]{Fact}
\newtheorem{proposition}[thm]{Proposition}
\newtheorem{lemma}[thm]{Lemma}
\newtheorem{rmk}[thm]{Remark}
\newtheorem{question}[thm]{Question}
\newtheorem{definition}[thm]{Definition}
\newtheorem{corollary}[thm]{Corollary}
\newtheorem{example}[thm]{Example}
\tikzset{curve/.style={settings={#1},to path={(\tikztostart)
			.. controls ($(\tikztostart)!\pv{pos}!(\tikztotarget)!\pv{height}!270:(\tikztotarget)$)
			and ($(\tikztostart)!1-\pv{pos}!(\tikztotarget)!\pv{height}!270:(\tikztotarget)$)
			.. (\tikztotarget)\tikztonodes}},
	settings/.code={\tikzset{quiver/.cd,#1}
		\def\pv##1{\pgfkeysvalueof{/tikz/quiver/##1}}},
	quiver/.cd,pos/.initial=0.35,height/.initial=0}
\tikzset{tail reversed/.code={\pgfsetarrowsstart{tikzcd to}}}
\tikzset{2tail/.code={\pgfsetarrowsstart{Implies[reversed]}}}
\tikzset{2tail reversed/.code={\pgfsetarrowsstart{Implies}}}
\tikzset{no body/.style={/tikz/dash pattern=on 0 off 1mm}}
\newcommand{\RomanNumeralCaps}[1]
    {\MakeUppercase{\romannumeral #1}}
\newcommand{\ZZ}{\mathbb{Z}}
\newcommand{\CC}{\mathbb{C}}
\newcommand{\RR}{\mathbb{R}}
\newcommand{\PP}{\mathbb{P}}
\newcommand{\dd}{\delta}
\newcommand{\lk}{\overline{\kappa}}
\title[Symplectic log Kodaira dimension $-\infty$, affine-ruledness and unicuspidal curves]{Symplectic log Kodaira dimension $-\infty$, affine-ruledness and unicuspidal rational curves}
\date{\today}
\author{Tian-Jun Li and Shengzhen Ning}
\begin{document}
\sloppy
\raggedbottom

\begin{abstract}
    Given a closed symplectic $4$-manifold $(X,\omega)$, a collection $D$ of embedded symplectic submanifolds satisfying certain normal crossing conditions is called a symplectic divisor. In this paper, we consider the pair $(X,\omega,D)$ with symplectic log Kodaira dimension $-\infty$ in the spirit of Li-Zhang. We introduce the notion of symplectic affine-ruledness, which characterizes the divisor complement $X\setminus D$ as being foliated by symplectic punctured spheres. We establish a symplectic analogue of a theorem by Fujita-Miyanishi-Sugie-Russell in the algebraic settings which describes smooth open algebraic surfaces with $\lk=-\infty$ as containing a Zariski open subset isomorphic to the product between a curve and the affine line. When $X$ is a rational manifold, the foliation is given by certain unicuspidal rational curves of index one with cusp singularities located at the intersection point in $D$. We utilize the correspondence between such singular curves and embedded curves in its normal crossing resolution recently highlighted by McDuff-Siegel, and also a criterion for the existence of embedded curves in the relative settings by McDuff-Opshtein. Another main technical input is Zhang's curve cone theorem for tamed almost complex $4$-manifolds, which is crucial in reducing the complexity of divisors. We also investigate the symplectic deformation properties of divisors and show that such pairs are deformation equivalent to K\"ahler pairs. As a corollary, the restriction of the symplectic structure $\omega$ on an open dense subset in the divisor complement $X\setminus D$ is deformation equivalent to the standard product symplectic structure. 
    
\end{abstract}

\maketitle
\pagenumbering{arabic}

\tableofcontents

\section{Introduction}

Pseudoholomorphic curve theory and Seiberg-Witten theory are two phenomenal tools opening the door for exploring the topology of closed symplectic $4$-manifolds. In \cite{McDuffrationalruled}, McDuff shows that the existence of an embedded symplectic sphere of self-intersection number $\geq 0$ implies the ambient manifold to be symplectic rational or ruled. By symplectic rational or ruled manifold, we mean a closed symplectic $4$-manifold diffeomorphic to the blowup of $\CC\PP^2$ or the blowup of an $S^2$-bundle over a Riemann surface. On the other hand, the groundbreaking work (\cite{TaubesSWandGr}) by Taubes which relates the Seiberg-Witten invariants to Gromov invariants on symplectic manifolds allows us to find embedded symplectic surfaces in certain cases. Encapsulating these two celebrated work by McDuff and Taubes, along with the $b_2^+=1$ Seiberg-Witten wall crossing formula by Kronheimer-Mrowka \cite{KM94} and more generally Li-Liu \cite{LiLiuWallcrossing}, leads to the following important result discovered independently by Liu and Ohta-Ono. It provides a simple characterization of symplectic rational ruled manifolds only involving the numerical pairing between the cohomology class of $\omega$ and the symplectic canonical class $K_{\omega}$.
Here $K_{\omega}\in H^2(X;\ZZ)$ is defined by $K_{\omega}:=-c_1(TX,J)$ for any $\omega$-compatible almost complex structure $J$; this is well-defined (independent of $J$) up to deformation of $\omega$.

\begin{theorem}[\cite{LiuAiKo,OhtaOnoc_1positive}]\label{thm:rationalruled}
    Let $(X,\omega)$ be a closed symplectic $4$-manifold such that $[\omega]\cdot K_{\omega}<0$, then $(X,\omega)$ must be a symplectic rational or ruled manifold.
\end{theorem}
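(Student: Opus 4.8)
The plan is to locate an embedded symplectic sphere of nonnegative self-intersection inside $(X,\omega)$ and then invoke McDuff's theorem; the hypothesis $\omega\cdot K_\omega<0$ will be fed into the Seiberg--Witten--Taubes machinery to manufacture such a sphere.

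As a first step I would rule out $b_2^+(X)>1$. In that range Taubes's equivalence $SW=Gr$, applied to the canonical $\mathrm{Spin}^c$ structure (with $c_1=-K_\omega$) together with charge-conjugation symmetry, shows that the conjugate structure (with $c_1=K_\omega$) has invariant $\pm 1$, so that the class $K_\omega$ carries a nonzero Gromov invariant and is therefore represented by an embedded $J$-holomorphic, hence symplectic, curve $C$ for a compatible $J$. But then $\omega\cdot K_\omega=\int_C\omega\ge 0$, contradicting the hypothesis. Hence $b_2^+(X)=1$.

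In the $b_2^+=1$ regime the Seiberg--Witten invariants are chamber dependent, and here I would combine Taubes's computation that the canonical $\mathrm{Spin}^c$ structure has invariant $\pm 1$ in the chamber determined by $\omega$ with the wall-crossing formula of Kronheimer--Mrowka and Li--Liu. Crossing the wall produces a $\mathrm{Spin}^c$ structure $\mathfrak{s}$ with nonvanishing invariant in the symplectic chamber; writing $c_1(\mathfrak{s})=2A-K_\omega$, the $b_2^+=1$ version of $SW=Gr$ then yields a nonempty embedded $J$-holomorphic representative of the class $A$, with $A\cdot\omega>0$ and Gromov dimension $\tfrac12(A^2-K_\omega\cdot A)\ge 0$.

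It remains to extract a sphere. For each component the adjunction formula reads $2g-2=A^2+K_\omega\cdot A$, so a genus-zero component of nonnegative self-intersection is exactly what McDuff's theorem requires. Using $b_2^+=1$, the light-cone lemma, and the sign input $\omega\cdot K_\omega<0$, I would run a positivity argument on the components of the Gromov curve to produce such a component; passing first to the minimal model (rational or ruledness being stable under blow-up) lets one split off the exceptional $(-1)$-spheres and makes this step cleanest. Feeding the resulting sphere into McDuff's theorem gives that $(X,\omega)$ is rational or ruled. The main obstacle is precisely this last extraction: the chamber bookkeeping, the sign and indexing conventions relating $\mathrm{Spin}^c$ structures to Gromov classes, and above all guaranteeing a genus-zero component of nonnegative square rather than a higher-genus or purely exceptional configuration.
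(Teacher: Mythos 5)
A preliminary remark: this paper does not prove Theorem \ref{thm:rationalruled} at all --- it is imported from \cite{LiuAiKo,OhtaOnoc_1positive}, and the introduction only records the ingredient list (McDuff's sphere theorem, Taubes' $SW=Gr$, and the $b_2^+=1$ wall-crossing formula), which is exactly the architecture you propose, so there is no in-paper proof to compare against. Within that architecture, your first step is sound: for $b_2^+>1$, Taubes plus charge conjugation makes $K_\omega$ a Gromov-nontrivial class, hence of nonnegative $\omega$-area, contradicting the hypothesis; and passing to the minimal model is legitimate, since blowing down only makes $\omega\cdot K_\omega$ more negative while rational or ruledness is insensitive to blowups.

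The rest, however, has two genuine gaps rather than routine bookkeeping. First, your wall-crossing step never specifies which $\mathrm{Spin}^c$ structure is crossed, and the mechanism as you state it is off: what actually happens (compare the proof of Corollary \ref{cor:SWnonzero} in this paper) is that one fixes a class $A$ with $I(A)\geq 0$, notes $SW_{\omega,-}(K_\omega-A)=0$ because $\omega\cdot(K_\omega-A)<0$ forbids $J$-holomorphic representatives, uses the symmetry lemma to force $SW_{\omega,+}(A)=0$, and then needs the wall-crossing \emph{difference} to be nonzero to conclude $SW_{\omega,-}(A)\neq 0$; Taubes' nonvanishing for the canonical structure plays no role in this step. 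For the natural candidate $A=-K_\omega$ the index is $I(-K_\omega)=2K_\omega^2$, so you must know $K_\omega^2\geq 0$ on the minimal model --- that is Gompf's conjecture (Liu's Theorem B), a substantial result proved alongside this theorem, not a free input --- or treat $K_\omega^2<0$ separately. Worse, when $b_1>0$ the wall-crossing difference is not $\pm1$: the version quoted in this paper (Theorem \ref{thm:liliu}) is stated only for manifolds already known to be rational or irrational ruled, so invoking it here would be circular, and in the general Li--Liu formula \cite{LiLiuWallcrossing} the correction term can vanish (this is exactly why Corollary \ref{cor:SWnonzero} carries the hypothesis $A\cdot F\neq -1$); handling this requires the cup-product analysis on $H^1$ that occupies much of Liu's paper. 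Second, the step you yourself call ``the main obstacle'' --- extracting an embedded \emph{sphere} of nonnegative square --- is the actual heart of the theorem, and the positivity argument you sketch cannot close it: on a minimal model, adjunction together with $I\geq 0$ does force every component of the Gromov curve to have square $\geq 0$, but it does not force genus zero; indeed an irreducible embedded representative of $-K_\omega$ has adjunction genus exactly $1+\tfrac12\bigl(K_\omega^2-K_\omega^2\bigr)=1$, a torus, about which McDuff's theorem says nothing. Ruling out such positive-genus configurations is precisely where Liu and Ohta--Ono work hardest, so as it stands your proposal is a correct outline of the known strategy whose two decisive steps remain open.
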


 We remark that $(X,\omega)$ in Theorem \ref{thm:rationalruled} is not assumed to be minimal (without any embedded symplectic exceptional sphere). This theorem is a symplectic reminiscence of the more classical result in the theory of classification of compact complex surfaces which says that complex surfaces with Kodaira dimension $-\infty$ must be biholomorphic to either rational or ruled surfaces (see for example \cite{BHPV}). Recall that one way to interpret the Kodaira dimension $\kappa(X)=-\infty$ for a complex surface $X$ is that the multiple of the canonical class $nK_X$ is not effective for any $n\geq 1$. From this point of view, for symplectic $4$-manifolds the parallel condition $[\omega]\cdot K_{\omega}<0$ in Theorem \ref{thm:rationalruled} is quite natural since it implies that $K_{\omega}$ can never be represented by symplectic submanifolds. 
 
 Theorem \ref{thm:rationalruled}, together with another result concerning the sign of $K_{\omega}^2$ in \cite{LiuAiKo} which answers Gompf's conjecture, eventually inspires the development of the notion of symplectic Kodaira dimension for closed symplectic 4-manifolds. Such a notion first appeared in \cite{MSsurvey} for minimal manifolds with $b_2^+=1$ and was later completed in \cite{kod06,kod15}. It is defined by first reducing the manifold to its minimal model and then applying the following classification scheme.
\begin{equation}\label{equ:kod}\tag{\textdagger}
\kappa^s(X,\omega) = 
\begin{cases}
   -\infty & \text{if }[\omega]\cdot K_{\omega}<0\text{ or }K_\omega^2<0,\\
   0 & \text{if }[\omega]\cdot K_{\omega}=0\text{ and }K_\omega^2=0,\\
   1 & \text{if }[\omega]\cdot K_{\omega}>0\text{ and }K_\omega^2=0,\\
   2 & \text{if }[\omega]\cdot K_{\omega}>0\text{ and }K_\omega^2>0.
\end{cases}
\end{equation}
 Consequently, all symplectic $4$-manifolds that satisfy the condition in Theorem \ref{thm:rationalruled} are categorized as having $\kappa^s=-\infty$. Now, we are interested in the following question.
 
\begin{question}\label{question:relativeversion}
    What is the relative version of Theorem \ref{thm:rationalruled}?
\end{question}

By `relative', we mean the context of considering the pair $(X,\omega,D)$ where $D$ is a union of embedded symplectic surfaces in $(X,\omega)$. On the algebraic side, a series of works by Fujita, Miyanishi, Sugie and Russell (see also the summary in the monograph \cite{Miyanishibook}) has established a remarkable result in the relative setting.

\begin{theorem}[\cite{Fujita,Miysugiekodinf,Russelaffineruled,Miyanishi82affineruled}]\label{thm:algbraicaffineruled}
    Let $V$ be an open nonsingular algebraic surface with log Kodaira dimension $\lk(V)=-\infty$. When the compactification of $V$ is a rational surface, further assume that the compactifying divisor is connected. Then $V$ is affine-ruled, i.e. $V$ contains a Zariski open subset isomorphic to $C\times \mathbb{A}^1$ where $C$ is a smooth quasi-projective curve and $\mathbb{A}^1$ is the affine line.
\end{theorem}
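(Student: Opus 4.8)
The plan is to reduce affine-ruledness to the construction of a single $\mathbb{A}^1$-fibration on $V$, and then to build that fibration by running a minimal model program on a compactification. First I would fix a smooth projective surface $\overline V \supset V$ whose boundary $D := \overline V \setminus V$ is a simple normal crossing divisor. Since $D$ is effective, multiplication by a defining section gives $h^0(nK_{\overline V}) \le h^0(n(K_{\overline V}+D))$ for all $n \ge 1$, so $\kappa(\overline V) \le \lk(V) = -\infty$; by the Enriques--Kodaira classification $\overline V$ is therefore rational or birationally ruled over a smooth curve $B$ with $g(B) \ge 1$. The reduction is that an $\mathbb{A}^1$-fibration suffices: if $\rho : V \to C$ is a dominant morphism to a smooth curve whose generic fiber is a form of the affine line, then in characteristic zero this form is trivial (as $\mathrm{Aut}(\mathbb{A}^1) = \mathbb{G}_a \rtimes \mathbb{G}_m$ has vanishing $H^1$ over a field), so after shrinking $C$ to a Zariski-open $C^\circ$ the fibration becomes a trivial bundle $\rho^{-1}(C^\circ) \cong C^\circ \times \mathbb{A}^1$, which is exactly an affine-ruling. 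Thus it remains to produce an $\mathbb{A}^1$-fibration.

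To produce one I would run the logarithmic minimal model program on the pair $(\overline V, D)$, i.e. the $(K_{\overline V}+D)$-MMP. Because $\lk(V) = -\infty$, no multiple of $K_{\overline V}+D$ has sections, so $K_{\overline V}+D$ is not pseudo-effective and the program must terminate in a log Mori fiber space, tracking the strict transform $D'$ of $D$ and contracting only $(K+D)$-negative extremal curves. In the fibration case $f : \overline V' \to B$ the general fiber $F \cong \PP^1$ satisfies $(K_{\overline V'}+D') \cdot F < 0$, whence $-2 + D' \cdot F < 0$ and $D' \cdot F \le 1$; when $D' \cdot F = 1$ the restriction to the complement is literally an $\mathbb{A}^1$-fibration, and when $D' \cdot F = 0$ a general fiber lies in the complement and removing a section again exhibits affine lines. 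Since the birational map $\overline V \dashrightarrow \overline V'$ is an isomorphism over a dense open set, this fibration pulls back to an $\mathbb{A}^1$-fibration on a Zariski-open subset of $V$, and affine-ruledness is insensitive to such modifications. If instead the Mori fiber space contracts to a point, $\overline V'$ is a log del Pezzo surface of Picard number one, and I would instead sweep out $V'$ by a pencil of low-degree rational curves meeting $D'$ in a single point to manufacture the fibration directly.

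The hard part --- and the reason for the rationality and connectedness hypotheses --- is that when $\overline V$ is rational the many competing $\PP^1$-fibrations must be organized so that one of them meets $D$ in a single point, yielding an honest $\mathbb{A}^1$-fibration rather than only an $\mathbb{A}^1_*$-fibration whose general fiber is $\CC^*$. Rational open surfaces of log Kodaira dimension $-\infty$ that admit only $\mathbb{A}^1_*$-fibrations do occur precisely when the boundary is disconnected, and these are exactly the surfaces that fail to be affine-ruled; connectedness of $D$ is the input that excludes them. Concretely I would analyze the weighted dual graph of the boundary on the output of the program and apply the peeling (bark) calculus of Miyanishi--Tsunoda together with the logarithmic Bogomolov--Miyaoka--Yau inequality to bound the number of boundary components met by a general fiber, using connectedness to force this number down to one. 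I expect this combinatorial control of the normal-crossing configuration through each contraction --- and the consequent exclusion of the $\CC^*$-fibered alternative --- to be the principal obstacle, as it is the step that genuinely separates the affine-ruled surfaces from their $\mathbb{A}^1_*$-fibered cousins.
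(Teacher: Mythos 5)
This theorem is not proved in the paper at all: it is imported from the algebraic-geometry literature (Fujita, Miyanishi--Sugie, Russell, Miyanishi), so there is no in-paper proof to compare against, and your proposal can only be measured against those sources, whose arguments rest on Miyanishi--Tsunoda peeling theory and the structure theory of open surfaces (in modern language, the log MMP). Your overall route --- reduce affine-ruledness to producing an $\mathbb{A}^1$-fibration, then extract one from a $(K_{\overline V}+D)$-Mori fiber space --- is the standard modern outline, and the reduction step (triviality of forms of $\mathbb{A}^1$ in characteristic zero plus spreading out the generic-fiber isomorphism over an open subset of the base) is correct. But two genuine gaps remain. First, the inference ``no multiple of $K_{\overline V}+D$ has sections, hence $K_{\overline V}+D$ is not pseudo-effective'' is not formal: it requires log abundance (nonvanishing) for log canonical surface pairs, a real theorem, without which the MMP could terminate in a log minimal model with $K+D$ nef but $\kappa=-\infty$ and your argument stops. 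This is fixable by citation, but it must be cited, not asserted.

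Second, and more seriously: when the MMP terminates with a Mori fiber space over a point, i.e.\ a Picard-rank-one log del Pezzo pair, you say you ``would sweep out $V'$ by a pencil of low-degree rational curves meeting $D'$ in a single point.'' That sentence is where essentially all the content of the theorem is hiding. Note that (i) the output of the log MMP is in general a \emph{singular} surface (the pair is only dlt), so naive pencil and intersection arguments on a smooth surface do not apply; and (ii) your fibration case ($\dim B=1$) never uses connectedness of $D$, so the connectedness hypothesis --- and with it the exclusion of the actual counterexamples, namely rational open surfaces with disconnected boundary that carry only $\mathbb{A}^1_*$-fibrations (Platonic fiber spaces) --- must be carried entirely by the point-base case. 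Those counterexamples do run through your MMP and do land in that case, so any correct treatment there must do substantial work that your sketch (naming peeling and the log Bogomolov--Miyaoka--Yau inequality as tools you ``would apply'') does not carry out. A minor further discrepancy: the theorem as stated in the paper holds over algebraically closed fields of arbitrary characteristic --- the positive-characteristic part, where forms of $\mathbb{A}^1$ can be nontrivial, is precisely Russell's contribution --- whereas your argument is characteristic-zero only; for the paper's symplectic applications this restriction is harmless, but it means your proposal does not recover the cited statement in full.
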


The above theorem is proved over algebraically closed field of arbitrary characteristic without assuming $V$ is affine\footnote{So the `affine' in the terminology `affine-ruled' means the open algebraic surface $V$ has a ruling by affine lines, rather than $V$ itself is affine.} and applied to answer Zariski's cancellation problem (see \cite{Fujita}). The condition of log Kodaira dimension $\lk(V)=-\infty$ for the open algebraic surface $V$ implies that after taking some simple normal crossing compactification $(X,D)$ with $V=X\setminus D$, any multiple of the \textbf{adjoint class} $K_X+D$ is not effective.  

\subsection[Symplectic log Kodaira dimension $-\infty$ and affine-ruledness]{Symplectic log Kodaira dimension $-\infty$ and affine-ruledness}

By the previous discussion, to define and study the notion of log Kodaira dimension $-\infty$ in the symplectic context, it is natural to first explore the implications of the condition that the adjoint class has negative symplectic area. Motivated by the earlier work of Li-Yau \cite{LiYau} on a single embedded symplectic surface, Li-Zhang \cite{LiZhangrelativeKod} investigate the case where $D$ is a disjoint union of embedded symplectic surfaces with genus $\geq 1$. They introduce a definition of symplectic log Kodaira dimension in the relative setting analogous to (\ref{equ:kod}) under the assumption on the genus. In this paper, we drop this assumption and consider the more general situation where $D$ is a symplectic divisor in the following sense.

\begin{definition}\label{def:sympdivisor}
    A \textbf{symplectic divisor} $D$ in a closed symplectic 4-manifold $(X,\omega)$ refers to a non-empty configuration of finitely many closed embedded symplectic surfaces $D=\cup D_i\subseteq (X,\omega)$ such that all intersections among $D_i$'s are positively transverse and $\omega$-orthogonal. 
\end{definition}

 $(X,\omega,D)$ in the above definition will also be simply called a pair. Note that the $\omega$-orthogonal condition guarantees that no three components in a symplectic divisor can intersect at one point. We say that the divisor $D$ or the pair $(X,\omega,D)$ is \textbf{connected} if the configuration $\cup D_i\subseteq (X,\omega)$ is connected. The above definition was also called the `singular set' in \cite{MO15} and should be thought of as the analogue of simple normal crossing divisors in algebraic settings. We will write $[D]$ as the total homology class of the symplectic divisor $D$, which is the sum of the homology classes of its components $\sum[D_i]$. 

Guided by the classification scheme outlined in \cite{LiZhangrelativeKod}, symplectic log Kodaira dimension $\lk^s=0$ has been extensively studied in works \cite{LiMakLCY,LiMakICCM,LiMinMak,Enumerate}, focusing on the deformation, contact and enumerative properties with connections to symplectic fillings, toric actions and almost toric fibrations. Additionally, \cite{Ouyang} also investigates specific divisors within $\lk^s>0$ relating them with Hamiltonian circle actions. In this paper, we turn our attention to $\lk^s=-\infty$, aiming to establish a symplectic analogue of Theorem \ref{thm:algbraicaffineruled}. In the algebraic context, an affine-ruled open algebraic surface is defined as one that contains a Zariski open subset isomorphic to the product of the affine line and a smooth quasi-projective curve. Here, we first provide a symplectic analogue of the definition for affine-ruledness.

\begin{definition}\label{def:sympaffruled}
    The pair $(X,\omega,D)$ is called {\bf symplectic affine-ruled} if there are finitely many embedded symplectic submanifolds $S_1,\cdots,S_l\subseteq X\setminus D$ and a diffeomorphism \[\Phi:\mathring{\Sigma}_g\times (S^2\setminus\{pt\})\rightarrow X\setminus (D\cup S_1\cup\cdots \cup S_l) \] 
    such that each fiber $\{*\}\times (S^2\setminus\{pt\})$ is a symplectic submanifold with respect to $\Phi^*\omega$ and has equal symplectic area, where $S^2\setminus\{pt\}$ denotes the one-punctured sphere and $\mathring{\Sigma}_g$ denotes a Riemann surface of genus $g$ with finitely many punctures.
\end{definition}

\begin{rmk}
     The result of Greene-Shiohama \cite{openmoser} implies that the symplectic forms on the punctured Riemann surface with the same finite volume are diffeomorphic. This is generalized into a family version by Pelayo-Tang \cite{PelayoTang,PelayoTang2}. In particular, we can choose $\Phi$ in Definition \ref{def:sympaffruled} to satisfy that, for any compact subset $K\subseteq \mathring{\Sigma}_g$, the restriction of $\Phi^*\omega$ on $\{*\}\times (S^2\setminus\{pt\})$ for $*\in K$ is a fixed standard symplectic form. 
\end{rmk}
%The smooth structure on punctured Riemann surface is unique (\cite[Corollary 1.18]{Whitehead}).

Now we can state our main result. 

\begin{theorem}\label{thm:firstmain}
    Let $D\subseteq (X,\omega)$ be a connected symplectic divisor with $[\omega]\cdot(K_{\omega}+[D])<0$. Then the pair $(X,\omega,D)$ is symplectic affine-ruled. Moreover, the same is true when $D$ is not connected and $X$ is irrational ruled. 
\end{theorem}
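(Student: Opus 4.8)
The plan is to first pin down the ambient manifold, then extract from the negative adjoint area a distinguished fiber class, and finally upgrade a single curve in that class to a foliation of the complement, handling the irrational ruled and rational cases separately since only the latter will require connectedness of $D$. First I would observe that the hypothesis already constrains $X$: each component $D_i$ is symplectic, so $\omega\cdot[D_i]>0$ and hence $\omega\cdot K_\omega<\omega\cdot(K_\omega+[D])<0$, whence Theorem~\ref{thm:rationalruled} forces $(X,\omega)$ to be symplectic rational or ruled. The goal is then to produce a homology class $A$ playing the role of the fiber of a ruling adapted to $D$: one with $K_\omega\cdot A=-2$, so that it is represented by genus-zero curves via adjunction, and with intersection pattern forcing each such curve to meet $D$ in essentially a single point. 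The numerical shadow of log Kodaira dimension $-\infty$ is precisely that $(K_\omega+[D])\cdot A<0$ on this ray.

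The bridge from the area inequality $\omega\cdot(K_\omega+[D])<0$ to the existence of such a class is Zhang's curve cone theorem. I would use it to locate a $(K_\omega+[D])$-negative extremal ray in the cone of $J$-holomorphic curve classes for a tamed $J$; since $K_\omega\cdot A<0$ along this ray, the representing curves have controlled genus-zero behaviour and sweep out a ruling. The same cone theorem is the engine of an induction that reduces the complexity of the configuration: one repeatedly contracts exceptional spheres and strips off or absorbs extremal components of $D$, checking at each stage that the inequality $\omega\cdot(K_\omega+[D])<0$ and the relevant intersection data persist, until the pair is brought to a standard model in which the ruling and its meeting with $D$ can be read off directly, and then the foliation is transported back through the blow-ups.

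For the irrational ruled case I would argue that the ruling is essentially forced: over a base of genus $g\ge 1$ the genus-zero curve classes are severely constrained, so the extremal class $A$ must be the fiber class $F$, and the condition $F\cdot(K_\omega+[D])=-2+F\cdot[D]<0$ together with non-emptiness of $D$ pins down $F\cdot[D]=1$. Here connectedness is unnecessary because the base genus already rules out the pathologies that a disconnected boundary could introduce, which is why the ``moreover'' clause holds without it. In the rational case the fibers may be forced to be singular: following McDuff-Siegel, I would realize the fiber as a unicuspidal rational curve of index one whose cusp sits at an intersection point of $D$, and pass to the normal crossing resolution, where the cusp is traded for an embedded curve meeting the resolved configuration transversally. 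McDuff-Opshtein's relative existence criterion is then invoked in the resolution to guarantee that these embedded curves genuinely occur and persist in families, after which one pushes the family back down. Connectedness of $D$ is exploited precisely at this point, to position the cusp coherently on the configuration and keep the resolved curves embedded and the family globally consistent.

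Finally, I would promote the existence of one such curve to a genuine foliation. Using positivity of intersections and McDuff's automatic transversality and foliation results for ruling classes, the curves in class $A$ foliate $X$ away from finitely many singular fibers; deleting $D$, which each generic member meets in a single point (a transverse intersection, or the cusp in the rational case), converts every fiber into a once-punctured sphere. After removing the finitely many singular fibers and finitely many sections $S_1,\dots,S_l\subseteq X\setminus D$, the surviving family is a smooth fibration over a punctured surface $\mathring\Sigma_g$ with punctured-sphere fibers, which the area-normalization of Greene-Shiohama renders of equal symplectic area; this is exactly the diffeomorphism $\Phi$ demanded by Definition~\ref{def:sympaffruled}. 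I expect the main obstacle to lie squarely in the rational case: controlling the unicuspidal fibers through the McDuff-Siegel correspondence and ensuring, via McDuff-Opshtein together with the cone-theoretic reduction of complexity, that the singular curves exist with their cusps correctly placed on the connected divisor and that the resulting family actually sweeps out the entire complement.
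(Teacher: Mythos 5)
Your outline follows the paper's broad strategy (reduce the pair by blowdowns, treat the irrational ruled case via the fiber class, treat the rational case via unicuspidal curves, normal crossing resolution, McDuff--Opshtein and the McDuff--Siegel correspondence), but there are genuine gaps at exactly the points you flag as "the main obstacle," plus a numerical error in your setup. Your requirement that the fiber class $A\in H_2(X;\ZZ)$ satisfy $K_\omega\cdot A=-2$ is wrong in the rational case: the class of the $(p,q)$-unicuspidal curves satisfies $A^2=pq$ and $K_\omega\cdot A=-p-q-1$ (e.g.\ $K_\omega\cdot A=-12$ for the $(3,8)$-cusp in the paper's final example); the conditions $\tilde{A}^2=0$, $K\cdot\tilde{A}=-2$ hold only for the resolution class $\tilde{A}=A-\sum m_iE_i$ in the blowup $\tilde{X}$. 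Insisting on $K_\omega\cdot A=-2$ in $X$ itself is precisely what fails for homological reasons and what forces the introduction of cuspidal fibers, so an argument built on that normalization cannot get off the ground. Relatedly, Zhang's curve cone theorem does not produce a "$(K_\omega+[D])$-negative extremal ray" with genus-zero representatives sweeping out a ruling; it only gives a decomposition of $J$-effective classes into $K$-nonnegative classes and exceptional classes. The paper uses it for something much more modest but checkable: the minimal-area bound (any class with $SW\neq 0$ and $K_\omega\cdot A\leq -1$ has area $\geq e_{\min}$) and the fact that minimal-area exceptional classes are embedded for every tame $J$, which drive the reduction to quasi-minimal pairs.

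The real content you are missing is the mechanism that determines the class $A$, the cusp type $(p,q)$, and the cusp's location, and then verifies the hypotheses of McDuff--Opshtein upstairs. The paper does this by: (i) using SW wall-crossing to prove $-K_\omega-[D]=E_{\min}$ for quasi-minimal pairs, which splits the analysis into first/second kind; (ii) completing first-kind pairs into log Calabi--Yau divisors and running a partially minimal reduction so that $D$ becomes a chain admitting an "admissible subchain"; (iii) taking $A=\sum c_i[D_i]$ with coefficients the associated sequence of the self-intersections, which is what produces integers $p,q$ with $A^2=pq$, $K_\omega\cdot A=-p-q-1$ and the correct intersection pattern with $D_a,D_b$; and (iv) proving the resolution class $\tilde{A}$ is $\tilde{D}$-good (this needs the non-negative-combination Lemma on weight sequences, not just positivity of intersection), plus separate arguments for second-kind pairs, the $b_2\leq 2$ configurations (where case $(A3)'$, the conic, needs an ad hoc tangency construction), and the stability of the whole package under blowups to transport the foliation back. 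None of these steps is supplied or replaceable by your extremal-ray heuristic. A smaller error: in the irrational ruled case $F\cdot[D]=1$ is not forced; $D$ may consist entirely of fiber-type components, so $F\cdot[D]=0$, and the paper then adjoins an auxiliary $J$-holomorphic section in class $B+gF$ to puncture the fibers--the conclusion still holds, but not for the reason you give.
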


Since $[\omega]\cdot K_{\omega}<[\omega]\cdot(K_{\omega}+[D])$ for any symplectic divisor $D$, Theorem \ref{thm:rationalruled} implies that the ambient manifold $X$ must be rational or ruled under the assumption $[\omega]\cdot(K_{\omega}+[D])<0$. The main Theorem \ref{thm:firstmain} is then a combination of Theorem \ref{thm:rationalmain} and \ref{thm:ruledmain} which handles the rational case and the irrational ruled case respectively. As we will see in the proof, we can provide more precise characterizations of those embedded symplectic submanifolds $S_1,\cdots,S_l\subseteq X\setminus D$ in Definition \ref{def:sympaffruled}. Actually, when $X$ is a rational manifold, they must be either a sphere or a $k$-punctured sphere whose closure in $X$ intersects $D$ at exactly $k$ points; when $X$ is an irrational ruled manifold, other than spheres and punctured spheres as the rational case, there might be one $S_i$ whose closure is the section of the ruling. In the rational case, the closure $\overline{S_i}$ could have singularities and may intersect two components of $D$ at their intersection point. As a result,  $D\cup \overline{S_1}\cup\cdots\cup \overline{S_l}$ may not generally qualify as a symplectic divisor under Definition \ref{def:sympdivisor}. However, it can still be considered as the completion of $D$ in a broader sense (non-simple normal crossing divisor). Furthermore, this union can be realized as $J$-holomorphic subvarieties (see Definition \ref{def:Jsubvariety}) by certain tame almost complex structure $J$. This is also called the holomorphic shadow in \cite{kesslershadow} where a Zariski-type structure is established. Therefore, the complement $X\setminus (D\cup S_1\cup\cdots \cup S_l)$ can be interpreted as the Zariski open subset in the symplectic setting, parallel to Theorem \ref{thm:algbraicaffineruled}.

\subsection{Unicuspidal rational curves}\label{section:mainidea}

The strategy for proving Theorem \ref{thm:firstmain} is in the same spirit as Theorem \ref{thm:rationalruled}. We wish to find some embedded symplectic sphere $C\subseteq (X,\omega)$ with $[C]^2=0$ by Gromov-Taubes-Seiberg-Witten invariants. In the relative setting, this can be achieved by appealing to McDuff-Opshtein's criterion \cite{MO15} (Theorem \ref{thm:MO15}) and there will be some tame almost complex structure $J$ realizing both $C$ and the divisor $D$ as $J$-holomorphic submanifolds. Ideally, if $C$ only transversally intersects one component in $D$, say $D_0$, at exactly one point, then the Lefschetz fibration produced by the moduli space of $J$-holomorphic curves in class $[C]$ might be expected to have $D_0$ as a section and other components in $D$ contained in the singular fibers. When $X$ is irrational ruled, this idea works well by taking $C$ to be the spherical fiber and applying Zhang's result on the fiber class \cite{Zhangmoduli} (Theorem \ref{thm:fiberclass}). The details are spelled out in Section \ref{section:ruled}. 

However, the situation becomes more intriguing when $X$ is a rational manifold. Such an embedded sphere may not exist simply for homological reasons. As a consequence, we allow $C$ to have exactly one cusp singularity and be embedded elsewhere, which is called a {\bf unicuspidal rational curve}. Such curves have recently been studied by McDuff-Siegel (\cite{mcduffsiegelellipsoidalsuperpotentialssingularcurve,MSinfstair}), which give (stable) symplectic embedding obstructions of ellipsoids. When requiring the index to be $0$, a correspondence between $C$ in $X$ and the exceptional curve $\tilde{C}$ coming from the normal crossing resolution in the blowup of $X$ is established in \cite[Section 4]{mcduffsiegelellipsoidalsuperpotentialssingularcurve}. For our purpose of foliating the divisor complement by punctured spheres, we need to consider unicuspidal rational curves with index $1$ instead. If the cusp is exactly located at an intersection point between two components in $D$, the complement of the total transform $\tilde{D}$ of $D$ under the normal crossing resolution in the blowup of $X$ will be diffeomorphic to $X\setminus D$. The condition on index will guarantee the smooth curve $\tilde{C}$ after the resolution to behave like the fiber class, which would give the desired fibration on the divisor complement. See Figure \ref{fig:mainidea} for a cartoon of the idea.

\begin{figure}[ht]
		\centering\includegraphics*[height=5cm, width=13cm]{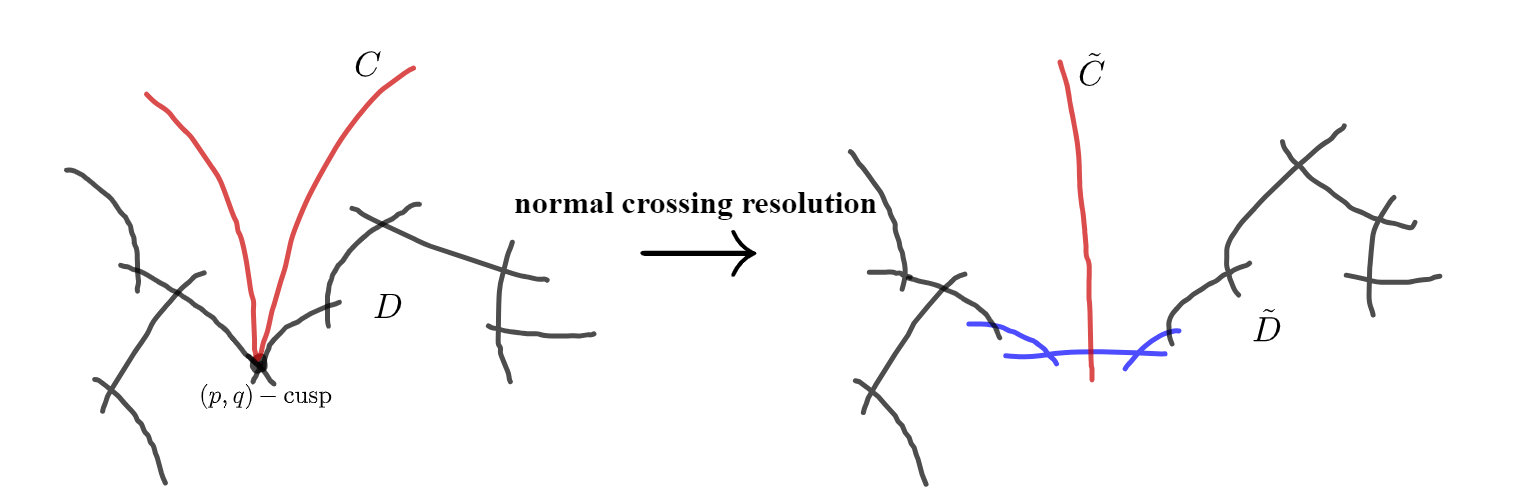}
 
		\caption{After the normal crossing resolution, \(\tilde{C}\) will be an embedded sphere of self-intersection 0.  \label{fig:mainidea}}
	\end{figure}

The majority of Section \ref{section:rational} is dedicated to the search for such a unicuspidal rational curve $C$. Initially, we only have the condition $[\omega]\cdot (K_{\omega}+[D])<0$ so that the configuration of $D$ could be quite complicated. However, by performing a sequence of blowdowns, we reduce the complexity, arriving at a model birationally equivalent to $(X,\omega,D)$, where the existence of $C$ becomes more evident.  The diagram accompanying the proof of Theorem \ref{thm:rationalmain} outlines this strategy. After the first reductions, three possible configurations emerge:
\begin{itemize}
    \item quasi-minimal pairs of first kind;
    \item quasi-minimal pairs of second kind;
    \item the ambient manifold has $b_2\leq 2$.
\end{itemize} When $b_2\leq 2$, it is possible to enumerate all the configurations so as to find the curve $C$ through a case-by-case discussion. The quasi-minimal pair of second kind would have configurations which look like a trident and can be further reduced to the case of $b_2\leq 2$, as detailed in Section \ref{section:psedolcy}. 

The quasi-minimal pair of first kind is more sophisticated but still manageable (Section \ref{section:LCY}), as it can be completed into a {\bf symplectic log Calabi-Yau divisor} introduced by Li-Mak \cite{LiMakLCY}. We then further proceed to reduce the complexity by applying the minimal reduction procedure for log Calabi-Yau divisors, obtaining a partially minimal model with a configuration that forms a chain of spheres. The homology class $[C]$ for the unicuspidal curve is taken to be a positive linear combination of the components in a subchain (called `admissible' in Section \ref{section:findcusp}). To show the existence of the unicuspidal rational curve $C$ in such a class, we transition to the blowup manifold via the normal crossing resolution. Using McDuff-Opshtein’s criterion to get the embedded curve $\tilde{C}$ with $[\tilde{C}]^2=0$ in the blowup manifold, there will be the desired singular curve $C$ in $X$ by the curve correspondence highlighted by McDuff-Siegel \cite{mcduffsiegelellipsoidalsuperpotentialssingularcurve} (Proposition \ref{prop:modulirelation}). This approach allows us to bypass the intricate analysis of the moduli space for singular $J$-holomorphic curves.

\subsection{Comparison with K\"ahler pairs}
Note that our definition of symplectic affine-ruledness only requires the divisor complement $X\setminus D$ to contain an open dense subset foliated by symplectic one-punctured spheres with equal symplectic area. This is the main distinction with the result in the algebraic setting, where it is shown that the Zariski open subset is isomorphic to a product between a curve and an affine line. However, in the symplectic setting, one can not naively expect the open dense subset in the divisor complement is symplectomorphic to the product. Consider the simplest example of the complement of a line in $\CC\PP^2$ which is biholomorphic to $\CC\times\CC$ but symplectomorphic to a ball. It is known there are many restrictions for symplectic embeddings of a polydisk into a ball other than the volume constraints (\cite{HindLisi,Hutchingsbeyond,Jopolydisk}). 

Nevertheless, we investigate the deformation aspect of symplectic divisors and obtain the following result in Section \ref{section:deformation}, where various notions of deformations between symplectic divisors are introduced.

\begin{theorem}[=Theorem \ref{thm:deformation}]\label{thm:maindeformation}
    Let $D\subseteq (X,\omega)$ be a connected symplectic divisor with $[\omega]\cdot(K_{\omega}+[D])<0$. Then $(X,\omega,D)$ is symplectic deformation equivalent to a K\"ahler pair with $\overline{\kappa}(X\setminus D)=-\infty$. Moreover, the same is true when $D$ is not connected and $X$ is irrational ruled. 
\end{theorem}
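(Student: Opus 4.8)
The plan is to leverage the structural output of the proof of Theorem \ref{thm:firstmain} rather than to attempt to integrate the tame almost complex structure directly. Under the hypothesis $\omega\cdot(K_\omega+[D])<0$ we already know from Theorem \ref{thm:rationalruled} that $X$ is symplectic rational or ruled, and the proof of Theorem \ref{thm:firstmain} exhibits $(X,\omega,D)$ as obtained from a short list of explicit minimal models by a sequence of symplectic blowups performed at points lying on (the total transform of) the divisor, the reductions being governed by the curve cone theorem. My first step would be to check that each of these minimal models --- the standard line/conic configurations in $\CC\PP^2$, the section-plus-fiber configurations in Hirzebruch and irrational ruled surfaces, and the low $b_2$ cases enumerated in Section \ref{section:rational} --- is literally a K\"ahler pair, i.e.\ realizable by an integrable complex structure with $D$ an algebraic normal crossing divisor whose complement is algebraically affine-ruled and hence has $\lk=-\infty$.

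The second step is reconstruction. Each blowdown used in the reduction is, symplectically, a standard procedure whose inverse is a symplectic blowup at a point on the divisor, and the same combinatorial blowup can be performed in the K\"ahler category on the corresponding K\"ahler model. The key point is that a symplectic blowup and the corresponding K\"ahler blowup, at matching location and with matching exceptional area, are symplectic deformation equivalent \emph{as pairs}; this should follow from the uniqueness of symplectic blowup up to deformation together with the fact that a Weinstein-type neighborhood of a symplectic divisor is determined up to deformation by its combinatorial type and the symplectic areas of its components, exactly as exploited in the log Calabi--Yau setting of \cite{LiMakLCY,LiMinMak}. Reassembling these blowups along the reduction tree of the main theorem then yields a K\"ahler pair $(X',J',\omega',D')$ together with a path of symplectic divisors connecting it to $(X,\omega,D)$, establishing the symplectic deformation equivalence.

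Since the reconstructed pair is K\"ahler and its complement carries the algebraic affine ruling transported through the deformation, the converse direction of the algebraic theory --- an $\mathbb{A}^1$-ruling forces log Kodaira dimension $-\infty$, cf.\ Theorem \ref{thm:algbraicaffineruled} --- gives $\lk(X'\setminus D')=-\infty$. The non-connected irrational ruled case is handled identically, the only change being that one of the reassembled standard pieces is a section of the ruling. For the corollary, on the K\"ahler model the complement contains a Zariski open subset isomorphic to $C\times\mathbb{A}^1$ carrying the standard product symplectic structure; transporting this open dense subset back through the deformation yields that the restriction of $\omega$ on an open dense subset of $X\setminus D$ is deformation equivalent to the standard product.

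The main obstacle, I expect, is ensuring that $D_t$ remains an honest symplectic divisor in the sense of Definition \ref{def:sympdivisor} --- embedded, positively transverse, and $\omega_t$-orthogonal --- at every time $t$, especially near the blowdown centers where components are forced to collide and where the singular or non-normal-crossing closures $\overline{S_i}$ appear. Most of the effort will go into the local model: showing via the symplectic neighborhood theorem for divisors that the simultaneous deformation of $(\omega_t,D_t)$ can be arranged to preserve $\omega_t$-orthogonality of the intersections, and arguing that the passage from the foliating tame $J$ to an integrable structure can be \emph{bypassed} by the blowup reconstruction rather than carried out directly, since a generic tame $J$ realizing $D$ need not be deformable to an integrable one relative to $D$.
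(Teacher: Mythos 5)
Your overall architecture agrees with the paper's proof of Theorem \ref{thm:deformation}: reduce by blowdowns, verify the terminal pairs are K\"ahler, and propagate ``deformation equivalent to a K\"ahler pair'' back up through the blowups. In particular your ``key point'' --- that a symplectic blowup and the corresponding K\"ahler blowup at a matching center are deformation equivalent as pairs --- is exactly Proposition \ref{prop:smallKahler}, which the paper assembles from Lemmas \ref{lem:flatKahler}, \ref{lem:shrinkingsizeblowup}, \ref{lem:strongD}, \ref{lem:blowupdeformation} and Corollary \ref{cor:divisorblowupcompare}. The genuine gap is in your first step. For rational $X$, the reduction of Section \ref{section:rational} does \emph{not} terminate only in the short list of low-$b_2$ models you describe: by Lemmas \ref{lem:quasiminred} and \ref{lem:partialminimalred}, the terminal objects are either pairs with $b_2\leq 2$ \emph{or} quasi-minimal pairs of first kind, and the latter (whose partially minimal representatives are, by Lemma \ref{lem:partialminimalgoodchain}, chains arising from arbitrary toric blowup sequences on log Calabi--Yau minimal models) occur with arbitrarily large $b_2$ and admit no further blowdown of the four allowed types. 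A concrete instance is the third configuration in Figure \ref{fig:example}, a partially minimal pair of first kind in $\CC\PP^2\#4\overline{\CC\PP}^2$. Your reconstruction tree therefore has nowhere to start for this infinite family.

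The missing idea is a \emph{global} Torelli-type input, not a local one. The paper handles quasi-minimal pairs of first kind by completing $D$ with an exceptional sphere in the class $E_{\text{min}}=-[D]-K_\omega$ (Proposition \ref{prop:-K-D=Emin}) into a symplectic log Calabi--Yau pair, invoking \cite[Theorem 1.3]{LiMakICCM} that such pairs are deformation equivalent to K\"ahler pairs, and then forgetting the added component. Your appeal to the deformation-uniqueness of a Weinstein-type neighborhood of the divisor cannot substitute for this: knowing that the homological configuration is realizable by some K\"ahler pair does not imply that the \emph{given} symplectic pair is deformation equivalent to that model --- that uniqueness is precisely the content of Li--Mak's theorem, and, for the $b_2\leq2$ and comb-like models, of Proposition \ref{prop:defminimal}, which itself requires McDuff--Opshtein's criterion, Coffey's transitivity theorem \cite{Coffey}, and the isotopy uniqueness of \cite{Enumerate}, so even your step 1 is not a routine check. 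Finally, a minor divergence rather than a gap: for $\lk(X\setminus D)=-\infty$ the paper does not transport an affine ruling; it notes that on the terminal model $\omega'\cdot(K_{\omega'}+[D'])<0$ forces $|n(K_{\omega'}+[D'])|=\emptyset$ for all $n\geq1$ and that this persists under the blowups, although your route via ``affine-ruled implies $\lk=-\infty$'' would also work once the K\"ahler model is actually produced.
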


Combined with the result in algebraic geometry, we have the following corollary.

\begin{corollary}
    Let $D\subseteq (X,\omega)$ be a connected symplectic divisor with $[\omega]\cdot(K_{\omega}+[D])<0$. Then $X\setminus D$ contains an open dense subset symplectic deformation equivalent to $\mathring{\Sigma}_g\times (S^2\setminus\{pt\})$ equipped with the product symplectic structure. Moreover, the same is true when $D$ is not connected and $X$ is irrational ruled. 
\end{corollary}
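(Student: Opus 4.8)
The plan is to combine the two already-stated main results of the paper with the classical structure theorem in algebraic geometry. The corollary is really a formal consequence: Theorem~\ref{thm:maindeformation} upgrades the symplectic pair $(X,\omega,D)$ to a K\"ahler pair, and the algebraic Theorem~\ref{thm:algbraicaffineruled} then identifies a genuine product structure inside the complement, which can be pulled back through the deformation equivalence. I would therefore organize the argument in three stages: first invoke the deformation result to pass to the K\"ahler model, then extract the Zariski-open product from algebraic affine-ruledness, and finally transport the product symplectic structure back along the chain of deformations.

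First I would apply Theorem~\ref{thm:maindeformation} to produce a K\"ahler pair $(X',\omega',D')$ that is symplectic deformation equivalent to $(X,\omega,D)$ and satisfies $\overline{\kappa}(X'\setminus D')=-\infty$. Here the hypotheses match exactly—$D$ connected with $\omega\cdot(K_\omega+[D])<0$, or $D$ arbitrary with $X$ irrational ruled—so no extra work is needed to set up the K\"ahler side. Since $(X',\omega',D')$ is K\"ahler with a simple normal crossing divisor $D'$, the complement $V=X'\setminus D'$ is a smooth quasi-projective surface of log Kodaira dimension $\overline{\kappa}(V)=-\infty$. In the rational case the connectedness of $D$ is preserved under deformation, which supplies the hypothesis on the compactifying divisor required by Theorem~\ref{thm:algbraicaffineruled}; in the irrational ruled case the hypothesis on the compactification being non-rational holds automatically. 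Thus Theorem~\ref{thm:algbraicaffineruled} applies and gives a Zariski-open subset $U\subseteq V$ with an algebraic isomorphism $U\cong C\times\mathbb{A}^1$ for some smooth quasi-projective curve $C$.

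Next I would promote this algebraic product to a symplectic product statement on the K\"ahler side. Writing $C=\mathring{\Sigma}_g$ as a punctured Riemann surface and $\mathbb{A}^1\simeq S^2\setminus\{pt\}$ as a one-punctured sphere, the isomorphism $U\cong C\times\mathbb{A}^1$ is in particular a diffeomorphism onto an open dense subset of $X'\setminus D'$, and the restriction of the K\"ahler form $\omega'$ to $U$ is a symplectic form for which the affine-line fibers are symplectic. The point is that a K\"ahler form restricted to the smooth affine-ruled locus provides exactly the fibered structure in Definition~\ref{def:sympaffruled}. Using the openmoser-type normalization from the remark following Definition~\ref{def:sympaffruled}, one can arrange the fiberwise areas to agree with the standard product form, so that $(U,\omega'|_U)$ is symplectic deformation equivalent to $\mathring{\Sigma}_g\times(S^2\setminus\{pt\})$ with its product symplectic structure. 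Finally, the symplectic deformation equivalence between $(X,\omega,D)$ and $(X',\omega',D')$ restricts to an open dense subset of $X\setminus D$, transporting this product structure back to the original manifold and completing the chain.

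The main obstacle is the bookkeeping at the interface between the algebraic and symplectic categories—specifically, checking that the open dense subset $U$ produced by the deformation and by the algebraic theorem genuinely corresponds to an open dense subset of the \emph{original} $X\setminus D$, and that ``symplectic deformation equivalent to a product'' is the correct reading of the two-step passage. The deformation equivalence of pairs identifies $X\setminus D$ with $X'\setminus D'$ only up to deformation of the symplectic form, not on the nose, so one must be careful that the foliated product region is preserved (at least up to deformation) rather than possibly falling outside the locus where the deformation is well-controlled. Once this compatibility is verified, the rest is essentially the concatenation of the two cited theorems with the Greene--Shiohama--Pelayo--Tang normalization, and no further hard analysis of moduli spaces or singular curves is required.
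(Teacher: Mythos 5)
Your overall route is the same as the paper's: apply Theorem~\ref{thm:maindeformation} to reach a K\"ahler pair, then apply Theorem~\ref{thm:algbraicaffineruled} to get a Zariski open subset isomorphic to $\mathring{\Sigma}_g\times\CC$, then carry the product structure back through the deformation. The first two stages are fine (and your bookkeeping worry is easily dispatched: the deformation in Theorem~\ref{thm:maindeformation} can be taken to be a $D$-symplectic homotopy, i.e.\ a path of forms $\omega_t$ on the \emph{same} manifold $X$ keeping the \emph{same} divisor $D$ symplectic, so the Zariski open subset $U\subseteq X\setminus D$ produced by the algebraic theorem is literally a subset of the original complement and the deformation restricts to it on the nose).

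The genuine gap is in your final step. You claim that because the fibers $\{*\}\times\CC$ are symplectic for $\omega'|_U$, the Greene--Shiohama/Pelayo--Tang fiberwise normalization makes $(U,\omega'|_U)$ symplectic deformation equivalent to the product symplectic structure. This does not follow: those results are two-dimensional statements that normalize the restriction of the form to each individual fiber (possibly in families over compact subsets of the base), but they say nothing about deforming the ambient four-dimensional form. Even after a fiber-preserving diffeomorphism that standardizes all fiber restrictions, the pulled-back form generally differs from the product form by off-diagonal terms, and a naive linear interpolation between two forms that merely agree on fibers need not stay nondegenerate. The paper closes this gap with a short but essential observation you omitted: the product symplectic form $\omega_2$ on $\mathring{\Sigma}_g\times\CC$ is itself K\"ahler \emph{for the same complex structure} supplied by the biholomorphism of Theorem~\ref{thm:algbraicaffineruled}, and so is $\omega_1|_U$ (the restriction of the K\"ahler form from the deformed pair). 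The linear interpolation $t\omega_2+(1-t)\omega_1|_U$ between two K\"ahler forms for one fixed complex structure is again K\"ahler, hence a symplectic deformation; concatenating with the restriction of the $D$-symplectic homotopy gives the deformation from $\omega|_U$ to the product form. Without this (or some substitute such as a careful Thurston-type argument), your last paragraph asserts the conclusion rather than proving it.
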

\begin{proof}
    By Theorem \ref{thm:maindeformation}, we may assume there is a deformation of symplectic forms $\omega_t$ of $\omega$ such that $(X,\omega_1,D)$ is a K\"ahler pair (this is called a $D$-symplectic homotopy in Section \ref{section:deformation}). This implies $\omega_1$ is a K\"ahler form of some complex structure $J$ on $X$. Now we can apply Theorem \ref{thm:algbraicaffineruled} to the pair $(X,J,D)$ with $\overline{\kappa}(X\setminus D)=-\infty$ to obtain a Zariski open subset biholomorphic to $\mathring{\Sigma}_g\times \CC$, where a product symplectic form $\omega_2$ is also K\"ahler. Then the linear interpolation between two K\"ahler forms $\omega_1|_{\mathring{\Sigma}_g\times \CC}$ and $\omega_2$ will be a symplectic deformation. Thus we also have a symplectic deformation from  $\omega|_{\mathring{\Sigma}_g\times \CC}$ and $\omega_2$.
\end{proof}

When $D$ is empty, the above result is well-known: any symplectic form on a rational ruled manifold is deformation equivalent to a K\"ahler form (\cite{spaceofsymp}). Indeed, under the condition $[\omega]\cdot K_{\omega}<0$ of Theorem \ref{thm:rationalruled}, the deformation can be strengthened into symplectomorphism.

\begin{proposition}\label{prop:kahlermain}
    Let $(X,\omega)$ be a symplectic $4$-manifold with $[\omega]\cdot K_\omega<0$, then $\omega$ is a K\"ahler form.
\end{proposition}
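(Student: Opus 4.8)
The plan is to prove that a symplectic $4$-manifold $(X,\omega)$ with $\omega\cdot K_\omega<0$ admits a K\"ahler structure by combining the classification provided by Theorem \ref{thm:rationalruled} with the known K\"ahler geometry of rational and ruled surfaces. First I would invoke Theorem \ref{thm:rationalruled} to conclude that $(X,\omega)$ is a symplectic rational or ruled manifold; that is, $X$ is diffeomorphic to a blowup of $\CC\PP^2$ or to an $S^2$-bundle over a Riemann surface $\Sigma_g$. The goal then reduces to a statement purely about the symplectic form: on such an underlying smooth manifold, I must show that \emph{this particular} $\omega$ (not merely some symplectic form in its deformation class) is K\"ahler, i.e. tamed by an integrable $J$ compatible with a K\"ahler metric.

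The key input is the uniqueness, up to diffeomorphism and scaling, of symplectic forms on rational ruled manifolds. By the work on the symplectic cone and the space of symplectic forms on rational and ruled $4$-manifolds (the reference \cite{spaceofsymp} cited in the excerpt, building on McDuff's results), any two cohomologous symplectic forms on such a manifold are diffeomorphic, and the symplectic cone is completely understood. Concretely, I would first establish that $\omega$ lies in the class where it can be normalized, via a self-diffeomorphism of $X$, to a standard form for which an explicit K\"ahler structure is available: on $\CC\PP^2$ and its blowups one has the Fubini-Study form and its reductions at the exceptional divisors, and on $S^2$-bundles over $\Sigma_g$ one has product and twisted product K\"ahler forms. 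Since $\omega\cdot K_\omega<0$ pins down the deformation class, I would match $[\omega]$ against the class of such a model K\"ahler form $\omega_{\mathrm{K}}$ and use that cohomologous symplectic forms are diffeomorphic to transport the K\"ahler structure back; because being K\"ahler is preserved under pullback by a diffeomorphism, $\omega$ itself is then K\"ahler.

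More carefully, the argument I envisage runs as follows. After fixing the diffeomorphism type, I would use the structure theorem for the symplectic cone of rational ruled $4$-manifolds to write $[\omega]$ in terms of the standard generators (the hyperplane class and exceptional classes in the rational case, or the base and fiber classes in the ruled case), and then realize the identical cohomology class by an honest K\"ahler form $\omega_{\mathrm{K}}$ built from the algebraic geometry of the corresponding complex surface. The condition $\omega\cdot K_\omega<0$ guarantees we are in the Kodaira dimension $-\infty$ regime, where every allowed cohomology class in the symplectic cone is represented by a K\"ahler form of some complex structure. The uniqueness result then yields a diffeomorphism $\psi$ with $\psi^*\omega_{\mathrm{K}}=\omega$, and pulling back the compatible integrable complex structure and K\"ahler metric along $\psi$ exhibits $\omega$ as K\"ahler.

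The main obstacle I expect is \emph{not} the deformation statement, which is already known, but the upgrade from deformation equivalence to the on-the-nose K\"ahler property: a priori Theorem \ref{thm:rationalruled} together with \cite{spaceofsymp} only gives that $\omega$ is deformation equivalent to a K\"ahler form, whereas the proposition asserts $\omega$ is literally K\"ahler. Bridging this gap requires the stronger uniqueness input that \emph{cohomologous} (after scaling) symplectic forms on rational ruled manifolds are related by a diffeomorphism, so that the model K\"ahler form can be pulled back exactly rather than merely connected by a path. I would therefore take care to verify that the cohomology class $[\omega]$, once normalized, actually lies in the K\"ahler cone of some complex structure on $X$; this is where the full force of the classification of the symplectic and K\"ahler cones in the $\kappa^s=-\infty$ case, and the matching of symplectic canonical class $K_\omega$ with the complex canonical class, is essential.
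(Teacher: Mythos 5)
Your overall skeleton---classify $X$ via Theorem \ref{thm:rationalruled}, realize the (normalized) class $[\omega]$ by a K\"ahler form of some complex structure on $X$, then use the fact that cohomologous symplectic forms on rational/ruled manifolds are diffeomorphic to pull the K\"ahler structure back---is the same skeleton the paper uses; in the irrational ruled case the uniqueness input is exactly what Theorem \ref{thm:ruledreducedcone} (Holm--Kessler) packages. However, there is a genuine gap at the decisive step: you assert that the condition $\omega\cdot K_\omega<0$ places us in a ``Kodaira dimension $-\infty$ regime, where every allowed cohomology class in the symplectic cone is represented by a K\"ahler form of some complex structure.'' That realization statement is not a fact you can quote; it is precisely what has to be proved, and it is the entire content of the paper's appendix in the non-minimal irrational ruled case. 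For rational manifolds the statement does follow from the literature (Friedman--Morgan's good generic complex structures \cite{FM88} combined with Li--Liu's determination of the symplectic cone \cite{LiLiucone}), and that is how the paper disposes of the rational case in one paragraph. But for $X_{g,n}=(\Sigma_g\times S^2)\#n\overline{\CC\PP}^2$ with $g\geq 1$ and $n\geq 1$ there is no off-the-shelf result: the paper must construct, by hand, a sequence of complex structures $J^1,J^2,\cdots$ on $X_{g,1},X_{g,2},\cdots$ (iterated blowups of a projectivized rank-$2$ bundle, with blowup points chosen so that specific smooth holomorphic curves survive) and then apply the $b_2^+=1$ $J$-compatible inflation Theorem \ref{thm:inflation} along those curves---including a zig-zag inflation to stay within the area constraints---to sweep out every vector in $\mathcal{P}^n_g$ as a K\"ahler class.

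The danger of treating this realization step as folklore is exactly the point of the Cascini--Panov example \cite{CP12} quoted in the paper: $(T^2\times S^2)\#\overline{\CC\PP}^2$ carries symplectic forms with $\omega\cdot K_\omega>0$ that are \emph{not} K\"ahler, even though the underlying smooth manifold is ruled and hence of Kodaira dimension $-\infty$ for every complex structure. So membership in the symplectic cone of a ruled manifold does not by itself imply that the class is a K\"ahler class of some complex structure; the inequality $\omega\cdot K_\omega<0$ has to enter the construction quantitatively, which is what the inflation argument accomplishes. Until you either supply such a construction or a citation covering the non-minimal irrational ruled case, your proof is incomplete at its central step.
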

When $X$ is a rational manifold, the conclusion of the above proposition can be seen by considering the `good generic' complex structures described in Friedman-Morgan \cite{FM88}. This means the anti-canonical class $-K$ is effective and smooth, and there is no smooth rational curve of self-intersection $-2$. It follows that the K\"ahler cone will agree with the symplectic cone with a fixed canonical class characterized by Li-Liu \cite{LiLiucone}. In Section \ref{section:appendix}, we will give a proof for irrational ruled surfaces using $J$-compatible inflation technique for manifolds with $b_2^+=1$. We remark that the condition $[\omega]\cdot K_\omega<0$ is necessary since Cascini-Panov \cite{CP12} point out that $(T^2\times S^2)\#\overline{\CC\PP}^2$ admits non-K\"ahler symplectic forms $\omega$ with $[\omega]\cdot K_{\omega}>0$.  The above discussion motivates us to ask a similar question in the relative setting. 
\begin{question}
    Is any pair $(X,\omega,D)$ with $[\omega]\cdot(K_{\omega}+[D])<0$ actually a K\"ahler pair?
\end{question}

An affirmative answer to the above question will make Theorem \ref{thm:firstmain} subsumed by Theorem \ref{thm:algbraicaffineruled}, since it implies that all the symplectic divisors considered in this paper actually correspond to some algebraic models. Such a question is also important for defining a reasonable notion of symplectic log Kodaira dimension. In the absolute setting, when $(X,\omega)$ is also K\"ahler, the definition (\ref{equ:kod}) involving the symplectic structure agrees with the holomorphic Kodaira dimension using the complex structure. Ideally, in the relative setting, a suitable notion of symplectic log Kodaira dimension for a K\"ahler pair is supposed to also agree with its holomorphic counterpart.

\subsection{Acknowledgment}
The authors would like to thank Jun Li for discussions on $b_2^+=1$ $J$-compatible inflation, and Jie Min for long-term collaborations on  symplectic log Calabi-Yau divisors and some helpful conversations during Rutgers Symplectic Summer School 2024 and Yamabe Memorial Symposium 2024. We are also grateful to Steven Lu, Ruiran Sun and Weiyi Zhang for their interest  and many useful discussions. Finally, we thank the anonymous referees for their valuable suggestions, which clarified several arguments and improved the presentation.

%We can prove the above for some minimal cases?

%See \cite{Zhangcurvecone} Theorem 1.2.

%\subsection{Future direction}
%$\lk=0$ implies $\CC^*$-fibration. LCY

\section{Symplectic divisors in dimension 4}
In this section, we gather various techniques for handling symplectic divisors in symplectic $4$-manifolds that will be used in subsequent discussions. 

\subsection{Operations on symplectic divisors}\label{sec:divisoroperation}

Given the pair $(X,\omega,D)$, the number of components $D_i$'s in $D$ will be denoted by $l(D)$.  we can consider its dual graph $\Gamma(D)$ whose vertices are given by each component $D_i$ and edges are given by the intersection points of its components. The \textbf{total genus} of $D$ is defined to be 
\[g(D):=\frac{1}{2}([D]^2+K_{\omega}\cdot [D])+1.\]
 A symplectic embedding $I$ from the standard ball $B(\delta)$ of radius $\delta$ in $(\CC^2,\omega_{\text{std}})$ to $(X,\omega)$ is said to be {\bf relative to $D$} if $I^{-1}(D)$ is either empty or the union of (one or two) coordinate planes intersecting with $B(\delta)$. If $p=D_a\cap D_b$ is an intersection point between two components in $D$, by the $\omega$-orthogonal assumption, there will be a symplectic embedding $I$ near $p$ and relative to $D$ such that $D_a,D_b$ correspond to two coordinate planes. As described in \cite[Section 4.1]{Weinsteincomplement}, we can then perform the {\bf smoothing operation} by locally replacing two transversally intersecting disks with an annulus. The outcome is another symplectic divisor $D'$ with $l(D')=l(D)-1$, as the components $D_a$ and $D_b$ will be `summed' into a new component with genus $g(D_a)+g(D_b)$ and homology class $[D_a]+[D_b]$. We have the relation between $g(D)$ and the genus of each component $g(D_i)$ given by the lemma below. 

\begin{lemma}\label{lem:g(D)}
    \[g(D)=\sum_{i=1}^{l(D)}g(D_i)+\text{dim}H_1(\Gamma(D))-\text{dim}H_0(\Gamma(D))+1.\]
\end{lemma}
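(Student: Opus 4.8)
The plan is to expand the total genus $g(D)$ by applying the symplectic adjunction formula componentwise and then to reinterpret the resulting combinatorial quantity through the topology of the dual graph $\Gamma(D)$. First I would record the adjunction identity for each embedded symplectic component: since $D_i$ is an embedded symplectic surface of genus $g(D_i)$, the adjunction formula gives $[D_i]^2 + K_{\omega}\cdot[D_i] = 2g(D_i)-2$. Summing these over all components supplies the ``diagonal'' contributions to the pairing that defines $g(D)$.

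Next I would handle the off-diagonal terms by expanding the self-intersection of the total class. Writing $[D]=\sum_i [D_i]$, bilinearity yields $[D]^2 = \sum_i [D_i]^2 + 2\sum_{i<j}[D_i]\cdot[D_j]$. The key geometric input here is that, by Definition \ref{def:sympdivisor}, all intersections among the $D_i$ are positively transverse, so each intersection point contributes exactly $+1$ to the homological pairing; hence $[D_i]\cdot[D_j]$ equals the number of edges joining the vertices $i$ and $j$ in $\Gamma(D)$. Summing over pairs, $\sum_{i<j}[D_i]\cdot[D_j]$ equals the total number $E$ of edges of $\Gamma(D)$. Substituting these into the definition $g(D)=\frac12\bigl([D]^2+K_{\omega}\cdot[D]\bigr)+1$ and using $\frac12\sum_i\bigl([D_i]^2+K_{\omega}\cdot[D_i]\bigr)=\sum_i g(D_i)-l(D)$ would give the intermediate identity $g(D)=\sum_i g(D_i)-l(D)+E+1$.

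Finally I would translate the quantity $E-l(D)$ into the stated graph-homology terms. Since $\Gamma(D)$ is a one-dimensional CW complex with $l(D)$ vertices and $E$ edges, its Euler characteristic satisfies $\chi(\Gamma(D))=l(D)-E=\dim H_0(\Gamma(D))-\dim H_1(\Gamma(D))$, so that $E-l(D)=\dim H_1(\Gamma(D))-\dim H_0(\Gamma(D))$. Plugging this into the intermediate identity produces exactly the claimed formula.

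The computation is essentially routine once the dictionary between intersection theory and the combinatorics of $\Gamma(D)$ is in place, so I do not anticipate a serious obstacle; the single point demanding care is the step identifying $[D_i]\cdot[D_j]$ with the edge count, which relies crucially on the positivity and transversality built into the definition of a symplectic divisor, together with the fact that the components are embedded so that no vertex carries a self-loop. Were the intersections allowed to be negative or non-transverse, the homological pairing would no longer count geometric intersection points and the formula would break down.
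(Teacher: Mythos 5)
Your proof is correct, but it takes a genuinely different route from the paper's. The paper first performs the smoothing operation at every intersection point of $D$ (the operation introduced just before the lemma), producing a disjoint union of embedded symplectic surfaces $C_1,\cdots,C_k$, one for each connected component of $\Gamma(D)$; it then observes that $k=\dim H_0(\Gamma(D))$ and $\sum_j g(C_j)=\sum_i g(D_i)+\dim H_1(\Gamma(D))$ (each smoothing either merges two pieces or adds a handle), and finally applies the adjunction formula to the disjoint surfaces $C_j$, using $[D]=\sum_j[C_j]$ and $[C_i]\cdot[C_j]=0$ for $i\neq j$. You instead apply adjunction componentwise to the $D_i$, expand $[D]^2$ bilinearly, identify the cross terms $\sum_{i<j}[D_i]\cdot[D_j]$ with the edge count of $\Gamma(D)$ via positive transversality, and convert $E-l(D)$ into $\dim H_1-\dim H_0$ by the Euler characteristic of the graph. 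Both arguments hinge on adjunction; the difference is where the graph combinatorics enters. Your version is more self-contained and purely homological: it does not require knowing how genus behaves under smoothing, only the intersection-form bookkeeping. The paper's version, on the other hand, exploits the smoothing machinery it has just set up (and reuses later, e.g.\ in Proposition \ref{prop:treeofsphere}), at the cost of implicitly invoking the combinatorial fact about genus gain under smoothing that your Euler-characteristic step makes explicit. Your caveats (positivity and transversality of intersections, no self-loops since each component is embedded) are exactly the right points to flag for the edge-counting step.
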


\begin{proof}
We can perform the smoothing operations at all the intersection points in $D$ to get a disjoint union of embedded symplectic submanifolds $C_1,\cdots,C_k$. Note that $k$ and $\sum_{i=1}^{k}g(C_i)$ are given by $\text{dim}H_0(\Gamma(D))$ and $\sum_{i=1}^{l(D)}g(D_i)+\text{dim}H_1(\Gamma(D))$ respectively. The above equation is then a direct consequence of the adjunction formula for these $C_1,\cdots,C_k$. 
\end{proof}

Next, we discuss the birational equivalence between symplectic divisors. When there is a symplectic embedding of the ball $B(\delta)$, the symplectic blowup construction (\cite[Section 7.1]{MSintro}) will remove $I(B(\delta))$ and collapse the boundary by Hopf fibration into a symplectic exceptional sphere $e$ with symplectic area $\delta$. When the embedding is relative to the divisor $D$, the intersection between $D$ and the boundary of $I(B(\delta))$ is exactly the circle fibers of the Hopf fibration. The {\bf proper transform} $\tilde{D}_i$ of $D_i$, a component in $D$, is then the symplectic submanifold in the blowup manifold $(\tilde{X},\tilde{\omega})$ obtained by removing the interior of the disk $I(B(\delta))\cap D_i$ and then collapsing its boundary circle. The proper transform $\tilde{D_i}$ will have an $\tilde{\omega}$-orthogonal intersection with the exceptional sphere $e$. Therefore, the union of all proper transforms $\tilde{D_i}$ and $e$ will be a symplectic divisor in $(\tilde{X},\tilde{\omega})$ which is called the {\bf total transform} of $D$.

Depending on the position of the symplectic embedding $I$ relative to the divisor $D$, we say $(\tilde{X},\tilde{\omega},\tilde{D})$ is a $\star$-blowup of $(X,\omega,D)$ by using the embedding $I$, where
\begin{itemize}
    \item $\star$=\textbf{exterior}, if $I(B(\delta))$ is disjoint from $D$ and $\tilde{D}$ is either the total transform of $D$ or the union of all proper transforms $\tilde{D_i}$;
    \item $\star$=\textbf{toric}, if $I(B(\delta))$ is centered at the intersection point and $\tilde{D}$ is the total transform of $D$;
    \item $\star$=\textbf{non-toric}, if $I(B(\delta))$ only intersects one component and $\tilde{D}$ is the union of all proper transform $\tilde{D_i}$;
    \item $\star$=\textbf{half-toric}, if $I(B(\delta))$ only intersects one component and $\tilde{D}$ is the total transform of $D$.
\end{itemize}
The symplectic exceptional sphere $e$ is called exterior/toric/non-toric/half-toric with respect to $\tilde{D}$ accordingly. See Figure \ref{fig:blowup}.

\begin{figure}[ht]
		\centering\includegraphics*[height=6cm, width=13cm]{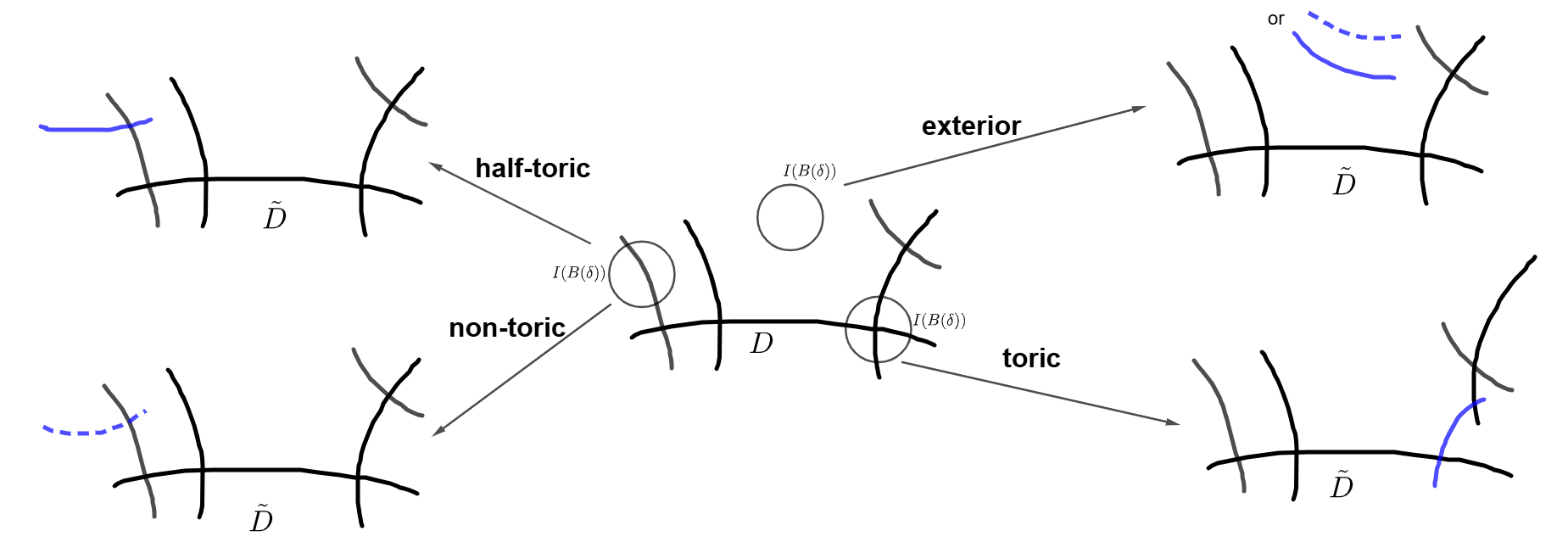}
 
		\caption{Four types of blowups, where the dashed curve indicates a component not contained in $\tilde{D}$. To distinguish the types, one must consider the full data $(X,\omega,D)$ and $(\tilde{X},\tilde{\omega},\tilde{D})$. For example, comparing $(X,\omega)$ with $(\tilde{X},\tilde{\omega})$ determines the exceptional class, and comparing $D$ with $\tilde{D}$ then distinguishes the half-toric and non-toric cases. \label{fig:blowup}}
	\end{figure}

We can analyze the effect of toric blowups on self-intersection sequence $([D_1]^2,[D_2]^2,\cdots)$ of the divisor $D$. For a sequence of integers $(a_1,\cdots,a_n)$ and some $1\leq k\leq n-1$, we define the toric blowup of the sequence at the position $k$ as $(a_1,\cdots,a_{k-1},a_k-1,-1,a_{k+1}-1,a_{k+2},\cdots,a_n)$ .

\begin{definition}\label{def:toricblowupseq}
A {\bf toric blowup sequence} $(t_1,\cdots,t_n)$ is a sequence that can be obtained through successive toric blowups starting from the initial sequence $(0,0)$.
\end{definition}

The following direct observation will be needed later. 

\begin{fact}\label{fact:blowupseq}
    Any toric blowup sequence other than $(0,0)$ contains at least two entries equal to $-1$. Moreover, except for $(-1,-1,-1)$, no toric blowup sequence contains two consecutive $-1$'s.
\end{fact}

The inverse procedure of the symplectic blowup construction yields the symplectic blowdown operation. Note that the $\omega$-orthogonality is preserved under blowdown. This is obvious for exterior, non-toric and half-toric blowdown. For toric blowdown, we can apply \cite[Proposition 3.5]{Symingtonblowdown} to identify a neighborhood of the exceptional sphere with the toric model, where the blowdown can be viewed as adding a corner to the moment polygon and the preimage of two edges under the moment map must intersect orthogonally. An important observation, which we will frequently rely on later, is that half-toric and exterior blowdowns decrease the pairing $[\omega]\cdot(K_{\omega}+[D])$ by $\omega(E)$, where $E$ is the exceptional class for blowdown; while non-toric and toric blowdown leave this paring unchanged. Consequently, all these four types of blowdowns will preserve the condition $[\omega]\cdot(K_{\omega}+[D])<0$.

Finally, we can compare symplectic blowup of divisors with the model of almost complex blowup, analogous to the absolute case. Let $J$ be an $\omega$-compatible almost complex structure which is integrable in $I(B(\delta))$. If the symplectic embedding $I$ is also holomorphic with respect to $J$, then we can take the almost complex blowup $(\overline{X},\overline{J})$ by replacing $I(0)$ by $\CC\PP^1$ and the total transform $\overline{D}\subseteq\overline{X}$ of $D$ under the projection $p:\overline{X}\rightarrow X$. As described in \cite[Theorem 7.1.21]{MSintro}, there is a symplectic form $\overline{\omega}$ compatible with $\overline{J}$ such that $(\overline{X},\overline{\omega})$ is symplectomorphic to $(\tilde{X},\tilde{\omega})$ obtained by the symplectic construction. If $\tilde{D}\subseteq (\tilde{X},\tilde{\omega})$ is the total transform of $D$ in the symplectic blowup model, this symplectomorphism will naturally map $\overline{D}$ to $\tilde{D}$ by its construction. In particular, we have the following corollary.

\begin{corollary}\label{cor:divisorblowupcompare}
    Let $D$ be a symplectic divisor in a K\"ahler manifold $(X,\omega,J)$ whose components are also $J$-holomorphic. If the symplectic embedding $I:B(\delta)\rightarrow X$ relative to $D$ is also holomorphic and $(\tilde{X},\tilde{\omega},\tilde{D})$ is the symplectic exterior/toric/non-toric/half-toric blowup of $(X,\omega,D)$ by using $I$, then there exists a complex structure $\tilde{J}$ compatible with $\tilde{\omega}$ such that all components in $\tilde{D}$ are $\tilde{J}$-holomorphic.
\end{corollary}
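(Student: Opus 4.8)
The plan is to build the desired complex structure $\tilde{J}$ by transporting the standard K\"ahler data on the algebraic blowup through the symplectomorphism that already exists between the two blowup models. The key conceptual point, which the preceding paragraph sets up, is that we have two constructions of the blowup that produce symplectomorphic outcomes: the symplectic blowup $(\tilde{X},\tilde{\omega},\tilde{D})$ and the almost complex (here genuinely complex, since $J$ is integrable near $I(B(\delta))$) blowup $(\overline{X},\overline{J},\overline{\omega})$. The cited \cite[Theorem 7.1.21]{MSintro} gives a symplectomorphism $\psi:(\overline{X},\overline{\omega})\to(\tilde{X},\tilde{\omega})$ carrying the total transform $\overline{D}$ to $\tilde{D}$, so I would simply set $\tilde{J}:=\psi_*\overline{J}$ and verify that this works in each of the four blowup types.

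First I would treat the \textbf{toric} and \textbf{half-toric} cases, where $\tilde{D}$ is the full total transform. Here the verification is immediate: because $I$ is holomorphic for the integrable $J$, the classical complex blowup $(\overline{X},\overline{J})$ is an honest complex manifold in which every proper transform $\overline{D_i}$ and the new exceptional curve $\mathbb{CP}^1$ are complex submanifolds. Transporting by the symplectomorphism $\psi$ gives a complex structure $\tilde{J}=\psi_*\overline{J}$ compatible with $\tilde{\omega}$ (compatibility is preserved since $\psi$ is a symplectomorphism and $\overline{J}$ is $\overline{\omega}$-compatible), and every component of $\tilde{D}=\psi(\overline{D})$ is $\tilde{J}$-holomorphic because it is the $\psi$-image of a $\overline{J}$-holomorphic submanifold. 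For the \textbf{exterior} and \textbf{non-toric} cases, $\tilde{D}$ is only the union of proper transforms (in the exterior case it may optionally include the exceptional sphere). Since the union of proper transforms is a subconfiguration of the total transform $\overline{D}$, and $\psi$ carries $\overline{D}$ to the total transform on the symplectic side, the same $\tilde{J}$ makes every component of the smaller divisor $\tilde{D}$ holomorphic as well; one just discards the exceptional component when it is not part of $\tilde{D}$. Thus a single transported structure handles all four cases uniformly.

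The one point requiring genuine care—and what I expect to be the main obstacle—is confirming that the symplectomorphism furnished by \cite[Theorem 7.1.21]{MSintro} can be taken to \emph{respect the divisor}, i.e.\ that it actually sends $\overline{D}$ to $\tilde{D}$ componentwise rather than merely being an abstract symplectomorphism of the ambient blowups. The excerpt asserts this (``this symplectomorphism will naturally map $\overline{D}$ to $\tilde{D}$ by its construction''), and the reason is that both the symplectic and the complex blowups modify the manifold only inside the model neighborhood $I(B(\delta))$, where $I$ is simultaneously symplectic and $\overline{J}$-holomorphic; outside this neighborhood both constructions are the identity, and inside, the standard local model of the blowup of $(\mathbb{C}^2,\omega_{\mathrm{std}})$ along coordinate planes identifies the proper transforms and the exceptional sphere in both pictures compatibly with the Hopf-circle collapsing. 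I would make this explicit by checking that $\psi$ restricts to the identity away from the exceptional locus and matches the two standard local models near $e$, so that each $\overline{D_i}$ maps to $\tilde{D_i}$ and $\mathbb{CP}^1$ maps to $e$.

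Finally I would record that $\tilde{J}$ is not merely tame but $\tilde{\omega}$-compatible, which is what the statement claims; this is automatic because $\psi$ is a symplectomorphism and compatibility is a pointwise linear-algebra condition invariant under pushforward by symplectomorphisms. With the divisor-respecting property of $\psi$ in hand, the corollary follows in all four cases simultaneously, completing the proof.
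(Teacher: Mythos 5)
Your proposal is correct and follows essentially the same route as the paper: the paper's argument is precisely the paragraph preceding the corollary, which performs the complex blowup $(\overline{X},\overline{J})$, invokes \cite[Theorem 7.1.21]{MSintro} for a compatible form $\overline{\omega}$ and a symplectomorphism to $(\tilde{X},\tilde{\omega})$ carrying $\overline{D}$ to $\tilde{D}$, and then takes the corollary as immediate---your $\tilde{J}:=\psi_*\overline{J}$ just makes the implicit pushforward explicit. Your additional care about the divisor-respecting property of $\psi$ and about discarding the exceptional component in the non-toric/exterior cases matches what the paper asserts ``by its construction.''
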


\subsection{Taubes-Seiberg-Witten theory}

In this section, we give a brief introduction to Taubes' Seiberg-Witten theory on a closed symplectic $4$-manifold $(X,\omega)$. For the entire story, see \cite{Taubes1,Taubes2,Taubes3,Taubes4}.

 To define Seiberg-Witten invariant, we firstly need a Riemannian metric $g$ and a spin$^c$ structure which gives the spinor bundles $S^{\pm}$ with their determinant line bundle $\mathcal{L}:=\text{det}(S^+)=\text{det}(S^-)$. By the choice of the canonical class $K_{\omega}$, there is a bijection between spin$^c$ structures on $X$ and $H^2(X;\ZZ)$ by associating $\mathcal{L}$ to $e:=\frac{1}{2}(c_1(\mathcal{L})+K_{\omega})$. Then for a self-dual $2$-form $\eta$, the Seiberg-Witten equations are defined for a pair $(A,\phi)$ consisting of a connection $A$ of $\mathcal{L}$ and a section $\phi$ of $S^+$. They are given by
\[D_A\phi=0\]
\[F_A^+=iq(\phi)+i\eta,\]
where $q:\Gamma(S^+)\rightarrow \Omega_+^2(X)$ is a canonical map. When the choice of $(g,\eta)$ is generic, the quotient of the space of solutions by $C^{\infty}(M;S^1)$ is a compact manifold $\mathcal{M}_X(\mathcal{L},g,\eta)$ of dimension 
\[I(e):=e^2-K_{\omega}\cdot e.\]
We may also call $I(e)$ the {\bf SW index} of the class $e$. There is a principal $S^1$-bundle $\mathcal{M}^0_X(\mathcal{L},g,\eta)$ over $\mathcal{M}_X(\mathcal{L},g,\eta)$ if we take the quotient by only elements in $C^{\infty}(M;S^1)$ mapping a base point of $X$ to $1\in S^1$. Now we can get a number by pairing the maximal cup product of Euler class of the principal $S^1$-bundle with the fundamental class of $\mathcal{M}_X(\mathcal{L},g,\eta)$. 

When $b_2^+(X)=1$, the number defined above depends on the chamber where the pair $(g,\eta)$ live. More precisely, let $\omega_g$ denote the $g$-harmonic self-dual $2$-form. Then it depends on the sign of the discriminant 
\[\Delta_{\mathcal{L}}(g,\eta):=\int_X (2\pi c_1(\mathcal{L})+\eta)\wedge \omega_g.\]
As a result, there are two maps
\[SW_{\omega,\pm}:H^2(X;\ZZ)\rightarrow\ZZ\]
where the $\pm$ in the subscript indicates the sign.

To relate the Seiberg-Witten invariants with $J$-holomorphic curves, we consider the invariants defined using the the pair $(g,\eta)$ with negative discriminant. After identifying $H_2(X;\ZZ)$ with $H^2(X;\ZZ)$ by Poincar\'e duality, we will just write the map
\[SW:H_2(X;\ZZ)\rightarrow \ZZ\]
to denote $SW_{\omega,-}$ for convenience throughout our discussion.

We will be mostly concerned with rational or ruled manifolds in this paper. When $X$ is a ruled manifold, denote by $g$ the genus of the base surface which is equal to $\frac{1}{2}b_1(X)$. If $g>0$ (irrational ruled), there will be a distinguished class $F\in H_2(X;\ZZ)$ which is the class of the fiber sphere. The key properties about Seiberg-Witten invariants we will frequently use are listed below.

\begin{theorem}[\cite{LiLiuWallcrossing,LiLiuIMRN}]\label{thm:liliu}
    Let $(X,\omega)$ be a closed symplectic $4$-manifold with $b_2^+(X)=1$ and $A\in H^2(X;\mathbb{Z})$ with $I(A)\geq 0$, then
    \begin{itemize}
        \item (Symmetry lemma) $|SW_{\omega,+}(A)|=|SW_{\omega,-}(K_{\omega}-A)|.$
        \item (Wall crossing formula) $|SW_{\omega,+}(A)-SW_{\omega,-}(A)|=\begin{cases}
            1, X\text{ is rational,}\\
            |1+A\cdot F|^g, X\text{ is irrational ruled.}
        \end{cases}$
       % \item (SW=Gr for $b_2^+=1$) $Gr'_\omega (B)=SW_{\omega,-}(B)$ when the set $\{E\in \mathcal{E}_{\omega}\,|\,E\cdot B<-1\}$ is finite.
        \item (Blowup formula) Let $(\tilde{X},\tilde{\omega})$ be the symplectic blowup of $(X,\omega)$ with exceptional class $E$, then $SW_{\tilde{\omega},-}(A+lE)=SW_{\omega,-}(A)$ when $I(A+lE)\geq0$.
    \end{itemize}
\end{theorem}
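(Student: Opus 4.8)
The three assertions are the standard structural properties of Seiberg--Witten theory on a $b_2^+=1$ manifold, and the plan is to establish each by tracking how the moduli space $\mathcal{M}_X(\mathcal{L},g,\eta)$ and its principal $S^1$-bundle $\mathcal{M}^0_X(\mathcal{L},g,\eta)$ respond to three operations: the charge-conjugation involution, a generic path of perturbations crossing the wall $\Delta_{\mathcal{L}}=0$, and the connected-sum decomposition of a blowup. In each case the invariant is computed as the pairing of the top cup power of the Euler class of the $S^1$-bundle with the fundamental class, so every identity reduces to a comparison of these evaluations.

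For the \emph{symmetry lemma}, I would use the charge-conjugation symmetry of the equations: replacing a solution $(A,\phi)$ by its complex conjugate sends the spin$^c$ structure with first Chern class $c_1(\mathcal{L})$ to the one with $-c_1(\mathcal{L})$, hence sends the class $e=\tfrac{1}{2}(c_1(\mathcal{L})+K_{\omega})$ to $K_{\omega}-e$. A direct computation gives $I(K_{\omega}-e)=I(e)$, so the two moduli spaces have the same dimension, and conjugation is a diffeomorphism between them carrying the $S^1$-bundle to its conjugate. Since conjugation simultaneously reverses $c_1(\mathcal{L})\mapsto -c_1(\mathcal{L})$ and the perturbation $\eta\mapsto -\eta$, it flips the sign of the discriminant $\Delta_{\mathcal{L}}(g,\eta)$, thereby interchanging the $+$ and $-$ chambers. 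Up to orientation this yields $|SW_{\omega,+}(A)|=|SW_{\omega,-}(K_{\omega}-A)|$.

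For the \emph{wall-crossing formula}, fix $A$ with $I(A)\ge 0$ and choose a generic path $(g_t,\eta_t)$ from the $-$ chamber to the $+$ chamber meeting the wall $\Delta_{\mathcal{L}}=0$ transversally once. On the wall, reducible solutions (with $\phi=0$) appear; these are the anti-self-dual connections, parametrized by the Picard torus $T^{b_1(X)}$. The jump $SW_{\omega,+}(A)-SW_{\omega,-}(A)$ is then the evaluation over this torus of the Euler class of the obstruction bundle assembled from the negative spectral subspaces of the perturbed Dirac operator. When $b_1(X)=0$ (the rational case) the torus is a point and the jump is $\pm1$; when $b_1(X)=2g>0$ (the irrational ruled case) I would identify this Euler class on $T^{2g}$ as a $g$-th power of the class coming from the cup pairing $H^1(X)\otimes H^1(X)\to H^2(X)$, whose top evaluation produces the factor $|1+A\cdot F|^g$. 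The \emph{blowup formula} I would obtain by a neck-stretching and gluing argument on $\tilde X=X\#\overline{\CC\PP}^2$: the exceptional summand contributes a single reducible configuration carrying the class $lE$, and the gluing theorem identifies the glued moduli space in class $A+lE$ (in the $-$ chamber, where $I(A+lE)\ge0$) with $\mathcal{M}_X$ in class $A$, leaving the count unchanged.

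The main obstacle is the irrational ruled case of the wall-crossing computation. Whereas the $b_1=0$ jump is an elementary $\pm1$, pinning down the number $|1+A\cdot F|^g$ requires a careful analysis of the spectral flow of the Dirac operator along the $2g$-dimensional reducible locus and of the multiplicative structure of the characteristic classes over $T^{2g}$; this is precisely where both the exponent $g=\tfrac{1}{2}b_1(X)$ and the dependence on the fiber pairing $A\cdot F$ enter, and it is the step I expect to demand the most work.
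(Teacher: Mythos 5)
The paper does not prove this statement at all---it is imported verbatim from Li--Liu \cite{LiLiuWallcrossing,LiLiuIMRN}---so there is no internal argument to compare against, and your sketch reproduces exactly the strategy of those cited references: charge conjugation sending $e\mapsto K_\omega-e$ while flipping the sign of the discriminant (hence exchanging the two chambers) for the symmetry lemma; reducible solutions on the wall parametrized by the Picard torus $T^{b_1}$, with the jump computed as a family-index/Pfaffian of the form built from the cup pairing on $H^1$ and $c_1(\mathcal{L})$, giving $1$ when $b_1=0$ and $|1+A\cdot F|^g$ when $b_1=2g$; and connected-sum gluing with $\overline{\CC\PP}^2$ for the blowup formula. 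Your outline is correct and is essentially the same route as the paper's source; the parts you defer (the family index computation over $T^{2g}$ and the chamber bookkeeping under blowup) are precisely the technical content of the cited papers.
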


We now provide a brief overview of how Taubes counts pseudo-holomorphic curves on the symplectic 4-manifold $(X,\omega)$, following the exposition of \cite{McDuffGT}. Let $A\in H_2(X;\ZZ)$ be a non-zero class with $I(A)\geq 0$.
By choosing a generic tame almost complex structure $J$ and $\frac{1}{2}I(A)$ generic points on $X$, the Gromov-Taubes invariant $Gr_{\omega}(A)$ is defined by a delicate weighted counting of the moduli space $\mathcal{H}_J(A)$ consisting of $\{(C_i,m_i)\}$ such that
\begin{itemize}
    \item $C_i$'s are disjoint embedded connected $J$-holomorphic submanifolds, $m_i$'s are positive integer numbers, $m_i=1$ unless $C_i$ is a torus with trivial normal bundle.
    \item $\sum m_i[C_i]=A$, $I([C_i])\geq 0$ for all $i$, each $C_i$ passes through $\frac{1}{2}I([C_i])$ chosen points. 
\end{itemize}

 An immediate observation by adjunction formula and index constraint is that when $[C_i]^2<0$, then $C_i$ must be an exceptional sphere. Now if one further defines $Gr_{\omega}$ to be $1$ for the zero class and $0$ for the classes with negative index, there will be a well-defined map
 $$Gr_{\omega}:H_2(X;\ZZ)\rightarrow \ZZ.$$

 When $b_2^+(X)>1$, Taubes' work identifies $Gr_{\omega}$ with $SW$. This implies that if $SW(A)\neq 0$ and $A\neq 0$, then $A$ can be represented by $J$-holomorphic submanifolds for generic $J$. When $b_2^+(X)=1$, such as rational or ruled manifolds mainly considered in this paper, one has to take the modified version of Gromov-Taubes invariants $Gr'_{\omega}$, defined in \cite{McDuffGT}, by discarding multiply-covered exceptional spheres so as to establish the identification with $SW$ (see \cite{LiLiuIMRN}). Since we will primarily work in relative settings and take a divisor-adapted almost complex structure, which may not be generic for defining Gromov-Taubes invariants, it is necessary to introduce the following notions.

\begin{definition}\label{def:Jsubvariety}
    An {\bf irreducible $J$-holomorphic subvariety} is a closed subset $C\subset M$ such that
    \begin{itemize}
        \item its $2$-dimensional Hausdorff measure is finite and non-zero;
        \item it has no isolated points;
        \item away from finitely many singular points, $C$ is a connected smooth submanifold with $J$-invariant tangent space.
    \end{itemize}
    A {\bf $J$-holomorphic subvariety} $\Theta$ is a finite set of pairs $\{(C_i,m_i)\}$ such that each $C_i$ is an irreducible $J$-holomorphic subvariety, $m_i$ is a positive integer and $C_i\neq C_j$ for $i\neq j$.
\end{definition}

Every irreducible $J$-holomorphic subvariety is the image of a $J$-holomorphic map $\phi:\Sigma\rightarrow X$ from a connected Riemann surface $\Sigma$. Therefore we can define the class of the subvariety $[C]$ as $\phi_*([\Sigma])$. For a class $A\in H_2(X;\ZZ)$, we let the moduli space $\mathcal{M}_A$ be the space of subvarieties $\Theta=\{(C_i,m_i)\}$ such that $[\Theta]:=\sum m_i[C_i]=A$, which can be naturally equipped with a topology in the Gromov-Hausdorff sense. We will also write $\mathcal{M}_{X,A}^J$ if we want to address the ambient manifold and the almost complex structure. Elements of $\mathcal{M}_A$ should be thought as unparametrized curves, while the moduli space parametrized curves will be denoted by the notation $\mathcal{M}(A;J)$ (following \cite{MSJcurve}) in Section \ref{section:ruled}.

\begin{definition}
    A class $A\in H_{2}(X;\ZZ)$ is called {\bf $J$-effective} if $\mathcal{M}_A$ is non-empty; is called {\bf $J$-nef} if its intersection pairings with all $J$-effective classes are non-negative.
\end{definition}
The key fact we will frequently use is that the non-vanishing of Seiberg-Witten invariants will still imply that the class is $J$-effective, though may not be represented by $J$-holomorphic submanifolds. To be more precise, if $SW(A)\neq 0$ and $A\neq 0$, then for {\bf any} tame almost complex structure $J$, there exists a $J$-holomorphic subvariety passing through any $\frac{1}{2}I(A)$ given points on $X$. As a result, if the class $A$ has $\omega(A)<0$, then we know $SW_{\omega,-}(A)=0$ since $A$ has no $J$-holomorphic representative. 

\begin{corollary}\label{cor:SWnonzero}
Let $(X,\omega)$ be a symplectic rational or ruled manifold. For any class $A\in H_2(X;\ZZ)$ with $I(A)\geq 0$, $[\omega]\cdot(K_{\omega}-A)<0$ and $A\cdot F\neq -1$ if $X$ is irrational ruled, we have $SW(A)\neq 0$. In particular, the Seiberg-Witten invariants of the classes represented by embedded symplectic exceptional spheres and the fiber class $F$ of ruled manifolds are non-zero.
\end{corollary}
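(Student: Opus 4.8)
The plan is to reduce the non-vanishing of $SW(A)=SW_{\omega,-}(A)$ to the Li--Liu package of Theorem~\ref{thm:liliu} by passing to the adjoint class $B:=K_\omega-A$ and to the \emph{opposite} chamber. The first move is the Symmetry lemma applied to $B$, which gives
\[
|SW_{\omega,+}(B)|=|SW_{\omega,-}(K_\omega-B)|=|SW_{\omega,-}(A)|=|SW(A)|,
\]
so it suffices to prove $SW_{\omega,+}(B)\neq 0$. This legitimately reuses Theorem~\ref{thm:liliu} because a one-line computation shows $I(K_\omega-A)=(K_\omega-A)^2-K_\omega\cdot(K_\omega-A)=A^2-K_\omega\cdot A=I(A)\geq 0$, so $B$ satisfies the index hypothesis required by both the Symmetry lemma and the Wall crossing formula.

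To access $SW_{\omega,+}(B)$ I would run the Wall crossing formula for $B$. The standing assumption $\omega\cdot B=\omega\cdot(K_\omega-A)<0$ forces $B$ to have no $J$-holomorphic subvariety representative, whence $SW_{\omega,-}(B)=0$ (note $B\neq 0$ since $\omega\cdot B<0$). Feeding this into the wall crossing formula yields $|SW_{\omega,+}(B)|=1$ when $X$ is rational, and $|SW_{\omega,+}(B)|=|1+B\cdot F|^{g}$ when $X$ is irrational ruled. The rational case is then immediate. In the irrational ruled case, $K_\omega\cdot F=-2$ gives $B\cdot F=(K_\omega-A)\cdot F=-2-A\cdot F$, so that $1+B\cdot F=-(1+A\cdot F)$; this is nonzero exactly because of the hypothesis $A\cdot F\neq -1$, and since $g\geq 1$ we conclude $|SW_{\omega,+}(B)|=|1+A\cdot F|^{g}\neq 0$. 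Combined with the symmetry step this proves $SW(A)\neq 0$.

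For the \emph{in particular} statement I would first record the numerics: an exceptional class $E$ satisfies $E^2=K_\omega\cdot E=-1$, hence $I(E)=0$, while the fiber class satisfies $F^2=0$ and $K_\omega\cdot F=-2$, hence $I(F)=2$; moreover positivity of intersections gives $E\cdot F\geq 0$ and of course $F\cdot F=0$, so the excluded value $-1$ never occurs in the irrational ruled case. The genuine obstacle here is that the numerical hypothesis $\omega\cdot(K_\omega-A)<0$ is \emph{not} automatic for $A=E$ or $A=F$: on $(S^2\times\Sigma_2)\#\overline{\CC\PP}^2$, for instance, choosing the fiber and section areas suitably arranges $\omega\cdot(K_\omega-E)>0$ and $\omega\cdot(K_\omega-F)>0$, so the main assertion does not directly apply. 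Instead I would argue uniformly through the Taubes correspondence $Gr'_\omega=SW$ (\cite{McDuffGT,LiLiuIMRN}): each of these classes is represented by an embedded $J$-holomorphic sphere (the exceptional sphere itself, respectively the fibers of the ruling) whose Gromov--Taubes count equals $1$ (a unique curve in class $E$ for generic $J$, respectively a unique fiber through a generic point), so $SW=Gr'_\omega=\pm 1\neq 0$. Thus the two special cases are best deduced from representability rather than as formal corollaries of the numerical criterion, and this dichotomy is the point I would be most careful to state precisely.
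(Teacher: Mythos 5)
Your proof of the main assertion is correct and is essentially the paper's own argument run in mirror image: the paper applies the symmetry lemma and the wall crossing formula of Theorem~\ref{thm:liliu} directly to $A$ (getting $|SW_{\omega,+}(A)|=|SW_{\omega,-}(K_\omega-A)|=0$ from the area hypothesis, hence $|SW_{\omega,-}(A)|=1$ or $|1+A\cdot F|^g\neq 0$), whereas you apply both to $B=K_\omega-A$; the two computations are interchangeable, and your check $I(B)=I(A)$ is the only extra step. The genuine divergence is in the \emph{in particular} clause. You correctly observe that the hypothesis $\omega\cdot(K_\omega-A)<0$ need not hold for $A=E$ or $A=F$ with the given form (your example on $(S^2\times\Sigma_2)\#\overline{\CC\PP}^2$ is right: with $\omega(F)>\omega(B)$ one gets $\omega\cdot(K_\omega-E)>0$), and you resolve this by invoking the full Taubes correspondence $SW=Gr'_\omega$ together with the standard counts $Gr'_\omega(E)=\pm1$ and $Gr'_\omega(F)=\pm1$ for generic $J$. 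The paper instead stays inside the numerical criterion: it deforms $\omega$ to a form $\omega'$ with $\omega'\cdot K_{\omega'}<0$ while keeping $\omega'(A)>0$, applies the main statement to $\omega'$, and implicitly uses that the Taubes-chamber invariant $SW_{\omega,-}$, the canonical class, and the exceptional/fiber classes are all unchanged under deformation of the symplectic form. Your route is heavier in machinery (generic-$J$ existence, uniqueness and transversality for the curve count, plus the $b_2^+=1$ identification $SW=Gr'$) but makes the nonvanishing completely explicit; the paper's route is lighter but leans on chamber/deformation invariance, which it leaves tacit. Both arguments are valid.
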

\begin{proof}
   The condition $[\omega]\cdot(K_{\omega}-A)<0$ indicates that $SW_{\omega,-}(K_\omega-A)=0$. By the symmetry lemma and the wall crossing formula in Theorem \ref{thm:liliu}, it follows immediately that $A$ has non-vanishing Seiberg-Witten invariant. If $A$ is a symplectic exceptional class or fiber class, it is straightforward to check $I(A)\geq 0$ and $A\cdot F\neq -1$. Moreover, one can choose a symplectic structure $\omega$ with small blowup sizes to make $[\omega]\cdot K_\omega<0$ while keeping $\omega(A)>0$, which guarantees $[\omega]\cdot(K_{\omega}+A)<0$. Thus, $SW(A)\neq 0$ follows directly.
\end{proof}

 We have another straightforward corollary concerning the genus of a class in the following.

\begin{corollary}\label{cor:SWgenus}
    Let $(X,\omega)$ be a symplectic rational or ruled manifold. Suppose $A\in H_2(X;\ZZ)$ is a class with $[\omega]\cdot A>0$, $[\omega]\cdot (K_{\omega}+A)<0$ and $A\cdot F\neq 1$ if $X$ is irrational ruled, then $A\cdot (A+K_{\omega})<0$.
\end{corollary}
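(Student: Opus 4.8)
The plan is to argue by contradiction, applying Taubes--Seiberg--Witten theory not to $A$ itself but to the \emph{adjoint class} $B:=K_\omega+A$. The crucial observation is the index identity
\[ I(K_\omega+A)=(K_\omega+A)^2-K_\omega\cdot(K_\omega+A)=A^2+K_\omega\cdot A=A\cdot(A+K_\omega), \]
so that the very quantity we wish to show is negative is exactly the SW index of $B$. Thus if the conclusion fails, i.e. $A\cdot(A+K_\omega)\ge 0$, then $I(B)\ge 0$ and $B$ falls squarely within the range $I(\cdot)\ge 0$ where the symmetry lemma and wall crossing formula of Theorem \ref{thm:liliu} apply (recall $X$ is rational or ruled, so $b_2^+=1$).

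Assuming $A\cdot(A+K_\omega)\ge 0$, I would first record the two area inequalities the hypotheses supply. The class $B=K_\omega+A$ has $\omega\cdot B<0$ by assumption, and its SW-dual $K_\omega-B=-A$ has $\omega\cdot(-A)=-\omega\cdot A<0$ since $\omega\cdot A>0$. Both classes are nonzero (as $\omega\cdot A>0$ forces $A\ne 0$, and $\omega\cdot B<0$ forces $B\ne 0$) and have negative symplectic area, so neither is $J$-effective and hence $SW_{\omega,-}(B)=SW_{\omega,-}(K_\omega-B)=0$. The symmetry lemma, valid because $I(B)\ge 0$, then yields $|SW_{\omega,+}(B)|=|SW_{\omega,-}(K_\omega-B)|=0$, so that both $SW_{\omega,+}(B)$ and $SW_{\omega,-}(B)$ vanish. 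This directly contradicts the wall crossing formula: when $X$ is rational it forces $|SW_{\omega,+}(B)-SW_{\omega,-}(B)|=1$, an immediate contradiction.

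The one genuinely delicate point --- and the place where the hypothesis $A\cdot F\ne 1$ enters --- is the irrational ruled case, where wall crossing only gives $|SW_{\omega,+}(B)-SW_{\omega,-}(B)|=|1+B\cdot F|^{g}$. To reach a contradiction I must ensure this jump is nonzero, i.e. $B\cdot F\ne -1$. Here I would compute $B\cdot F=(K_\omega+A)\cdot F=A\cdot F+K_\omega\cdot F=A\cdot F-2$, using that the fiber is an embedded symplectic sphere of square zero so that adjunction gives $K_\omega\cdot F=-2$. Hence $B\cdot F=-1$ precisely when $A\cdot F=1$, which is exactly what the hypothesis excludes; therefore $|1+B\cdot F|^{g}\ne 0$ and we again contradict the vanishing of both chamber invariants. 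I expect this fiber bookkeeping to be the main (though mild) obstacle, since it is what pins down the correct numerical exclusion and explains why the assumption is phrased as $A\cdot F\ne 1$ rather than $A\cdot F\ne -1$. Having ruled out $A\cdot(A+K_\omega)\ge 0$ in every case, the desired inequality $A\cdot(A+K_\omega)<0$ follows.
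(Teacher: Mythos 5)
Your proof is correct and follows essentially the same route as the paper: you use the identity that $A\cdot(A+K_\omega)$ is the SW index of the dual pair $-A$ and $K_\omega+A$, note both classes have negative symplectic area so $SW_{\omega,-}$ vanishes for each, and then derive a contradiction from the symmetry lemma and wall crossing formula of Theorem \ref{thm:liliu}, with the hypothesis $A\cdot F\neq 1$ ruling out a zero wall-crossing jump in the irrational ruled case. The only cosmetic difference is that you anchor the wall crossing computation on $K_\omega+A$ rather than $-A$; since these classes are SW-dual the bookkeeping is identical, and your explicit verification that $B\cdot F=A\cdot F-2$ just makes precise what the paper leaves implicit.
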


\begin{proof}
    The key observation is that $A\cdot (A+K_{\omega})$ is the SW index for both $-A$ and $K_{\omega}+A$. If $A\cdot (A+K_{\omega})\geq 0$, apply Theorem \ref{thm:liliu} to the classes $-A$ and $K_{\omega}+A$ to see that one of them must have non-trivial Seiberg-Witten invariant. But this contradicts with the assumption that both of them have negative pairing with $\omega$.
\end{proof}

\subsection{Divisor-adapted almost complex structures and McDuff-Opshtein's criterion}
In this section we follow \cite{MO15} to introduce the almost complex structures which we will work with for the symplectic divisor $D=\cup D_i\subseteq (X,\omega)$.
\begin{definition}
    An $\omega$-tame almost complex structure $J$ is said to be \textbf{$D$-adpated} if there exists some plumbed closed fibered neighborhood $\overline{\mathcal{N}}(D)=\cup\overline{\mathcal{N}}(D_i)$, where each $\overline{\mathcal{N}}(D_i)$ is a closed neighborhood of $D_i$ modeled on a closed neighborhood of the zero section in a holomorphic line bundle over $D_i$ with Chern number $[D_i]^2$, such that $J$ is integrable in $\overline{\mathcal{N}}(D)$ and makes both $D_i$ and the projection $\overline{\mathcal{N}}(D_i)\rightarrow D_i$ be $J$-holomorphic. The collection of all such $J$'s will be denoted by $\mathcal{J}(D)$.
\end{definition}

\begin{definition}\label{def:Dgood}
    Let $D\subseteq (X,\omega)$ be a symplectic divisor. A nonzero class $A\in H_2(X;\ZZ)$ is said to be \textbf{$D$-good} if the followings are satisfied:
    \begin{enumerate}
        \item $SW(A)\neq 0$;
        \item if $A^2=0$, then $A$ is primitive;
        \item $A\cdot E\geq 0$ for any class $E$ which is not equal to $A$ and can be represented by an embedded symplectic exceptional sphere;
        \item $A\cdot[D_i]\geq 0$ for all $i$.
    \end{enumerate}
\end{definition}

Recall that once we have fixed the symplectic canonical class $K$, there will be an identification between $H_2(X;\ZZ)$ and spin$^c$-structures on $X$ and thus $SW$ is a well-defined function on $H_2(X;\ZZ)$. Also, the classes represented by embedded symplectic exceptional spheres only depend on the symplectic canonical class $K$ when $b_2^+=1$ by \cite[Theorem A]{LiLiuruled} (and only depend on the deformation equivalent class of symplectic forms for all symplectic $4$-manifolds, see for example \cite[Theorem B]{Wendlbook}). Consequently, for $b_2^+=1$ manifold $X$, the notion of $D$-goodness actually only requires a topological divisor $D\subseteq X$ and the choice of a symplectic canonical class. When $(X',D')$ is the topological model of the blowup of $(X,\omega,D)$ with exceptional class $E$, we can still talk about the notion of $D'$-goodness by choosing the canonical class $K_\omega+E\in H_2(X';\ZZ)$ without specifying a particular symplectic form on $X'$. In this sense, we have the following lemma of $D'$-goodness regarding the blowup.

\begin{lemma}\label{lem:criterionforDgood}
    Let $(X',D')$ be the topological model for the blowup of $(X,\omega,D)$ with $b_2^+(X)=1$ and $E$ be the exceptional class. If $A\in H_2(X;\ZZ)$ is $D$-good for $(X,\omega,D)$ and $A^2\geq 0$, then $A$ is also $D'$-good when viewed as a class in $H_2(X';\ZZ)=H_2(X;\ZZ)\oplus \ZZ E$.
\end{lemma}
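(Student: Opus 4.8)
The plan is to verify the four defining conditions of $D'$-goodness for $A$ one at a time, using throughout the orthogonal splitting $H_2(X';\ZZ)=H_2(X;\ZZ)\oplus\ZZ E$; in particular $A\cdot E=0$ since $A\in H_2(X;\ZZ)$. Conditions (1) and (2) are the easy ones. For (1), I first note that the SW index is unchanged: $I_{X'}(A)=A^2-(K_\omega+E)\cdot A=A^2-K_\omega\cdot A=I_X(A)\geq 0$, the inequality holding because $SW(A)\neq 0$ forces nonnegative index. The blowup formula of Theorem \ref{thm:liliu} (with $l=0$) then gives that the invariant of $A$ on $(X',K_\omega+E)$ equals $SW(A)\neq 0$. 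For (2), the self-intersection $A^2$ is computed by the same form on $H_2(X)$, and primitivity survives the orthogonal direct sum: any factorization $A=kB$ in $H_2(X')$ with $k>1$ forces the $E$-component of $B$ to vanish, reducing to primitivity of $A$ in $H_2(X)$.

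Condition (4) is also routine. Each component of $D'$ is either the new exceptional sphere $E$, for which $A\cdot E=0$, or a proper transform $\tilde D_i$ whose class is $[D_i]-cE$ with $c\geq 0$ the multiplicity of the blown-up point on $D_i$. Since $A\cdot E=0$, I get $A\cdot[\tilde D_i]=A\cdot[D_i]\geq 0$ using condition (4) for $A$ in $X$, and this is uniform across the exterior, toric, non-toric and half-toric cases.

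The main work, and the only step I expect to be a real obstacle, is condition (3): showing $A\cdot E'\geq 0$ for every exceptional class $E'\neq A$ of $X'$, because a blowup can create exceptional classes with nonzero $E$-component that do \emph{not} descend to exceptional classes of $X$. Writing $E'=E_0'+mE$ with $E_0'\in H_2(X)$, the adjunction identities $E'^2=(K_\omega+E)\cdot E'=-1$ translate into $(E_0')^2=m^2-1$ and $K_\omega\cdot E_0'=m-1$; a short computation gives $I_{X'}(E')=0$, so the blowup formula yields $SW(E_0')\neq 0$ from the non-vanishing of the invariant of $E'$ (exceptional classes have nonzero SW by Corollary \ref{cor:SWnonzero}). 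Since $A\cdot E'=A\cdot E_0'$, it remains to show $A\cdot E_0'\geq 0$, and I would split on $m$. If $E_0'=0$ (forced to be $E'=E$) the pairing is trivially $0$. If $m=0$ then $(E_0')^2=K_\omega\cdot E_0'=-1$ with $SW(E_0')\neq 0$, so $E_0'$ is an exceptional class of $X$ distinct from $A$, and condition (3) for $A$ in $X$ applies. If $m\neq 0$ and $E_0'\neq 0$ then $(E_0')^2=m^2-1\geq 0$ and $SW(E_0')\neq 0$ makes $E_0'$ a nonzero $J$-effective class, hence $\omega\cdot E_0'>0$; together with $A^2\geq 0$ and $\omega\cdot A>0$, both $A$ and $E_0'$ lie in the closed forward cone of the Lorentzian intersection form on $H_2(X)$ (here $b_2^+(X)=1$ is essential), and the light cone lemma gives $A\cdot E_0'\geq 0$. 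The hypothesis $A^2\geq 0$ enters precisely at this point.

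With all four conditions verified, $A$ is $D'$-good. The delicate bookkeeping to get right is in condition (3): confirming that the classes requiring the light-cone argument are exactly those with $m\neq 0$ (equivalently $(E_0')^2\geq 0$), and that effectivity of $E_0'$, and thus $\omega\cdot E_0'>0$, follows from $SW(E_0')\neq 0$ and $E_0'\neq 0$.
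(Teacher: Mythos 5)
Your proof is correct and takes essentially the same route as the paper: conditions (1), (2), (4) are dispatched via the blowup formula and the orthogonal splitting, and condition (3) by writing an exceptional class of $X'$ as $E_0'+mE$, observing that either $m=0$ (reducing to $D$-goodness of $A$ in $X$) or $(E_0')^2=m^2-1\geq 0$, and then applying the light cone lemma using $A^2\geq 0$ and $b_2^+=1$. Your additional step deducing $SW(E_0')\neq 0$ from the blowup formula so that $\omega\cdot E_0'>0$ places $E_0'$ in the forward cone is a worthwhile detail that the paper's terser proof leaves implicit, but it is not a different argument.
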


\begin{proof}
Conditions (2) and (4) in Definition \ref{def:Dgood} is obvious. (1) follows from the blowup formula in Theorem \ref{thm:liliu}. To see (3), observe that the exceptional classes $B+kE$ of $X'$ where $B\in H_2(X;\ZZ)$ satisfy either $k=0$ or $B^2\geq 0$. Again, by blowup formula in Theorem \ref{thm:liliu}, we know that $SW_{\omega,-}(B)=SW_{\tilde{\omega},-}(B+kE)\neq 0$, which implies $B\cdot[\omega]>0$. The non-negative intersection property then immediately follows from the light cone lemma since we also have $A^2\geq 0$, $A\cdot [\omega]>0$ and $b_2^+=1$.
\end{proof}

To show the $D$-goodness for some special classes, it follows from the lemma below that we only need to check condition (4) in Definition \ref{def:Dgood}.

\begin{lemma}\label{lem:specialclassDgood}
    Let $D\subseteq(X,\omega)$ be a symplectic divisor. Suppose $[\omega]\cdot K_{\omega}<0$. If $A\in H_2(X;\ZZ)$ is a primitive class represented by an embedded symplectic sphere with $A^2\geq -1$, then $A$ is $D$-good if and only if $A\cdot[D_i]\geq 0$ for all $i$.
\end{lemma}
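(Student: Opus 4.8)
The plan is to prove the two directions separately, with essentially all the content in the ``if'' direction. The ``only if'' direction is immediate, since condition (4) in Definition \ref{def:Dgood} is literally the requirement $A\cdot[D_i]\geq 0$ for all $i$. For the converse I would assume $A\cdot[D_i]\geq 0$ and show that conditions (1), (2) and (3) of Definition \ref{def:Dgood} hold automatically from the standing hypotheses on $A$; the real content of the lemma is precisely that these three conditions come for free, so that $D$-goodness collapses to checking (4). Throughout I use that $\omega\cdot K_\omega<0$ forces $X$ to be rational or ruled by Theorem \ref{thm:rationalruled}, hence $b_2^+(X)=1$, and that an embedded symplectic sphere has positive $\omega$-area, so $\omega\cdot A>0$, and genus $0$, so by the adjunction formula (as in the total genus formula of Lemma \ref{lem:g(D)}) one has $K_\omega\cdot A=-A^2-2$.

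Condition (2) is trivial since $A$ is assumed primitive. For condition (1) I would apply Corollary \ref{cor:SWnonzero}, checking its hypotheses in turn: the adjunction computation gives $I(A)=A^2-K_\omega\cdot A=2A^2+2\geq 0$ because $A^2\geq -1$, and $\omega\cdot(K_\omega-A)=\omega\cdot K_\omega-\omega\cdot A<0$ since both terms are negative. The only remaining hypothesis is $A\cdot F\neq -1$ when $X$ is irrational ruled, and here I would show the stronger fact that $A\cdot F=0$ for any sphere class. Writing $\pi\colon X\to\Sigma_g$ for the ruling with base genus $g\geq 1$, one has $\mathrm{PD}(F)=\pi^*\alpha$ for the positive generator $\alpha\in H^2(\Sigma_g;\ZZ)$, so that $A\cdot F=\langle(\pi\circ\iota)^*\alpha,[S^2]\rangle$, where $\iota\colon S^2\hookrightarrow X$ realizes $A$; since $\pi_2(\Sigma_g)=0$ the map $\pi\circ\iota$ is null-homotopic and the pairing vanishes. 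Thus $A\cdot F=0\neq -1$ and Corollary \ref{cor:SWnonzero} gives $SW(A)\neq 0$.

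For condition (3), let $E\neq A$ be represented by an embedded symplectic exceptional sphere, and split on the sign of $A^2$. If $A^2\geq 0$, I choose an $\omega$-tame $J$ making the embedded sphere $C_A$ representing $A$ holomorphic; since $SW(E)\neq 0$ by Corollary \ref{cor:SWnonzero}, $E$ is $J$-effective, represented by a $J$-holomorphic subvariety $\sum m_j C_j$. Positivity of intersections of distinct irreducible $J$-holomorphic curves gives $A\cdot[C_j]\geq 0$ whenever $C_j\neq C_A$, while the at most one component equal to $C_A$ contributes $m\,A^2\geq 0$; summing yields $A\cdot E\geq 0$. If $A^2=-1$, then $A$ is itself an exceptional class and condition (3) is exactly the assertion that two distinct exceptional classes pair non-negatively, which I would invoke as the standard structural fact about the set of exceptional classes (cf.\ \cite{LiLiuruled}). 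Combining (1), (2), (3) with the assumed (4) shows $A$ is $D$-good.

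The main obstacle is the interaction of $A$ with the negative classes (the fiber $F$ in the irrational ruled case and the exceptional $E$ in (3)) when $A$ itself has negative square: there the naive positivity estimate breaks down, since $C_A$ can appear as a component of the degenerate subvariety representing $E$ or $F$, making the term $m\,A^2$ negative. The clean way around this is the two observations highlighted above—that every sphere class in an irrational ruled manifold is automatically $F$-orthogonal, eliminating the fiber difficulty entirely, and that distinct exceptional classes have non-negative pairing, which settles the $A^2=-1$ case of (3) without any direct analysis of the degenerate subvariety.
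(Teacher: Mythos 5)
Your proof is correct, and its skeleton coincides with the paper's: both reduce $D$-goodness to conditions (1) and (3) of Definition \ref{def:Dgood} (conditions (2) and (4) being immediate), both get (1) from the adjunction computation $I(A)=2A^2+2\geq 0$ together with $\omega\cdot(K_\omega-A)<0$ and Corollary \ref{cor:SWnonzero}, and both get (3) in the case $A^2\geq 0$ by choosing a tame $J$ making the sphere in class $A$ holomorphic, so that positivity of intersection against a $J$-holomorphic subvariety representing $E$ (which exists since $SW(E)\neq 0$) gives $A\cdot E\geq 0$. You deviate in two sub-steps, both validly. First, for the hypothesis $A\cdot F\neq -1$ in the irrational ruled case, the paper invokes the light cone lemma, which only applies when $A^2\geq 0$ (the case $A^2=-1$ is absorbed by the exceptional-sphere clause of Corollary \ref{cor:SWnonzero}); you instead prove the stronger statement $A\cdot F=0$ for every sphere class via $\mathrm{PD}(F)=\pi^*\alpha$ and $\pi_2(\Sigma_g)=0$. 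This is correct --- it is the same asphericity/degree argument the paper itself uses in Proposition \ref{prop:ruledconfig} --- and it has the advantage of handling $A^2\geq 0$ and $A^2=-1$ uniformly rather than implicitly. Second, for condition (3) when $A^2=-1$, the paper gives a self-contained argument: by the symmetry of the claim assume $\omega(A)\geq\omega(E)$, represent $A$ by an embedded $J$-holomorphic sphere $C$ and $E$ by a $J$-holomorphic subvariety $\Theta$, rule out $C$ appearing as a component of $\Theta$ by symplectic area, and then apply positivity of intersection. You instead cite the standard fact that distinct classes in $\mathcal{E}_\omega(X)$ pair non-negatively. That fact is true (for $J$ in a residual set all exceptional classes are simultaneously represented by embedded $J$-holomorphic spheres, and positivity of intersection finishes), but it is essentially the very statement being proved here, so leaning on it makes your proof less self-contained than the paper's; the area-comparison trick above is the two-line substitute if you want to avoid the external citation.
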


\begin{proof}
    By assumptions, we only need to check conditions (1) and (3). For condition (1), note that since $[\omega]\cdot K_\omega<0$, by Theorem \ref{thm:rationalruled}, $X$ is a rational or ruled manifold. In particular, $b_2^+(X)=1$. By light cone lemma, if $A^2\geq 0$, then $A\cdot F\neq -1$ for the fiber class $F$. By adjunction formula, when $A^2\geq -1$ we have \[I(A)=A^2-K_\omega\cdot A=2A^2-(A^2+K_\omega\cdot A)=2A^2+2\geq 0.\]
    Therefore, $SW(A)\neq 0$ by Corollary \ref{cor:SWnonzero}. For condition (3), assume $E$ is a symplectic exceptional class. When $A^2\geq 0$, we can choose tame almost complex structure $J$ making the symplectic sphere in class $A$ become $J$-holomorphic. By positivity of intersection, $A$ must be $J$-nef. Since $SW(E)\neq 0$, $E$ is $J$-effective so that we have $A\cdot E\geq 0$. If $A^2=-1$ and $A\neq E$, by symmetry in $A$ and $E$, we may further assume $\omega(A)\geq \omega(E)$ without loss of generality. We still take an embedded $J$-holomorphic sphere $C$ in class $A$ and the $J$-holomorphic subvariety $\Theta$ in class $E$. By symplectic area consideration, $C$ can not appear in the component of $\Theta$. Again, by positivity of intersection, we see that $A\cdot E\geq 0$.
\end{proof}

Finally, we need the following crucial result by McDuff-Opshtein which indicates that any $D$-good spherical class or exceptional class has an embedded $J$-holomorphic representative.

\begin{theorem}[\cite{MO15}, Theorem 1.2.7]\label{thm:MO15}
    If $A\in H_2(X;\ZZ)$ is a $D$-good class such that $A^2+K_{\omega}\cdot A=-2$ or $A$ is the class of an embedded symplectic exceptional sphere, then there is a residual set $\mathcal{J}_{\text{emb}}(D,A)\subseteq \mathcal{J}(D)$ such that for any $J\in \mathcal{J}_{\text{emb}}(D,A)$, $A$ can be represented by an embedded $J$-holomorphic sphere. 
\end{theorem}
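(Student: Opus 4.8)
The plan is to obtain a representative from Taubes--Seiberg--Witten existence, to pin down its topology with the adjunction formula, and then to secure genericity by a transversality argument carried out inside the constrained space $\mathcal{J}(D)$. First I would record the existence input: since $A$ is $D$-good, condition (1) of Definition \ref{def:Dgood} gives $SW(A)\neq 0$, so by the extension of Taubes' theory to arbitrary tame almost complex structures recalled before Corollary \ref{cor:SWnonzero}, the moduli space $\mathcal{M}_A$ is non-empty for \emph{every} $J\in\mathcal{J}(D)$. The numerical hypothesis then pins down the topology of any good representative: for a somewhere injective $J$-holomorphic curve $C$ with $[C]=A$ the adjunction formula reads $A^2+K_{\omega}\cdot A=2g(C)-2+2\delta(C)$, where $g(C)\geq 0$ and the singularity contribution $\delta(C)\geq 0$. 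Since $A^2+K_{\omega}\cdot A=-2$ in both cases of the statement (in the exceptional case $A^2=K_{\omega}\cdot A=-1$), we are forced to have $g(C)=0$ and $\delta(C)=0$, so $C$ is an embedded sphere. It therefore remains only to guarantee that, for generic adapted $J$, the subvariety supplied by $SW$ is a single somewhere injective curve rather than a reducible, multiply covered, or singular configuration.

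The heart of the argument, and its main obstacle, is transversality within $\mathcal{J}(D)$: every $J\in\mathcal{J}(D)$ is fixed and integrable on the neighborhood $\overline{\mathcal{N}}(D)$, so $J$ cannot be perturbed there and the usual Sard--Smale scheme does not apply verbatim. The key point I would exploit is that any somewhere injective curve $C$ in a class $B$ with $B\cdot[D_i]\geq 0$ is either a component of $D$ or meets $D$ in only finitely many points; in the latter case its injective points may be taken in the open set $X\setminus\overline{\mathcal{N}}(D)$, where $J$ is free to move. Perturbing $J$ only on this open set suffices to run the universal moduli space construction for the somewhere injective strata, producing a residual set $\mathcal{J}_{\mathrm{emb}}(D,A)\subseteq\mathcal{J}(D)$ on which all such strata are cut out transversally of the expected dimension. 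For the class $A$ itself, transversality for an embedded sphere is automatic by the positivity results of Hofer--Lizan--Sikorav in dimension four whenever $c_1(A)=-K_{\omega}\cdot A\geq 1$, and in all cases it follows from the relative transversality established in \cite{MO15} in the presence of the integrable locus $\overline{\mathcal{N}}(D)$; this relative transversality is precisely the technical content I would import.

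Finally I would rule out degenerate representatives on the residual set. Fix $J\in\mathcal{J}_{\mathrm{emb}}(D,A)$ and $\tfrac12 I(A)$ generic points; any $\Theta=\{(C_i,m_i)\}\in\mathcal{M}_A$ through them gives a decomposition $A=\sum m_i[C_i]$ into $J$-effective classes. A component of negative self-intersection is an exceptional sphere by adjunction, and condition (3) ($A\cdot E\geq 0$ for exceptional $E\neq A$) together with condition (4) ($A\cdot[D_i]\geq 0$) forbids exceptional or divisor components from splitting off; condition (2) excludes multiple covers in the borderline case $A^2=0$. More generally, a dimension count shows that for generic $J$ a somewhere injective curve in class $B$ passes through at most $\tfrac12 I(B)$ of the chosen points, and positivity of intersections combined with the $D$-good inequalities makes the total number absorbable by any proper splitting strictly less than $\tfrac12 I(A)$. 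Hence the only configuration meeting all the constraints is a single somewhere injective curve in class $A$, which by the first paragraph is an embedded sphere; this realizes the desired representative for every $J$ in the residual set $\mathcal{J}_{\mathrm{emb}}(D,A)$.
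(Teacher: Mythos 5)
The first thing to say is that this statement has no proof in the paper to compare against: it is quoted verbatim as a theorem of McDuff--Opshtein, and the paper's ``proof'' is the citation \cite{MO15}. The relevant question is therefore whether your sketch would stand on its own, and it does not, for a structural reason: the step you yourself identify as ``the heart of the argument'' --- transversality and genericity inside the constrained space $\mathcal{J}(D)$ --- is exactly the step you resolve by importing ``the relative transversality established in \cite{MO15}.'' Since the theorem being proved \emph{is} the theorem of \cite{MO15}, this is circular; what you prove independently (existence of some $J$-holomorphic subvariety in class $A$ from $SW(A)\neq 0$, and the adjunction computation showing that a somewhere injective representative of $A$ must be an embedded sphere) is the routine part, not the content of the result.

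Even setting the circularity aside, your final degeneration-exclusion paragraph has a genuine gap. The claim that ``a component of negative self-intersection is an exceptional sphere by adjunction'' is valid only for Gromov--Taubes generic $J$, and no $J\in\mathcal{J}(D)$ is generic in that sense once $D$ has a component of negative index --- the paper states this explicitly in the remark closing Section 2.4. For adapted $J$, every component $D_i$ is $J$-holomorphic no matter how negative $[D_i]^2$ is, and these components are rigid: they persist for \emph{every} $J\in\mathcal{J}(D)$, so they cannot be removed by perturbing $J$ away from $\overline{\mathcal{N}}(D)$ and they do not fit into your Sard--Smale dimension count for somewhere injective strata. Concretely, condition (4), $A\cdot[D_i]\geq 0$, does not forbid a subvariety in class $A$ from containing $D_i$ as a component with the remainder in class $A-[D_i]$; ruling out such splittings (as well as closed curves trapped inside the integrable neighborhood $\overline{\mathcal{N}}(D)$, where no perturbation is available) requires the delicate analysis that occupies most of \cite{MO15}, and your assertion that the $D$-good inequalities make any proper splitting ``absorbable'' is stated without an argument precisely where one is needed.
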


\subsection{$J$-holomorphic curves in tamed almost complex $4$-manifolds}
In this section, we review several results from Li-Zhang \cite{LiZhangnef} and Zhang \cite{Zhangcurvecone,Zhangmoduli} which will be used later. The first one concerns the configuration of reducible $J$-holomorphic subvariety in a $J$-nef spherical class.

\begin{theorem}[\cite{LiZhangnef}, Theorem 1.5]\label{thm:lizhangnef}
     Let $J$ be any $\omega$-tame almost complex structure on a symplectic $4$-manifold $(X,\omega)$. Suppose $A\in H_2(X;\ZZ)$ is $J$-nef and satisfies $A^2+K_{\omega}\cdot A=-2$. Then any $J$-holomorphic subvariety in class $A$ must have a connected tree configuration whose components are all embedded spheres.
\end{theorem}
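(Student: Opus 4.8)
The plan is to prove Theorem \ref{thm:lizhangnef} by combining positivity of intersections, the adjunction inequality for $J$-holomorphic subvarieties, and the $J$-nef hypothesis to control the combinatorics of any decomposition $A = \sum m_i [C_i]$. First I would record the numerical input: the hypothesis $A^2 + K_\omega \cdot A = -2$ says that the ``virtual genus'' of $A$ is zero, i.e. $A$ is a spherical class in the adjunction sense. I would set up the adjunction inequality in the tamed (non-generic) setting, which for an irreducible $J$-holomorphic subvariety $C$ reads $g(C) \geq \frac{1}{2}([C]^2 + K_\omega \cdot [C]) + 1 \geq 0$, with equality in the second inequality forcing $C$ to be an embedded sphere and equality in the first forcing $C$ to be smooth. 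This is the main technical device; in the $4$-dimensional tamed category it is available through the work cited earlier (and underlies the definitions of genus and the moduli-space results of Li-Zhang and Zhang).

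Next I would analyze a subvariety $\Theta = \{(C_i, m_i)\}$ with $\sum m_i [C_i] = A$. Since $A$ is $J$-nef, each $[C_i]$ satisfies $A \cdot [C_i] \geq 0$, and summing gives $A^2 = \sum m_i (A \cdot [C_i]) \geq 0$; combined with $A^2 + K_\omega \cdot A = -2$ this pins down $K_\omega \cdot A \leq -2$. The key step is to show each component is an embedded sphere. I would argue that $J$-nefness prevents any component from having negative self-intersection in a way incompatible with being an exceptional sphere: any $[C_i]^2 < 0$ component must, by adjunction and the index/genus constraints already discussed in the excerpt, be an exceptional sphere, but then $A \cdot [C_i] \geq 0$ together with the decomposition constrains how such spheres can appear. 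I would then run the numerical bookkeeping on $\sum_i m_i([C_i]^2 + K_\omega \cdot [C_i])$, comparing it against $A^2 + K_\omega \cdot A = -2$ and the cross-terms $\sum_{i \neq j} m_i m_j ([C_i]\cdot[C_j])$, all of which are nonnegative by positivity of intersections. The total adjunction defect must distribute so that every $[C_i]^2 + K_\omega \cdot [C_i] = -2$, i.e. every component is an embedded sphere, with no room left over for higher-genus or singular pieces.

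Finally I would establish the tree configuration. Having shown all components are embedded spheres meeting each other with nonnegative local intersection numbers, I would compute the total genus $g(A)$ of the class via Lemma \ref{lem:g(D)} applied to the union (after smoothing), which expresses $g(A)$ in terms of $\sum_i g(C_i) = 0$, the number of intersection points, and $\dim H_1(\Gamma) - \dim H_0(\Gamma) + 1$ of the dual graph $\Gamma$ of $\Theta$. Since $g(A) = 0$, the graph must satisfy $\dim H_1(\Gamma) = 0$ (no cycles) and, for the bookkeeping to balance with connectivity, $\dim H_0(\Gamma) = 1$ (connected); together these say $\Gamma$ is a tree. I would also need to rule out multiplicities $m_i > 1$ and multiple intersection points between the same pair, again using that $A^2 = 0$ forces each cross-term and self-intersection contribution to be as small as possible. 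I expect the main obstacle to be the careful accounting in this last step: showing that the only way to reconcile $A^2 = 0$, $g(A) = 0$, nonnegative intersection pairings, and $J$-nefness is a connected, simply-connected (tree) configuration of reduced embedded spheres — in particular, systematically excluding cycles, repeated edges, and nonreduced components. This is where I would lean most heavily on the positivity of intersections and the exactness of the adjunction equality forced by the $-2$ constraint.
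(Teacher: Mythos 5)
First, a point of orientation: the paper does not prove Theorem \ref{thm:lizhangnef} at all --- it is quoted from \cite{LiZhangnef} and used as a black box, and the remark following it emphasizes that its whole value lies in holding for an \emph{arbitrary} (non-generic) tame $J$. So your sketch has to stand on its own, and it has genuine gaps. The most serious conceptual one is the claim that any component with $[C_i]^2<0$ ``must, by adjunction and the index/genus constraints already discussed in the excerpt, be an exceptional sphere.'' That dichotomy is a feature of Gromov--Taubes theory for \emph{generic} $J$, exactly what is unavailable here (divisor-adapted $J$'s are typically non-generic, as the paper itself notes); moreover the conclusion of the theorem must allow embedded spheres of arbitrarily negative square as components --- in the paper's own application (Lemmas \ref{lem:tildeAmoduli} and \ref{lem:homologicallyaffineruled}), components of $\tilde{D}$ with self-intersection $\le -2$ occur as components of reducible representatives of the nef class $\tilde{A}$. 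So this step is both unjustified and false in the intended generality. A second error: your final accounting leans on ``$A^2=0$,'' but the hypotheses only give $A^2\ge 0$; for instance the line class $h$ in $\CC\PP^2$ is $J$-nef with $h^2+K_\omega\cdot h=-2$ and $h^2=1$, so an argument requiring $A^2=0$ does not prove the stated theorem.

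The heart of the matter --- the ``numerical bookkeeping'' --- is also never actually carried out, and the naive version you describe does not close. Expanding
\begin{align*}
-2 \;=\; A^2+K_\omega\cdot A \;=\; \sum_i m_i\bigl([C_i]^2+K_\omega\cdot[C_i]\bigr)\;+\;\sum_i m_i(m_i-1)[C_i]^2\;+\;\sum_{i\neq j} m_i m_j\,[C_i]\cdot[C_j],
\end{align*}
the cross terms are nonnegative by positivity of intersections, and each $[C_i]^2+K_\omega\cdot[C_i]\ge 2g(C_i)-2\ge -2$ by adjunction (which, incidentally, you stated with the inequality reversed), but the middle term is \emph{negative} whenever a negative-square component carries multiplicity $m_i>1$. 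Hence ``everything must be extremal'' does not follow; one must first absorb these negative contributions using nefness component by component, $0\le A\cdot[C_i]=m_i[C_i]^2+\sum_{j\neq i}m_j[C_i]\cdot[C_j]$, and this absorption is precisely the delicate part of Li--Zhang's argument that is absent from your sketch. Finally, connectedness is assumed rather than proved: you write that $\dim H_0(\Gamma)=1$ holds ``for the bookkeeping to balance with connectivity,'' which is circular. The genus count alone permits a disconnected subvariety consisting of a cycle of spheres (genus $1$) together with a tree of spheres (genus $0$): the extra component contributes $-1$ and restores $g(A)=0$, and nefness plus positivity give only $A_1^2\ge 0$, $A_2^2\ge0$, $A_1\cdot A_2=0$ for the classes of the two pieces, which is numerically consistent. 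Excluding this requires a further input --- for example a light-cone-lemma argument in the $b_2^+=1$ (rational/ruled) regime, of the kind the paper runs in the proof of Lemma \ref{lem:tildeAmoduli}. Without these repairs, neither the ``embedded spheres'' half nor the ``connected tree'' half of the conclusion is established.
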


The next result, known as the curve cone theorem, is the following almost complex analogue of the celebrated Mori's cone theorem in algebraic geometry.

\begin{theorem}[\cite{Zhangcurvecone}, Theorem 1.1]\label{thm:curvecone}
    Let $(X,\omega)$ be a non-minimal symplectic 4-manifold not diffeomorphic to $\CC\PP^2\#\overline{\CC\PP}^2$ and $J$ an $\omega$-tame almost complex structure. If $A\in H_2(X;\ZZ)$ is $J$-effective, then there exists irreducible $J$-holomorphic subvarieties within the classes $F_1,\cdots,F_m$ satisfying $F_i\cdot K_J\geq 0$ for all $1\leq i\leq m$ and embedded $J$-holomorphic exceptional spheres within the classes $E_1,\cdots,E_m$, and positive real numbers $a_i,b_j$'s, such that 
    \[A=\sum_{i=1}^ma_i F_i+\sum_{j=1}^nb_j E_j.\]
\end{theorem}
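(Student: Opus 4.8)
The plan is to treat this as the almost-complex analogue of Mori's cone theorem, with Gromov compactness and the adjunction inequality playing the role of bend-and-break. First I would represent the $J$-effective class $A$ by a $J$-holomorphic subvariety and decompose it into irreducible components, writing $A=\sum_i m_i F_i$ with $m_i\in\ZZ$, $m_i>0$, and each $F_i=[C_i]$ the class of an irreducible $J$-holomorphic subvariety $C_i$. For each such irreducible class the adjunction formula gives $F_i^2+K_J\cdot F_i=2g(C_i)-2+2\delta_i\geq -2$, where $g(C_i)\geq 0$ is the geometric genus and $\delta_i\geq 0$ accounts for the singularities. This single inequality is what organizes the entire argument.

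Next I would trichotomize the $F_i$ by the signs of $K_J\cdot F_i$ and $F_i^2$. If $K_J\cdot F_i\geq 0$, then $F_i$ is already an admissible generator of the first type, realized by $C_i$. If $K_J\cdot F_i<0$ and $F_i^2<0$, the adjunction inequality forces $F_i^2=-1$, $K_J\cdot F_i=-1$, and $g=\delta=0$, so $C_i$ is an embedded exceptional sphere and $F_i$ is a generator of the second type. The remaining, and genuinely hard, case is $K_J\cdot F_i<0$ with $F_i^2\geq 0$. An irreducible $J$-curve of non-negative self-intersection already forces (via McDuff's criterion, or Theorem \ref{thm:rationalruled} after a compatible deformation) that $X$ is rational or ruled with $b_2^+=1$, so the ambient geometry is completely pinned down in this case.

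The main obstacle is to rewrite such a class $F=F_i$ with $F^2\geq 0$ and $K_J\cdot F<0$ as a positive real combination of $K_J$-nonnegative irreducible classes and of exceptional classes that are genuinely represented by embedded $J$-spheres. Here I would run the symplectic bend-and-break: the class $F$ has positive Gromov--Taubes index $I(F)=F^2-K_J\cdot F>0$, so a curve in class $F$ can be constrained to pass through $\frac{1}{2}I(F)$ points; forcing it through additional generic points and invoking Gromov compactness produces a nodal degeneration, hence a homological splitting $F=\sum_k B_k$ into $J$-effective classes of strictly smaller $\omega$-area. Because the $\omega$-areas of $J$-holomorphic subvarieties are bounded below by a fixed energy threshold, a downward induction on $\omega$-area reduces every piece to the first two types. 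Non-vanishing of the relevant Seiberg--Witten invariants (Corollary \ref{cor:SWnonzero}, together with Theorem \ref{thm:liliu}) certifies that the exceptional classes arising are $J$-effective, and the light cone lemma (available since $b_2^+=1$) controls their intersection pattern so the decomposition stays inside the curve cone. The exclusion of $\CC\PP^2\#\overline{\CC\PP}^2$ enters precisely here: it is the unique non-minimal rational/ruled manifold whose fiber class $H-E$ (square $0$, $K_J\cdot(H-E)=-2$) admits no such decomposition, since its only irreducible $J$-effective classes are $K_J$-negative and there are too few exceptional classes to span it; for every larger $b_2$ there are enough $J$-represented generators to carry the induction through.

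Finally I would assemble the output: the $K_J$-nonnegative irreducible classes furnish the $F_i$, the extremal exceptional classes furnish the $E_j$, and the positive real coefficients produced by the degeneration and induction give $A=\sum a_i F_i+\sum b_j E_j$. The bulk of the difficulty is concentrated in the third case, both in running the degeneration argument for a possibly non-generic $J$ and in certifying that each exceptional generator is realized by an embedded $J$-holomorphic sphere rather than merely by a reducible subvariety.
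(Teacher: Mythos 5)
This theorem is not proved in the paper at all: it is quoted verbatim from \cite{Zhangcurvecone}, so your proposal has to be measured against Zhang's own argument. Your opening moves do match it: represent $A$ by a subvariety, split it into irreducible components, and use the adjunction inequality so that each irreducible class $F$ is either $K_J$-non-negative, or has $K_J\cdot F<0$ and $F^2<0$, forcing $F^2=K_J\cdot F=-1$ with an embedded exceptional sphere representative. The entire content of the theorem, however, sits in your third case ($K_J\cdot F<0$, $F^2\geq 0$), and there your argument has a genuine gap. The ``symplectic bend-and-break'' you invoke does not exist as described: for an \emph{arbitrary} tamed $J$ (which is the whole point of the theorem --- the paper applies it to divisor-adapted, hence non-generic, $J$), Seiberg--Witten nonvanishing only yields \emph{some} $J$-holomorphic subvariety through $\frac{1}{2}I(F)$ prescribed points; there is no mechanism forcing it to break when you impose further constraints. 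Mori produces the breaking via reduction to characteristic $p$ and Frobenius, which has no symplectic substitute, and the transversality/dimension counts that could replace it are exactly what fails for non-generic $J$. Moreover, even granting a splitting $F=\sum_k B_k$, the theorem requires exceptional classes represented by \emph{embedded} $J$-holomorphic spheres; $J$-effectiveness (all that SW theory certifies) is strictly weaker, since for non-generic $J$ an exceptional class may admit only reducible representatives. Making embeddedness work for every tamed $J$ is itself a hard theorem (Pinsonnault's result, Corollary \ref{cor:minimalarea} --- which in this paper is \emph{deduced from} Theorem \ref{thm:curvecone}, so invoking it here would be circular). You flag this difficulty in your last sentence, but flagging it is not resolving it.

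Two of your supporting claims are also incorrect. An irreducible curve with $F^2\geq 0$ and $K_J\cdot F<0$ need not be an embedded sphere, so McDuff's criterion does not apply, and Theorem \ref{thm:rationalruled} requires $\omega\cdot K_\omega<0$, which is not among the hypotheses; the correct route when $b_2^+>1$ is positivity of intersection against a subvariety representing $K_J$ (whose existence is Taubes' theorem), with separate arguments when $b_2^+=1$. And on $\CC\PP^2\#\overline{\CC\PP}^2$ it is false that the only irreducible $J$-effective classes are $K_J$-negative; the actual obstruction is that the fiber class $H-E$ can be irreducibly represented, has square zero, and pairs positively with the unique exceptional class $E$: in any putative decomposition $H-E=\sum_i a_iF_i+bE$, pairing with $K_J$ gives $b\geq 2$, while positivity of intersection gives $0=(H-E)^2\geq b$, a contradiction. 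Zhang's actual treatment of the hard case runs through Taubes--Seiberg--Witten theory (wall-crossing and the blowup formula, Theorem \ref{thm:liliu}), the existence of embedded representatives of minimal-area exceptional classes for every tamed $J$, and a structural analysis of rational and ruled manifolds --- including a dedicated study of $\CC\PP^2\#2\overline{\CC\PP}^2$ (quoted in this paper as Theorem \ref{thm:2blowup}) and the fiber-class theory on ruled manifolds (Theorem \ref{thm:fiberclass}) --- rather than through any degeneration argument. Your proposal correctly locates where the difficulty lies but replaces it with a heuristic that cannot be carried out.
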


We also need results from Section 3 in \cite{Zhangcurvecone} studying curves on $\CC\PP^2\#2\overline{\CC\PP}^2$.

\begin{theorem}[\cite{Zhangcurvecone}]\label{thm:2blowup}
    For any tame almost complex structure $J$ on $\CC\PP^2\#2\overline{\CC\PP}^2$, there exists at least two embedded $J$-holomorphic exceptional spheres $C_1,C_2$. They satisfy either $[C_1]\cdot[C_2]=0$ or $[C_1]\cdot[C_2]=1$.
\end{theorem}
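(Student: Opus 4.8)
The plan is to reduce the statement to a homological assertion about three distinguished classes and then control their pseudoholomorphic degenerations. Write $H$ for the line class and $E_1,E_2$ for the exceptional classes of the two blowups, so that $K_\omega=-3H+E_1+E_2$. A class $A=aH+c_1E_1+c_2E_2$ is the class of an embedded symplectic exceptional sphere exactly when $A^2=-1$ and $K_\omega\cdot A=-1$, i.e. $a^2-c_1^2-c_2^2=-1$ and $3a+c_1+c_2=1$. The resulting discriminant condition $-(7a+1)(a-1)\geq 0$ forces $a\in\{0,1\}$, and one reads off that the only solutions are $E_1$, $E_2$ and $L:=H-E_1-E_2$. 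Since $E_1\cdot E_2=0$ and $E_1\cdot L=E_2\cdot L=1$, any two of these meet in $0$ or $1$; hence the intersection assertion in the theorem is automatic, and it suffices to prove that for every tame $J$ at least two of $E_1,E_2,L$ are represented by embedded $J$-holomorphic spheres.

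Next I would record a dichotomy. By Corollary \ref{cor:SWnonzero} each of $E_1,E_2,L$ has nonzero Seiberg--Witten invariant, hence is $J$-effective for every tame $J$; let $\Theta_E$ be a subvariety in class $E$. As $E$ is primitive with $E^2+K_\omega\cdot E=-2$, if $\Theta_E$ is irreducible then its genus $\tfrac12(E^2+K_\omega\cdot E)+1=0$ together with the adjunction inequality forces $\Theta_E$ to be a single embedded sphere. Thus $E$ fails to be embedded only when $\Theta_E$ is reducible; pairing the decomposition $E=\sum_i m_i[C_i]$ with $E$ gives $\sum_i m_i\,(E\cdot[C_i])=E^2=-1<0$, so by positivity of intersections some component $C$ satisfies $E\cdot[C]<0$, and then $m_C[C]^2<0$ forces $[C]^2<0$ with $[C]\neq E$. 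Conversely an embedded sphere in class $E$ meets every other irreducible $J$-curve non-negatively. This yields the criterion: $E$ is represented by an embedded $J$-holomorphic sphere if and only if there is no irreducible $J$-holomorphic curve $C$ with $[C]\neq E$ and $E\cdot[C]<0$, and any such obstructing $C$ necessarily has $[C]^2<0$.

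The heart of the argument is then to show that at most one of $E_1,E_2,L$ is obstructed in this sense. Since the three exceptional classes meet each other non-negatively, an obstructing curve must be a non-exceptional irreducible curve of negative self-intersection; writing its class as $N=aH+c_1E_1+c_2E_2$ one computes $N\cdot E_1=-c_1$, $N\cdot E_2=-c_2$ and $N\cdot L=a+c_1+c_2$, so that $N$ obstructs $E_1$, $E_2$ or $L$ precisely when $c_1>0$, $c_2>0$ or $a+c_1+c_2<0$. Set $h=\omega\cdot H$ and $a_i=\omega\cdot E_i$, so the symplectic cone gives $a_1,a_2>0$ and $h>a_1+a_2$. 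I would first rule out a single curve obstructing two of the classes: if $c_1,c_2\geq 1$, then combining $N^2<0$ with the adjunction bound $N^2+K_\omega\cdot N\geq -2$ excludes the positive root (which exceeds $\sqrt{c_1^2+c_2^2}$) and pushes the admissible $a$ below $1-\max(c_1,c_2)$, forcing $a\leq-\max(c_1,c_2)$; then $\omega\cdot N=a\,h+c_1a_1+c_2a_2< -\max(c_1,c_2)(a_1+a_2)+c_1a_1+c_2a_2\leq 0$, contradicting effectivity. The same kind of area estimate handles the mixed cases involving $L$.

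Finally one must exclude two distinct obstructing curves $N,N'$ for two different exceptional classes. Here I would use that each obstructing curve is a component of the corresponding $\Theta_E$ (so its $\omega$-area is bounded by $\omega\cdot E$), that distinct irreducible curves satisfy $N\cdot N'\geq 0$, and the adjunction/area bounds above, to show that the effectivity conditions for the two obstructions are mutually incompatible with the cone $h>a_1+a_2$ — exactly as one sees on the model pairs $E_1-E_2$ versus $E_2-E_1$ (effective only when $a_1>a_2$, resp. $a_2>a_1$) and $2E_1-H$ versus $2E_2-H$ (effective only when $2a_1>h$, resp. $2a_2>h$, which together violate $h>a_1+a_2$). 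Concluding that at most one exceptional class is obstructed, at least two of $E_1,E_2,L$ are embedded spheres, and the intersection claim follows from the first step. I expect this last step to be the main obstacle: unlike the absolute case, non-exceptional negative curves genuinely appear (blowing up two infinitely near points makes $E_1-E_2$ an embedded $(-2)$-sphere and destroys the sphere in class $E_1$), so the argument cannot avoid them and must instead verify, through the interplay of the area/cone constraint, the adjunction bound, and pairwise positivity, that any admissible configuration of negative curves obstructs at most one of the three classes at a time.
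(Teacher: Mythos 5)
Your preparatory steps are correct and essentially complete: the classification of classes with $A^2=K_\omega\cdot A=-1$ as $\{E_1,E_2,L\}$ with $L=H-E_1-E_2$, the dichotomy (SW-effectivity plus adjunction show $E$ has an embedded representative if and only if there is no irreducible curve $N$ with $[N]\neq E$ and $E\cdot[N]<0$, and any such $N$ has $[N]^2<0$ and sits inside every subvariety in class $E$), and the exclusion of a single curve obstructing two of the three classes all check out — including the mixed cases involving $L$, which do follow from the same adjunction-plus-area estimates you use for $\{E_1,E_2\}$, and including your integrality point that adjunction pushes $a\leq-\max(c_1,c_2)$. For the record, the paper contains no proof of this statement at all; it is quoted from Zhang's curve cone paper, so your argument is being judged on its own terms rather than against an in-text proof.

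The genuine gap is exactly where you flag it: the case of two distinct obstructing curves for two different classes is the heart of the theorem, and verifying the two model pairs $E_1-E_2$ versus $E_2-E_1$ and $-H+2E_1$ versus $-H+2E_2$ does not prove it, because you never bound the set of possible obstructing classes. The missing step is an enumeration that your own constraints make routine. Since by your step (i) an obstructing curve obstructs exactly one class, an obstructing curve $N=aH+c_1E_1+c_2E_2$ for $E_1$ satisfies $c_1\geq1$, $c_2\leq0$, $a+c_1+c_2\geq0$, $N^2<0$ and adjunction; combining $N^2<0$ with adjunction gives $3a+c_1+c_2\leq 1$, hence $a\leq0$, and then integrality forces $N=-kH+(k+1)E_1-\epsilon E_2$ with $k\geq0$, $\epsilon\in\{0,1\}$, $(k,\epsilon)\neq(0,0)$; positivity of $\omega$-area then gives $(k+1)a_1>kh$, and symmetrically $(k'+1)a_2>k'h$ for an obstructing curve for $E_2$. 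If $k,k'\geq1$ these give $a_1+a_2>\bigl(\tfrac{k}{k+1}+\tfrac{k'}{k'+1}\bigr)h\geq h$, contradicting $h>a_1+a_2$; if $k=0$ then $N=E_1-E_2$ forces $a_1>a_2$, which rules out $N'=E_2-E_1$ and, via $(k'+1)a_2>k'h>k'(a_1+a_2)>2k'a_2$, every $k'\geq1$. A similar and easier enumeration shows no irreducible curve obstructs $L$ at all: positive area forces $a\geq1$, and then adjunction, $N^2<0$ and $N\cdot L<0$ force $N=L$ itself. With these two computations your outline becomes a complete proof; without them the claimed "mutual incompatibility" is only an expectation checked on two examples — and note that the tool you list alongside the area bounds, positivity of intersections $N\cdot N'\geq0$, is automatically satisfied by all the candidate pairs above and contributes nothing, so the area/cone argument really is the only mechanism available and must be run over the full enumerated family.
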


Finally, for irrational ruled manifolds, \cite[Section 3]{Zhangmoduli} obtained the following result concerning curves in the fiber class.

\begin{theorem}[\cite{Zhangmoduli}]\label{thm:fiberclass}
   Let $J$ be any tame almost complex structure on an irrational ruled manifold $X$ and $F$ be the fiber class. Then 
   \begin{itemize}
       \item $F$ is $J$-nef;
       \item there exists an embedded $J$-holomorphic curve $C$ of genus $\frac{1}{2}b_1(X)$ which can be identified with the moduli space of $J$-holomorphic subvarieties in class $F$, with finitely many points representing reducible subvarieties;
       \item any embedded $J$-holomorphic sphere is an irreducible component of an element of $\mathcal{M}_F$.
   \end{itemize}
\end{theorem}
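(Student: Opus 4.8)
The plan is to run the Taubes--Seiberg--Witten machinery for the class $F$ and then extract the fibration structure from positivity of intersections, in the spirit of McDuff's original analysis of ruled surfaces but carried out at the level of $J$-holomorphic subvarieties so as to accommodate an arbitrary (non-generic) tame $J$. First I record the numerics: since the fiber is an embedded sphere with $F^2=0$, adjunction gives $K_\omega\cdot F=-2$, hence $F^2+K_\omega\cdot F=-2$ and $I(F)=2$. By Corollary \ref{cor:SWnonzero} we have $SW(F)\neq 0$, so for every tame $J$ the class $F$ is $J$-effective; moreover, because $\tfrac12 I(F)=1$, there is a subvariety in class $F$ through every prescribed point of $X$.

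Next I would establish $J$-nefness. The engine is positivity of intersection together with $F^2=0$: if $\Theta_1,\Theta_2\in\mathcal{M}_F$ are distinct and share no irreducible component, then $[\Theta_1]\cdot[\Theta_2]=F^2=0$ forces them to be disjoint, so the irreducible members of $\mathcal{M}_F$ sweep out $X$ by a generically disjoint family. Given an irreducible $J$-holomorphic subvariety $Z$ whose class is not proportional to $F$, a generic member of this family does not contain $Z$ as a component, so $F\cdot[Z]=[\Theta]\cdot[Z]\geq 0$; when $[Z]$ is proportional to $F$ the pairing is $0$. This proves $F$ is $J$-nef. Once nefness is known, the identity $F^2+K_\omega\cdot F=-2$ lets me invoke Theorem \ref{thm:lizhangnef}: every subvariety in class $F$ is a connected tree of embedded $J$-holomorphic spheres, so the only degenerations of a generic irreducible embedded sphere fiber are reducible configurations of spheres.

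With nefness and the tree structure in hand, I would analyze the moduli space. Gromov compactness makes $\mathcal{M}_F$ compact; the disjointness above shows the irreducible members foliate an open dense subset of $X$, giving a ruling map from that subset onto $\mathcal{M}_F$ whose fibers are spheres. Comparing Euler characteristics --- an $S^2$-bundle over a genus $g$ surface has $\chi=4-4g$, with each reducible fiber contributing a controlled excess --- pins down $\mathcal{M}_F$ as a compact curve of genus $g=\tfrac12 b_1(X)$ with only finitely many points corresponding to reducible fibers, and this curve is realized as the stated embedded $J$-holomorphic curve $C$. For the last bullet, let $S$ be any embedded $J$-holomorphic sphere. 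If $F\cdot[S]=d>0$, then $S$ meets a generic fiber in $d$ points and the ruling map restricts to a degree-$d$ map $S\cong S^2\to \mathcal{M}_F$ onto a curve of genus $g\geq 1$, which is impossible; hence $F\cdot[S]=0$, and positivity of intersection applied to the fiber through a point of $S$ forces $S$ to be one of its components.

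The main obstacle is the passage from generic to arbitrary $J$: for non-generic $J$ the moduli space is not a priori cut out transversally, so both the claim that $\mathcal{M}_F$ is a smooth genus-$g$ curve and the claim that only finitely many members are reducible require the finer analysis (automatic genericity and precise control of degenerate configurations) that occupies \cite{Zhangmoduli}. The nefness and tree-of-spheres steps are comparatively soft, but the identification of the global structure of $\mathcal{M}_F$ --- in particular, ruling out pathological positive-dimensional families of reducible fibers and guaranteeing the smoothness needed for the genus computation --- is where the real work lies.
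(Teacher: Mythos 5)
First, a framing remark: the paper does not prove Theorem \ref{thm:fiberclass} at all --- it is imported verbatim from \cite{Zhangmoduli} --- so the benchmark is Zhang's proof, and your sketch does not replace it. The decisive gap is in your nefness step. You assert that ``a generic member of this family does not contain $Z$ as a component,'' but this assertion is essentially equivalent to what you are trying to prove: if $F\cdot[Z]<0$, then positivity of intersection forces \emph{every} element of $\mathcal{M}_F$ to contain $Z$ as a component, and nothing in your argument excludes that scenario (likewise, your claim that the irreducible members of $\mathcal{M}_F$ sweep out $X$ is unjustified for a non-generic $J$, since a priori every member could be reducible). The symptom is that your nefness argument never uses $g\geq 1$, and it cannot be repaired without using it, because the statement is false in genus zero: on $S^2\times S^2$ with $\omega(F)>\omega(B)$, the integrable complex structure of the second Hirzebruch surface, taken so that its ruling fibers lie in class $B$, is tamed by (indeed K\"ahler for) such an $\omega$ and carries an embedded holomorphic sphere in class $F-B$ (its negative section), with $F\cdot(F-B)=-1$. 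For this $J$ all of your ingredients hold --- $SW(F)\neq 0$, $I(F)=2$, subvarieties in class $F$ through every point --- yet $F$ is not $J$-nef, every element of $\mathcal{M}_F$ is reducible (negative section plus a ruling fiber), and the reducible members form a positive-dimensional family. Any correct proof must therefore exploit the positive genus of the base in an essential way, for instance via the fact that every $J$-holomorphic sphere pairs trivially with $F$ (its composition with the topological projection to $\Sigma_g$ is a map $S^2\to\Sigma_g$, which has degree zero since $\pi_2(\Sigma_g)=0$), together with Kneser-type degree and genus bounds for positive-genus components; this ingredient is entirely absent from your sketch.

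The second genuine gap is the global structure of $\mathcal{M}_F$: the identification with an embedded $J$-holomorphic curve $C\subseteq X$ of genus $g$, and the finiteness of reducible members. Your Euler-characteristic comparison presupposes exactly what is to be shown (that $\mathcal{M}_F$ is a closed surface and that the reducible members are isolated points), and you never construct the curve $C$ inside $X$ at all; your closing paragraph concedes that for arbitrary tame $J$ this is ``where the real work lies'' and defers it to \cite{Zhangmoduli} --- a proof whose core steps are deferred to the reference being proved is a citation, not a proof. Note also that both your third bullet and any finiteness argument need the uniqueness statement that two distinct elements of $\mathcal{M}_F$ share no irreducible component, which you do not establish. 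A workable template appears in the paper's rational analogue, Lemma \ref{lem:tildeAmoduli}: there, uniqueness is deduced from the light cone lemma plus the existence of one embedded representative, and finiteness of reducible members follows because each must contain a component of negative self-intersection while $b_2^-$ is finite and distinct members share no components. Adapting that argument here would be cleaner than the Euler-characteristic heuristic, but it still requires as inputs the $J$-nefness of $F$ (to invoke Theorem \ref{thm:lizhangnef}) and an embedded representative for every tame $J$ --- which is precisely the content of \cite{Zhangmoduli} that your proposal leaves unproved.
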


 We remark that all the results reviewed in this section do not require the genericity of the almost complex structure, which makes them particularly useful in practice. Note that when the divisor contains some component with negative index, any divisor-adapted almost complex structure cannot be generic in the Gromov-Taubes sense.

\section{Cuspidal curves and their normal crossing resolutions}\label{section:cuspcurve}

In this section we give an introduction to cuspidal curves in symplectic $4$-manifolds, following the setup in the work by McDuff-Siegel \cite{MSiegeltangency,mcduffsiegelellipsoidalsuperpotentialssingularcurve}.

For a symplectic divisor $D$ in $(X,\omega)$, some $J\in \mathcal{J}(D)$ and a point $x\in D_1$ where $D_1$ is some component of $D$, we can choose a small neighborhood $\mathcal{O}p(x)$ of $x$ such that $J$ is integrable in $\mathcal{O}p(x)$. Let $u:\Sigma\rightarrow X$ be a $J$-holomorphic map from a Riemann surface $\Sigma$ with a marked point $z$ such that $u(z)=x$. We say that $u$ has \textbf{tangency order} $m-1$ (or equivalently \textbf{contact order} $m$) to $(D_1,x)$ at the marked point $z$ if we have 
\[\frac{d^j(g\circ u\circ f)}{d\zeta^j}|_{\zeta=0}=0\,\,\,\,\,\,\text{for }j=1,\cdots,m-1,\]
where $f:\mathbb{C}\supseteq \mathcal{O}p(0)\rightarrow\mathcal{O}p(z)\subseteq \Sigma$ is a choice of local complex coordinates for $\Sigma$ with $f(0)=z$, and $g:X\supseteq \mathcal{O}p(x)\rightarrow \mathbb{C}$ is a holomorphic function such that $D_1\cap \mathcal{O}p(x)=g^{-1}(0)$ and $dg(x)\neq 0$. Assuming $m$ is maximal such that $u$ is tangent to $(D_1,x)$ to order $m-1$, we will denote by
\[\text{ord}(u,D_1;z)=m\]
the \textbf{local contact order} of $u$ to $(D_1,x)$ at $z$. If $x$ is the intersection point of two distinct components $D_1,D_2$ of $D$ and $p,q$ are two positive integers, we denote by \[\mathcal{M}^J_{X,A}\bigl\langle\mathcal{C}_{D_1,D_2}^{p,q}x\bigr\rangle,\] 
a subspace of $\mathcal{M}_{X,A}^J$ ,the moduli space of $J$-holomorphic subvarieties which is the image of a $J$-holomorphic map $u:\mathbb{CP}^1\rightarrow X$ such that $u_*([\CC\PP^1])=A\in H_2(X;\ZZ)$, $u([0:0:1])=x$ and $u$ has local contact order $p$ at $(D_1,x)$ and $q$ at $(D_2,x)$. When $p>q$ and $\text{gcd}(p,q)=1$, this local multidirectional tangency condition is closely related to the $(p,q)$-cuspidal singularity. Let $C\subseteq \CC^2$ be an algebraic curve. A singular point $x\in C$ is a \textbf{$(p,q)$-cusp} if its link is the $(p,q)$-torus knot, i.e. if for $\varepsilon>0$ sufficiently small there is a
diffeomorphism
\[(S_{\varepsilon}^3,C\cap S_{\varepsilon}^3)\cong (S^3,\{z_1^p+z_2^q=0\}\cap S^3),\]
where $S_{\varepsilon}^3\subseteq \CC^2$ is the sphere of radius $\varepsilon$ centered at $x$. If $u$ is an injective $J$-holomorphic curve parametrizing a subvariety in $\mathcal{M}^J_{X,A}\bigl\langle\mathcal{C}_{D_1,D_2}^{p,q}x\bigr\rangle$ with only one singularity at $[0:0:1]$, then we say that $u$ is \textbf{$(p,q)$-unicuspidal}. \cite[Lemma 3.5.3]{mcduffsiegelellipsoidalsuperpotentialssingularcurve} shows that the tangency condition $\bigl\langle\mathcal{C}_{D_1,D_2}^{p,q}x\bigr\rangle$ implies that in the local holomorphic chart of the singularity, the image of $u$ is a $(p,q)$-cusp. Let us remark that when $q=1$, while $u$ might be a smooth curve with $\text{ord}(u,D_1;x)=p$, it will still be called a $(p,1)$-unicuspidal curve for convenience in subsequent statements\footnote{For example, a smooth curve passing through $x$ and transverse to both $D_1,D_2$ will be called $(1,1)$-unicuspidal.}. 

%\begin{definition}
%    For positive integers $p,q$ with $\text{gcd}(p,q)=1$, a \textbf{$(p,q)$-unicuspidal symplectic curve} is a subset $C\subset (X,\omega)$ such that \begin{itemize}
%        \item there is a point $x\in C$ with an open neighborhood $U\subset X$ such that $(U,C\cap U)$ is symplectomophic to $(U',C'\cap U')$ where $C'$ is an algebraic curve in $\CC^2$ having a $(p,q)$ cusp at $x'\in C'$ and $U'\subset \CC^2$ is an open neighborhood of $x'$;
%        \item $C\setminus \{x\}$ is an embedded symplectic submanifold.
%    \end{itemize}
%\end{definition}

Now we discuss the {\bf normal crossing resolution} for such a $(p,q)$-unicuspidal $J$-holomorphic curve $u$. Denote by $C$ the image of $u$. Since $J$ is integrable near the divisor $D$, we can perform the blowup construction for complex manifolds by removing the point $x$ and replacing it with the space of complex lines in $T_xX$ to get the blowup manifold $X_1\cong X\#\overline{\CC\PP}^2$ equipped with an almost complex structure $J_1$ and a holomorphic map $\pi:(X_1,J_1)\rightarrow (X,J)$. It is well known that the blowup of a K\"ahler manifold is still K\"ahler (see for example \cite[Proposition 3.24]{Voisinbook}), $J_1$ must be tamed with some symplectic structure on $X_1$ since the blowup construction is local in nature. The total transform $\pi^{-1}(D)\subseteq X_1$ is a union of proper transform $\cup \pi^{-1}(D_i)$ and the exceptional curve $e$. Let $C_1\subseteq X_1$ be the proper transforms $\pi^{-1}(C)$. Since we assume $p>q$, $C_1$ is disjoint from $\pi^{-1}(D_2)$ but has contact order $p-q$ with $\pi^{-1}(D_1)$ and contact order $q$ with the exceptional curve $e$. Thus $C_1$ is a $J_1$-holomorphic curve satisfying the constraint $\bigl\langle\mathcal{C}_{\pi^{-1}(D_1),e}^{p-q,q}x_1\bigr\rangle$ where $x_1$ is the intersection between $\pi^{-1}(D_1)$ and $e$. Since $\text{gcd}(p-q,q)=1$, we can continue to blowup at $x_1$ and obtain the proper transform $C_2$ in $X_2=X\#2\overline{\CC\PP}^2$. Therefore, there must be some $k\in\ZZ_+$ such that the proper transform $C_k$ along with the total transform of $D$ gives a normal crossing divisor in $X_k$.
Such a smooth $J_k$-holomorphic curve $C_k$ is called the {\bf normal crossing resolution} of $C$. It intersects exactly one component, the exceptional curve that comes from the $k$-th blowup, of the total transform of $D$.

Given relatively prime numbers $p,q\in\ZZ_+$, there is an associated {\bf weight sequence} $\mathcal{W}(p,q):=(m_1,\cdots,m_k)$ defined as follows. Firstly, let $(p_1,q_1):=(p,q)$. Assuming $(p_n,q_n)$ is defined and $p_n\neq q_n$, then $(p_{n+1},q_{n+1})$ is defined as $(|p_n-q_n|,\min\{p_n,q_n\})$. Since $p,q$ are relatively prime, there exists $k\in\ZZ_+$ such that $(p_k,q_k)=(1,1)$. Then the number $m_i$ in the weight sequence is defined as $\min\{p_i,q_i\}$. For example, we have $\mathcal{W}(5,2)=(2,2,1,1)$.
Another way to introduce the weight sequence is through the box diagram in \cite{mcduffsiegelellipsoidalsuperpotentialssingularcurve}. From the box diagram, it is easy to obtain the following facts.

\begin{fact}\label{fact:boxdiagram}
    $pq=\sum_{i=1}^km_i^2$, $p+q=\sum_{i=1}^km_i+1$.
\end{fact}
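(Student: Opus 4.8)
The plan is to prove both identities simultaneously by induction on the length $k$ of the weight sequence, exploiting the recursive structure built into the definition of $\mathcal{W}(p,q)$. The key observation is that the defining recursion is Markovian: the sequence $(m_2,\dots,m_k)$ obtained by deleting the first term of $\mathcal{W}(p,q)$ is precisely the weight sequence $\mathcal{W}(p',q')$ of the reduced pair $(p',q'):=(|p-q|,\min\{p,q\})$, since all subsequent terms $(p_n,q_n)$ are determined by $(p_2,q_2)=(p',q')$ alone. Moreover the greatest common divisor is preserved under the step $(p_n,q_n)\mapsto(|p_n-q_n|,\min\{p_n,q_n\})$, so $(p',q')$ remains a coprime pair and the induction hypothesis applies to it.

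For the base case, coprimality forces the process to terminate at $(1,1)$, where $\mathcal{W}(1,1)=(1)$ and both identities read $1=1\cdot 1$ and $1=1+1-1$. For the inductive step, write $m_1=\min\{p,q\}$ and $M=\max\{p,q\}$, so that the reduced pair is $(p',q')=(M-m_1,\,m_1)$ and has strictly smaller sum $p'+q'=M<p+q$. Assuming the identities for $(p',q')$, namely $p'q'=\sum_{i=2}^k m_i^2$ and $p'+q'-1=\sum_{i=2}^k m_i$, it remains to verify the two purely algebraic relations
\[
pq=m_1^2+p'q',\qquad p+q-1=m_1+(p'+q'-1).
\]
These follow by direct substitution: $m_1^2+p'q'=m_1^2+(M-m_1)m_1=Mm_1=pq$ for the first, and $m_1+(M-m_1+m_1-1)=M+m_1-1=p+q-1$ for the second.

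Since the reduction is symmetric in $p$ and $q$ (only $\min$ and $\max$ enter), no case distinction on which coordinate is larger is needed once everything is written in terms of $m_1$ and $M$. There is no genuine obstacle here; the only point requiring care is the bookkeeping verifying that deleting the leading weight of $\mathcal{W}(p,q)$ returns exactly $\mathcal{W}(p',q')$, which is immediate from the deterministic nature of the recursion, together with the check that coprimality is inherited so that the base case is reached. As an alternative one could read both identities directly off the box diagram of \cite{mcduffsiegelellipsoidalsuperpotentialssingularcurve}: the area $pq$ of the $p\times q$ box is partitioned into squares of side lengths $m_i$, giving $pq=\sum_{i=1}^k m_i^2$, while tracking the lattice segments removed at each stage yields $p+q-1=\sum_{i=1}^k m_i$.
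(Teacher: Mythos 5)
Your proof is correct, and it is essentially the paper's approach made rigorous: the paper simply invokes the box diagram of McDuff--Siegel, whose squares correspond exactly to the steps of your induction (cutting the largest square off a $p\times q$ rectangle is the reduction $(p,q)\mapsto(|p-q|,\min\{p,q\})$, the area count is your first identity, and the semi-perimeter bookkeeping is your second). Your inductive write-up, including the checks that coprimality is inherited and that deleting $m_1$ from $\mathcal{W}(p,q)$ yields $\mathcal{W}(p',q')$, supplies precisely what the paper leaves as ``easy to obtain,'' and you note the box-diagram reading yourself as the alternative.
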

\begin{proof}
    Following \cite{mcduffsiegelellipsoidalsuperpotentialssingularcurve}, associate to a $(p,q)$-cusp a $p\times q$ rectangle. The weight sequence corresponds to a decomposition of this rectangle into squares of side lengths $m_1,\dots,m_k$. The identity $pq=\sum_{i=1}^k m_i^2$ is obtained by comparing areas. For the second identity, one considers the half-perimeter contributions of the squares along the boundary of the box diagram, which yields $p+q=\sum_{i=1}^k m_i+1$ (the last square of side length $m_k=1$ contributes twice).
\end{proof} 

The importance of weight sequence is that it gives the homology class of the normal crossing resolution of a $(p,q)$-cuspidal curve. In fact, if $C\subseteq X$ has a $(p,q)$-cusp and $\tilde{C}\subseteq X_k$ is its normal crossing resolution, then we have \[[\tilde{C}]=[C]-\sum_{i=1}^km_iE_i,\]
where $E_i$'s are the exceptional class and we implicitly include $H_2(X;\ZZ)$ into $H_2(X_k;\ZZ)$. See Figure \ref{fig:resolution} for an example of a $(5,2)$-cusp.

\begin{figure}[ht]
		\centering\includegraphics*[height=4cm, width=14cm]{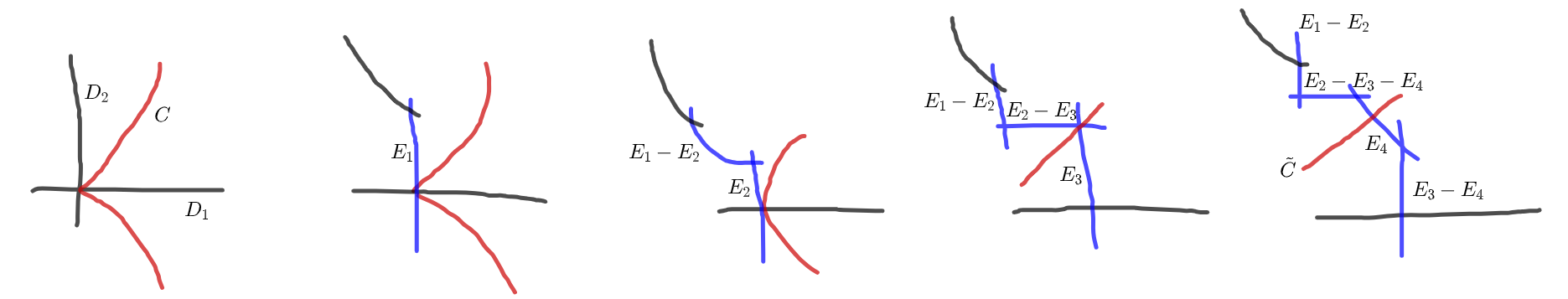}
 
		\caption{The normal crossing resolution for a (5,2)-cusp. The intersection pattern implies the stated homology class relation. \label{fig:resolution}}
	\end{figure}
    
We will also need the following simple lemma.

\begin{lemma}\label{lem:positivecombination}
     Let $p,q\in\ZZ_+$ be relatively prime numbers with weight sequence $(m_1,\cdots,m_k)$ and $\tilde{D}$ the proper transform in $X_k$ of the irreducible smooth divisor $D\subseteq X$ under the normal crossing resolution of a $(p,q)$-cusp with contact order $p$ at $D$. Then we can write \[q([D]-[\tilde{D}])-\sum_{i=1}^km_iE_i\] as a linear combination of the components of the total transform of $D$ with all coefficients being non-negative.
\end{lemma}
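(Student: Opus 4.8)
The plan is to compute the class $q([D]-[\tilde{D}])-\sum_{i=1}^k m_iE_i$ in two different bases of the exceptional lattice and thereby reduce the whole statement to a single inequality between divisorial valuations. Choose local holomorphic coordinates $(z_1,z_2)$ centered at the cusp point in which $D=\{z_2=0\}$ and the cuspidal curve $C$ (the image of $u$) is the monomial curve $\{z_2^q=z_1^p\}$; these exist because $J$ is integrable near $D$ and, by \cite[Lemma 3.5.3]{mcduffsiegelellipsoidalsuperpotentialssingularcurve}, the contact condition forces $C$ to be a standard $(p,q)$-cusp tangent to $D$ to order $p$. Let $\pi\colon X_k\to X$ be the normal crossing resolution, write $e_1,\dots,e_k$ for the proper transforms of the successive exceptional curves and $E_1,\dots,E_k$ for their total transform classes, and set $b_i:=\mathrm{mult}_{e_i}(\pi^*D)$, $\mu_i:=\mathrm{mult}_{e_i}(\pi^*C)$ and $a_i:=\mathrm{mult}_{e_i}(\pi^*\{z_1=0\})$.

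Writing each total transform as the proper transform plus its exceptional multiplicities gives the two identities $[D]=[\tilde{D}]+\sum_i b_i[e_i]$ and $[C]=[\tilde{C}]+\sum_i \mu_i[e_i]$ in $H_2(X_k;\ZZ)$. Comparing the second with the weight-sequence relation $[\tilde{C}]=[C]-\sum_i m_iE_i$ recorded above shows that $\sum_i m_iE_i=\sum_i\mu_i[e_i]$, i.e.\ it is the single class $[C]-[\tilde{C}]$ expressed once in the basis $\{E_i\}$ and once in the basis $\{[e_i]\}$. Substituting both identities into the class of interest yields
\[q\big([D]-[\tilde{D}]\big)-\sum_{i=1}^k m_iE_i=\sum_{i=1}^k\big(qb_i-\mu_i\big)[e_i].\]
In particular the coefficient of $[\tilde{D}]$ is zero; and since every $e_i$ lies over the cusp point of $D$ we have $b_i\ge 1$, so each $e_i$ is genuinely a component of the total transform of $D$. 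Hence the lemma is reduced to the inequalities $qb_i\ge\mu_i$ for all $i$.

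This remaining inequality is the crux, and I would prove it by exploiting that $C=\{z_2^q=z_1^p\}$ is a monomial curve. Its embedded resolution by successive point blow-ups is a toric morphism --- each center is the origin of the preceding chart, as one checks directly along the Euclidean recursion defining the weight sequence --- so every exceptional valuation $v_i:=\mathrm{ord}_{e_i}$ is the monomial valuation on $(z_1,z_2)$ with weights $v_i(z_1)=a_i$ and $v_i(z_2)=b_i$. For a monomial valuation the value of a polynomial is the minimum of the weighted degrees of its terms, so, the two monomials $z_2^q$ and $z_1^p$ being distinct,
\[\mu_i=v_i\big(z_2^q-z_1^p\big)=\min\big(q\,v_i(z_2),\,p\,v_i(z_1)\big)=\min(qb_i,\,pa_i)\le qb_i.\]
Equivalently $qb_i-\mu_i=\max(0,\,qb_i-pa_i)\ge 0$, which gives all the desired non-negative coefficients. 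The one genuinely delicate point is the monomiality of the $v_i$: it is exactly the statement that the cuspidal equation acquires no extra vanishing along any exceptional divisor, and this is where I expect to spend the most care, either invoking the standard toric resolution of a quasi-homogeneous plane-curve singularity or verifying inductively that every blow-up center is a torus-fixed point.
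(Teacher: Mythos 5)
Your proposal is correct in substance, but it takes a genuinely different route from the paper's. The paper proves the lemma by induction on the length $k$ of the weight sequence, mirroring the Euclidean recursion $(p,q)\mapsto(|p-q|,\min\{p,q\})$: it splits into the cases $p>q$ and $p<q$, peels off $q([D]-[\tilde{D}_{(1)}])=qE_1=m_1E_1$ (resp.\ $qE_1=m_1E_1+(q-p)E_1$) after the first blowup, and applies the inductive hypothesis to the once-blown-up configuration; it produces no explicit coefficients. You instead compute once and for all in the toric model: working in the basis of proper-transform classes $[e_i]$, identifying each exceptional valuation with the monomial valuation of the ray $(a_i,b_i)$, and arriving at the closed formula $q([D]-[\tilde{D}])-\sum_i m_iE_i=\sum_i\max(0,\,qb_i-pa_i)\,[e_i]$. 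This is strictly more information than the lemma asks for (it pinpoints exactly which coefficients vanish, e.g.\ in the paper's $(5,2)$ example it gives $[e_3]=E_3-E_4$, matching their answer), at the price of importing the two toric facts you yourself flag: that every blowup center is a corner of the evolving boundary, and that $\mathrm{ord}_{e_i}$ is monomial; both are standard and do hold here, since at each stage the strict transform is again a binomial-type curve through the intersection point of two boundary components. Two small repairs are needed in your write-up. First, \cite[Lemma 3.5.3]{mcduffsiegelellipsoidalsuperpotentialssingularcurve} gives only the \emph{topological} type of the cusp (its link), not an analytic normal form, so you cannot literally assume $C=\{z_2^q=z_1^p\}$; second, in the paper's actual application (Lemma \ref{lem:resolutionDgood}) the blowups are performed topologically according to the $(p,q)$-pattern before any curve is known to exist, so there is no $u$ to normalize. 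Both issues vanish if you phrase the reduction correctly: the lemma concerns only the combinatorics of the blowup pattern (the centers, their incidences with the boundary components, and the classes $[\tilde{D}]$, $E_i$, $m_i$), and this pattern is identical for the monomial curve; hence computing $b_i$, $a_i$, $\mu_i$ in the toric model realizing the same pattern is legitimate. With that rephrasing your argument is complete.
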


\begin{proof}
    We prove by induction on $k$. When $k=1$ we must have $m_1=p=q=1$ and thus $q([D]-[\tilde{D}])-m_1E_1$ is just the zero class. For $k>1$, we can reduce to the case for $k-1$ by considering the proper transform $\tilde{D}_{(1)}$ of $D$ in the one-time blowup $X_1$. If $p>q$, we can split $q([D]-[\tilde{D}])$ as the sum of $q([D]-[\tilde{D}_{(1)}])$ and $q([\tilde{D}_{(1)}]-[\tilde{D}])$. Then on the one hand, by induction $q([\tilde{D}_{(1)}]-[\tilde{D}])-\sum_{i=2}^km_iE_i$ is a non-negative linear combination of the components of the total transform of $\tilde{D}_{(1)}$ which is part of the total transform of $D$, since we have a $(p-q,q)$-cusp with contact order $p-q$ at $\tilde{D}_{(1)}$; on the other hand, $q([D]-[\tilde{D}_{(1)}])=qE_1=m_1E_1$. If $p<q$, then $q([D]-[\tilde{D}])=qE_1=m_1E_1+(q-p)E_1$. Since we have a $(p,q-p)$-cusp with contact order $p$ at the exceptional curve $e_1$ of $X_1$ in class $E_1$, by induction $(q-p)E_1-(q-p)\tilde{E}_1-\sum_{i=2}^km_iE_i$ is a non-negative linear combination of components of the total transform of $e_1$ which is part of the total transform of $D$, where $\tilde{E}_{1}$ denotes the homology class of proper transform $\tilde{e}_1$ of $e_1\subseteq X_1$ in $X_k$. Thus, $q([D]-[\tilde{D}])$ still satisfies our requirement.
\end{proof}

\begin{example}
    Consider $(p,q)=(5,2)$ as shown in Figure \ref{fig:resolution}. The class of $C$ is changed to $[C]-2E_1-2E_2-E_3-E_4$ after resolution. Then $[\tilde{D}_1]=[D_1]-E_1-E_2-E_3$. We can then write \[2([D_1]-[\tilde{D}_1])-2E_1-2E_2-E_3-E_4=E_3-E_4,\] where $E_3-E_4$ appears in the components in the total transform of $D$. Also, we have $[\tilde{D}_2]=[D_2]-E_1$ and similarly one can verify that
\[
5([D_2]-[\tilde{D}_2])-2E_1-2E_2-E_3-E_4 = 3E_1-2E_2-E_3-E_4 = 3(E_1-E_2)+(E_2-E_3-E_4).
\]
\end{example}

The following relationship between the moduli space of curves in $X$ of class $A\in H_2(X;\ZZ)$ with tangency conditions and curves in $X_k$ of class $\tilde{A}:=A-\sum_{i=1}^km_iE_i\in H_2(X_k;\ZZ)$ was pointed out in \cite[Proposition 4.3.1]{mcduffsiegelellipsoidalsuperpotentialssingularcurve}.

\begin{proposition}\label{prop:modulirelation}
    There is a natural bijective correspondence 
    \[\mathcal{M}^J_{X,A}\bigl\langle\mathcal{C}_{D_1,D_2}^{p,q}x\bigr\rangle\cong \mathcal{M}_{X_k,\tilde{A}}^{J_k,\text{emb}},\]
    where $\mathcal{M}_{X_k,\tilde{A}}^{J_k,\text{emb}}\subseteq\mathcal{M}_{X_k,\tilde{A}}^{J_k}$ denotes the moduli space of embedded (unparametrized) $J_k$-holomorphic spheres of class $\tilde{A}$.
\end{proposition}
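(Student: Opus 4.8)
The plan is to realize the claimed correspondence geometrically through the normal crossing resolution, exploiting that the elementary blowup and blowdown are mutually inverse and both preserve $J$-holomorphicity in the locus where $J$ is integrable. Since $J\in\mathcal{J}(D)$, integrability holds on a neighborhood of $x=D_1\cap D_2$ and, inductively, of all its infinitely near points, so every blowup in the resolution tower $X\leftarrow X_1\leftarrow\cdots\leftarrow X_k$ is a genuine complex blowup of an integrable region. I would define the forward map $\mathcal{M}^J_{X,A}\langle\mathcal{C}^{p,q}_{D_1,D_2}x\rangle\to\mathcal{M}^{J_k,\mathrm{emb}}_{X_k,\tilde A}$ by sending a subvariety to its iterated proper transform, and the backward map by iterated blowdown. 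The entire statement then reduces, by induction on the length $k$ of the weight sequence $\mathcal{W}(p,q)$, to a single elementary blowup, matching each stage of the resolution with one step of the recursion $(p_n,q_n)\mapsto(|p_n-q_n|,\min\{p_n,q_n\})$.

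For the inductive step I would assume without loss of generality $p>q$, so $m_1=\min\{p,q\}=q$. In the forward direction, given $u$ with image $C$, form $\hat C=\overline{\pi_1^{-1}(C\setminus x)}\subseteq X_1$, which is $J_1$-holomorphic because $\pi_1$ is a holomorphic blowup near $x$. By \cite[Lemma 3.5.3]{mcduffsiegelellipsoidalsuperpotentialssingularcurve} the tangency constraint forces $C$ to be an honest $(p,q)$-cusp in a local holomorphic chart, so its multiplicity at $x$ is exactly $q$; hence $[\hat C]=A-m_1E_1$, and the usual blowup behavior of a cusp yields contact order $p-q$ with $\pi_1^{-1}(D_1)$ and $q$ with the exceptional curve $e_1$, placing $\hat C$ in $\mathcal{M}^{J_1}_{X_1,A-m_1E_1}\langle\mathcal{C}^{p-q,q}_{\pi_1^{-1}(D_1),e_1}x_1\rangle$. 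In the backward direction, blowing $e_1$ down reconstitutes a point of multiplicity $q$ at $x$, and the recorded contact orders reassemble precisely the $(p,q)$-cusp while raising the class back to $A$; these operations are inverse since blowup and blowdown are. Iterating until $(p_k,q_k)=(1,1)$ and summing the successive drops accounts, via Fact \ref{fact:boxdiagram}, for the total correction $\sum_{i=1}^k m_iE_i$, so the resolved class is exactly $\tilde A$.

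At the terminal stage the constraint is $\mathcal{C}^{1,1}$, a smooth branch through $x_{k-1}$ transverse to both local components; one further blowup separates it, so its proper transform $\tilde C$ is an embedded sphere meeting only the last exceptional curve $e_k$, transversally at one point. To identify the image of the terminal step with the \emph{full} space $\mathcal{M}^{J_k,\mathrm{emb}}_{X_k,\tilde A}$, which carries no tangency constraint, I must prove the converse: every embedded $J_k$-holomorphic sphere in class $\tilde A$ meets $e_k$ transversally once and is disjoint from the remaining exceptional curves and from the proper transforms of $D_1,D_2$, so that it is legitimately blown down. \textbf{This is where I expect the main work to lie.} It is a purely homological computation of the intersection numbers of $\tilde A$ with each irreducible component of the total transform of $D$, converted into a geometric intersection pattern by positivity of intersections, using that $\tilde C$ and those components are distinct irreducible $J_k$-holomorphic curves. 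The identities of Fact \ref{fact:boxdiagram} also give, through adjunction, $\tilde A^2=A^2-pq$ and $K_{X_k}\cdot\tilde A=K_X\cdot A+(p+q-1)$, which are consistent with $\tilde C$ being a genus-zero embedded sphere; this is what keeps the two sides in genuine bijection, and it simultaneously confirms on the forward side that a tangency curve with embedded resolution has the $(p,q)$-cusp as its only singularity. Finally, since each elementary blowup and blowdown is continuous for the Gromov topology, the bijection is a homeomorphism, yielding the asserted natural correspondence.
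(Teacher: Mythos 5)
Your backward map is essentially sound, and you are right that it rests on positivity of intersections: the classes of the exceptional components of the total transform lie in the span of $E_1,\dots,E_k$, so the intersection numbers of $\tilde A$ with them are independent of $A$ and equal those of the model resolution, namely $0$ for the proper transforms of $e_1,\dots,e_{k-1}$ and $1$ for $e_k$; positivity then forces any embedded sphere in class $\tilde A$ to meet $e_k$ once transversally and to avoid the other exceptional curves, and the multiplicities $p,q$ of $e_k$ in the pullbacks $\pi^*D_1,\pi^*D_2$ give contact orders exactly $(p,q)$ at $x$ after blowing down. One slip there: you cannot also conclude disjointness from the proper transforms of $D_1,D_2$, since $\tilde A\cdot[\hat D_1]=A\cdot[D_1]-p$ and $\tilde A\cdot[\hat D_2]=A\cdot[D_2]-q$ need not vanish; fortunately that disjointness is irrelevant, because the constraint only prescribes the behavior at $x$. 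Note also that the paper offers no proof of this proposition at all -- it is quoted from McDuff--Siegel -- so there is no internal argument to compare with; what follows is how your reconstruction differs from the argument that actually makes the statement true.

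The genuine gap is in the forward direction, at the step you treat as automatic: that the iterated proper transform of a constrained curve is \emph{embedded}. Your resolution analysis controls only the branch through the marked point; the moduli space $\mathcal{M}^J_{X,A}\bigl\langle\mathcal{C}_{D_1,D_2}^{p,q}x\bigr\rangle$ as defined allows $u$ to have nodes or worse singularities away from $x$, and further branches of $u^{-1}(x)$, and all of these survive untouched in the proper transform. This cannot be argued away by soft means: an irreducible rational curve in class $A$ has total $\delta$-invariant $\frac{1}{2}(A^2+K_X\cdot A)+1$, determined by the class alone, of which a $(p,q)$-cusp accounts for exactly $\frac{1}{2}(p-1)(q-1)$; hence the proper transform is embedded if and only if $A^2+K_X\cdot A+2=(p-1)(q-1)$, equivalently $\tilde A^2+K_{X_k}\cdot\tilde A=-2$. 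When this identity fails, the proposition as you are proving it is false: take $X=\CC\PP^2$, $D_1,D_2$ two lines meeting at $x$, $(p,q)=(1,1)$, $A=3h$; irreducible nodal rational cubics through $x$ transverse to both lines populate the left-hand side, while the right-hand side is empty, since by adjunction no embedded sphere represents $3h-E_1$. A correct proof must therefore either invoke this adjunction identity as a hypothesis -- it does hold in the paper's application, where Definition~\ref{def:homaffruled} gives $A^2=pq$ and $A\cdot K_\omega=-p-q-1$, so that $\tilde A^2=0$ and $K_{X_k}\cdot\tilde A=-2$ as in Lemma~\ref{lem:resolutionDgood} -- or build unicuspidality into the left-hand moduli space, which is in effect what McDuff--Siegel do. Your closing remark that the adjunction numbers are ``consistent with'' embeddedness gestures at exactly this point but proves nothing; that, and not the converse you flagged, is where the real work lies.
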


\section{Irrational ruled manifolds}\label{section:ruled}

Let $(X,\omega)$ a symplectic ruled manifold with genus $g:=\frac{1}{2}b_1(X)$. When $g\geq 1$, we say $(X,\omega)$ is irrational ruled. The diffeomorphism type of $X$ is either a twisted $S^2$-bundle over the genus $g$ Riemann surface $S^2\tilde{\times}\Sigma_g$, or the blowup of the trivial $S^2$-bundle $(S^2\times \Sigma_g)\#n\overline{\CC\PP}^2$. In the former case, we choose the basis
\[\{B_1,F\}\subseteq H_2(X;\ZZ),\]
where $B_1$ is the class of a section with self-intersection one of the twisted $S^2$-bundle and $F$ is the class of the fiber $S^2$. By \cite[Theorem 1]{LiLiucone}, up to a diffeomorphism, the symplectic canonical class can always be assumed to be the standard one \[K_{\omega}=-2B_1+(2g-1)F.\]
In the latter case, we choose the basis 
\[\{B,F,E_1,\cdots,E_n\}\subseteq H_2(X;\mathbb{Z}),\]
where $B$ is the class of a section with self-intersection zero, $F$ is the class of the fiber $S^2$ and $E_i$'s are exceptional classes. Then we may assume the symplectic canonical class is the standard one
\[K_{\omega}=-2B+(2g-2)F+\sum_{i=1}^n E_i.\] 

The following definition will be used both in this section, where $g\geq 1$, and in the next section, where $g=0$.

\begin{definition}\label{def:comblikeruled}
   A symplectic divisor $D$ in a ruled manifold with $g\geq 0$ is called {\bf comb-like} if, after adding finitely many components, it extends to a divisor $\tilde{D}$ obtained from an initial divisor in $\Sigma_g\times S^2$ (or $\Sigma_g\tilde{\times} S^2$) by a sequence of toric/non-toric/half-toric/exterior blowups. Here the initial divisor consists of one component in the section class $B+kF$ (or $B_1+kF$) and several components in the fiber class $F$. 
\end{definition}
  We refer to Figure \ref{fig:comb} for an illustrative example of a comb-like configuration. Now suppose $D=\cup_i D_i\subseteq (X,\omega)$ is a symplectic divisor with classes $[D_i]=a_iF+b_iB-\sum_{j=1}^nc_i^jE_j$ (or $[D_i]=a_iF+b_iB_1$ in the twisted $S^2$-bundle case). It turns out that the configuration of $D$ in an irrational ruled manifold must be comb-like under the assumption $[\omega]\cdot(K_{\omega}+[D])<0$ by the following characterization of the classes of components in $D$. We write $D=D'\cup D''$, where $D'$ consists of spherical components and $D''$ consists of components of higher genus. The notation $\sum E_j$ below denotes $\sum_{j\in \Lambda}E_j$ for some subset $\Lambda\subseteq \{1,2,\cdots,n\}$.

\begin{proposition}\label{prop:ruledconfig}
     Let $D\subseteq (X,\omega)$ be a symplectic divisor with $[\omega]\cdot (K_{\omega}+[D])<0$ where $X$ is an irrational ruled manifold. Then \begin{itemize}
        \item each spherical component $D_i$ of $D'$ has class $F-\sum E_j$ or $E_l-\sum E_j$;
        \item $D''$ is either empty or has exactly one component of genus $g$ within the class $B+kF-\sum E_j$ (or $B_1+kF$ in the twisted $S^2$-bundle case).
    \end{itemize}

\end{proposition}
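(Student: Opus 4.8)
The plan is to fix a divisor-adapted $J\in\mathcal{J}(D)$ and exploit the ruling on the fiber class $F$ from Theorem \ref{thm:fiberclass}. Since $F$ is $J$-nef and each $D_i$ is $J$-holomorphic, pairing gives $[D_i]\cdot F=b_i\geq 0$, so every component already has non-negative $B$-coefficient. Moreover $F^2+K_\omega\cdot F=-2$, so Theorem \ref{thm:lizhangnef} shows every subvariety in class $F$ is a connected tree of embedded spheres; combined with the last bullet of Theorem \ref{thm:fiberclass}, each spherical component of $D$ is an irreducible component of some fiber, hence is vertical with $b_i=[D_i]\cdot F=0$.

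The first dichotomy comes from Corollary \ref{cor:SWgenus}. For any fixed $i$, subtracting the positive areas of the other components gives $\omega\cdot(K_\omega+[D_i])\leq\omega\cdot(K_\omega+[D])<0$, while $\omega\cdot[D_i]>0$. Thus, whenever $[D_i]\cdot F=b_i\neq 1$, Corollary \ref{cor:SWgenus} forces $[D_i]\cdot([D_i]+K_\omega)=2g(D_i)-2<0$, i.e. $D_i$ is a sphere. Consequently every positive-genus component has $b_i=1$, and no component can have $b_i\geq 2$ (it would be a sphere, but spheres are vertical with $b_i=0$). Hence $b_i\in\{0,1\}$, with $b_i=0$ exactly for the vertical spheres and $b_i=1$ for the ``sections''. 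A section $D_i$ is irreducible with $[D_i]\cdot F=1$ and is not contained in a fiber, so it maps with degree one onto the genus $g$ base curve $\mathcal{M}_F$; a degree-one map of closed oriented surfaces does not decrease genus, so $g(D_i)\geq g$. On the other hand adjunction gives $g(D_i)=g-\tfrac12\sum_j c_i^j(c_i^j-1)\leq g$, whence $g(D_i)=g$ and all $c_i^j\in\{0,1\}$, i.e. $[D_i]=B+kF-\sum E_j$ (respectively $B_1+kF$ in the twisted bundle, where adjunction already forces genus $g$).

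For the spherical components I would determine the vertical classes directly. Writing $[D_i]=aF-\sum_j c_i^j E_j$, adjunction for a sphere yields $a=1-\tfrac12\sum_j c_i^j(c_i^j-1)$, so $a\leq 1$; the remaining point is to establish $a\in\{0,1\}$ with the precise shapes $F-\sum E_j$ (for $a=1$) and $E_l-\sum E_j$ (for $a=0$). Here I would induct along the tree of the ambient fiber, successively contracting its $(-1)$-spheres (each a symplectic blowdown to a smaller ruled manifold) while tracking the proper transform of $D_i$ until the fiber becomes smooth, and constrain the admissible $c_i^j$ using positivity of intersection of $D_i$ with the $J$-effective classes $E_j$ and $F-E_j$ (the latter being an exceptional class by a direct computation, hence $\omega$-positive and $J$-effective by Corollary \ref{cor:SWnonzero}). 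I expect this to be the main obstacle: for a non-generic $J$ the homological and area constraints alone do not exclude ``twisted'' spherical classes such as $-F+E_1+E_2$, and ruling them out genuinely uses that the fibers are honest degenerations of the ruling rather than arbitrary negative semi-definite trees of spheres.

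Finally I would show that at most one genus $g$ section occurs, which is where the hypothesis $\omega\cdot(K_\omega+[D])<0$ enters decisively. Suppose two sections $D_1,D_2$ have classes $B+k_iF-\sum_{S_i}E_j$. Each $F-E_j$ is exceptional, so $\omega\cdot(F-E_j)>0$ and hence $\omega\cdot E_j<\omega\cdot F$. A short computation gives $K_\omega+[D_1]+[D_2]=(2g-2+k_1+k_2)F+\sum_j(1-[j\in S_1]-[j\in S_2])E_j$, and retaining only the negative $E_j$-coefficients together with $\omega\cdot E_j<\omega\cdot F$ yields $\omega\cdot(K_\omega+[D_1]+[D_2])>(2g-2+k_1+k_2-|S_1\cap S_2|)\,\omega\cdot F$. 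Since $[D_1]\cdot[D_2]=k_1+k_2-|S_1\cap S_2|\geq 0$ for distinct components of a symplectic divisor and $g\geq 1$, the right-hand side is $\geq 0$, forcing $\omega\cdot(K_\omega+[D])>0$ and contradicting the hypothesis. The twisted $S^2$-bundle case is identical but simpler, since there are no $E_j$ and adjunction already fixes the genus at $g$. This shows $D''$ has at most one component, necessarily of genus $g$ and of the asserted class.
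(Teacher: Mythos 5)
Your treatment of the non-spherical components is correct and essentially matches the paper: $b_i=1$ via Corollary \ref{cor:SWgenus}, the genus pinching $g\leq g(D_i)\leq g$, and the class $B+kF-\sum E_j$ are all the paper's steps. Your uniqueness argument is a genuine (and valid) alternative: you run a direct area estimate using $\omega(E_j)<\omega(F)$ (from the exceptional classes $F-E_j$) together with $[D_1]\cdot[D_2]\geq 0$, whereas the paper applies Corollary \ref{cor:SWgenus} to $A=[D_i]+[D_j]$ to force $A\cdot F=1$, contradicting $A\cdot F=2$. The genuine gap is exactly where you flagged it: you never prove $a\geq 0$ for the vertical spherical components, i.e.\ you never exclude classes such as $-F+E_1+E_2$, and the tool you propose cannot do it as stated. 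Positivity of intersection against the $J$-effective classes $E_j$ and $F-E_j$ is self-defeating here, because the classes the proposition itself allows already pair negatively with them: $E_l-\sum E_j$ pairs $-1$ with $E_l$, and $F-E_1-E_2$ pairs $-1$ with $F-E_1$. This happens precisely because a component of $D$ can be an irreducible component of the (possibly reducible) subvariety representing $E_j$ or $F-E_j$, and positivity of intersection says nothing about a curve against a subvariety containing it. So the blowdown-induction along the fiber tree would have to control exactly this coincidence, and you do not carry that out; your own diagnosis that ``homological and area constraints alone do not exclude twisted classes'' is accurate, but the proposal stops there.

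The missing idea is the other half of Theorem \ref{thm:fiberclass}, which is what the paper uses at this point: the embedded $J$-holomorphic curve $C$ of genus $g$ identified with $\mathcal{M}_F$, whose class is $B+gF$ (resp.\ $B_1+gF$ in the twisted case). A spherical component $D_i$ has genus $0<g=g(C)$, so $D_i\neq C$ as irreducible $J$-holomorphic curves, and positivity of intersection applies with no caveat, giving $a_i=[D_i]\cdot[C]\geq 0$. Combined with your adjunction identity $a_i=1-\tfrac12\sum_j c_i^j(c_i^j-1)\leq 1$, this forces $a_i\in\{0,1\}$: for $a_i=1$ all $c_i^j\in\{0,1\}$, giving $F-\sum E_j$; for $a_i=0$ exactly one $c_i^l\in\{-1,2\}$ with the rest in $\{0,1\}$, and $c_i^l=2$ is excluded since the resulting class $-2E_l-\sum E_j$ would have negative $\omega$-area. (Your derivation of $b_i=0$ for spheres from the last bullet of Theorem \ref{thm:fiberclass} plus $J$-nefness of $F$ is fine and replaces the paper's appeal to Kneser's theorem for the projection $X\rightarrow\Sigma_g$; the section curve $C$ is needed only for the lower bound on $a_i$, which is the step your proposal lacks.)
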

   
\begin{proof}
  For any component $D_i$ of $D$, the adjunction formula gives 
 \begin{align}\label{ruledadj}
     g(D_i)&=\frac{[D_i]^2+K_{\omega}\cdot [D_i]}{2}+1\notag\\
     &=
     \begin{cases}
         (a_i-1)(b_i-1)+gb_i+\dfrac{b_i(b_i-1)}{2},
         & \text{if } [D_i]=a_iF+b_iB_1,\\[0.4em]
         (a_i-1)(b_i-1)+gb_i+\dfrac{1}{2}\displaystyle\sum_{j=1}^n(c_i^j-(c_i^j)^2),
         & \text{if } [D_i]=a_iF+b_iB-\displaystyle\sum_{j=1}^nc_i^jE_j.
     \end{cases}
 \end{align}
Now, choose any $J\in\mathcal{J}(D)$. By the $J$-nefness of the fiber class $F$ in Theorem \ref{thm:fiberclass}, the $b_i$ coefficient must be non-negative. Note that there is a projection map from the irrational ruled manifold $X$ onto $\Sigma_g$ which also provides a map from $D_i$ to $\Sigma_g$ of mapping degree $b_i$. By Kneser's theorem, when $g(D_i)=0$, the mapping degree $b_i$ must be $0$. Now we apply Theorem \ref{thm:fiberclass} again. The embedded $J$-holomorphic curve $C$ of genus $g$ in Theorem \ref{thm:fiberclass} (see also \cite[Proposition 3.6]{Zhangmoduli}) has class $B+aF-\sum E_j$ (or $B_1+aF$ for twisted $S^2$-bundle case) for some $a\in \mathbb{Z}$. If $b_i=0$, the positivity of intersection between $C$ and $D_i$ implies that $a_i\geq 0$. Therefore, from equation (\ref{ruledadj}) we see that either $a_i=1$ and $c_i^j\in\{0,1\}$ which corresponds to the class $F-\sum E_j$; or $a_i=0$, only one $c_i^l=-1$ and other $c_i^j\in\{0,1\}$ which corresponds to the class $E_l-\sum E_j$. This proves the first bullet.

For the second bullet, let us apply Corollary \ref{cor:SWgenus}. Since $[\omega]\cdot [D_i]>0$, $[\omega]\cdot(K_{\omega}+[D_i])<0$ and $[D_i]\cdot ([D_i]+K_\omega)\geq 0$ for a component $D_i$ with genus $\geq 1$, we must have $b_i=[D_i]\cdot F=1$. From equation (\ref{ruledadj}) we see that $g(D_i)\leq g$. Since we have degree $1$ map from $D_i$ to $\Sigma_g$, by Kneser's theorem, $g(D_i)$ must be equal to $g$ and thus $c_i^j\in\{0,1\}$ which gives the class $B+kF-\sum E_j$. To see there could not be two components $[D_i],[D_j]$ in $D''$, consider the class $A:=[D_i]+[D_j]$. Note that by Lemma \ref{lem:g(D)}, $A\cdot(A+K_\omega)\geq 0$. Then we can just apply Corollary \ref{cor:SWgenus} to the class $A$ to get $A\cdot F=1$. But $([D_i]+[D_j])\cdot F=1+1=2$, which is a contradiction.
\end{proof}

\begin{corollary}\label{cor:no3-Ej}
Under the assumption of Proposition \ref{prop:ruledconfig}, if $\omega(E_n)\leq \omega(E_i)$ for any $1\leq i\leq n$, then there are the following two cases:
\begin{enumerate}
       \item if there is exactly one component of $D$ in class $E_n$, then this component must be a toric/half-toric/exterior exceptional sphere with respect to $D$;
    \item if $D$ has no component in class $E_n$, then the intersection numbers between $E_n$ and the components in $D$ must be either $0$ or $1$, and at most two of them are $1$. In particular, $E_n$ must be $D$-good.
\end{enumerate}
As a consequence, $D$ must be comb-like.
\end{corollary}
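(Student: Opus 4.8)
The plan is to reduce both parts to a single intersection computation based on Proposition \ref{prop:ruledconfig}, and then to use the fibration structure of Theorem \ref{thm:fiberclass} to control how many components can touch $E_n$. First I would compute $E_n\cdot[D_i]$: from the classification of $[D_i]$ and the relations $E_n^2=-1$, $E_n\cdot F=E_n\cdot B=0$, $E_n\cdot E_j=-\delta_{jn}$, one sees that $E_n\cdot[D_i]$ equals $+1$ exactly when $E_n$ occurs with coefficient $-1$ in $[D_i]$, equals $-1$ exactly when $[D_i]=E_n-\sum_{j\in\Lambda}E_j$, and is $0$ otherwise. In case (2) I rule out the value $-1$: a component of class $E_n-\sum_{j\in\Lambda}E_j$ with $\Lambda\neq\emptyset$ would have area $\omega(E_n)-\sum_{j\in\Lambda}\omega(E_j)\leq 0$ by the minimality hypothesis $\omega(E_n)\leq\omega(E_i)$, contradicting positivity of symplectic area, while $\Lambda=\emptyset$ is excluded since there is no component in class $E_n$. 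Hence $E_n\cdot[D_i]\in\{0,1\}$ for all $i$, which is the first assertion of (2). Since $\omega\cdot[D]>0$ forces $\omega\cdot K_\omega<\omega\cdot(K_\omega+[D])<0$ and $E_n$ is a primitive class carried by an embedded symplectic exceptional sphere with $E_n^2=-1$, Lemma \ref{lem:specialclassDgood} then yields the $D$-goodness of $E_n$ directly from $E_n\cdot[D_i]\geq0$. The same computation gives $E_n\cdot[D_i]\in\{0,1\}$ for all components other than the one in class $E_n$ in case (1).

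The heart of the matter is that at most two components of $D$ meet $E_n$. Fix $J\in\mathcal{J}(D)$; in case (2) the $D$-goodness together with Theorem \ref{thm:MO15} realizes $E_n$ by an embedded $J$-holomorphic sphere $C$, and in case (1) I take $C$ to be the component of $D$ in class $E_n$ itself. By Theorem \ref{thm:fiberclass} the sphere $C$ is an irreducible component of some member $\mathcal{F}$ of $\mathcal{M}_F$, and by Theorem \ref{thm:lizhangnef} (valid because $F$ is $J$-nef and $F^2+K_\omega\cdot F=-2$) the subvariety $\mathcal{F}$ is a connected tree of embedded spheres. A component $D_i$ with $E_n\cdot[D_i]=1$ is then of one of two kinds. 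If $D_i$ is spherical, i.e. $F\cdot[D_i]=0$, then $D_i$ is itself a fiber component by Theorem \ref{thm:fiberclass}; being vertical it lies in the unique member of $\mathcal{M}_F$ through $C$, namely $\mathcal{F}$, and hence is a tree-neighbour of $C$. If $D_i$ is the genus $g$ section, i.e. $F\cdot[D_i]=1$, then $D_i$ is horizontal.

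It remains to bound the number of such $D_i$ by two. For the vertical neighbours I would contract the exceptional sphere $C$: the blowdown is again irrational ruled with fiber class $F$, so the fiber through the image point $\bar p$ is once more a connected tree of embedded spheres by Theorems \ref{thm:fiberclass} and \ref{thm:lizhangnef}. If $C$ had three distinct tree-neighbours, necessarily meeting $C$ at three distinct points by the normal crossing structure, their images would be three embedded spheres through $\bar p$, impossible in a tree; hence $C$ has at most two tree-neighbours. To prevent the section from contributing a third intersection, write $[\mathcal{F}]=F=\mu_C[C]+\sum_a m_a[A_a]$ with the $A_a$ vertical, and pair with the section $D_i$: as $D_i$ is horizontal it differs from every $A_a$, so $1=F\cdot[D_i]\geq\mu_C\,(E_n\cdot[D_i])=\mu_C$, forcing $\mu_C=1$; pairing $C$ with $F$ gives $\sum_a m_a(C\cdot A_a)=\mu_C=1$, so $C$ then has exactly one tree-neighbour, and adding the section yields at most two components meeting $E_n$. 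If no section meets $C$, the two tree-neighbours already give the bound. In case (1), meeting zero, one, or two other components is exactly the exterior, half-toric, or toric situation, using that all intersections in $D$ are positively transverse and $\omega$-orthogonal and that $\omega$-orthogonality is preserved under blowdown; in case (2), it says at most two of the $E_n\cdot[D_i]$ equal $1$, which together with the $D$-goodness already established completes the proof.

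The step I expect to be the main obstacle is the neighbour count for $C$ inside its fiber: the numerical identity $\sum_a m_a(C\cdot A_a)=\mu_C$ only bounds the number of neighbours by the multiplicity $\mu_C$, which can be arbitrarily large, so one genuinely needs the geometric tree structure of Theorem \ref{thm:lizhangnef} together with the contraction argument to force the bound down to two, and then the separate estimate $\mu_C=1$ to forbid the section from producing a third intersection.
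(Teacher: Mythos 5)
Your first paragraph is correct and coincides with what the paper actually does: by Proposition \ref{prop:ruledconfig} the only way to get $E_n\cdot[D_i]=-1$ is a component of class $E_n-\sum_{j\in\Lambda}E_j$, which the minimality of $\omega(E_n)$ excludes unless $\Lambda=\emptyset$, and $D$-goodness then follows from Lemma \ref{lem:specialclassDgood}. Where you diverge is the count that at most two components meet $E_n$. The paper settles this purely homologically: any component meeting $E_n$ must carry a $-E_n$ term, and a short case analysis combining pairwise non-negativity of intersections between distinct components of $D$ with the same area minimality shows that no three classes of the shapes allowed by Proposition \ref{prop:ruledconfig} can simultaneously contain $-E_n$. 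Your route --- realizing $E_n$ by a sphere $C$ inside a fiber tree, the multiplicity estimate forcing $\mu_C=1$ when the section meets $C$, and the contraction of $C$ to produce a triangle in the dual graph of the new fiber --- is genuinely different, and its numerical ingredients (distinct neighbours meet $C$ at distinct points in a tree; a triangle contradicts Theorem \ref{thm:lizhangnef}) are sound.

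The genuine gap is the blowdown step in case (2). The almost complex structure $J$ furnished by Theorem \ref{thm:MO15} lies in $\mathcal{J}(D)$: it is integrable only in a fibered neighbourhood of $D$, and the embedded sphere $C$ in class $E_n$ is not a component of $D$, so $J$ need not be integrable, or in any standard form, near $C$. Contracting an embedded $J$-holomorphic $(-1)$-sphere is only a routine operation when $J$ is integrable near it; as written you have not produced any tame almost complex structure on the blown-down manifold making the images of the components of $D$ that meet $C$ pseudo-holomorphic, and without one you cannot invoke Theorems \ref{thm:fiberclass} and \ref{thm:lizhangnef} downstairs to conclude that the fiber through $\bar p$ is a tree. (In case (1) this issue does not arise, since there $C$ is a component of $D$ and $J\in\mathcal{J}(D)$ is integrable near it.) The gap is repairable by a device the paper uses elsewhere: perturb $C$ so that its intersections with $D$ become $\omega$-orthogonal as in Section \ref{section:LCY}, regard $D\cup C$ as a symplectic divisor, and choose $J'\in\mathcal{J}(D\cup C)$; then $C$ can be contracted holomorphically, the images of the $D_i$ stay pseudo-holomorphic for the blown-down structure, and your triangle argument goes through. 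But absent this (or some substitute), the key geometric step of your proof is unjustified, whereas the paper's direct computation with areas avoids the issue entirely.
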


\begin{proof}
By Proposition \ref{prop:ruledconfig}, every spherical component of $D$ has class either $F-\sum E_j$ or $E_l-\sum E_j$. In particular, a spherical component contains an $(-E_n)$-term in its class only if it is of the form $F-E_n-\sum_{j\in\Lambda\setminus{\{n\}}}E_j$ or $E_l-E_n-\sum_{j\in\Lambda\setminus{\{n\}}}E_j$. Since $\omega(E_n)\leq \omega(E_i)$ for all $i$, the symplectic area constraints exclude any component in class $E_n-\sum_{j\in\Lambda}E_j$ with nonempty $\Lambda$. Also, Proposition \ref{prop:ruledconfig} implies that there are at most two components of $D$ whose classes involve $-E_n$ since any two componenets of $D$ must have non-negative intersection number.

If $D$ has a component in class $E_n$, then this component is an exceptional sphere and necessarily meets $D$ in at most two points; hence it is exterior, half-toric, or toric. Otherwise, $D$ has no $E_n$-component. Then $E_n\cdot [D_i]\in\{0,1\}$ for every component $D_i$, and at most two of these intersection numbers are equal to $1$. In particular, $E_n$ has nonnegative intersection with every component of $D$, so $E_n$ is $D$-good.

Therefore, after possibly adding an $E_n$-component to $D$, we can always blow down in the class $E_n$ (toric, non-toric, half-toric, or exterior) and obtain a divisor $D_{n-1}$ in $(S^2\times\Sigma_g)\#(n-1)\overline{\CC\PP}^2$.
By construction, the components of $D_{n-1}$ also have homology classes of the type described in Proposition \ref{prop:ruledconfig}.
We may then repeat the same argument for the class $E_{n-1}$ to produce a divisor $D_{n-2}$ in $(S^2\times\Sigma_g)\#(n-2)\overline{\CC\PP}^2$, and so on.
Iterating, we arrive at a divisor $D_0$ in the minimal ruled surface $S^2\times\Sigma_g$.
Its components must represent either the section class $B+kF$ or the fiber class $F$, since their proper transforms must be among the classes listed in Proposition \ref{prop:ruledconfig}.
This proves that $D$ is comb-like.
\end{proof}

\begin{example}\label{example:comb}
    Figure \ref{fig:comb} gives an example of a divisor $D$ in $(S^2\times\Sigma_g)\#11\overline{\CC\PP}^2$ satisfying the conclusion of Proposition \ref{prop:ruledconfig}. The configuration has exactly one component of genus $g\geq 1$, which is in class $B-2F-E_5$ and called the section-component. The other components are either the spherical fibers or contained in some nodal representative of the fiber class, called the fiber-components. Such a curve configuration will be called {\bf comb-like}. We will also see some comb-like configurations in rational manifolds in Section \ref{section:rational}, where the section-component will have genus $0$.
\end{example}

\begin{figure}[ht]
		\centering\includegraphics*[height=5.5cm, width=14cm]{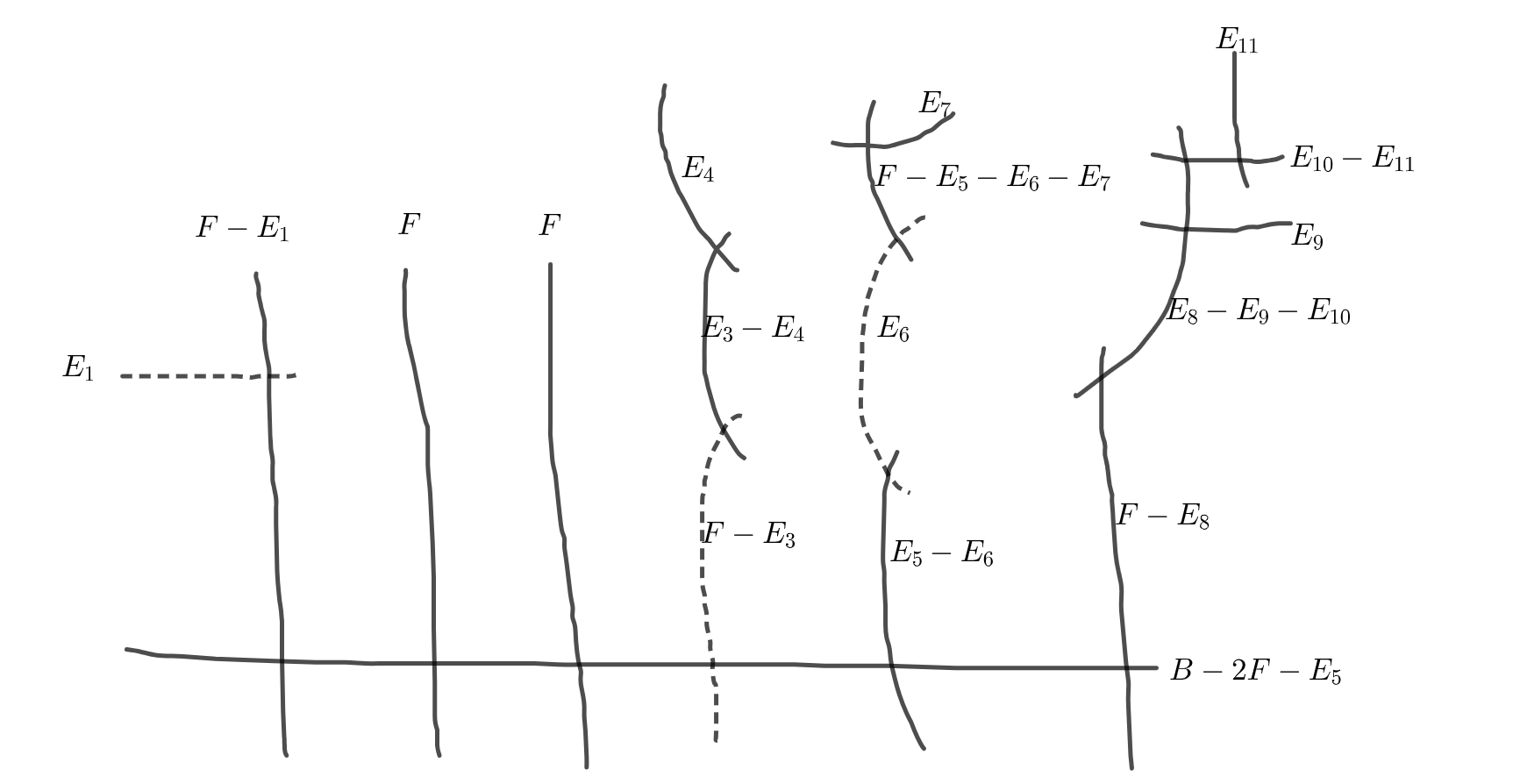}
 
		\caption{A comb-like configuration, where the dashed curves are not included in $D$. After adding components in classes $F-E_3$ and $E_6$, it can be obtained by the blowups from the divisor in $S^2\times \Sigma_g$ with one section-component in class $B-2F$ and six fiber-components.\label{fig:comb}}
	\end{figure}

Now we can prove our main theorem for the irrational ruled case by a modification of the argument as \cite[Proposition 9.4.4]{MSJcurve}. In order to construct the diffeomorphism in Definition \ref{def:sympaffruled}, we also have to consider the moduli space of parametrized curves. Let $A\in H_2(X;\ZZ)$ and $J$ be a tame almost complex structure. Recall that for a smooth map $u:(S^2,j)\to (X,J)$, the Cauchy-Riemann equation is
\[\bar{\partial}_J u:=\frac{1}{2}(du+J\circ du\circ j)=0.\]
Define the relevant moduli spaces
\[\mathcal{M}(A;J):=\{u:S^2\rightarrow X\,|\,\overline{\partial}_J u=0,u_*([S^2])=A\},\]
\[\widetilde{\mathcal{M}}_{0,1}(A;J):=\mathcal{M}(A;J)\times S^2,\]
\[\mathcal{M}_{0,0}(A;J):=\mathcal{M}(A;J)/\text{PSL}(2;\CC),\]
\[\mathcal{M}_{0,1}(A;J):=\widetilde{\mathcal{M}}_{0,1}(A;J)/\text{PSL}(2;\CC).\]
There is the evaluation map $\widetilde{\text{ev}}:\widetilde{\mathcal{M}}_{0,1}(A;J)\rightarrow X$ by sending $(u,z)$ to $u(z)$. The map is invariant under the $\text{PSL}(2;\CC)$-action which descends to the map $\text{ev}:\mathcal{M}_{0,1}(A;J)\rightarrow X$.

%We also denote by $\mathcal{M}^{\text{emb}}(A;J),\widetilde{\mathcal{M}}^{\text{emb}}_{0,1}(A;J),\mathcal{M}^{\text{emb}}_{0,1}(A;J)$ the corresponding subspace of $\mathcal{M}(A;J)$ consisting of embedded $J$-holomorphic spheres. 
%By abuse of notation, we still use $\widetilde{\text{ev}}, \text{ev}$ to denote the restriction of the evaluation maps on the subspace $\widetilde{\mathcal{M}}^{\text{emb}}_{0,1}(A;J),\mathcal{M}^{\text{emb}}_{0,1}(A;J)$ respectively.

\begin{theorem}\label{thm:ruledmain}
    Let $D\subseteq (X,\omega)$ be a symplectic divisor with $[\omega]\cdot (K_{\omega}+[D])<0$ where $X$ is an irrational ruled manifold. Then the pair $(X,\omega,D)$ is symplectic affine-ruled.
\end{theorem}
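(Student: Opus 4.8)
The plan is to fix an arbitrary $D$-adapted almost complex structure $J\in\mathcal{J}(D)$ and realize the sphere fibration attached to the fiber class $F$ in Theorem \ref{thm:fiberclass} as the sought-after affine ruling, adapting the argument of \cite[Proposition 9.4.4]{MSJcurve} while keeping track of where $D$ sits. By Theorem \ref{thm:fiberclass}, $F$ is $J$-nef and the moduli space $\mathcal{M}_F$ is identified with an embedded genus-$g$ curve $\mathcal{B}$, with a finite subset $B_0\subseteq\mathcal{B}$ parametrizing the reducible subvarieties; for $b\in\mathcal{B}\setminus B_0$ the corresponding subvariety is an embedded $J$-holomorphic sphere, and distinct such spheres are disjoint because $F^2=0$ together with positivity of intersections forbids them from meeting. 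This produces a fibration $\pi$ whose generic fiber is such a sphere, while each reducible fiber is a connected tree of embedded $J$-holomorphic spheres by Theorem \ref{thm:lizhangnef}, which applies since $F$ is $J$-nef and $F^2+K_{\omega}\cdot F=-2$. By Proposition \ref{prop:ruledconfig}, every spherical component of $D$ has class $F-\sum E_j$ or $E_l-\sum E_j$ and so pairs to zero with $F$, hence lies inside a fiber, whereas the at most one higher-genus component $D_0$ satisfies $[D_0]\cdot F=1$ and, being $J$-holomorphic alongside the fibers, is a section of $\pi$ by positivity of intersections.

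First I would set $\mathring{\Sigma}_g:=\mathcal{B}\setminus B$, where $B\supseteq B_0$ is obtained by further adjoining the base points of any smooth fibers (components of $D$ in class $F$) that themselves belong to $D$. Then $\mathring{\Sigma}_g$ is a genus-$g$ surface with finitely many punctures, and over it every fiber is a smooth embedded sphere in class $F$ disjoint from all fiber-components of $D$. Using the parametrized moduli space $\mathcal{M}_{0,1}(F;J)$ and its evaluation map $\mathrm{ev}$, the restriction of $\mathrm{ev}$ over $\mathring{\Sigma}_g$ identifies $\pi^{-1}(\mathring{\Sigma}_g)$ with an oriented $S^2$-bundle over $\mathring{\Sigma}_g$ whose fibers are exactly these $J$-holomorphic spheres; since $\mathring{\Sigma}_g$ is an open surface with $H^2(\mathring{\Sigma}_g;\ZZ/2)=0$, this bundle is smoothly trivial.

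Next I would puncture along a section $\sigma$ of $\pi$: when $D_0$ exists I take $\sigma=D_0$, which is already deleted as part of $D$; otherwise I take the embedded genus-$g$ section furnished by Theorem \ref{thm:fiberclass}, delete its finitely many intersection points with $D$, and record it as one additional submanifold $S_i\subseteq X\setminus D$ whose closure is a section of the ruling. Removing $\sigma$ from each fiber turns $S^2$ into $S^2\setminus\{pt\}$, and the complement of the section in the trivial $S^2$-bundle is an oriented rank-two bundle over the open surface $\mathring{\Sigma}_g$, hence trivial, yielding a diffeomorphism $\Phi\colon\mathring{\Sigma}_g\times(S^2\setminus\{pt\})\to\pi^{-1}(\mathring{\Sigma}_g)\setminus\sigma$ whose fibers $\{*\}\times(S^2\setminus\{pt\})$ are the punctured $J$-holomorphic fiber spheres. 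The remaining $S_i$'s are produced over $B$: for each reducible fiber, every irreducible (spherical) component not belonging to $D$, with its finitely many intersection points with $D$ removed, is a punctured symplectic sphere lying in $X\setminus D$. A direct set-theoretic check then shows that $D$, these punctured leftover components, and (in the sectionless case) the punctured section together cover $X\setminus\bigl(\pi^{-1}(\mathring{\Sigma}_g)\setminus\sigma\bigr)$, so that the image of $\Phi$ is precisely $X\setminus(D\cup S_1\cup\cdots\cup S_l)$.

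Finally, since each fiber of $\Phi$ is a $J$-holomorphic fiber sphere with a single point removed, it is a symplectic submanifold of area $\omega\cdot F$ for $\Phi^*\omega$; all fibers thus have equal area, which establishes symplectic affine-ruledness in the sense of Definition \ref{def:sympaffruled}. The work is essentially bookkeeping rather than analysis: the step needing care is to verify, via Proposition \ref{prop:ruledconfig} and Theorem \ref{thm:lizhangnef}, that after deleting $D$ the leftover pieces of the reducible fibers and the section assemble into the trivial punctured-sphere bundle, with the $\omega$-orthogonality of $D$ ensuring that all the punctures occur at isolated transverse intersection points. The input that makes this scheme work for an arbitrary $J\in\mathcal{J}(D)$, with no genericity assumption, is precisely the regularity and global structure of the fiber-class moduli space recorded in Theorem \ref{thm:fiberclass}.
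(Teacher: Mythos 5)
Your route is the same as the paper's: fix an arbitrary $J\in\mathcal{J}(D)$, use Theorem \ref{thm:fiberclass} and Theorem \ref{thm:lizhangnef} to organize $X$ into a sphere fibration in class $F$ with $D$'s spherical components inside fibers and the (at most one) higher-genus component as a section, then adapt \cite[Proposition 9.4.4]{MSJcurve} to trivialize over a punctured base. However, there is a concrete step that fails as written. Your punctures of the base come only from reducible fibers ($B_0$) and from smooth fibers that are components of $D$; if both sets are empty --- for instance $D$ is a single genus-$g$ component in class $B+kF$ with $k\neq 0$ and the chosen $J$ admits no reducible representative of $F$ --- then $\mathring{\Sigma}_g=\mathcal{B}$ is a \emph{closed} surface. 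In that case $H^2(\mathring{\Sigma}_g;\ZZ/2)\neq 0$, so your triviality argument for the $S^2$-bundle does not apply, and more decisively, $X\setminus D$ is then an $\RR^2$-bundle over $\mathcal{B}$ with Euler number $-[D_0]^2=-2k\neq 0$, which is not diffeomorphic to any product $\mathring{\Sigma}_g\times(S^2\setminus\{pt\})$; yet your construction supplies no additional $S_i$ to delete. The paper avoids exactly this by always adjoining an auxiliary embedded $J$-holomorphic sphere $C_1$ in class $F$ not contained in $D$, together with its intersection point $o$ with the section $C$: removing $C_1\setminus D$ (a legitimate punctured symplectic sphere in $X\setminus D$) guarantees the base has at least one puncture, after which all your triviality claims become correct. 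This is a small but genuine repair your argument needs.

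A second, softer point: you assert that the evaluation map identifies $\pi^{-1}(\mathring{\Sigma}_g)$ with a \emph{smooth} $S^2$-bundle and call the rest ``bookkeeping rather than analysis,'' but establishing that $\text{ev}$ is a diffeomorphism (not merely a continuous bijection) is the analytic core of the paper's proof. Theorem \ref{thm:fiberclass} does not record any transversality statement; one needs automatic transversality for spheres of square zero to make the moduli spaces smooth, and then the kernel computation for $d\widetilde{\text{ev}}$ --- splitting $u^*TX\cong\mathcal{O}(2)\oplus\mathcal{O}$ and showing the normal component of any kernel element is a holomorphic section of a degree-zero line bundle for a perturbed complex-linear Cauchy--Riemann operator, hence zero --- to conclude $d\,\text{ev}$ is everywhere invertible. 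Since you frame your proposal as an adaptation of \cite[Proposition 9.4.4]{MSJcurve}, where this analysis lives, this is less a wrong step than an underestimate of where the actual work sits; but in a complete write-up it cannot be omitted.
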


\begin{proof}
First, let us pick any $J\in \mathcal{J}(D)$. By Proposition \ref{prop:ruledconfig}, there is at most one component in $D$ with genus $g>0$. When no such component exists, we can choose some embedded $J$-holomorphic section in class $B+aF-\sum E_j$ for some $a\in\ZZ$ by Theorem \ref{thm:fiberclass}. Such an embedded surface of genus $g$, whether it is part of $D$ or chosen additionally, will be denoted by $C$. Also, we artificially choose an embedded $J$-holomorphic sphere $C_1$ in the class $F$ not contained as a component of $D$ and denote its intersection point with $C$ by $o$\footnote{When $s=r=0$, this extra choice of $o$ can help trivialize the $(S^2\setminus\{\text{pt}\})$-bundle over the punctured Riemann surface.}. The spherical components of $D$ in the class $F$ will intersect the surface $C$ at finitely many points $p_1,\cdots,p_s$. The spherical components in $D$ not in the class $F$ will be the component of some reducible subvariety in class $F$ by Theorem \ref{thm:fiberclass}. Since $F$ is $J$-nef, any reducible representative of $F$ must be a tree of embedded spheres by Theorem \ref{thm:lizhangnef}. Therefore, we can choose embedded $J$-holomorphic spheres $C_2,\cdots,C_l\subseteq X$ such that the support of all the reducible subvarieties in class $F$ is contained in the union of $\cup_{i=2}^l C_i$ and spherical components $D'$ in $D$. Denote by $q_1,\cdots,q_r$ the intersection points between these reducible subvarieties with $C$ which must be finite by Theorem \ref{thm:fiberclass}. 

Now, consider the moduli space $\mathcal{M}(F;J)$ for the fiber class $F$. By Theorem \ref{thm:fiberclass}, there always exist embedded curves in $\mathcal{M}(F;J)$. By adjunction inequality for $J$-holomorphic curves (\cite[Theorem 2.6.4]{MSJcurve}), all curves in $\mathcal{M}(F;J)$ are indeed embedded. For any $u\in\mathcal{M}(F;J)$, consider the linearization of $\bar{\partial}_J$ at $u$ 
\[D_u:=D\overline{\partial}_J:W^{1,p}(S^2,u^*TX)\rightarrow L^p(S^2,\Lambda^{0,1}T^*S^2\otimes u^*TX).\] Since $F^2=0$, the operator $D_u$ is surjective by automatic transversality (\cite{HLSauttrans} or \cite[Corollary 3.3.4]{MSJcurve}). Thus, $\mathcal{M}(F;J)$, $\widetilde{\mathcal{M}}_{0,1}(F;J)$ and $\mathcal{M}_{0,1}(F;J)$ are smooth manifolds of real dimensions $8$, $10$ and $4$ respectively, though not compact in general. Note that the points $p_1,\cdots,p_s$ correspond to $s$ copies of $\text{PSL}(2;\CC)$ in $\mathcal{M}(F;J)$. Let $\mathring{\mathcal{M}}(F;J)$ be the subspace of $\mathcal{M}(F;J)$ obtained by removing these elements (and similarly for other moduli spaces). The restriction of the evaluation map then gives
\[\widetilde{\text{ev}}:\mathring{\mathcal{M}}(F;J)\times S^2\rightarrow X\setminus (D'\cup C_1\cup\cdots\cup C_l)\]
which descends to 
\[\text{ev}:\mathring{\mathcal{M}}_{0,1}(F;J)\rightarrow X\setminus (D'\cup C_1\cup\cdots\cup C_l)\]
modulo the $\text{PSL}(2;\CC)$-action. $\text{ev}$ is a bijection between two manifolds of the same dimension by our choice of these $C_i$ spheres. To further prove  $\text{ev}$ is a diffeomorphism, let us consider the kernel of $d\widetilde{\text{ev}}$. For $(u,z)\in \mathring{\mathcal{M}}(F;J)\times S^2$, the tangent space
\[T_{(u,z)}(\mathring{\mathcal{M}}(F;J)\times S^2)=\text{ker}D_u\oplus T_zS^2.\]
Suppose $(\xi,\zeta)\in\text{ker }d\widetilde{\text{ev}}_{(u,z)}$ where $\xi\in\text{ker}D_u$ and $\zeta\in T_zS^2$. The differential of the evaluation map is given by
\[d\widetilde{\text{ev}}_{(u,z)}(\xi,\zeta)=\xi(z)+du_z\zeta.\]
Note that $u^*TX\cong\text{im }du\oplus(\text{im }du)^{\perp}\cong\mathcal{O}(2)\oplus \mathcal{O}$ so that we can write $\xi=\xi_0+\xi_1$ where $\xi_0\in W^{1,p}(S^2,\mathcal{O}(2))$ and $\xi_1\in W^{1,p}(S^2,\CC)$. Consider the real linear Cauchy-Riemann operator \[\tilde{D}_u:W^{1,p}(S^2,\CC)\rightarrow L^{p}(S^2,\Lambda^{0,1}T^*S^2)\]
obtained by restricting $D_u$ to $W^{1,p}(S^2,\CC)$ and then project to $L^p(S^2,\Lambda^{0,1}T^*S^2)$.
Since $D_u(\xi_0)\in L^p(S^2,\Lambda^{0,1}T^*S^2\otimes\mathcal{O}(2))$ by definition of $D_u$,  $\xi_1\in \text{ker}\tilde{D}_u$. $\tilde{D}_u$ can be further written as 
\[\overline{\partial}+a, \text{\,\,\,\,\,\,\,\,\, } a\in L^p(S^2,\Lambda^{0,1}T^*S^2\otimes \text{End}_\RR(\CC)).\]
We can define $b\in L^p(S^2,\Lambda^{0,1}T^*S^2)$ by setting 
\[b(z,\hat{z}):=\begin{cases}
    (a(z,\hat{z})\xi_1(z))/\xi_1(z),&\text{if }\xi_1(z)\neq 0,\\
    0,&\text{otherwise,}
\end{cases}\]
where $\hat{z}\in T_zS^2$. Then $\xi_1$ is in the kernel of a complex linear Cauchy-Riemman operator $\overline{\partial}+b$, which corresponds to a holomorphic section with respect to some holomorphic structure on $\mathcal{O}$. As a result, $\xi_1$ must be zero and $(\xi,\zeta)=(\xi_0,(du_z)^{-1}(\xi_0(z)))$ where $\xi_0$ is the vector field on $S^2$ induced by $\text{PSL}(2;\CC)$-action. Therefore, after modulo the $\text{PSL}(2;\CC)$-action, $d\text{ev}$ will be bijective everywhere by dimensional consideration and thus $\text{ev}$ must be a diffeomorphism.

Finally, note that the smooth, free and proper action of $\text{PSL}(2;\CC)$ provides $\mathring{\mathcal{M}}(F;J)$ with a principle $\text{PSL}(2;\CC)$-bundle structure. Then there is the associated $S^2$-bundle 
\[\begin{tikzcd}
	S^2 & \mathring{\mathcal{M}}_{0,1}(F;J) & \text{ev}^{-1}(C\setminus\{o,p_1,\cdots,p_s,q_1,\cdots,q_r\})\\
	& \mathring{\mathcal{M}}_{0,0}(F;J)
	\arrow[hook, from=1-1, to=1-2]
    \arrow[hook', from=1-3, to=1-2]
	\arrow[from=1-2, to=2-2]
\end{tikzcd}\]
where $\text{ev}^{-1}(C\setminus\{o,p_1,\cdots,p_s,q_1,\cdots,q_r\})$ is a submanifold. Since each curve in $\mathcal{M}(F;J)$ intersects $C$ transversely, this submanifold must be a section of the $S^2$-bundle transverse to all the $S^2$-fibers and positivity of intersection. Therefore $\mathring{\mathcal{M}}_{0,0}(F;J)$ is diffeomorphic to the punctured Riemman surface $C\setminus\{o,p_1,\cdots,p_s,q_1,\cdots,q_r\}$ over which any $S^2$-bundle must be trivial. It follows that $\text{ev}^{-1}(C\setminus\{o,p_1,\cdots,p_s,q_1,\cdots,q_r\})$ can be viewed as the graph of a smooth function from $\mathring{\mathcal{M}}_{0,0}(F;J)$ to $S^2$, whose complement in $\mathring{\mathcal{M}}_{0,1}(F;J)$ is diffeomorphic to $\mathring{\mathcal{M}}_{0,0}(F;J)\times(S^2\setminus\{\text{pt}\})$. Since $\text{ev}$ is a diffeomorphism, $X\setminus(D'\cup C_1\cup\cdots\cup C_l\cup C)$ will then be diffeomorphic to 
\[(C\setminus\{o,p_1,\cdots,p_s,q_1,\cdots,q_r\})\times (S^2\setminus\{\text{pt}\})\]
where all the fibers $S^2\setminus\{\text{pt}\}$ are symplectic since they are $J$-holomorphic. So the pair $(X,\omega,D)$ is symplectic affine-ruled.  

\end{proof}

\section{Rational manifolds}\label{section:rational}

In this section, we focus on connected symplectic divisors in rational manifolds with $[\omega]\cdot (K_\omega+[D])<0$. Our first observation is that the configurations of such divisors must form a tree of spheres, as established by the following proposition, which will be utilized throughout this section.

\begin{proposition}\label{prop:treeofsphere}
    Let $D=\cup D_i $ be a connected symplectic divisor in a symplectic rational manifold $(X,\omega)$ with $[\omega]\cdot(K_{\omega}+[D])<0$. Then each component $D_i$ must be a sphere and the graph of $D$ has no loop. In particular, $[D]^2+K_{\omega}\cdot [D]=-2$.
\end{proposition}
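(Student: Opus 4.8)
The plan is to establish the three assertions --- each $D_i$ is a sphere, the dual graph $\Gamma(D)$ is a tree, and $[D]^2+K_\omega\cdot[D]=-2$ --- by combining the total genus formula of Lemma \ref{lem:g(D)} with the genus constraint on individual classes coming from Corollary \ref{cor:SWgenus}. The key numerical identity is that $g(D) = \frac{1}{2}([D]^2+K_\omega\cdot[D])+1$ by definition, so the final claim $[D]^2+K_\omega\cdot[D]=-2$ is exactly equivalent to $g(D)=0$. Thus the whole proposition will follow once I show that $g(D)=0$, that each $g(D_i)=0$, and that $\dim H_1(\Gamma(D))=0$, since Lemma \ref{lem:g(D)} gives
\[
g(D)=\sum_{i=1}^{l(D)}g(D_i)+\dim H_1(\Gamma(D))-\dim H_0(\Gamma(D))+1,
\]
and connectedness of $D$ forces $\dim H_0(\Gamma(D))=1$, collapsing this to $g(D)=\sum_i g(D_i)+\dim H_1(\Gamma(D))$.

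First I would verify that $g(D)=0$ directly. Since $X$ is rational it has no fiber class to worry about, so Corollary \ref{cor:SWgenus} applies to the total class $A=[D]$ cleanly: the hypotheses $\omega\cdot[D]>0$ (as $[D]$ is a sum of symplectic surface classes) and $\omega\cdot(K_\omega+[D])<0$ are exactly what is given, so the corollary yields $[D]\cdot([D]+K_\omega)<0$, i.e. $g(D)<1$. The only remaining point is that $g(D)$ is a nonnegative integer; this is where I would use that $g(D)$ equals the sum of genera of the smooth surfaces $C_1,\dots,C_k$ obtained by smoothing all intersection points (as in the proof of Lemma \ref{lem:g(D)}), each $C_j$ being an honest embedded symplectic surface of nonnegative genus, which forces $g(D)\in\ZZ_{\geq0}$ and hence $g(D)=0$.

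Having $g(D)=0$, the decomposition $g(D)=\sum_i g(D_i)+\dim H_1(\Gamma(D))$ with all summands nonnegative integers forces every term to vanish: each $g(D_i)=0$ (so every component is a sphere) and $\dim H_1(\Gamma(D))=0$ (so the graph has no loop, being connected). This simultaneously delivers all three conclusions, and the final equality $[D]^2+K_\omega\cdot[D]=-2$ then reads off from $g(D)=0$ via the total genus formula.

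The main obstacle I anticipate is not the inequality $g(D)<1$, which is a direct citation of Corollary \ref{cor:SWgenus}, but rather pinning down that $g(D)$ is a genuine nonnegative integer so that $g(D)<1$ upgrades to $g(D)=0$. The subtlety is that $g(D)=\frac12([D]^2+K_\omega\cdot[D])+1$ is a priori just an integer combination and could in principle be negative; the resolution is precisely the smoothing argument, which realizes $g(D)$ as $\sum_j g(C_j)$ for embedded symplectic surfaces $C_j$, whose genera are manifestly nonnegative. One should double-check that the smoothing operation is available here, i.e. that all intersections are $\omega$-orthogonal positive transverse crossings with no triple points --- but this is guaranteed by Definition \ref{def:sympdivisor}. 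A minor point to confirm is that $\omega\cdot[D]>0$, which holds because each $\omega\cdot[D_i]>0$ for a symplectic surface and $[D]=\sum_i[D_i]$.
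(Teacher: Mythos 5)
Your proof is correct and rests on exactly the same ingredients as the paper's: Corollary \ref{cor:SWgenus} applied to $[D]$, the smoothing-plus-adjunction argument, and Lemma \ref{lem:g(D)}. The only difference is organizational: the paper additionally applies Corollary \ref{cor:SWgenus} to each component class $[D_i]$ to get sphericity directly, whereas you deduce it (together with $\dim H_1(\Gamma(D))=0$) from the vanishing of the nonnegative-integer sum $g(D)=\sum_i g(D_i)+\dim H_1(\Gamma(D))$, which is equally valid and saves one application of the corollary.
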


\begin{proof}
    Taking $A$ to be $[D]$ and each $[D_i]$ in Corollary \ref{cor:SWgenus}, we see that $[D]^2+K_{\omega}\cdot [D]<0$ and each $D_i$ must be a sphere by adjunction formula. Then we can smooth all the intersection points of the components to get a connected embedded symplectic surface representing the class $[D]$. Applying adjunction formula to this embedded symplectic surface we see that the negative value $[D]^2+K_{\omega}\cdot [D]$ must be $-2$ and the graph of $D$ has no loop since the genus is $0$ by Lemma \ref{lem:g(D)}.
\end{proof}

Before moving on, we first comment on the additional complexity in the rational case and give a brief outline of Section \ref{section:rational}. The main difference from the irrational ruled case is that, to equip the divisor complement with a one-punctured sphere fibration structure, one must look for foliations by singular curves rather than by smooth curves alone. There is a result in \cite[Theorem 1.7]{LLWnef} on $\omega$-nef smooth spherical classes in rational manifolds, analogous to Theorem \ref{thm:fiberclass}, which was used in Section \ref{section:ruled} for irrational ruled manifolds. However, there may exist pairs $(X,\omega,D)$ satisfying $[\omega]\cdot (K_\omega+[D])<0$ for which the spherical classes listed in \cite[Theorem 1.7]{LLWnef} cannot serve as candidates for affine rulings. For example, let $X=\mathbb{CP}^2\#4\overline{\mathbb{CP}}^2$ with standard basis $\{H,E_1,\cdots,E_4\}$ of $H_2(X;\mathbb{Z})$, and consider the string-like configuration $D$ in $X$ with homological sequence
    \[(H_{1234},H,H_1,E_1-E_3,E_3-E_4),\]
    where $H_{1234}$ and $H_1$ denote $H-E_1-E_2-E_3-E_4$ and $H-E_1$, respectively. The total homology class is $[D]=3H-E_1-E_2-E_3-2E_4$, so $[\omega]\cdot (K_\omega+[D])=-\omega(E_4)<0$ holds automatically. If the $\omega$-nef spherical class $A$ is taken from the classes listed in \cite[Theorem 1.7]{LLWnef}, then the following intersections occur:
    \begin{itemize}
        \item $H-E_1$: the fiber intersects the $H$- and $(E_1-E_3)$-components of $D$.
        \item $lH-(l-1)E_1$: the fiber intersects the $H_{1234}$-, $H$-, $H_1$-, and $(E_1-E_3)$-components of $D$.
        \item $lH-(l-1)E_1-E_2$: the fiber intersects the $H$-, $H_1$-, and $(E_1-E_3)$-components of $D$.
        \item $2H$: the fiber intersects the $H_{1234}$-, $H$-, and $H_1$-components of $D$.
    \end{itemize}
In each case, the fiber intersects $D$ in more than two points. Thus, on the divisor complement $X\setminus D$, one obtains at best a $\mathbb{C}^*$-fibration, or something worse, whereas the desired structure is a $\mathbb{C}$-fibration.
This motivates the search for unicuspidal curves whose cusps are located at intersection points of components of $D$. In this example, one may take either
\[A=(E_3-E_4)+2(E_1-E_3)+3H_1=3H-E_1-E_2-E_3,\]
which has a $(2,3)$-cusp at the intersection of the $H_1$- and $H$-components, or take
\[A=H_{1234}+3H=4H-E_1-E_2-E_3-E_4,\]
which has a $(3,4)$-cusp at the intersection of the $H_1$- and $H$-components.

More generally, the divisor may be even more complicated than in the example above, so we must first reduce its complexity by successive blowdowns. In Section \ref{section:Emin}, we collect useful properties of exceptional spheres with minimal symplectic area. In Section \ref{section:defofquasiminimal}, we introduce the key notion of \emph{quasi-minimal} pairs in Definition \ref{def:quasiminimal}. After reviewing the possible divisor configurations in manifolds with $b_2(X)\leq 2$ in Section \ref{section:divisorsinminimal}, we prove the structural result for quasi-minimal pairs in Section \ref{section:quasiminimal}. Section \ref{section:findcusp} is then devoted to finding unicuspidal curves that foliate the divisor complement, based on the disussions in Section \ref{section:cuspcurve}. Finally, in Section \ref{secton:proofofaffineruled}, we use the preceding reduction schemes to prove the affine-ruledness result.

\subsection{Exceptional classes with minimal area}\label{section:Emin}

As outlined above, the complexity of divisors can be simplified through blowdowns. Compared to the holomorphic setting, a notable feature working in the symplectic setting is the ability to discuss the symplectic area of an exceptional class. Let us introduce the following notions for a symplectic $4$-manifold $(X,\omega)$:
\[\mathcal{E}(X):=\{E\in H_2(X;\ZZ)\,|\,E\text{ can be represented by an embedded smooth $(-1)$-sphere}\},\]
\[\mathcal{E}_{\omega}(X):=\{E\in H_2(X;\ZZ)\,|\,E\text{ can be represented by an embedded symplectic $(-1)$-sphere}\},\]
\[e_{\text{min}}(X,\omega):=\inf\{\omega(E)\,|\,E\in \mathcal{E}_{\omega}(X)\},\]
\[\mathcal{E}_{\text{min}}(X,\omega):=\{E\in\mathcal{E}_{\omega}(X)\,|\,\omega(E)=e_{\text{min}}(X,\omega)\}.\]
 By Gromov's compactness, whenever $\mathcal{E}_{\omega}(X)$ is non-empty, $e_{\text{min}}(X,\omega)$ is a positive real number and $\mathcal{E}_{\text{min}}(X,\omega)$ is a finite and non-empty set. A useful fact (see \cite[Theorem~A]{LiLiuruled}) is that, when $b_2^+(X)=1$,
 \[
   \mathcal{E}_{\omega}(X)=\{E\in\mathcal{E}(X)\mid E\cdot K_{\omega}=-1\}.
 \]
 Combined with the case $b_2^+(X)>1$ in \cite{TaubesSWandGr}, it follows that $\mathcal{E}(X)$ is empty if and only if $\mathcal{E}_\omega(X)$ is empty. We call $(X,\omega)$ \textbf{minimal} if $\mathcal{E}(X)$ (equivalently, $\mathcal{E}_\omega(X)$) is empty. We will need the following lemma.

\begin{lemma}\label{lem:minimalarea}
Let $(X,\omega)$ be a non-minimal symplectic $4$-manifold not diffeomorphic to $\CC\PP^2\#\overline{\CC\PP}^2$. If $A\in H_2(X;\ZZ)$ is a class with $SW(A)\neq 0$ and $A\cdot K_{\omega}\leq -1$, then $\omega(A)\geq e_{\text{min}}(X,\omega)$. 
\end{lemma}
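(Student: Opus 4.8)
The plan is to feed the class $A$ into Zhang's curve cone theorem (Theorem \ref{thm:curvecone}) and then extract the area bound by pairing the resulting decomposition against the canonical class. First I would note that the hypothesis $A\cdot K_{\omega}\leq -1$ forces $A\neq 0$, so together with $SW(A)\neq 0$ the class $A$ is $J$-effective for \emph{any} $\omega$-tame almost complex structure $J$ (this is exactly the non-vanishing Seiberg--Witten $\Rightarrow$ $J$-effective principle recorded after Definition \ref{def:Jsubvariety}). Fixing any such $J$ and invoking the standing hypotheses that $(X,\omega)$ is non-minimal and not diffeomorphic to $\CC\PP^2\#\overline{\CC\PP}^2$, Theorem \ref{thm:curvecone} provides irreducible $J$-holomorphic subvarieties $F_1,\dots,F_m$ with $F_i\cdot K_J\geq 0$, embedded $J$-holomorphic exceptional spheres $E_1,\dots,E_n$, and positive reals $a_i,b_j$ with $A=\sum_i a_iF_i+\sum_j b_jE_j$. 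Here I would use that $K_J=K_{\omega}$ for any $\omega$-tame $J$.

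The crux is to pair this decomposition with $K_{\omega}$. Since each exceptional sphere satisfies $E_j\cdot K_{\omega}=-1$ by the adjunction formula, while $F_i\cdot K_{\omega}=F_i\cdot K_J\geq 0$ and $a_i>0$, one gets
\[
A\cdot K_{\omega}=\sum_{i=1}^m a_i\,(F_i\cdot K_{\omega})-\sum_{j=1}^n b_j\geq -\sum_{j=1}^n b_j,
\]
so that $\sum_{j=1}^n b_j\geq -A\cdot K_{\omega}\geq 1$. In particular the exceptional part of the decomposition is non-empty. This inequality is the whole point of the argument, and it is where the hypothesis $A\cdot K_{\omega}\leq -1$ is used.

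Finally I would convert the homological decomposition into an area estimate. Each $F_i$ is an irreducible $J$-holomorphic subvariety, hence $\omega(F_i)>0$, and each $E_j$ is an embedded symplectic exceptional sphere, so $E_j\in\mathcal{E}_{\omega}(X)$ and $\omega(E_j)\geq e_{\text{min}}(X,\omega)$ by definition of $e_{\text{min}}$. Discarding the (non-negative) $F_i$-contributions and using $\sum_j b_j\geq 1$ yields
\[
\omega(A)=\sum_{i=1}^m a_i\,\omega(F_i)+\sum_{j=1}^n b_j\,\omega(E_j)\geq\Bigl(\sum_{j=1}^n b_j\Bigr)e_{\text{min}}(X,\omega)\geq e_{\text{min}}(X,\omega),
\]
which is the claim. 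The one subtlety to be careful about — and the only place where a naive estimate could fail — is that the coefficients $b_j$ from the curve cone theorem are merely positive \emph{reals}, not integers, so a single exceptional summand need not carry area at least $e_{\text{min}}$; it is precisely the aggregate bound $\sum_j b_j\geq 1$, rather than any individual $b_j\geq 1$, that rescues the inequality. Everything else is bookkeeping.
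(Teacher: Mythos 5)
Your proposal is correct and follows essentially the same route as the paper's proof: both apply Zhang's curve cone theorem (Theorem \ref{thm:curvecone}) to decompose $A=\sum_i a_iF_i+\sum_j b_jE_j$, pair with $K_{\omega}$ to deduce $\sum_j b_j\geq -A\cdot K_{\omega}\geq 1$, and then bound $\omega(A)$ from below by $\bigl(\sum_j b_j\bigr)e_{\text{min}}(X,\omega)\geq e_{\text{min}}(X,\omega)$. Your closing remark about the $b_j$ being positive reals rather than integers correctly identifies why the aggregate bound, not an individual-coefficient bound, is what makes the final inequality work.
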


\begin{proof}
    We choose any $\omega$-tame almost complex structure $J$. Since $SW(A)\neq 0$ we know $A$ is $J$-effective. By Zhang's curve cone Theorem \ref{thm:curvecone}, there is a decomposition \[A=\sum_{i=1}^m a_iF_i+\sum_{j=1}^n b_jE_j,\] where $F_i$ is a $J$-effective class with $F_i\cdot K_{\omega}\geq 0$, $E_j\in \mathcal{E}_{\omega}(X)$ and $a_i,b_j$'s are positive real numbers. By the assumption that $A\cdot K_{\omega}\leq -1$, we have 
    \[1\leq -K_{\omega}\cdot A=-\sum_{i=1}^m a_i(K_{\omega}\cdot F_i)-\sum_{j=1}^n b_j(K_{\omega}\cdot E_j) \leq \sum_{j=1}^nb_j.\] Therefore
    \[\omega(A)=\sum_{i=1}^m a_i\omega(F_i)+\sum_{j=1}^nb_j\omega(E_j)\geq \sum_{j=1}^nb_j\omega(E_j)\geq\sum_{j=1}^nb_j e_{\text{min}}(X,\omega)\geq e_{\text{min}}(X,\omega).\]
\end{proof}

The following useful result was first observed by Pinsonnault in \cite{Pinsonnaultmaxtori}\footnote{The original proof in \cite{Pinsonnaultmaxtori} uses Mori's cone theorem in algebraic geometry, which only works for smooth projective varieties. Since many symplectic manifolds are even non-K\"ahler, a complete proof needs Zhang's curve cone theorem in the almost complex setting.}. Here we give a proof as a corollary of Zhang's curve cone theorem. This proof is informed to us by Weiyi Zhang.

\begin{corollary}\label{cor:minimalarea}
    Let $(X,\omega)$ be a symplectic $4$-manifold not diffeomorphic to $\CC\PP^2\#\overline{\CC\PP}^2$ and $J$ is any $\omega$-tame almost complex structure. Then any class $E\in \mathcal{E}_{\text{min}}(X,\omega)$ can be represented by an embedded $J$-holomorphic sphere.
\end{corollary}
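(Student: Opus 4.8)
The plan is to run the same curve-cone decomposition as in the proof of Lemma \ref{lem:minimalarea}, but to exploit the area-minimality of $E$ to force the decomposition to be trivial. First I would dispose of the trivial case: if $X$ is minimal then $\mathcal{E}_{\omega}(X)=\emptyset$, hence $\mathcal{E}_{\text{min}}(X,\omega)=\emptyset$ and there is nothing to prove. So assume $X$ is non-minimal; together with the hypothesis $X\not\cong\CC\PP^2\#\overline{\CC\PP}^2$ this lets me invoke Zhang's curve cone Theorem \ref{thm:curvecone}. Fix $E\in\mathcal{E}_{\text{min}}(X,\omega)$ and an arbitrary $\omega$-tame $J$. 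Since $E$ is an exceptional class we have $SW(E)\neq 0$ (Corollary \ref{cor:SWnonzero}), so $E$ is $J$-effective and Theorem \ref{thm:curvecone} yields a decomposition
\[E=\sum_{i=1}^m a_iF_i+\sum_{j=1}^n b_jE_j,\]
with $a_i,b_j>0$, $F_i\cdot K_{\omega}\geq 0$, and each $E_j\in\mathcal{E}_{\omega}(X)$ represented by an embedded $J$-holomorphic exceptional sphere.

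Second, I would pin down this decomposition using $K_{\omega}$ and $\omega$. Pairing with $K_{\omega}$ and using $E\cdot K_{\omega}=E_j\cdot K_{\omega}=-1$ (adjunction for exceptional spheres) together with $F_i\cdot K_{\omega}\geq 0$ gives $\sum_j b_j\geq 1$, exactly as in Lemma \ref{lem:minimalarea}. Now evaluate $\omega$: since $\omega(F_i)>0$ and $\omega(E_j)\geq e_{\text{min}}(X,\omega)$ by definition of $e_{\text{min}}$, I get the chain
\[e_{\text{min}}(X,\omega)=\omega(E)\geq\sum_j b_j\,\omega(E_j)\geq\Big(\sum_j b_j\Big)e_{\text{min}}(X,\omega)\geq e_{\text{min}}(X,\omega).\]
Hence every inequality is an equality, which forces three things: there are no $F_i$-terms (as $a_i,\omega(F_i)>0$); each $\omega(E_j)=e_{\text{min}}(X,\omega)$, i.e. $E_j\in\mathcal{E}_{\text{min}}(X,\omega)$; and $\sum_j b_j=1$. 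After merging repeated classes I may assume the $E_j$ are pairwise distinct, so that they are represented by pairwise distinct embedded $J$-holomorphic spheres.

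Finally I would show that the decomposition is trivial. Using $E^2=E_j^2=-1$ and positivity of intersections for the distinct irreducible $J$-holomorphic curves representing the $E_j$ (so $E_i\cdot E_j\geq 0$ for $i\neq j$), I expand
\[-1=E^2=\Big(\sum_j b_jE_j\Big)^2=-\sum_j b_j^2+\sum_{i\neq j}b_ib_j\,(E_i\cdot E_j)\geq -\sum_j b_j^2,\]
so $\sum_j b_j^2\geq 1$. On the other hand $\sum_j b_j=1$ with all $b_j>0$ gives $\sum_j b_j^2\leq\big(\sum_j b_j\big)^2=1$, with equality only when $n=1$. Thus $n=1$, $b_1=1$ and $E=E_1$ is represented by an embedded $J$-holomorphic sphere, as desired. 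The only delicate point is this last step: one must ensure the curve-cone classes can be taken pairwise distinct so that positivity of intersections applies, after which the strict convexity inequality $\sum_j b_j^2<1$ for $n\geq 2$ closes the argument; everything else is direct numerical bookkeeping built on Theorem \ref{thm:curvecone} and the minimality of $\omega(E)$.
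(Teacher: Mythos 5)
Your proof is correct, but it runs the cone argument along a genuinely different route than the paper. The paper's proof first represents $E$ by a possibly reducible $J$-holomorphic subvariety $\{(C_i,m_i)\}$ (Gromov compactness), picks a component with $[C_i]\cdot K_{\omega}\leq -1$ (which exists since $E\cdot K_{\omega}=-1$), feeds that \emph{component class} into the argument of Lemma \ref{lem:minimalarea} to get $\omega([C_i])\geq e_{\text{min}}(X,\omega)$, concludes from $\omega(E)=e_{\text{min}}(X,\omega)$ that the subvariety has a single component of multiplicity one, and finally gets embeddedness from the adjunction formula. You instead apply Theorem \ref{thm:curvecone} directly to $E$ and force the decomposition itself to be trivial: your equality analysis on $\omega$ kills the $F_i$-terms and normalizes $\sum_j b_j=1$, and then $E^2=-1$ together with positivity of intersections and the convexity estimate $\sum_j b_j^2\leq\bigl(\sum_j b_j\bigr)^2$ forces $n=1$; embeddedness then comes for free from the statement of the curve cone theorem rather than from adjunction. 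Both routes rest on Zhang's theorem plus area-minimality; yours trades Gromov compactness and adjunction for intersection-theoretic bookkeeping, and your final step is sound once the $E_j$ are made pairwise distinct, as you note. One citation point to fix: $SW(E)\neq 0$ for an exceptional class in a \emph{general} symplectic $4$-manifold does not follow from Corollary \ref{cor:SWnonzero} as stated, since that corollary assumes $X$ is rational or ruled; you should instead invoke the standard fact --- which the paper itself uses tacitly when it writes ``by Gromov's compactness'' --- that exceptional classes are $J$-effective for every tame $J$ on any symplectic $4$-manifold (see for example \cite[Theorem B]{Wendlbook}).
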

\begin{proof}
    By Gromov's compactness, $E$ can be represented by a possibly reducible $J$-holomorphic subvariety $\Theta_E=\{(C_i,m_i)\}_{i=1}^n$. Then there will be some component $C_i$ with $[C_i]\cdot K\leq -1$ since $-1=E\cdot K_\omega=\sum_{i=1}^n m_i[C_i]\cdot K_\omega$. Taking $A=[C_i]$ in the proof of Lemma \ref{lem:minimalarea}, we then must have $\omega([C_i])=e_{\text{min}}(X,\omega)$. This implies that there is only one component in $\Theta_E$. Moreover, it is embedded by adjunction formula.
\end{proof}

If some $E_{\text{min}}\in\mathcal{E}_{\text{min}}(X,\omega)$ is chosen, we can further define
\[e_{\text{min}}(X,\omega;E_{\text{min}}):=\inf\{\omega (E)\,|\,E\in \mathcal{E}_{\omega}(X),E\cdot E_{\text{min}}=0\},\]
\[\mathcal{E}_{\text{min}}(X,\omega;E_{\text{min}}):=\{E\in\mathcal{E}_{\omega}(X)\,|\,\omega(E)=e_{\text{min}}(X,\omega;E_{\text{min}}),E\cdot E_{\text{min}}=0\}.\]
A class $E'\in\mathcal{E}_{\text{min}}(X,\omega;E_{\text{min}})$ will be called a {\bf secondary minimal area exceptional class} of $E_{\text{min}}$. If $(X',\omega')$ is the symplectic blowdown of $C_{E_\text{min}}\subseteq (X,\omega)$, the embedded symplectic exceptional sphere of minimal area, $E'$ can be naturally viewed as a class in $\mathcal{E}_{\text{min}}(X',\omega')$. If $X'\neq \CC\PP^2\#\overline{\CC\PP}^2$, we can then further consider the secondary minimal area exceptional class of $E'$ and blow down the embedded $J$-holomorphic exceptional sphere representing $E'$ for any $\omega'$-tame $J$ on $X'$. This process can be repeated until the manifold becomes minimal or $\CC\PP^2\#\overline{\CC\PP}^2$ which will produce a sequence of exceptional classes. In summary, we have the following corollary.

\begin{corollary}\label{cor:secondminexceptional}
    Let $(X,\omega)$ be a symplectic rational manifold and suppose $n=b_2^-(X)\geq 2$. Then there is a choice of $E_2,\cdots,E_n\subseteq \mathcal{E}_{\omega}(X)$ such that $E_n\in \mathcal{E}_{\text{min}}(X,\omega)$ and $E_{i}$ is a secondary minimal area exceptional class of $E_{i+1}$ for all $2\leq i\leq n-1$. Furthermore, let $X'$ be the blowdown of the exceptional spheres in these classes $E_2,\cdots,E_n$. Then, there exist $E_1,E_1'\in\mathcal{E}_{\omega}(X)$ satisfying $E_1\cdot E_j=E_1'\cdot E_j=0$ for all $3\leq j\leq n$ and
    \begin{itemize}
        \item either $X'=S^2\times S^2$ and $E_1\cdot E_1'=0,E_1\cdot E_2=E_1'\cdot E_2=1$;
        \item or $X'=\CC\PP^2\#\overline{\CC\PP}^2$ and $E_1\cdot E_1'=E_1'\cdot E_2=1,E_1\cdot E_2=0$.
    \end{itemize}
\end{corollary}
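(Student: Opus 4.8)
The plan is to produce $E_2,\dots,E_n$ via the iterated minimal-area blowdown already described, and then to read off $E_1,E_1'$ from the two-point blowup reached after blowing down $E_3,\dots,E_n$. First, since $X$ is non-minimal, $\mathcal{E}_{\text{min}}(X,\omega)$ is non-empty by Gromov compactness, so I take $E_n\in\mathcal{E}_{\text{min}}(X,\omega)$; by Corollary \ref{cor:minimalarea} it is represented by an embedded $J$-holomorphic sphere for every tame $J$, hence can be symplectically blown down to give $(X_{n-1},\omega_{n-1})$ with $b_2^-=n-1$. Iterating, at the $i$-th stage I let $E_i$ be a minimal-area exceptional class of $X_i$, which is by definition a secondary minimal-area exceptional class of $E_{i+1}$; throughout, $b_2^-(X_i)=i\geq 2$ so $X_i\neq\CC\PP^2\#\overline{\CC\PP}^2$ and Corollary \ref{cor:minimalarea} continues to apply. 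Since $E_i$ lives in $H_2(X_i;\ZZ)=\langle E_{i+1},\dots,E_n\rangle^{\perp}\subseteq H_2(X;\ZZ)$, I automatically get $E_i\cdot E_j=0$ for $j>i$; and because $E_i\cdot K_{\omega}=E_i\cdot K_{\omega_i}=-1$ with $E_i^2=-1$, the characterization $\mathcal{E}_{\omega}(X)=\{E\in\mathcal{E}(X):E\cdot K_{\omega}=-1\}$ shows $E_i\in\mathcal{E}_{\omega}(X)$. The process stops at $X_1=X'$ with $b_2^-=1$, so $X'$ is diffeomorphic to $S^2\times S^2$ or to $\CC\PP^2\#\overline{\CC\PP}^2$.

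Next I would examine $X_2$, the blowdown of $E_3,\dots,E_n$: it has $b_2^-=2$ and is therefore diffeomorphic to $\CC\PP^2\#2\overline{\CC\PP}^2$. In a standard basis $\{H,E_a,E_b\}$ with $K_{\omega_2}=-3H+E_a+E_b$, a direct enumeration of the integral solutions of $E^2=-1$, $E\cdot K_{\omega_2}=-1$ shows that the exceptional classes of $X_2$ are exactly $E_a$, $E_b$ and $H-E_a-E_b$, with intersection pattern $E_a\cdot E_b=0$ and $(H-E_a-E_b)\cdot E_a=(H-E_a-E_b)\cdot E_b=1$. All three lie in $\mathcal{E}_{\omega}(X_2)$, and pulled back to $X$ they lie in $\mathcal{E}_{\omega}(X)$ orthogonally to $E_3,\dots,E_n$. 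The class $E_2$, being minimal-area among the exceptional classes of $X_2$, is one of these three (consistent with Theorem \ref{thm:2blowup}).

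I would then split into cases according to which class $E_2$ is. If $E_2=H-E_a-E_b$, I set $E_1=E_a$ and $E_1'=E_b$, so that $E_1\cdot E_1'=0$ and $E_1\cdot E_2=E_1'\cdot E_2=1$; blowing down $E_2$ then produces $S^2\times S^2$, which is the first alternative. If instead $E_2\in\{E_a,E_b\}$, say $E_2=E_a$, I set $E_1=E_b$ and $E_1'=H-E_a-E_b$, so that $E_1\cdot E_2=0$, $E_1'\cdot E_2=1$ and $E_1\cdot E_1'=1$; blowing down $E_2=E_a$ produces $\CC\PP^2\#\overline{\CC\PP}^2$, the second alternative. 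In both cases $E_1$ and $E_1'$ are orthogonal to $E_3,\dots,E_n$, which gives the remaining asserted intersection numbers.

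I expect the main obstacle to be the identification of the diffeomorphism type of $X'$ in the two cases, i.e.\ verifying that blowing down the ``central'' class $H-E_a-E_b$ yields $S^2\times S^2$ while blowing down a ``peripheral'' class $E_a$ yields $\CC\PP^2\#\overline{\CC\PP}^2$. I would settle this by the lattice computation of the orthogonal complement of the blown-down class inside $H_2(X_2;\ZZ)$: the complement of $E_a$ is $\langle H,E_b\rangle$ with the odd form $\mathrm{diag}(1,-1)$, giving $\CC\PP^2\#\overline{\CC\PP}^2$, whereas the complement of $H-E_a-E_b$ is spanned by $E_a-E_b$ and $H-E_a$, whose Gram matrix $\left(\begin{smallmatrix}-2&1\\1&0\end{smallmatrix}\right)$ is even, unimodular and of signature $(1,1)$, hence the hyperbolic form of $S^2\times S^2$. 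Equivalently one can invoke the classical fact that $\CC\PP^2$ blown up at two points coincides with $S^2\times S^2$ blown up at one point, the exceptional curve of the latter being precisely $H-E_a-E_b$.
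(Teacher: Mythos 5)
Your proposal is correct and follows essentially the same route as the paper: the paper states this corollary as a summary of the iterated minimal-area blowdown process described just before it (blow down a class in $\mathcal{E}_{\text{min}}$, pass to secondary minimal classes via Corollary \ref{cor:minimalarea}, and repeat until $b_2^-=1$), which is exactly your first paragraph. Your endgame at $X_2\cong\CC\PP^2\#2\overline{\CC\PP}^2$ --- enumerating the three exceptional classes $E_a$, $E_b$, $H-E_a-E_b$ and checking the parity of the orthogonal complement of the blown-down class --- is a faithful and correct fleshing-out of the dichotomy that the paper leaves implicit.
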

\begin{proof}
    The two cases reflect the well-known non-uniqueness of minimal models of rational surfaces. Choose a standard basis $\{h,e_1,\cdots,e_n\}$ of $H_2(X;\ZZ)$ such that $e_i=E_i$ for $3\leq i\leq n$. After blowing down a collection of exceptional spheres in classes $e_n,e_{n-1},\cdots,e_3$, we arrive at $\mathbb{CP}^2\#2\overline{\mathbb{CP}}^2$. The resulting model is determined by whether one next blows down the exceptional sphere in class $h-e_1-e_2$ or $e_2$, yielding respectively $S^2\times S^2$ or $\mathbb{CP}^2\#\overline{\mathbb{CP}}^2$. If $X'=S^2\times S^2$, then $E_2=h-e_1-e_2$; set $E_1:=e_1$ and $E_1':=e_2$. If $X'=\CC\PP^2\#\overline{\CC\PP}^2$, then $E_2$ is either $e_1$ or $e_2$; accordingly, set $(E_1,E_1')=(e_2,h-e_1-e_2)$ or $(e_1,h-e_1-e_2)$. In all cases, the required intersection properties follow immediately from the standard intersection form.
\end{proof}

\subsection{Minimal reductions for pairs}\label{section:defofquasiminimal}
We now introduce several key notions for studying the structure of connected symplectic divisors in rational manifolds satisfying $[\omega]\cdot (K_{\omega}+[D])<0$. These notions can be viewed as a relative version of the minimality introduced in the previous section for the absolute setting.
\begin{definition}\label{def:quasiminimal}
    The connected pair $(X,\omega,D)$ is said to be \textbf{quasi-minimal} if $b_2(X)\geq 3$ and there exists some $E_{\text{min}}\in\mathcal{E}_{\text{min}}(X,\omega)$ so that $[D]\cdot E_{\text{min}}\geq 2$.
\end{definition}

The following lemma allows us to reduce any connected pair to a quasi-minimal pair via blowdowns.

\begin{lemma}\label{lem:quasiminred}
    Any connected pair $(X,\omega,D)$, where $X$ is a rational manifold, can be obtained from a sequence of blowups (toric, non-toric, half-toric or exterior) from either a quasi-minimal pair or a divisor in $\CC\PP^2, S^2\times S^2$ or $\CC\PP^2\#\overline{\CC\PP}^2$.
\end{lemma}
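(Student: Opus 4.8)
The plan is to induct on $b_2(X)$, using the four symplectic blowdown operations (exterior, toric, non-toric, half-toric) of Section \ref{sec:divisoroperation} as the reduction steps; since each strictly lowers $b_2$, the induction terminates. For the base case $b_2(X)\le 2$ the only rational manifolds are $\CC\PP^2$, $S^2\times S^2$ and $\CC\PP^2\#\overline{\CC\PP}^2$, so $(X,\omega,D)$ is already a divisor in one of the three listed manifolds and there is nothing to prove. For the inductive step assume $b_2(X)\ge 3$; then $X$ is non-minimal and not diffeomorphic to $\CC\PP^2\#\overline{\CC\PP}^2$, so $\mathcal{E}_\omega(X)\neq\emptyset$ and Corollary \ref{cor:minimalarea} applies. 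If $(X,\omega,D)$ is quasi-minimal we are done. Otherwise, by the definition of quasi-minimality, every class in $\mathcal{E}_{\min}(X,\omega)$ pairs with $[D]$ to at most $1$, and the goal is to exhibit a single blowdown of one of the four types yielding a connected non-empty symplectic divisor in a rational manifold; the inductive hypothesis then applies to the reduced pair, and reversing the blowdown exhibits $(X,\omega,D)$ as a blowup of it.

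To produce the blowdown, fix $E_{\min}\in\mathcal{E}_{\min}(X,\omega)$ and a divisor-adapted $J\in\mathcal{J}(D)$. By Corollary \ref{cor:minimalarea}, $E_{\min}$ is represented by an embedded $J$-holomorphic sphere $C$, and $C$ is the \emph{unique} $J$-holomorphic representative, since two distinct such spheres would violate positivity of intersections as $E_{\min}^2=-1$. The decisive dichotomy is whether $C$ is a component of $D$. If $C$ is not a component, then $C$ and each $D_i$ are distinct $J$-holomorphic curves, so positivity of intersections gives $C\cdot D_i\ge 0$ with $\sum_i C\cdot D_i=[D]\cdot E_{\min}\le 1$. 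Hence either $C$ is disjoint from $D$, giving an exterior blowdown that leaves $D$ unchanged and connected, or $C$ meets exactly one component $D_i$ in a single transverse interior point, giving a non-toric blowdown after which $D_i$ is replaced by a sphere in class $[D_i]+E_{\min}$ and the configuration stays connected. In both cases $b_2$ drops by one and we conclude by induction.

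It remains to treat the case in which $C$ coincides with a component $D_0$, that is, some component of $D$ is a minimal-area exceptional class $[D_0]=E_{\min}$. Then $[D]\cdot E_{\min}=-1+\sum_{j\neq 0}[D_j]\cdot[D_0]\le 1$ forces the total intersection multiplicity of $D_0$ with the remaining components to be at most $2$. When $D_0$ meets two distinct components each once, I perform a toric blowdown, merging those components at a point; when it meets a single component once, I perform a half-toric blowdown. In each case deleting the corresponding vertex of the dual graph (and, for the toric case, joining its two neighbors) preserves connectedness and non-emptiness, $b_2$ drops, and induction applies.

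The genuinely degenerate possibilities are that $D_0$ is disjoint from all other components (so $D=\{D_0\}$ is a single exceptional sphere) or that $D_0$ meets one component with multiplicity two; these are the main obstacle, since contracting $D_0$ would either empty the divisor or destroy the normal-crossing condition. I would dispatch them by contracting an auxiliary exceptional sphere rather than $D_0$ itself: since $b_2(X)\ge 3$ we have $b_2^-(X)\ge 2$, so Corollary \ref{cor:secondminexceptional} furnishes a secondary minimal-area exceptional class $E'$ with $E'\cdot E_{\min}=0$, realizable (via Corollary \ref{cor:minimalarea} on the blowdown of $C_{E_{\min}}$ and lifting back) by a $J$-holomorphic sphere disjoint from $D_0$, which yields an exterior or non-toric blowdown. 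In the single-sphere subcase $D=\{D_0\}$ this is immediate, as disjointness from $D_0$ is disjointness from all of $D$. The subtle point, and the technical heart of the argument, is the multiplicity-two subcase: here one must ensure the auxiliary sphere can be taken to meet the \emph{entire} divisor $D$ in one of the four standard patterns, not merely to be disjoint from $D_0$, and this is exactly where the minimal-area ordering of exceptional classes must be combined with positivity of intersections (and, if needed, the curve cone decomposition of Theorem \ref{thm:curvecone}) to control the intersections of $E'$ with the remaining components.
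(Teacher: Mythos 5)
Your main line is exactly the paper's proof: realize a minimal-area exceptional class by an embedded $J$-holomorphic sphere via Corollary \ref{cor:minimalarea} for $J\in\mathcal{J}(D)$, split according to whether this sphere is a component of $D$, perform the corresponding toric/half-toric/non-toric/exterior blowdown, and terminate by finiteness of $b_2^-$ (your induction on $b_2$ is the same device). You are in fact more careful than the paper in flagging the two degenerate configurations --- $D$ a single sphere in class $E_{\text{min}}$, and the component $D_0$ in class $E_{\text{min}}$ meeting one other component at two points --- which the paper's proof passes over silently; both are possible under the stated hypotheses, since the lemma assumes nothing beyond connectedness (no tree condition as in Proposition \ref{prop:treeofsphere} is available here). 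Your resolution of the single-sphere case is correct in substance, though the mechanism should not be ``blow down $C_{E_{\text{min}}}$ and lift back'' (the representative downstairs may pass through the blown-down point, so its proper transform need not lie in class $E'$) but rather McDuff--Opshtein: a class $E'$ with $E'\cdot E_{\text{min}}=0$ pairs non-negatively with the unique component, hence is $D$-good, so Theorem \ref{thm:MO15} gives an embedded representative for generic $J\in\mathcal{J}(D)$, disjoint from $D_0$ by positivity of intersection, yielding an exterior blowdown.

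The genuine gap is the multiplicity-two case, which you acknowledge but do not prove, and for which your recipe fails as stated. Concretely: in $X=\CC\PP^2\#2\overline{\CC\PP}^2$ let $D_0$ be the exceptional sphere in class $E_1$ and $D_1$ the proper transform of an irreducible two-nodal quartic, an embedded symplectic torus in class $4H-2E_1-2E_2$ meeting $D_0$ at two points; choose $\omega$ with $\omega(E_1)$ strictly smallest, so $\mathcal{E}_{\text{min}}(X,\omega)=\{E_1\}$ and $[D]\cdot E_1=-1+2=1$, i.e.\ the pair is connected, has $b_2=3$, and is not quasi-minimal. The only exceptional class orthogonal to $E_{\text{min}}$ is $E_2$, and $E_2\cdot[D_1]=2$, so any embedded representative of your auxiliary class $E'=E_2$ meets $D_1$ with total multiplicity two and fits none of the four blowdown patterns; no amount of ``minimal-area ordering plus positivity plus the curve cone'' can change this, because the obstruction is homological. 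The case can be rescued, but only by dropping the orthogonality requirement: here $H-E_1-E_2$ pairs $1$ with $[D_0]$ and $0$ with $[D_1]$, hence is $D$-good, and Theorem \ref{thm:MO15} produces a non-toric exceptional sphere whose blowdown lands in the $b_2\leq 2$ base case. A complete proof must therefore show in general that \emph{some} exceptional class meets all of $D$ in one of the four admissible patterns, which your sketch does not do. (To be fair, the paper's own two-line proof has the same lacuna; it is invisible in the paper's applications because there $\omega\cdot(K_\omega+[D])<0$ holds, so Proposition \ref{prop:treeofsphere} forbids loops in $\Gamma(D)$ and hence the multiplicity-two configuration, but the lemma as stated allows it.)
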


\begin{proof}
    Assume $[D]\cdot E_{\text{min}}\leq 1$. Then by Corollary \ref{cor:minimalarea}, for any $J\in\mathcal{J}(D)$, $E_{\text{min}}$ has an embedded representative $C_{E_{\text{min}}}$.  By positivity of intersection, if $C_{E_{\text{min}}}$ is a component of $D$, then the condition $[D]\cdot E_{\text{min}}\leq 1$ implies that there are at most two other components of $D$ intersecting $C_{E_{\text{min}}}$ which we can blowdown (toric or half-toric) to get another symplectic divisor. Otherwise, $C_{E_{\text{min}}}$ would intersect exactly one component of $D$ or be disjoint from $D$. And we can perform non-toric or exterior blowdown to obtain another symplectic divisor. Repeating this process finitely many times (since $b_2^-$ is finite) we will arrive at either a quasi-minimal pair or a pair with $b_2\leq 2$.
\end{proof}

\begin{rmk}
    The prefix `quasi' is used for the obvious reason: we only require the non-existence of one specific exterior/toric/non-toric/half-toric exceptional sphere with minimal symplectic area. A quasi-minimal pair could contain many other exceptional spheres with symplectic area $\geq e_{\text{min}}$ which allow further exterior/toric/non-toric/half-toric blowdowns.
\end{rmk}

For quasi-minimal pairs, we introduce a further minimality notion below.

\begin{definition}\label{def:partialmin}
    A quasi-minimal pair $(X,\omega,D)$ is called {\bf partially minimal} if 
    \begin{enumerate}
        \item no class $E\in \mathcal{E}_{\omega}(X)$ with $E\cdot E_{\text{min}}=0$ has non-negative intersections with all components of $D$;
        \item no component $D_i$ of $D$ with $[D_i]^2=-1$ satisfies $[D_i]\cdot([D]-[D_i])=2$.
    \end{enumerate}
\end{definition}

Note that the first condition ensures that there are no non-toric exceptional spheres, while the second condition implies the absence of toric exceptional spheres. Using an argument similar to that of Lemma \ref{lem:quasiminred}, we obtain the following partially minimal reduction in the category of quasi-minimal pairs; its proof is deferred to Section \ref{section:quasiminimal}.

\begin{lemma}\label{lem:partialminimalred}
    Any connected quasi-minimal pair $(X,\omega,D)$, where $X$ is a rational manifold, is obtained from a sequence of toric or non-toric blowups from either another quasi-minimal pair which is partially minimal or a divisor in $\CC\PP^2, S^2\times S^2$ or $\CC\PP^2\#\overline{\CC\PP}^2$.
\end{lemma}

\subsection{Divisors in manifolds with $b_2(X)\leq 2$}\label{section:divisorsinminimal}

By Lemma \ref{lem:quasiminred} and \ref{lem:partialminimalred}, in order to establish our ultimate result, we must first consider the cases with $b_2(X)\leq 2$, which are not covered by the quasi-minimal pairs which are also partially minimal. It turns out that all possible configurations are either comb-like, as seen in irrational ruled surfaces (Example \ref{example:comb}), or sub-configurations of minimal models of symplectic log Calabi-Yau divisors which we now introduce.

A pair $(X,\omega,D)$ is called {\bf symplectic log Calabi-Yau} if $[D]=-K_{\omega}$. In a series of work \cite{LiMakLCY,LiMakICCM,LiMinMak,Enumerate}, such a notion was introduced as the symplectic analogue of the anticanonical pairs in algebraic geometry and their deformation, contact and enumerative aspects were studied and related to symplectic fillings, toric actions and almost toric fibrations. One basic fact is that their configurations must be a cycle of spheres when $l(D)\geq 2$ and $X$ is a rational manifold (\cite[Lemma 3.1]{LiMakLCY}).

It was shown in \cite[Lemma 3.4]{LiMakLCY} that any symplectic log Calabi-Yau divisor in a rational manifold with $l(D)\geq 2$ can be obtained from a sequence of toric blowups followed by a sequence of non-toric blowups from {\bf minimal models} shown below ($k$ is some integer):
		
		$\bullet$  Case $(A)$: $X=\mathbb{CP}^2$,
		$-K_{\omega}=3h$.

		$(A1)$ $D$ consists of a $h-$sphere and a $2h-$sphere, or
		
		$(A2)$ $D$ consists of three $h-$spheres.
		
		$\bullet$ Case $(B)$: $X=S^2 \times S^2$, $-K_{\omega}=2f_1+2f_2$, where $f_1$ and $f_2$ are the homology classes
		of the two factors.
		
		$(B1)$ $l(D)=2$ and  $[D_1]=kf_1+f_2, [D_2]=(2-k)f_1+f_2$, $k\geq 1$.
		
		$(B2)$ $l(D)=3$ and  $[D_1]=kf_1+f_2, [D_2]=f_1, [D_3]=(1-k)f_1+f_2$, $k\geq 1$.
		
		$(B3)$ $l(D)=4$ and $[D_1]=kf_1+f_2,  [D_2]=f_1, [D_3]=-kf_1+f_2, [D_4]=f_1$, $k\geq 0$.

        $\bullet$ Case $(C)$: $X=\mathbb{CP} ^2 \# \overline{\mathbb{CP}}^2$, $-K_{\omega}=f+2s$, where $f$ and $s$ are the fiber class and section class
		with $f\cdot f=0$, $f\cdot s=1$ and $s\cdot s=1$.

		$(C1)$ $l(D)=2$,  and either
		$([D_1],[D_2])=(kf+s,(1-k)f+s)$, $k\geq 1$ or $([D_1],[D_2])=(2s, f)$.
		
		$(C2)$  $l(D)=3$ and  $[D_1]=kf+s, [D_2]=f, [D_3]=-kf+s$, $k\geq 0$.
		
		$(C3)$ $l(D)=4$ and  $[D_1]=kf+s, [D_2]=f,  [D_3]=-(k+1)f+s, [D_4]=f$, $k\geq 0$.

The dual graphs \footnote{Labelled points denote symplectic surfaces with a prescribed self-intersection number, edges denote intersections between two symplectic surfaces.} in $(A1)$, $(A2)$ are given respectively  by
        \[
		\begin{tikzpicture}
		\node at (-1,0) {$\CC\PP^2$};
		
		\node (x1) at (1,0) [circle,fill,outer sep=5pt, scale=0.5] [label=above:$ 1 $]{};
		\node (y1) at (2.5,0) [circle,fill,outer sep=5pt, scale=0.5] [label=above:$ 4 $]{};
		\draw (x1) to[bend right] (y1);\draw (y1) to[bend right] (x1);
		\node at (1.75,-1) {($A_1$)};
		
		\node (x2) at (4,0) [circle,fill,outer sep=5pt, scale=0.5] [label=above:$ 1 $]{};
		\node (y2) at (5.5,0) [circle,fill,outer sep=5pt, scale=0.5] [label=above:$ 1 $]{};
		\node (z2) at (4,-1) [circle,fill,outer sep=5pt, scale=0.5] [label=below:$ 1 $]{};
		\draw (x2) -- (y2);\draw (y2) -- (z2);\draw (z2) -- (x2);
		\node at (5,-1) {($A_2$)};
		
		\end{tikzpicture}
		\]	
		
		The dual graphs in $(B1)$, $(B2)$, $(B3)$ are given respectively  by
		\[
		\begin{tikzpicture}
		
		\node at (-1,0) {$S^2\times S^2$};
		
		\node (x1) at (1,0) [circle,fill,outer sep=5pt, scale=0.5] [label=above:$ 2k $]{};
		\node (y1) at (2.5,0) [circle,fill,outer sep=5pt, scale=0.5] [label=above:$ 4-2k $]{};
		\draw (x1) to[bend right] (y1);\draw (y1) to[bend right] (x1);
		\node at (1.75,-1) {($B_1$)};
		
		\node (x2) at (4,0) [circle,fill,outer sep=5pt, scale=0.5] [label=above:$ 2k $]{};
		\node (y2) at (5.5,0) [circle,fill,outer sep=5pt, scale=0.5] [label=above:$ 0 $]{};
		\node (z2) at (4,-1) [circle,fill,outer sep=5pt, scale=0.5] [label=below:$ 2-2k $]{};
		\draw (x2) -- (y2);\draw (y2) -- (z2);\draw (z2) -- (x2);
		\node at (5,-1.1) {($B_2$)};
		
		\node (x3) at (7,0) [circle,fill,outer sep=5pt, scale=0.5] [label=above:$ 2k $]{};
		\node (y3) at (8.5,0) [circle,fill,outer sep=5pt, scale=0.5] [label=above:$ 0 $]{};
		\node (z3) at (8.5,-1) [circle,fill,outer sep=5pt, scale=0.5] [label=below:$ -2k $]{};
		\node (w3) at (7,-1) [circle,fill,outer sep=5pt, scale=0.5] [label=below:$ 0 $]{};
		\draw (x3) -- (y3);\draw (y3) -- (z3);\draw (z3) -- (w3);\draw (w3) to (x3);
		\node at (7.75,-1.7) {($B_3$)};
		\end{tikzpicture}
		\]	
		
		The dual graphs in $(C1)$, $(C2)$ and $(C3)$ are given respectively by
		\[
		\begin{tikzpicture}
		
		\node at (-4,0) {$\CC\PP^2\#\overline{\CC\PP}^2$};
		
		\node (x) at (-2,0) [circle,fill,outer sep=5pt, scale=0.5] [label=above:$ 2k+1 $]{};
		\node (y) at (-0.5,0) [circle,fill,outer sep=5pt, scale=0.5] [label=above:$ 3-2k $]{};
		\draw (x) to [bend right] (y);\draw (y) to [bend right] (x);
		
		\node (x1) at (1,0) [circle,fill,outer sep=5pt, scale=0.5] [label=above:$ 4 $]{};
		\node (y1) at (2.5,0) [circle,fill,outer sep=5pt, scale=0.5] [label=above:$ 0 $]{};
		\draw (x1) to[bend right] (y1);\draw (y1) to[bend right] (x1);
		\node at (0.25,-1.2) {($C_1$)};
		
		\node (x2) at (4,0) [circle,fill,outer sep=5pt, scale=0.5] [label=above:$ 2k+1 $]{};
		\node (y2) at (5.5,0) [circle,fill,outer sep=5pt, scale=0.5] [label=above:$ 0 $]{};
		\node (z2) at (4,-1) [circle,fill,outer sep=5pt, scale=0.5] [label=below:$ 1-2k $]{};
		\draw (x2) -- (y2);\draw (y2) -- (z2);\draw (z2) -- (x2);
		\node at (5,-1.2) {($C_2$)};
		
		\node (x3) at (7,0) [circle,fill,outer sep=5pt, scale=0.5] [label=above:$ 2k+1 $]{};
		\node (y3) at (8.5,0) [circle,fill,outer sep=5pt, scale=0.5] [label=above:$ 0 $]{};
		\node (z3) at (8.5,-1) [circle,fill,outer sep=5pt, scale=0.5] [label=below:$ -2k-1 $]{};
		\node (w3) at (7,-1) [circle,fill,outer sep=5pt, scale=0.5] [label=below:$ 0 $]{};
		\draw (x3) -- (y3);\draw (y3) -- (z3);\draw (z3) -- (w3);\draw (w3) to (x3);
		\node at (7.75,-1.8) {($C_3$)};
		\end{tikzpicture}
		\]	

We now enumerate all possible configurations of divisors satisfying $[\omega]\cdot (K_\omega+[D])<0$, analogous to the list above for which $[\omega]\cdot (K_\omega+[D])=0$.

When $X$ is $\CC\PP^2$, let $h$ be the line class in $H_2(X;\ZZ)$. Then $K_\omega=-3h$. In order to make $[\omega]\cdot (K_\omega+[D])<0$, $[D]$ must be either $h$ or $2h$ and we have the following cases:

    $(A1)'$  $D$ is a single sphere in class $h$. 
    
    $(A2)'$ $D=D_1\cup D_2$ where $D_1,D_2$ are spheres in class $h$. 
    
    $(A3)'$ $D$ is a single sphere in class $2h$.

When $X$ is $S^2\times S^2$, let $f_1,f_2\in H_2(X;\ZZ)$ be the classes of $[S^2\times \{\text{pt}\}]$ and $[\{\text{pt}\}\times S^2]$. Then $K_\omega=-2f_1-2f_2$. By Proposition \ref{prop:treeofsphere}, each component of $D$ is a sphere. It is easy to see from adjunction formula that the homology class of each component must be either $f_1+kf_2$ or $f_2+kf_1$ for some $k\in \ZZ$. After modulo the symmetry in $f_1$ and $f_2$, the following cases remain when $[\omega]\cdot (K_\omega+[D])<0$:

     $(B1)'$ $D$ is comb-like: $D=D_1\cup \cdots \cup D_n$ where $[D_1]=f_1+kf_2$ and each $[D_i]=f_2$ for $2\leq i\leq n$. 
     
     $(B2)'$ $D$ has three components with $[D_1]=f_1+kf_2$, $[D_2]=f_2$, $[D_3]=f_1-kf_2$ for some $k\in\ZZ_{\geq0}$.
     
     $(B3)'$ $D$ has two components with $[D_1]=f_1+(k+1)f_2$, $[D_2]=f_1-kf_2$ for some $k\in \ZZ_{\geq0}$.

When $X$ is $\CC\PP^2\#\overline{\CC\PP}^2$, let $f,s\in H_2(X;\ZZ)$ be the fiber class and section class respectively with $f\cdot f=0,f\cdot s=1$ and $s\cdot s=1$. Then $K_\omega=-f-2s$. By adjunction formula, the homology classes of embedded symplectic spheres must be $s+kf$ for some $k\in\ZZ$. There are the following cases when $[\omega]\cdot (K_\omega+[D])<0$:

     $(C1)'$ $D$ is comb-like: $D=D_1\cup \cdots \cup D_n$ where $[D_1]=s+kf$ and each $[D_i]=f$ for $2\leq i\leq n$. 
     
     $(C2)'$ $D$ has three components with $[D_1]=s+kf$, $[D_2]=f$, $[D_3]=s+(-1-k)f$ for some $k\in\ZZ_{\geq0}$.
     
     $(C3)'$ $D$ has two components with $[D_1]=s+kf$, $[D_2]=s-kf$ for some $k\in \ZZ_{\geq0}$. 

The dual graphs in $(A1)'$, $(A2)'$, $(A3)'$ are given respectively  by
        \[
		\begin{tikzpicture}
		\node at (-3,0) {$\CC\PP^2$};
		
        \node (x1) at (-1,0) [circle,fill,outer sep=5pt, scale=0.5] [label=above:$ 1 $]{};
		\node at (-1,-0.8) {$(A1)'$};
      
		\node (x1) at (1,0) [circle,fill,outer sep=5pt, scale=0.5] [label=above:$ 1 $]{};
		\node (y1) at (2.5,0) [circle,fill,outer sep=5pt, scale=0.5] [label=above:$ 1 $]{};
		\draw (x1) to (y1);
		\node at (1.75,-0.8) {$(A2)'$};
		
		\node (x2) at (4.5,0) [circle,fill,outer sep=5pt, scale=0.5] [label=above:$ 4 $]{};
		\node at (4.5,-0.8) {$(A3)'$};
		
		\end{tikzpicture}
		\]	
		
		The dual graphs in $(B1)'$, $(B2)'$, $(B3)'$ are given respectively  by
		\[
		\begin{tikzpicture}
		
		\node at (-3,0.5) {$S^2\times S^2$};
		
		\node (x1) at (1,0) [circle,fill,outer sep=5pt, scale=0.5] [label=below:$ 2k $]{};
		\node (y1) at (-0.5,1) [circle,fill,outer sep=5pt, scale=0.5] [label=above:$ 0 $]{};
        \node (y2) at (0.2,1) [circle,fill,outer sep=5pt, scale=0.5] [label=above:$ 0 $][label=right: $\cdots\cdots$]{};
        \node (y3) at (1.8,1) [circle,fill,outer sep=5pt, scale=0.5] [label=above:$ 0 $]{};
        \node (y4) at (2.5,1) [circle,fill,outer sep=5pt, scale=0.5] [label=above:$ 0 $]{};
		\draw (x1) to (y1);\draw (x1) to (y2);\draw (x1) to (y3);\draw (x1) to (y4);
		\node at (1.0,-0.8) {$(B1)'$};
		
		\node (x2) at (4,1) [circle,fill,outer sep=5pt, scale=0.5] [label=above:$ 0 $]{};
		\node (y2) at (5.5,1) [circle,fill,outer sep=5pt, scale=0.5] [label=above:$ 2k $]{};
		\node (z2) at (4,0) [circle,fill,outer sep=5pt, scale=0.5] [label=below:$ -2k $]{};
		\draw (x2) -- (y2);\draw (z2) -- (x2);
		\node at (4.75,-0.8) {$(B2)'$};
		
		\node (x3) at (7,1) [circle,fill,outer sep=5pt, scale=0.5] [label=above:$ 2k+2 $]{};
		\node (y3) at (8.5,1) [circle,fill,outer sep=5pt, scale=0.5] [label=above:$ -2k $]{};
		\draw (x3) -- (y3);
		\node at (7.75,-0.8) {$(B3)'$};
		\end{tikzpicture}
		\]	
		
		The dual graphs in $(C1)'$, $(C2)'$, $(C3)'$  are given respectively by
		\[
		\begin{tikzpicture}
		
		\node at (-4,0.5) {$\CC\PP^2\#\overline{\CC\PP}^2$};
		
		\node (x1) at (1,0) [circle,fill,outer sep=5pt, scale=0.5] [label=below:$ 2k+1 $]{};
		\node (y1) at (-0.5,1) [circle,fill,outer sep=5pt, scale=0.5] [label=above:$ 0 $]{};
        \node (y2) at (0.2,1) [circle,fill,outer sep=5pt, scale=0.5] [label=above:$ 0 $][label=right: $\cdots\cdots$]{};
        \node (y3) at (1.8,1) [circle,fill,outer sep=5pt, scale=0.5] [label=above:$ 0 $]{};
        \node (y4) at (2.5,1) [circle,fill,outer sep=5pt, scale=0.5] [label=above:$ 0 $]{};
		\draw (x1) to (y1);\draw (x1) to (y2);\draw (x1) to (y3);\draw (x1) to (y4);
		\node at (1.0,-0.8) {$(C1)'$};
		
		\node (x2) at (4,1) [circle,fill,outer sep=5pt, scale=0.5] [label=above:$ 0 $]{};
		\node (y2) at (5.5,1) [circle,fill,outer sep=5pt, scale=0.5] [label=above:$ 2k+1 $]{};
		\node (z2) at (4,0) [circle,fill,outer sep=5pt, scale=0.5] [label=below:$ -2k-1 $]{};
		\draw (x2) -- (y2);\draw (z2) -- (x2);
		\node at (4.75,-0.8) {$(C2)'$};
		
		\node (x3) at (7,1) [circle,fill,outer sep=5pt, scale=0.5] [label=above:$ 2k+1 $]{};
		\node (y3) at (8.5,1) [circle,fill,outer sep=5pt, scale=0.5] [label=above:$ -2k+1 $]{};
		\draw (x3) -- (y3);
		\node at (7.75,-0.8) {$(C3)'$};
		\end{tikzpicture}
		\]	

Based on the above enumerations, we will give the proof of the following result at the end of Section \ref{section:findcusp}.

\begin{lemma}\label{lem:threecases}
    Let $(X,\omega,D)$ be a connected pair where $X$ is a rational manifold with $[\omega]\cdot(K_\omega+[D])<0$ and $b_2(X)\leq 2$, then all blowups (toric, non-toric, half-toric, exterior) of $(X,\omega,D)$ are symplectic affine-ruled.
\end{lemma}

\subsection{Structural results for quasi-minimal pairs}\label{section:quasiminimal}
The goal of this section is to prove Lemma \ref{lem:partialminimalred}. The proof will be divided into two parts, given in Sections \ref{section:LCY} and \ref{section:psedolcy}, which is based on the analysis of the structure of quasi-minimal pairs given below.

\begin{lemma}\label{lem:E+D+K}
  Suppose $(X,\omega,D)$ is a quasi-minimal connected pair with $[\omega]\cdot(K_\omega+[D])<0$, where $X$ is a rational manifold. If $E_{\text{min}}\cdot[D]\geq 2$, then $SW(E_{\text{min}}+[D]+K_{\omega})\neq 0$.
\end{lemma}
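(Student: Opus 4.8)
The plan is to apply Corollary \ref{cor:SWnonzero} directly to the class $A:=E_{\text{min}}+[D]+K_{\omega}$. Since $X$ is rational, that corollary requires only two things: that the SW index $I(A)$ is non-negative, and that $\omega\cdot(K_{\omega}-A)<0$. I would verify these two conditions and then read off the conclusion.

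For the index, I would use $I(A)=A^2-K_{\omega}\cdot A=A\cdot(A-K_{\omega})$ together with $A-K_{\omega}=E_{\text{min}}+[D]$, so that
\[
I(A)=(E_{\text{min}}+[D]+K_{\omega})\cdot(E_{\text{min}}+[D])
=(E_{\text{min}}^2+K_{\omega}\cdot E_{\text{min}})+([D]^2+K_{\omega}\cdot[D])+2\,E_{\text{min}}\cdot[D].
\]
The first parenthesized term equals $-2$ because $E_{\text{min}}$ is an exceptional class, so its $(-1)$-sphere representative satisfies the adjunction identity $E_{\text{min}}^2+K_{\omega}\cdot E_{\text{min}}=-2$. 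The second term equals $-2$ by Proposition \ref{prop:treeofsphere}, which applies precisely because $\omega\cdot(K_{\omega}+[D])<0$. Hence $I(A)=2\,E_{\text{min}}\cdot[D]-4$, and the standing hypothesis $E_{\text{min}}\cdot[D]\geq 2$ is exactly what forces $I(A)\geq 0$. This is the single place where the numerical hypothesis of the lemma enters, and computing it is the real content of the argument.

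For the area condition, I would observe that $\omega\cdot(K_{\omega}-A)=-\omega(E_{\text{min}})-\omega([D])$, which is strictly negative since an embedded symplectic exceptional sphere and a symplectic divisor each have positive symplectic area; in particular $K_{\omega}-A\neq 0$, so the negativity of its area indeed forces $SW_{\omega,-}(K_{\omega}-A)=0$, as used in the proof of Corollary \ref{cor:SWnonzero}. With $I(A)\geq 0$ and $\omega\cdot(K_{\omega}-A)<0$ both established, Corollary \ref{cor:SWnonzero} gives $SW(A)\neq 0$, which is the assertion.

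Because the proof reduces to a short adjunction computation followed by invocation of an already-proven corollary, I do not expect a genuine obstacle here. The only subtlety worth care is the bookkeeping of the two $-2$ contributions, and in particular recognizing that it is Proposition \ref{prop:treeofsphere} (rather than merely the fact that $X$ is rational) that supplies the relation $[D]^2+K_{\omega}\cdot[D]=-2$, so that the hypothesis $\omega\cdot(K_{\omega}+[D])<0$ is used exactly there.
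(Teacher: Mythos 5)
Your proposal is correct and follows essentially the same route as the paper's own proof: the identical index computation $I(E_{\text{min}}+[D]+K_{\omega})=2\,E_{\text{min}}\cdot[D]-4\geq 0$ using Proposition \ref{prop:treeofsphere} and adjunction for $E_{\text{min}}$, followed by the area check $\omega\cdot(K_{\omega}-A)=-\omega\cdot(E_{\text{min}}+[D])<0$ and an appeal to Corollary \ref{cor:SWnonzero}. No gaps.
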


\begin{proof}
   By Proposition \ref{prop:treeofsphere} we have $[D]^2+K_{\omega}\cdot [D]=-2$. Using the assumption that $E_{\text{min}}\cdot[D]\geq 2$, we then compute the index 
   \begin{align*}
       I(E_{\text{min}}+[D]+K_{\omega}) & =(E_{\text{min}}+[D]+K_{\omega})^2-K_{\omega}\cdot (E_{\text{min}}+[D]+K_{\omega})\\
       &=(E_{\text{min}}+[D])\cdot(E_{\text{min}}+[D]+K_{\omega})\\
       & = E_{\text{min}}^2+2E_{\text{min}}\cdot [D]+[D]^2+E_{\text{min}}\cdot K+[D]\cdot K_{\omega}\\
       & = 2E_{\text{min}}\cdot [D]-4\\
       & \geq 0.
   \end{align*}
    Now since \[[\omega]\cdot (K_{\omega}-(E_{\text{min}}+[D]+K_{\omega}))=-[\omega]\cdot (E_{\text{min}}+[D])<0,\] by Corollary \ref{cor:SWnonzero} we see that $SW(E_{\text{min}}+[D]+K_{\omega})\neq 0$.
\end{proof}

\begin{lemma}\label{lem:(K+D)^2>-1}
    If $(X,\omega, D)$ is a quasi-minimal connected pair with $[\omega]\cdot(K_{\omega}+[D])<0$, where $X$ is a rational manifold, then we have $(K_{\omega}+[D])^2\geq -1$.
\end{lemma}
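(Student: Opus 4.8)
The plan is to reduce the inequality to a Seiberg--Witten positivity statement for one cleverly chosen class, and then to read off the conclusion from a purely homological identity. The class I would use is
\[
A := E_{\text{min}} + K_\omega + [D],
\]
where $E_{\text{min}}\in\mathcal{E}_{\text{min}}(X,\omega)$ is the minimal-area exceptional class witnessing quasi-minimality, so that $E_{\text{min}}\cdot[D]\geq 2$. Lemma \ref{lem:E+D+K} already supplies the input $SW(A)\neq 0$, which is where the quasi-minimality hypothesis $E_{\text{min}}\cdot[D]\geq 2$ enters. The first observation is that $A$ has area strictly below the minimal exceptional area: since $\omega(E_{\text{min}})=e_{\text{min}}(X,\omega)$ and $\omega\cdot(K_\omega+[D])<0$ by hypothesis,
\[
\omega(A)=e_{\text{min}}(X,\omega)+\omega\cdot(K_\omega+[D])<e_{\text{min}}(X,\omega).
\]

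The second ingredient is the lower area bound of Lemma \ref{lem:minimalarea} for classes pairing negatively with $K_\omega$. I would argue by contradiction: suppose $A\cdot K_\omega\leq -1$ (this already forces $A\neq 0$). Because $X$ is rational with $b_2(X)\geq 3$, it is non-minimal and not diffeomorphic to $\CC\PP^2\#\overline{\CC\PP}^2$, so Lemma \ref{lem:minimalarea} applies to $A$ and yields $\omega(A)\geq e_{\text{min}}(X,\omega)$, contradicting the strict inequality above. Hence $A\cdot K_\omega\geq 0$. (The degenerate possibility $A=0$ is harmless: it gives $K_\omega+[D]=-E_{\text{min}}$, whence $(K_\omega+[D])^2=-1$ directly.)

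It then remains to convert $A\cdot K_\omega\geq 0$ into the desired estimate, which is a short computation rather than a genuine obstacle. Using $K_\omega\cdot E_{\text{min}}=-1$ together with the relation $[D]^2+K_\omega\cdot[D]=-2$ from Proposition \ref{prop:treeofsphere} to eliminate $[D]^2$, one verifies the identity
\[
A\cdot K_\omega=(K_\omega+[D])^2+1.
\]
Thus $A\cdot K_\omega\geq 0$ is precisely $(K_\omega+[D])^2\geq -1$, as claimed.

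I do not expect a serious difficulty here; the whole argument hinges on the choice of $A=E_{\text{min}}+K_\omega+[D]$, whose area is dragged below $e_{\text{min}}(X,\omega)$ exactly because the adjoint class $K_\omega+[D]$ has negative area, while its Seiberg--Witten nonvanishing is guaranteed by the quasi-minimal hypothesis. The only points requiring care are checking that the hypotheses of Lemma \ref{lem:minimalarea} hold (immediate from $b_2(X)\geq 3$) and correctly carrying out the homological identity via Proposition \ref{prop:treeofsphere}.
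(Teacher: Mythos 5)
Your proof is correct and is essentially the same as the paper's: both hinge on the class $A=E_{\text{min}}+K_\omega+[D]$, the nonvanishing $SW(A)\neq 0$ from Lemma \ref{lem:E+D+K}, the area bound of Lemma \ref{lem:minimalarea}, and the identity $A\cdot K_\omega=(K_\omega+[D])^2+1$ coming from Proposition \ref{prop:treeofsphere}. The only difference is cosmetic — you run the contradiction on $A\cdot K_\omega\leq -1$ while the paper runs it on $(K_\omega+[D])^2\leq -2$, which are the same statement under that identity.
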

\begin{proof}
    By assumptions, we can choose some class $E_{\text{min}}\in \mathcal{E}_{\text{min}}(X,\omega)$ with $E_{\text{min}}\cdot [D]\geq 2$. By Lemma \ref{lem:E+D+K} we have $SW(E_{\text{min}}+[D]+K_{\omega})\neq 0$. If $(K_{\omega}+[D])^2\leq -2$, by Proposition \ref{prop:treeofsphere}, we see that 
    \begin{align*}
        (E_{\text{min}}+[D]+K_{\omega})\cdot K_{\omega}&=E_{\text{min}}\cdot K_{\omega}+([D]+K_{\omega})^2-[D]\cdot([D]+K_{\omega})\\
       & \leq -1-2+2=-1.
    \end{align*}
    Thus, Lemma \ref{lem:minimalarea} implies that $[\omega]\cdot(E_{\text{min}}+[D]+K_{\omega})\geq \omega(E_{\text{min}})$, which is a contradiction with the assumption $[\omega]\cdot (K_{\omega}+[D])<0$.
\end{proof}

\begin{proposition}\label{prop:-K-D=Emin}
    If $(X,\omega,D)$ is a quasi-minimal connected pair with $[\omega]\cdot(K_{\omega}+[D])<0$, where $X$ is a rational manifold, then $-[D]-K_{\omega}\in\mathcal{E}_{\text{min}}(X,\omega)$. Moreover, $-[D]-K_{\omega}$ is the unique element in $\mathcal{E}_{\text{min}}(X,\omega)$ whose pairing with $[D]$ is at least $2$.
\end{proposition}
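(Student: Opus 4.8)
The plan is to single out the candidate class $L := -[D]-K_{\omega}$ and to show that it coincides with $E_{\text{min}}$, equivalently that the class $B := E_{\text{min}}+[D]+K_{\omega} = E_{\text{min}}-L$ is zero. Once $B=0$ is established we read off $-[D]-K_{\omega}=E_{\text{min}}\in\mathcal{E}_{\text{min}}(X,\omega)$ directly, and the uniqueness clause comes for free: the argument below uses $E_{\text{min}}$ only through the relation $E_{\text{min}}\cdot[D]\geq 2$, so every element of $\mathcal{E}_{\text{min}}(X,\omega)$ satisfying this is forced to equal the single fixed class $-[D]-K_{\omega}$.

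First I would assemble the numerics. By Proposition \ref{prop:treeofsphere} we have $[D]\cdot(K_{\omega}+[D])=-2$, and Lemma \ref{lem:(K+D)^2>-1} gives $(K_{\omega}+[D])^2\geq -1$; subtracting these yields $K_{\omega}\cdot(K_{\omega}+[D])=(K_{\omega}+[D])^2-[D]\cdot(K_{\omega}+[D])\geq -1+2=1$. Plugging the definition of $L$ in, this gives $L\cdot K_{\omega}=-K_{\omega}\cdot(K_{\omega}+[D])\leq -1$, and substituting $[D]^2=-2-K_{\omega}\cdot[D]$ into $I(L)=L^2-K_{\omega}\cdot L$ collapses to the clean identity $I(L)=2\bigl((K_{\omega}+[D])^2+1\bigr)\geq 0$. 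Finally $\omega\cdot(K_{\omega}-L)=\omega\cdot(2K_{\omega}+[D])=\omega\cdot K_{\omega}+\omega\cdot(K_{\omega}+[D])<0$, since $\omega\cdot K_{\omega}<\omega\cdot(K_{\omega}+[D])<0$. These three statements are precisely the hypotheses of Corollary \ref{cor:SWnonzero}, so I conclude $SW(L)\neq 0$.

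With $SW(L)\neq 0$ and $L\cdot K_{\omega}\leq -1$ secured, Lemma \ref{lem:minimalarea} applies (quasi-minimality forces $b_2(X)\geq 3$, so $X$ is non-minimal and not $\CC\PP^2\#\overline{\CC\PP}^2$) and delivers the lower bound $\omega(L)\geq e_{\text{min}}(X,\omega)$. On the other hand $\omega(B)=\omega(E_{\text{min}})-\omega(L)=e_{\text{min}}(X,\omega)-\omega(L)$. If $B\neq 0$, then Lemma \ref{lem:E+D+K} gives $SW(B)\neq 0$, so $B$ is $J$-effective for any tame $J$ and hence $\omega(B)>0$; this forces $\omega(L)<e_{\text{min}}(X,\omega)$, contradicting the lower bound just obtained. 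Therefore $B=0$, that is $-[D]-K_{\omega}=E_{\text{min}}\in\mathcal{E}_{\text{min}}(X,\omega)$.

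The conceptual heart of the argument, and the step I expect to be most delicate, is playing two opposite-pointing estimates on the single quantity $\omega(L)$ against each other: the curve-cone estimate behind Lemma \ref{lem:minimalarea} pushes $\omega(L)$ up to at least $e_{\text{min}}$, while the effectivity of the Seiberg-Witten class $B$ pushes it strictly below $e_{\text{min}}$ unless $B$ vanishes. Making this squeeze legitimate hinges on verifying every numerical hypothesis (especially $I(L)\geq 0$ and $L\cdot K_{\omega}\leq -1$), and those in turn rest entirely on the bound $(K_{\omega}+[D])^2\geq -1$ from Lemma \ref{lem:(K+D)^2>-1}; without that single input the sign computations would not close. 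The uniqueness statement then requires no further work, since the derivation never used more about $E_{\text{min}}$ than $E_{\text{min}}\cdot[D]\geq 2$.
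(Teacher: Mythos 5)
Your proof is correct, and its first half coincides with the paper's own: both establish $SW(-[D]-K_{\omega})\neq 0$ from the same numerics (Proposition \ref{prop:treeofsphere}, Lemma \ref{lem:(K+D)^2>-1}, Corollary \ref{cor:SWnonzero}), and both invoke Lemma \ref{lem:E+D+K} to get $SW(E_{\text{min}}+[D]+K_{\omega})\neq 0$. Where you genuinely diverge is the mechanism that forces $B=E_{\text{min}}+[D]+K_{\omega}$ to vanish. The paper argues geometrically: it fixes $J\in\mathcal{J}(D)$, takes $J$-holomorphic subvarieties in both classes, uses Corollary \ref{cor:minimalarea} together with positivity of intersection and area considerations to see that $\mathcal{M}_{E_{\text{min}}}$ is a single point (the embedded sphere $C_{E_{\text{min}}}$), and concludes that the union of the two subvarieties must be that single smooth sphere, so the piece in class $B$ is empty and $B=0$. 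You instead run a purely numerical area squeeze: the inequality $L\cdot K_{\omega}\leq -1$ (a consequence of Lemma \ref{lem:(K+D)^2>-1} that the paper never extracts, since its own argument does not need it) together with $SW(L)\neq 0$ feeds Lemma \ref{lem:minimalarea} to give $\omega(L)\geq e_{\text{min}}(X,\omega)$, while $B\neq 0$ would force $\omega(B)>0$ and hence $\omega(L)<e_{\text{min}}(X,\omega)$. Your route is leaner — no divisor-adapted $J$, no moduli-space uniqueness, no positivity of intersection — at the cost of invoking the curve-cone machinery behind Lemma \ref{lem:minimalarea} once more; the paper's decomposition argument yields slightly more geometric information (the subvariety in class $-[D]-K_{\omega}$ for $J\in\mathcal{J}(D)$ is exactly the embedded sphere $C_{E_{\text{min}}}$), though none of that extra information is needed for the statement. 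Both treatments obtain uniqueness identically, by observing that $E_{\text{min}}$ enters the argument only through the relation $E_{\text{min}}\cdot[D]\geq 2$.
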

\begin{proof}
    By Lemma \ref{lem:(K+D)^2>-1} we may assume $(K_{\omega}+[D])^2\geq -1$. By Proposition \ref{prop:treeofsphere}, $[D]^2+K_\omega\cdot[D]=-2$. We then compute the index 
    \begin{align*}
        I(-[D]-K_{\omega})&=(-[D]-K_{\omega})^2-K_{\omega}\cdot(-[D]-K_{\omega})\\
        &=[D]^2+2[D]\cdot K_\omega+K_\omega^2+K_\omega\cdot [D]+K_\omega^2\\
        &=2([D]+K_{\omega})^2-[D]^2-[D]\cdot K_{\omega}\\
        &=2([D]+K_{\omega})^2+2\\
        &\geq 0.
    \end{align*} Note that \[[\omega]\cdot(K_\omega-(-[D]-K_\omega))=[\omega]\cdot([D]+2K_{\omega})=2[\omega]\cdot([D]+K_{\omega})-[\omega]\cdot [D]<0.\] We thus have $SW(-[D]-K_{\omega})\neq 0$ by Corollary \ref{cor:SWnonzero}. Suppose $E_{\text{min}}\in \mathcal{E}_{\text{min}}(X,\omega)$ is any class satisfying $E_{\text{min}}\cdot [D]\geq 2$. By Lemma \ref{lem:E+D+K}, we also know $SW(E_{\text{min}}+[D]+K_{\omega})\neq 0$. For any $J\in \mathcal{J}(D)$, by the non-vanishing of Seiberg-Witten invariants, there will be two $J$-holomorphic subvarieties $\Theta_{-[D]-K_\omega}=\{(C_i,m_i)\}$ and $\Theta_{E_{\text{min}}+[D]+K_\omega}=\{(C_i',m_i')\}$ in classes $-[D]-K_{\omega}$ and $E_{\text{min}}+[D]+K_{\omega}$ respectively. Since $E_{\text{min}}$ can be represented by an embedded $J$-holomorphic sphere $C_{E_\text{min}}$ by Corollary \ref{cor:minimalarea}, any $J$-holomorphic subvariety representing the class $E_{\text{min}}$ must contain the component $C_{E_\text{min}}$ by positivity of intersection. Thus, $\mathcal{M}_{E_\text{min}}$ must be a single point by symplectic area consideration. This implies that the union of $\{(C_i,m_i)\}$ and $\{(C_i',m_i')\}$ is a single smooth exceptional sphere. Since $[\omega]\cdot(-[D]-K_{\omega})>0$, the class $-[D]-K_{\omega}$ is non-zero. We thus have $\Theta_{E_{\text{min}}+[D]+K_{\omega}}=\emptyset$ and $-[D]-K_{\omega}=E_{\text{min}}$ is represented by an embedded symplectic exceptional sphere.
\end{proof}

%\begin{proposition}
%     If $(X,D)$ is quasi-minimal and $(K+D)^2\leq -2$, then there exists an embedded symplectic sphere with positive self-intersection intersecting $D$ transversely at exactly one point.
%\end{proposition}
%\textcolor{red}{need to refine the argument below, but the conclusion should be correct}
%\begin{proof}
 %   Let $m$ be the positive integer such that $SW(E_{\text{min}}+m(D+K))\neq 0$ but $SW(E_{\text{min}}+(m+1)(D+K))=0$. Such $m$ does exist by Lemma \ref{lem:E+D+K} and the index is quadratic in $m$. Choose some $J\in \mathcal{J}(X,\omega,D)$ and take a $J$-holomorphic subvariety $\Theta_{E_{\text{min}}+m(D+K)}=\{(C_i,m_i)\}$.  Now apply Lemma \ref{lem:treelike} by taking $A$ to be $C_i+D$ we know $C_i\cdot D\leq 1$ and $C_i^2+K\cdot C_i=-2$. If $C_i^2\geq 0$ then we are done. Otherwise, if all $C_i^2<0$, we have $K\cdot C_i\geq -1$. But $(E_{\text{min}}+m(D+K))\cdot K=-1+m(2+(D+K)^2)<0$ which implies some $K\cdot C_i=-1$ so that $C_i^2=-1$. So $C_i$ is an exceptional sphere with smaller symplectic area than $E_{\text{min}}$ which is a contradiction.
%\end{proof}

 Consequently, if the quasi-minimal pair $(X,\omega,D)$ is connected, Propositions \ref{prop:treeofsphere} and \ref{prop:-K-D=Emin} show that the quasi-minimal inequality is in fact an equality:
 \begin{align}\label{equation:=2}
    E_{\text{min}}\cdot [D]=(-[D]-K_\omega)\cdot [D]=2.
 \end{align}
 This gives rise to two possibilities: either the exceptional sphere $C_{E_\text{min}}$ is not a component of $D$ and intersects $D$ in two points, or $C_{E_\text{min}}$ is a component of $D$ and intersects exactly three other components of $D$. We refer to these as {\bf quasi-minimal pairs of the first/second kind}, and treat the two cases separately below.

\begin{example}\label{example:quasiminimal}
 Figure \ref{fig:quasiminimal} provides the example of a quasi-minimal pair of first kind in $\CC\PP^2\#8\overline{\CC\PP}^2$ and a quasi-minimal pair of second kind in $\CC\PP^2\#4\overline{\CC\PP}^2$. Note that both of them satisfy $-[D]-K_\omega=E_{\text{min}}$.
\end{example}

\begin{figure}[ht]
		\centering\includegraphics*[height=5cm, width=14cm]{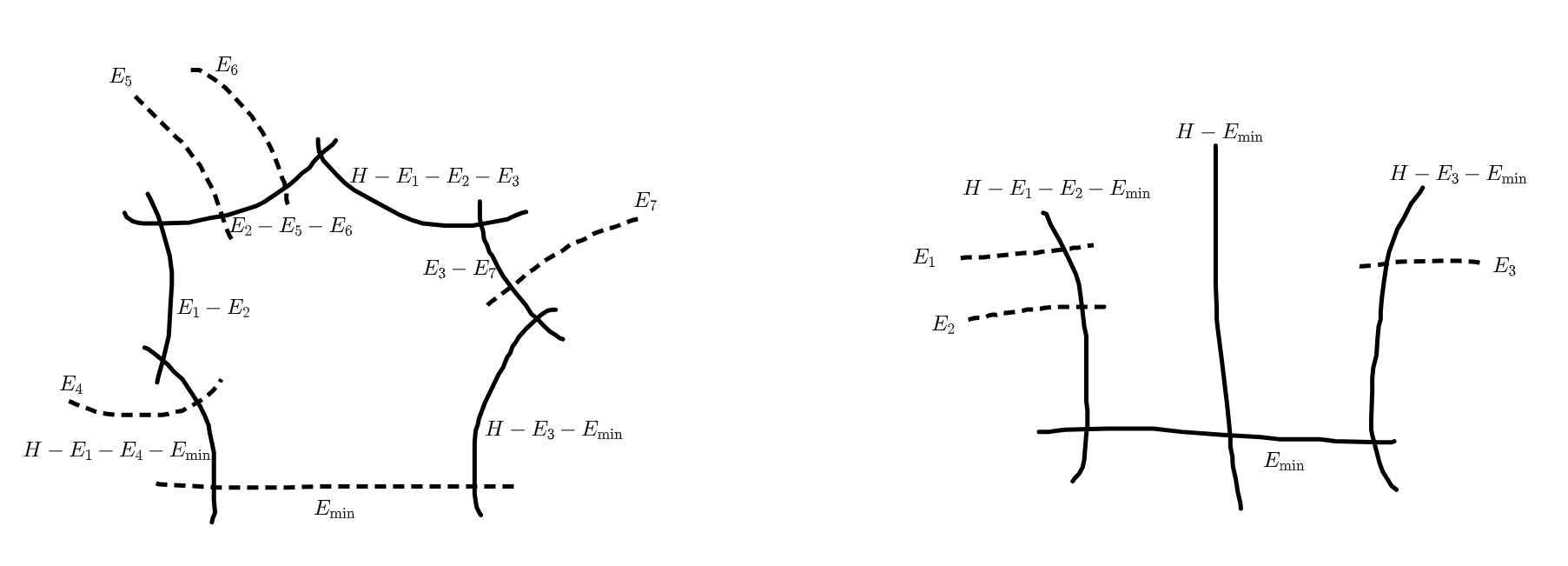}
 
		\caption{Two possible curve configurations for quasi-minimal pairs of the first/second kind. The dashed curves are not part of $D$. Neither configuration is partially minimal, due to the non-toric exceptional spheres indicated by the dashed curves. \label{fig:quasiminimal}}
	\end{figure}

\subsubsection{Quasi-minimal pairs of first kind}\label{section:LCY}

If $D\subseteq (X,\omega)$ is connected and quasi-minimal with $[\omega]\cdot([D]+K_{\omega})<0$, from Equation (\ref{equation:=2}) we see that $E_{\text{min}}=-K_{\omega}-[D]$ and $E_{\text{min}}$ is represented by a $J$-holomorphic sphere $C_{E_{\text{min}}}$ such that $[D]\cdot E_{\text{min}}=2$. Now,  let us further assume that $D$ is of first kind so that $C_{E_\text{min}}$ is not a component of $D$. It is possible for $C_{E_\text{min}}$ to intersect only one component of $D$ with contact order $2$, or to intersect two components of $D$ but at the same point. In such cases, we can always perform a $C^2$-small perturbation of $C_{E_{\text{min}}}$ to get another symplectic sphere $C_{E_{\text{min}}}'$ intersecting with $D$ at two distinct points transversely, as being symplectic is an open condition. This could even be achieved by making the perturbed sphere $J'$-holomorphic under a $C^1$-small perturbation of $J$ (see \cite[Proposition 3.3]{LiUsher}). After further perturbing  $C_{E_{\text{min}}}'$ into $C_{E_{\text{min}}}''$ to ensure the transverse intersections are $\omega$-orthogonally (\cite[Lemma 2.3]{Gompfnew}), $(\cup D_i)\cup C_{E_{\text{min}}}''$ will form a symplectic log Calabi-Yau divisor. In summary, we have the following result.
\begin{corollary}
    A quasi-minimal pair of first kind with $[\omega]\cdot([D]+K_{\omega})<0$ can be completed into a symplectic log Calabi-Yau pair by adding a symplectic exceptional sphere in class $E_{\text{min}}$. In particular, $D$ must be a chain of spheres.
\end{corollary}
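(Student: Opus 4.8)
The plan is to use the numerical identity furnished by Proposition~\ref{prop:-K-D=Emin} to recognize the completed configuration as a log Calabi-Yau divisor, and then to invoke the structural classification of such divisors to read off the chain property. First I would record that since $-[D]-K_{\omega}=E_{\text{min}}\in\mathcal{E}_{\text{min}}(X,\omega)$, adjoining any symplectic representative of $E_{\text{min}}$ to $D$ produces a configuration whose total homology class is $[D]+E_{\text{min}}=-K_{\omega}$, which is precisely the defining condition of a symplectic log Calabi-Yau pair. By Corollary~\ref{cor:minimalarea}, for any $J\in\mathcal{J}(D)$ the class $E_{\text{min}}$ is represented by an embedded $J$-holomorphic sphere $C_{E_{\text{min}}}$, and Proposition~\ref{prop:-K-D=Emin} yields the intersection count $[D]\cdot E_{\text{min}}=2$.

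The geometric content, and the step I expect to be the main obstacle, is to arrange that $C_{E_{\text{min}}}$ meets $D$ as a genuine symplectic divisor in the sense of Definition~\ref{def:sympdivisor}: transversally, at distinct points, $\omega$-orthogonally, and away from the nodes of $D$. Since we are in the first-kind case $C_{E_{\text{min}}}$ is not itself a component of $D$, so positivity of intersection together with $[D]\cdot E_{\text{min}}=2$ leaves exactly three a priori possibilities---two transverse intersections at distinct smooth points of $D$ (the desired situation), a single tangency of contact order two with one component, or a single passage through a node $D_a\cap D_b$. In the two degenerate cases I would perturb $C_{E_{\text{min}}}$ to a nearby symplectic sphere meeting $D$ transversally at two distinct points, exploiting that being symplectic is an open condition; by \cite[Proposition 3.3]{LiUsher} this perturbation can moreover be taken $J'$-holomorphic for a $C^1$-small perturbation $J'$ of $J$. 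A further small perturbation via \cite[Lemma 2.3]{Gompfnew} makes the two intersections $\omega$-orthogonal, producing a sphere $C_{E_{\text{min}}}''$ for which $(\cup D_i)\cup C_{E_{\text{min}}}''$ satisfies Definition~\ref{def:sympdivisor} and carries total class $-K_{\omega}$. This is the sought completion into a symplectic log Calabi-Yau pair.

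Finally, to deduce that $D$ is a chain of spheres I would invoke the structural result on symplectic log Calabi-Yau divisors in rational manifolds: by \cite[Lemma 3.1]{LiMakLCY}, once the number of components is at least two the dual graph is a cycle of spheres. The completion $(\cup D_i)\cup C_{E_{\text{min}}}''$ has at least two components and is therefore such a cycle; since $C_{E_{\text{min}}}''$ meets $D$ at exactly two distinct smooth points it appears as a single valence-two vertex of this cycle, and deleting that vertex breaks the cycle into a path. Hence the dual graph of $D$ is a chain and every component is a sphere, as claimed. The only point requiring care in this last step is the bookkeeping when $D$ has a single component, where the completion is a two-component cycle and $D$ is trivially a chain.
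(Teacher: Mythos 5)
Your proposal is correct and follows essentially the same route as the paper: it invokes Proposition~\ref{prop:-K-D=Emin} and Corollary~\ref{cor:minimalarea} to get the embedded representative of $E_{\text{min}}=-[D]-K_{\omega}$ with $[D]\cdot E_{\text{min}}=2$, handles the same two degenerate intersection patterns by the same perturbations (\cite[Proposition 3.3]{LiUsher} and \cite[Lemma 2.3]{Gompfnew}), and deduces the chain property from the cycle structure of log Calabi-Yau divisors in \cite[Lemma 3.1]{LiMakLCY}. Your explicit valence-two-vertex argument for the final chain deduction is a small elaboration of what the paper leaves implicit, but the substance is identical.
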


For a quasi-minimal pair of the first kind $(X,\omega,D)$, viewed as part of a symplectic log Calabi-Yau pair $(X,\omega, D\cup C_{E_{\text{min}}})$, we may first carry out all non-toric blowdowns in the minimal reduction procedure for $(X,\omega, D\cup C_{E_{\text{min}}})$, and then perform only those toric blowdowns along $(-1)$-components that are not adjacent to $C_{E_{\text{min}}}$. This motivates Definition \ref{def:partialmin} and explains the term ``partially'' in this setting. In particular, when $D$ is quasi-minimal of first kind which is also partially minimal, the $(-1)$-component in $D$ can only appear at the first or the last one in the chain of spheres. We emphasize that, although the notion is motivated by quasi-minimal pairs of the first kind, it is also defined for quasi-minimal pairs of the second kind; we will rely on this in Section~\ref{section:psedolcy}.

\begin{example}\label{example:partiallyminimal}
    We continue Example \ref{example:quasiminimal} by performing partially minimal reductions for its first configuration. As shown in Figure \ref{fig:partiallyminimal}, we first blowdown the non-toric exceptional spheres in classes $E_4,E_5,E_6,E_7$ and then the toric exceptional spheres in classes $E_3,E_2,E_1$ (in order). It will be reduced to a divisor in $\CC\PP^2\#\overline{\CC\PP}^2$.
\end{example}

\begin{figure}[ht]
		\centering\includegraphics*[width=14cm,trim=0cm 13cm 0cm 0cm,clip]{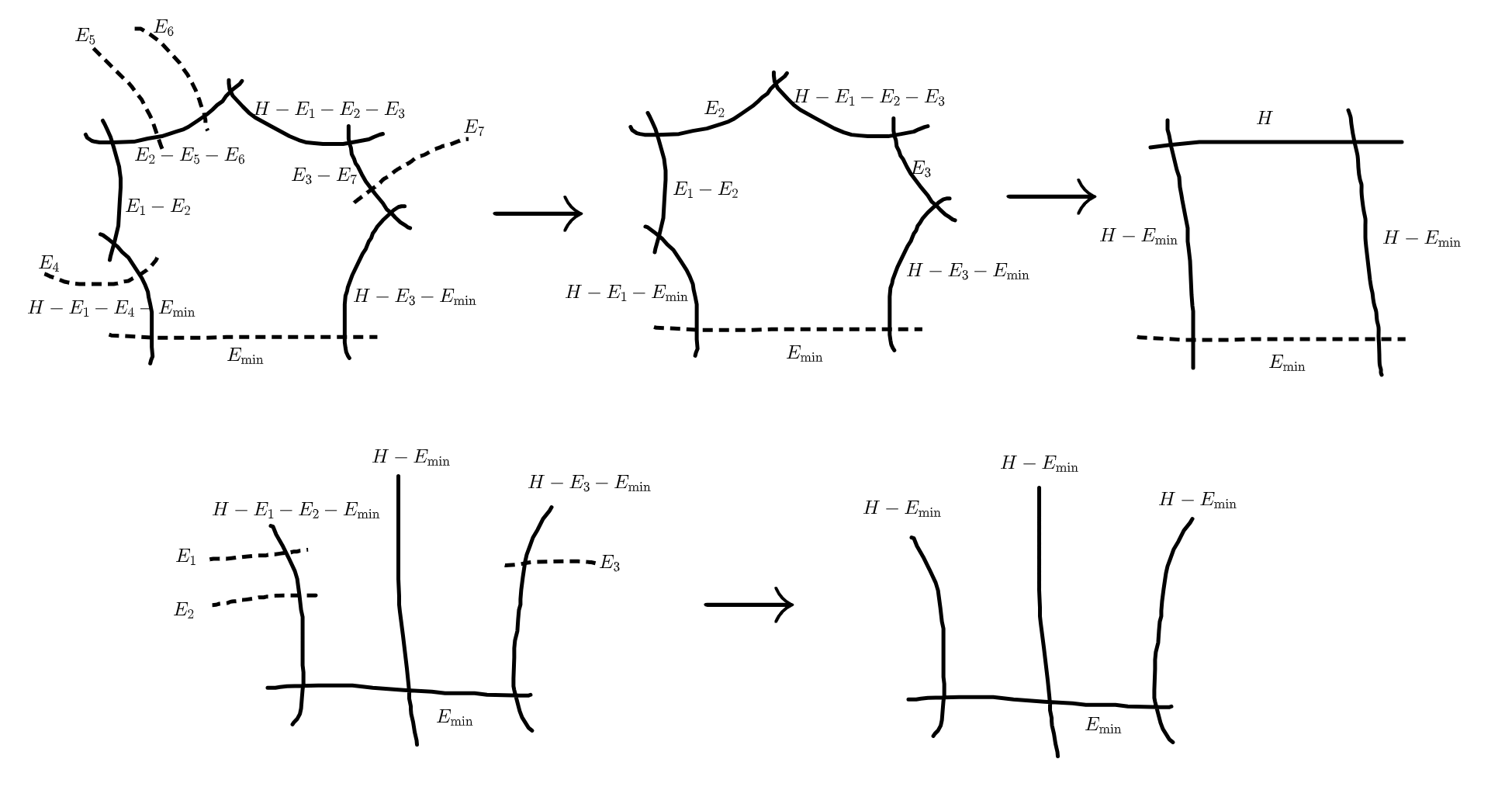}
		\caption{Partially minimal reduction of a quasi-minimal pair of first kind. \label{fig:partiallyminimal}}
	\end{figure}

\begin{proof}[Proof of Lemma \ref{lem:partialminimalred} for quasi-minimal pairs of first kind]
If there is an $E\in \mathcal{E}_{\omega}$ with $E\cdot E_{\text{min}}=0$ that has non-negative intersections with all components of $D$, then $E$ must be $D$-good by Lemma \ref{lem:specialclassDgood}. By Theorem \ref{thm:MO15} we can choose $J\in\mathcal{J}_{\text{emb}}(D,E)$ to realize $E$ as an embedded $J$-holomorphic sphere $C_E$. Note that 
\[E\cdot [D]=E\cdot(-K_{\omega}-E_{\text{min}})=1-E\cdot E_{\text{min}}=1\]
which would imply that $C_E$ is a non-toric exceptional sphere. If we perform non-toric blowdown on $C_E$ to get $(X',\omega',D')$, since $E\cdot E_{\text{min}}=0$, $E_{\text{min}}$ will persist to be a class in $\mathcal{E}_{\omega'}$ with minimal $\omega'$-symplectic area and \[E_{\text{min}}\cdot [D']=E_{\text{min}}\cdot([D]+E)=E_{\text{min}}\cdot [D]\geq 2.\]This means that $(X',\omega',D')$ is also quasi-minimal when $b_2(X')\geq 3$.

If there is a component $D_i$ of $D$ with $[D_i]^2=-1$ that satisfies $[D_i]\cdot([D]-[D_i])=2$, we can see that 
\[[D_i]\cdot E_{\text{min}}=[D_i]\cdot(-K_{\omega}-[D])=0.\]
This implies $D_i$ is a toric exceptional sphere which does not intersect $C_{E_{\text{min}}}$. As above, if we perform toric blowdown on $D_i$, $C_{E_{min}}$ will persist and we then obtain another pair which is quasi-minimal when $b_2\geq 3$.

Now we just repeat the above process until we reach either a partially minimal pair or a pair with $b_2\leq 2$.
\end{proof}

\subsubsection{Quasi-minimal pairs of second kind}\label{section:psedolcy}
Now let us deal with the pair $(X,\omega,D)$ with $[\omega]\cdot (K_{\omega}+[D])<0$ which is quasi-minimal of second kind. This means that there is a component $D_0$ of $D$ with $[D_0]=E_{\text{min}}$ and $[D_0]\cdot ([D]-[D_0])=3$. By Proposition \ref{prop:treeofsphere}, the dual graph $\Gamma(D)$ has no loop so that if we delete the component $D_0$ there will be three connected components. We denote them by $U_1\cup\cdots\cup U_a$, $V_1\cup\cdots\cup V_b$ and $W_1\cup\cdots\cup W_c$ where $[U_1]\cdot [D_0]=[V_1]\cdot [D_0]=[W_1]\cdot[D_0]=1$ and all the other $U,V,W$-components are disjoint from $D_0$. 

\begin{lemma}\label{lem:quasisecondb2geq5}
   When $X$ is a rational manifold with $b_2(X)\geq 5$, there is no quasi-minimal pair $(X,\omega,D)$ of second kind with $[\omega]\cdot (K_{\omega}+[D])<0$ which is also partially minimal. 
\end{lemma}

\begin{proof}
    First, by Corollary \ref{cor:secondminexceptional}, let us choose $E_2,\cdots,E_n\in \mathcal{E}_{\omega}(X)$ where $n=b_2(X)-1\geq 4$, $E_n=E_{\text{min}}$ and $E_{j-1}$ is the secondary minimal area exceptional class of $E_j$ for $3\leq j\leq n$. If $(X,\omega,D)$ is partially minimal, $E_{n-1}$ will have negative intersection with some component of $D$, say $U_j$ without loss of generality. By half-toric blowdown of $D_0$ with respect to the divisor $D_0\cup U_1\cup\cdots\cup U_a$, there will be $(X',\omega',U')$ where $U'=U_1'\cup\cdots\cup U_a'$. For any $J_U\in\mathcal{J}(U')$, $E_{n-1}$ has embedded $J_U$-holomorphic representative by Corollary \ref{cor:minimalarea}. Since $E_{n-1}\cdot [U_j']=E_{n-1}\cdot[U_j]<0$, by positivity of intersection, $[U_j']=E_{n-1}$. Then we must have $j=1$ since otherwise, \[[U_j]\cdot ([D]-[U_j])=[U_j]\cdot(-K_{\omega}-E_{\text{min}}-[U_j])=1-0-(-1)=2,\]
which contradicts with the partially minimal assumption. Also note that 
\[[U_1]\cdot ([D]-[U_1])=[U_1]\cdot(-K_{\omega}-E_{\text{min}}-[U_1])=[U_1]\cdot(-K_\omega-[U_1])-[U_1]\cdot E_{\text{rm}}=2-1=1,\]
which implies $a=1$ and $[U_1]=E_{n-1}-E_n$. 

Next, let us take $E_{n-2}$ and assume it has negative intersection with some $V$-component. Consider the divisor $U_1\cup D_0\cup V_1\cup\cdots\cup V_b$. By toric blowdown of $D_0$, there will be the half-toric exceptional sphere $U_1'$ by the previous discussion. By half-toric blowdown $U_1'$ we then get $(X'',\omega'',V'')$ with $V''=V_1''\cup\cdots\cup V_b''$. Again, by choosing any $J_V\in\mathcal{J}(V'')$ and represent $E_{n-2}$ by an embedded $J_V$-holomorphic sphere, the same argument as above shows that $b=1$ and $[V_1]=E_{n-2}-E_{n-1}-E_{n}$.

Then, we take the divisor $U_1\cup D_0\cup W_1\cup\cdots\cup W_c$ and consider the divisor $W'':=W_1''\cup \cdots\cup W_c''$ obtained by toric blowdown of $D_0$ followed by half-toric blowdown of $U_1'$ as the previous paragraph. Since $[U_1]\cdot[W_j]=[V_1]\cdot[W_j]=0$ for all $1\leq j\leq c$, we see that 
\begin{align*}
    E_{n-2}\cdot [W_1'']&=E_{n-2}\cdot([W_1]+E_{n-1}+E_n)=E_{n-2}\cdot[W_1]\\
    &=(E_{n-2}-[V_1]-[U_1])\cdot[W_1]=2E_{n}\cdot[W_1]\\
    &=2[D_0]\cdot [W_1]=2,
\end{align*}
and for $j>1$,
\begin{align*}
    E_{n-2}\cdot [W_j'']&=E_{n-2}\cdot[W_j]=(E_{n-2}-[V_1]-[U_1])\cdot[W_j]\\
    &=2E_{n}\cdot[W_1]=2[D_0]\cdot [W_j]=0.
\end{align*}
As a result, $E_{n-2}$ is $W''$-good so that we can represent $E_{n-2}$ by embedded $J_W$-holomorphic sphere $C_{E_{n-2}}$ for generic $J_W\in\mathcal{J}(W'')$. As Section \ref{section:LCY}, after small isotopy, we may assume $C_{E_{n-2}}\cup W''$ is a symplectic divisor. Note that 
\begin{align*}
    E_{n-2}+[W_1'']+\cdots+[W_c'']&=E_{n-2}+E_{n-1}+E_n+[W_1]+\cdots+[W_c]\\
    &=E_{n-1}+2E_n+[U_1]+[V_1]+[D_0]+[W_1]+\cdots+[W_c]\\
    &=E_{n-1}+2E_n-K_{\omega}-E_n=-K_{\omega}+E_{n}+E_{n-1}.
\end{align*}
Therefore, $C_{E_{n-2}}\cup W''$ is a symplectic log Calabi-Yau divisor, which must be a cycle of spheres. This implies $c=1$ and $[W_1]=-K_{\omega}-E_{n-2}$.

Finally, by Corollary \ref{cor:secondminexceptional}, there exists $E_1'\in \mathcal{E}_{\omega}(X)$ with $E_1'\cdot E_n=E_1'\cdot E_{n-1}=0$ and $E_1'\cdot E_{n-2}\in\{0,1\}$. Since we now know $D$ only has four components $U_1,V_1,W_1,D_0$, it is easy to check they all have non-negative intersections with $E_1'$, which contradicts with the partially minimal assumption.
\end{proof}

Now let us discuss the cases when $b_2(X)\leq 4$ using Zhang's theorem on $J$-holomorphic exceptional spheres on $\CC\PP^2\#2\overline{\CC\PP}^2$.

\begin{lemma}\label{lem:quasisecondb2geq4}
    When $X$ is a rational manifold with $b_2(X)=4$, there is no quasi-minimal pair $(X,\omega,D)$ of second kind with $[\omega]\cdot (K_{\omega}+[D])<0$ which is also partially minimal. 
\end{lemma}

\begin{proof}
    By Corollary \ref{cor:secondminexceptional}, there are $E_1,E_1',E_2,E_3\in\mathcal{E}_{\omega}(X)$ such that $E_3=E_{\text{min}}$, $E_2$ is the secondary minimal area exceptional class of $E_3$ and 
    \begin{enumerate}
        \item either $E_1\cdot E_1'=0,E_1\cdot E_2=E_1'\cdot E_2=1$; 
        \item or $E_1\cdot E_1'=E_1'\cdot E_2=1,E_1\cdot E_2=0$.
    \end{enumerate}
    Note that the same argument in the proof of Lemma \ref{lem:quasisecondb2geq5} implies that we can assume $a=1$ and $[U_1]=E_2-E_3$. Consider the divisor $V_1'\cup\cdots\cup V_b'\cup W_1'\cup\cdots\cup W_c'$ obtained from the toric blowdown of $D_0$ from the divisor $V_1\cup\cdots\cup V_b\cup D_0\cup W_1\cup\cdots\cup W_c$. We can choose any divisor-adapted almost complex structure $J$ and at least one of $E_1,E_1'$ will have an embedded $J$-holomorphic representative by Theorem \ref{thm:2blowup}. By the partially minimal condition, this embedded $J$-holomorphic representative must be the same as some component in $V_1'\cup\cdots\cup V_b'\cup W_1'\cup\cdots\cup W_c'$.

   Let us first suppose that case (1) happens. Without loss of generality, we may assume $E_1$ has an embedded $J$-holomorphic representative. Then the same argument in Lemma \ref{lem:quasisecondb2geq5} shows that we can further assume $b=1$ and $[V_1]=E_1-E_3$. Observe that when $1<j\leq c$, $[W_j]\cdot [D_0]=[W_j]\cdot [U_1]=[W_j]\cdot [V_1]=0$, which implies that $[W_j]\cdot E_3=[W_j]\cdot E_2=[W_j]\cdot E_1=0$. Since $b_2^-(X)=3$, we must have $[W_j]^2\geq 0$. Note that $[W_j]\cdot (E_2+E_1)=0$ and $(E_2+E_1)^2=0$. By light cone lemma, $[W_j]$ is a positive multiple of $E_2+E_1$. Thus, we can assume $[W_2]+\cdots+[W_c]=k(E_2+E_1)$ and $[W_1]=-K_\omega-E_1-E_2-k(E_2+E_1)$. By adjunction formula,
\begin{align*}
    -2=&[W_1]^2+K_{\omega}\cdot [W_1]=(-K_\omega-E_1-E_2-k(E_2+E_1))\cdot(-E_1-E_2-k(E_2+E_1))\\
    =&(-K_\omega-(k+1)(E_1+E_2))\cdot((-k-1)(E_1+E_2))=-2k-2.
\end{align*}
We see $k=0$ so that $c=1$ and $[W_c]=-K_{\omega}-E_1-E_2$. It then follows that $E_1'$ will have non-negative intersections with all components of $D$, which contradicts with the partially minimal assumption.

   Now suppose case (2) happens. If $E_1$ has an embedded $J$-holomorphic representative, then by the same reasoning as above, $[V_1]=E_1-E_3$. But this is impossible since $[V_1]\cdot[U_1]=-1$. So we may assume $E_1'$ has an embedded $J$-holomorphic representative and $[V_1]=E_1'-E_3$. One can then see that by replacing $E_1,E_1',E_2$ by $E_2,E_1,E_1'$ respectively in the previous paragraph, $E_1$ will be an exceptional class having non-negative intersections with all components of $D$. Again, this contradicts with the partially minimal assumption.
    
\end{proof}

\begin{corollary}\label{cor:quasiminmalsecondkindreduction}
  Any connected quasi-minimal pair $(X,\omega,D)$ of second kind with $[\omega]\cdot (K_{\omega}+[D])<0$, where $X$ is a rational manifold, can be obtained from a sequence of blowups from a divisor in $\CC\PP^2, S^2\times S^2$ or $\CC\PP^2\#\overline{\CC\PP}^2$.
\end{corollary}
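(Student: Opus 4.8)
The plan is to feed the three preceding lemmas into the partially minimal reduction and then dispose of the residual $b_2=3$ case by a single further blowdown. First I would apply Lemma~\ref{lem:partialminimalred} to $(X,\omega,D)$, which realizes it as a sequence of toric and non-toric blowups over either a partially minimal quasi-minimal pair or a divisor in $\CC\PP^2$, $S^2\times S^2$ or $\CC\PP^2\#\overline{\CC\PP}^2$. The key point I would stress is that every blowdown used in that reduction takes place at a sphere disjoint from $C_{E_{\text{min}}}$: a non-toric $C_E$ has $E\cdot E_{\text{min}}=0$, and a toric $(-1)$-component $D_i$ likewise satisfies $[D_i]\cdot E_{\text{min}}=[D_i]\cdot(-K_\omega-[D])=0$, so in each case positivity of intersection keeps the blown-down sphere away from $C_{E_{\text{min}}}=D_0$. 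Hence $D_0$ and its three neighbors persist, and the condition defining second kind, $[D_0]=E_{\text{min}}$ with $[D_0]\cdot([D]-[D_0])=3$, is preserved at every stage. Therefore, if the reduction stops at a partially minimal pair, that pair is still quasi-minimal of second kind, and Lemmas~\ref{lem:quasisecondb2geq5} and \ref{lem:quasisecondb2geq4} forbid $b_2\geq 4$; since quasi-minimality demands $b_2\geq 3$, the only possibility left is $b_2=3$, i.e. $X=\CC\PP^2\#2\overline{\CC\PP}^2$.

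It then remains to reduce such a $b_2=3$ pair to $b_2\leq 2$ by one more blowdown, and here I would use Theorem~\ref{thm:2blowup}. For any $J\in\mathcal{J}(D)$ it supplies two distinct embedded $J$-holomorphic exceptional spheres $C_1,C_2$; at least one, say $C_2$, has $[C_2]\neq E_{\text{min}}$, whence $[C_2]\cdot E_{\text{min}}\geq 0$ by positivity of intersection. I would then split into two cases. If $C_2$ is not a component of $D$, then $C_2\cdot[D]=[C_2]\cdot(-K_\omega-E_{\text{min}})=1-[C_2]\cdot E_{\text{min}}\in\{0,1\}$, so $C_2$ is either disjoint from $D$ (an exterior sphere) or meets it transversally in a single point (a non-toric sphere), and in both cases we blow it down. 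If instead $C_2$ is a component $D_j$, then $[D_j]\cdot([D]-[D_j])=[C_2]\cdot[D]+1=2-[C_2]\cdot E_{\text{min}}\leq 2$; condition~(2) of Definition~\ref{def:partialmin} excludes the value $2$ for a $(-1)$-component, and connectedness excludes $0$, so $D_j$ is a leaf and may be blown down half-torically. Either way $b_2$ drops to $2$, landing us in a divisor in $S^2\times S^2$ or $\CC\PP^2\#\overline{\CC\PP}^2$; concatenating the blowup sequences then expresses the original $(X,\omega,D)$ as a blowup of a divisor in $\CC\PP^2$, $S^2\times S^2$ or $\CC\PP^2\#\overline{\CC\PP}^2$.

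The hard part will be the bookkeeping in the $b_2=3$ endgame rather than any deep new input. One must be careful that the inequality $[C_2]\cdot E_{\text{min}}\geq 0$ really does follow from positivity of intersection, since for $J\in\mathcal{J}(D)$ the class $E_{\text{min}}$ and the various exceptional classes may only be represented by reducible subvarieties, not by embedded spheres; it is precisely the distinctness $C_2\neq D_0$ that licenses the estimate and thereby forces $C_2$ to be an exterior/non-toric sphere or a leaf component, guaranteeing a divisor-compatible blowdown. A secondary point deserving attention is the verification that each blowdown in the whole argument preserves $\omega$-orthogonality and the normal-crossing structure as well as the second-kind property; this is where the vanishing $[D_i]\cdot E_{\text{min}}=0$ is used to ensure the reduction never disturbs $C_{E_{\text{min}}}$.
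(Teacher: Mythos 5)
Your proposal is correct and follows essentially the paper's own route: reduce via Lemma~\ref{lem:partialminimalred}, use Lemmas~\ref{lem:quasisecondb2geq5} and \ref{lem:quasisecondb2geq4} to force the partially minimal stage to have $b_2=3$, and then use Theorem~\ref{thm:2blowup} to produce an exceptional sphere in $\CC\PP^2\#2\overline{\CC\PP}^2$ that admits a divisor-compatible blowdown. The only divergences are cosmetic: where you invoke condition~(2) of Definition~\ref{def:partialmin} to exclude $[D_j]\cdot([D]-[D_j])=2$, the paper simply performs a toric blowdown in that case (so partial minimality is not needed in the endgame), and your explicit verification that the reduction of Lemma~\ref{lem:partialminimalred} preserves the second-kind property (because the blown-down spheres pair trivially with $E_{\text{min}}$ and hence avoid $D_0$) is a point the paper leaves implicit.
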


\begin{proof}
By Lemma \ref{lem:partialminimalred}, \ref{lem:quasisecondb2geq4} and \ref{lem:quasisecondb2geq5}, we only need to show that such a divisor $D$ in $X=\CC\PP^2\#2\overline{\CC\PP}^2$ can be reduced by blowdown. Choosing any $J\in\mathcal{J}(D)$, by Theorem \ref{thm:2blowup}, there will be a $J$-holomorphic exceptional curve $C$ with $[C]\cdot [D_0]\in \{0,1\}$. If $C$ is a component of $D$, then \[[C]\cdot ([D]-[C])=[C]\cdot (-K_\omega-[D_0]-[C])\in\{1,2\}\] so that we can either toric or half-toric blowdown the exceptional component $C$. If $C$ is not a component of $D$, then \[[C]\cdot [D]=[C]\cdot (-K_\omega-[D_0])\in\{0,1\}.\] By positivity of intersection, $C$ must be a non-toric exceptional sphere (when $[C]\cdot [D]=0$, $C$ will intersect $D_0$; when $[C]\cdot [D]=1$, $C$ will intersect other component of $D$) and we can perform the non-toric blowdown.
\end{proof}

\begin{rmk}
    Quasi-minimal pairs of second kind can be understood as the blowup of `abnormal' log Calabi-Yau divisors. Recall that our definition of symplectic divisors requires that no three components share a common intersection point. Under this assumption, the configuration of log Calabi-Yau divisors is constrained to form a cycle of spheres.  The simple normal crossing condition excludes configurations like three concurrent lines in $\CC\PP^2$. For instance, the partially minimal model for the divisor of second kind in Example \ref{example:partiallyminimal} can actually be obtained from the blowup at the point of concurrency of three lines in $\CC\PP^2$.
\end{rmk}

\begin{example}\label{example:partiallyminimal2}
    We continue Example \ref{example:quasiminimal} by performing partially minimal reductions for its second configuration. As shown in Figure \ref{fig:partiallyminimal2}, we blowdown the non-toric exceptional spheres in classes $E_1,E_2,E_3$. It will be reduced to a divisor in $\CC\PP^2\#\overline{\CC\PP}^2$.
\end{example}

\begin{figure}[ht]
		\centering\includegraphics*[width=14cm,trim=0cm 0cm 0cm 13cm,clip]{partiallyminimal}
		\caption{Partially minimal reduction of a quasi-minimal pair of second kind. \label{fig:partiallyminimal2}}
	\end{figure}

\begin{proof}[Proof of Lemma \ref{lem:partialminimalred} for quasi-minimal pairs of second kind]
    By Corollary \ref{cor:quasiminmalsecondkindreduction}, we can reduce any quasi-minimal pair of second kind to a pair with $b_2\leq 2$ via blowdowns.
\end{proof}

\subsection{Affine rulings from unicuspidal curves}\label{section:findcusp}
From now on, $(X,\omega,D)$ always denotes a connected pair, where $X$ is a rational manifold. We will prove the main Theorem \ref{thm:firstmain} for the rational case in the next section. By Lemma \ref{lem:partialminimalred}, we divide the argument into two parts: quasi-minimal pairs of the first kind and pairs obtained by blowing up pairs with $b_2\leq 2$ (i.e. Lemma \ref{lem:threecases}).
As outlined in the introduction, our strategy for proving symplectic affine-ruledness is to find a suitable homology class represented by unicuspidal rational curves (or smooth rational curves for specific cases in Lemma \ref{lem:threecases}) that foliate $X$ and meet the divisor $D$ precisely at their cusps. This motivates the following definition.

\begin{definition}\label{def:homaffruled}
A pair $(X,\omega,D)$ is called {\bf $(p,q)$-homologically affine-ruled} if there are
\begin{itemize}
    \item relatively prime integers $p,q\in \ZZ_{\geq 0}$;
    \item two intersecting components $D_a,D_b$ of $D$ with $[D_a]\cdot[D_b]=1$;
    \item a class $A\in H_2(X;\ZZ)$ satisfying $A\cdot[D_a]=p,A\cdot[D_b]=q$ and has trivial intersection numbers with any other components of $D$;
    \item $A^2=pq$ and $A\cdot K_{\omega}=-p-q-1$.
\end{itemize}

\end{definition}

\begin{rmk}
    In the above definition, we allow $(p,q)=(1,0)$ or $(0,1)$ and adopt the convention that the weight sequence for such pairs is empty. In such a situation, we also think of $(X,\omega,D)$ itself as the outcome of the normal crossing resolution of this $(1,0)$ or $(0,1)$-cusp. 
\end{rmk}

When such a class $A$ in Definition \ref{def:homaffruled} exists, let us first pretend that there is a $(p,q)$-cusp at $D_a\cap D_b$. Assume the weight sequence $\mathcal{W}(p,q)=(m_1,\cdots,m_k)$. Then we can topologically perform toric blowups of $X$ for $k$ times according to the pattern of normal crossing resolution. Let $\tilde{X}$ be the manifold after blowups and $\tilde{D}$ be the total transform of $D$. Take the resolution class $\tilde{A}=A-\sum_{i=1}^km_iE_i\in H_2(\tilde{X};\ZZ)$.  By Fact \ref{fact:boxdiagram}, it satisfies 
\begin{equation}\label{equation:index}
\begin{gathered}
\tilde{A}^2=A^2-\sum_{i=1}^km_i^2=pq-pq=0,\\
\tilde{A}\cdot (K_{\omega}+\sum_{i=1}^kE_i)=A\cdot K_\omega+\sum_{i=1}^km_i=(-p-q-1)+(p+q-1)=-2.
\end{gathered} 
\end{equation}
Moreover, the class $\tilde{A}$ will have intersection number $1$ with the exceptional component $C_{E_k}$ of $\tilde{D}$ occurred in the last blowup, and $0$ with all other components of $\tilde{D}$. The following lemma is an analogue of Theorem \ref{thm:fiberclass} for rational manifolds, which describes the properties of a fiber class.

\begin{lemma}\label{lem:tildeAmoduli}
    Let $\tilde{J}$ be a tame almost complex structure on $\tilde{X}$ with canonical class $K_{\tilde{J}}=K_\omega+\sum_{i=1}^kE_i$. Assume all components in $\tilde{D}$ are $\tilde{J}$-holomorphic. If $SW(\tilde{A})\neq 0$ and $\tilde{A}$ is represented by an embedded $\tilde{J}$-holomorphic sphere, then the moduli space of $\tilde{J}$-holomorphic subvarieties $\mathcal{M}_{\tilde{A}}$ can be naturally identified with $C_{E_k}$, with finitely many points representing reducible subvarieties. Moreover, any component of $\tilde{D}$ other than $C_{E_k}$ is an irreducible component of an element of $\mathcal{M}_{\tilde{A}}$.
\end{lemma}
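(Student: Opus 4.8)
The plan is to show that $\tilde{A}$ behaves exactly like the fiber class in Theorem \ref{thm:fiberclass}, with $C_{E_k}$ playing the role of the section. The starting point is to check that $\tilde{A}$ is $\tilde{J}$-nef: since by hypothesis $\tilde{A}$ is represented by an embedded $\tilde{J}$-holomorphic sphere $C$ and $\tilde{A}^2=0$, positivity of intersection gives $\tilde{A}\cdot[C']\geq 0$ for every irreducible $\tilde{J}$-holomorphic curve $C'$ (the case $C'=C$ is $\tilde{A}^2=0$, and $C'\neq C$ is handled by positivity), hence $\tilde{A}\cdot B\geq 0$ for every $\tilde{J}$-effective $B$. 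Because $\tilde{A}^2+K_{\tilde{J}}\cdot\tilde{A}=-2$, Theorem \ref{thm:lizhangnef} then applies and shows every $\tilde{J}$-holomorphic subvariety in class $\tilde{A}$ is a connected tree of embedded spheres. Writing $\tilde{A}^2=\sum_i n_i\,\tilde{A}\cdot[C_i]=0$ over the components and using nefness forces $\tilde{A}\cdot[C_i]=0$ for every component $C_i$. In particular, since $\tilde{A}\cdot[C_{E_k}]=1\neq 0$, the sphere $C_{E_k}$ can never be a component of such a subvariety, so each $\Theta\in\mathcal{M}_{\tilde{A}}$ meets $C_{E_k}$ in a single transverse point. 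This defines the natural map $\phi:\mathcal{M}_{\tilde{A}}\to C_{E_k}$.

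Next I would prove that $\phi$ is a bijection. Surjectivity, and the foliation property, come from Taubes--Seiberg--Witten theory: since $I(\tilde{A})=\tilde{A}^2-K_{\tilde{J}}\cdot\tilde{A}=2$ and $SW(\tilde{A})\neq 0$, for the given $\tilde{J}$ there is a subvariety in class $\tilde{A}$ through any prescribed point of $\tilde{X}$; applying this to points of $C_{E_k}$ yields surjectivity, and applying it to arbitrary points shows the members of $\mathcal{M}_{\tilde{A}}$ cover $\tilde{X}$. For injectivity, two distinct irreducible (embedded sphere) members are disjoint by positivity of intersection together with $\tilde{A}^2=0$, so $\phi$ is injective on the irreducible locus; coincidences involving shared components are controlled using the tree structure from Theorem \ref{thm:lizhangnef}. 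To upgrade $\phi$ to a homeomorphism and locate the degenerate fibers, I would use that on the irreducible locus the curves are embedded spheres with normal Chern number $\tilde{A}^2=0$, so automatic transversality (exactly as in the proof of Theorem \ref{thm:ruledmain}) makes this locus a smooth $2$-manifold on which $\phi$ is a local diffeomorphism onto an open subset of $C_{E_k}\cong S^2$, while Gromov compactness of $\mathcal{M}_{\tilde{A}}$ makes $\phi$ proper; the reducible members then sit over the finitely many points of $C_{E_k}$ where the fiber degenerates, giving the identification $\mathcal{M}_{\tilde{A}}\cong C_{E_k}$.

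For the final assertion, let $D'$ be a component of $\tilde{D}$ other than $C_{E_k}$, so that $\tilde{A}\cdot[D']=0$ by the construction of $\tilde{A}$. Choosing any point $x\in D'$, the foliation property provides a subvariety $\Theta_x\in\mathcal{M}_{\tilde{A}}$ through $x$; since $[\Theta_x]\cdot[D']=\tilde{A}\cdot[D']=0$, positivity of intersection forbids an isolated intersection at $x$, forcing $D'$ to be one of the irreducible components of $\Theta_x$. This is precisely the second conclusion.

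The step I expect to be the main obstacle is the injectivity of $\phi$ in the presence of shared components, together with the accompanying claim that only finitely many fibers are reducible. The positivity argument cleanly separates the generic (irreducible, disjoint) fibers, but when two subvarieties share components one must analyze the full tree configuration rather than a single intersection point; here the tree structure of Theorem \ref{thm:lizhangnef}, automatic transversality, and Gromov compactness must be combined, and care is needed to ensure the degeneration locus in $C_{E_k}\cong S^2$ is genuinely finite rather than merely closed with empty interior.
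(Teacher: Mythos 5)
Your overall skeleton agrees with the paper's proof: define the map $\mathcal{M}_{\tilde{A}}\to C_{E_k}$ via the single transverse intersection point, get surjectivity (and the coverage of $\tilde{X}$) from $SW(\tilde{A})\neq 0$ together with $I(\tilde{A})=2$, and deduce the final assertion about components of $\tilde{D}$ from positivity of intersection, exactly as the paper does. However, there is a genuine gap precisely at the step you yourself flag as the main obstacle: injectivity in the presence of shared components and the finiteness of the reducible locus. Saying these are ``controlled using the tree structure from Theorem \ref{thm:lizhangnef}'' is not an argument; the tree structure says nothing about whether two distinct reducible members of $\mathcal{M}_{\tilde{A}}$ can share the component meeting $C_{E_k}$ (in which case they would hit $C_{E_k}$ at the same point and injectivity would fail), and your local-diffeomorphism-plus-properness scheme only sees the irreducible locus, so it cannot rule this out either. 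In fact the paper does not invoke Theorem \ref{thm:lizhangnef} or automatic transversality in this lemma at all.

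The missing idea is a light cone lemma argument showing that \emph{two distinct elements of $\mathcal{M}_{\tilde{A}}$ cannot have any common irreducible component}. Write the two subvarieties as $\{(C_i,k_i)\}$ and $\{(C_i,k_i')\}$ (allowing zero multiplicities) and let $B=\sum_i \min\{k_i,k_i'\}[C_i]$ be the class of their common part, so $B\neq\tilde{A}$. Then $(\tilde{A}-B)^2$ is the homological intersection of two subvarieties with no common component, hence $\geq 0$ by positivity of intersection; positivity against the embedded representative gives $\tilde{A}\cdot(\tilde{A}-B)\geq 0$, while $\tilde{A}\cdot B\geq 0$ forces $\tilde{A}\cdot(\tilde{A}-B)=-\tilde{A}\cdot B\leq 0$, so $\tilde{A}\cdot(\tilde{A}-B)=0$. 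Since $b_2^+=1$, $\tilde{A}^2=0$ and $(\tilde{A}-B)^2\geq 0$, the light cone lemma forces $\tilde{A}-B$ to be proportional to $\tilde{A}$, which (using that $\tilde{A}$ is primitive and $B$ is effective with smaller area) gives $B=0$. Once this is established, your remaining difficulties evaporate: distinct elements are disjoint (their homological intersection is $\tilde{A}^2=0$), so injectivity holds on all of $\mathcal{M}_{\tilde{A}}$; and finiteness of the reducible locus follows because every reducible member must contain a component of negative self-intersection, distinct members share no components, two distinct irreducible curves in the same negative-square class would intersect negatively, and $b_2^-(\tilde{X})$ is finite. Without this common-part argument, neither injectivity nor finiteness is actually proved in your proposal.
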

\begin{proof}
    First, since $SW(\tilde{A})\neq 0$ and the index $I(\tilde{A})=2$, we know there is a $\tilde{J}$-holomorphic subvariety passing through each point on $C_{E_k}$. The uniqueness follows from the observation that two distinct subvarieties in $\mathcal{M}_{\tilde{A}}$ can not have any common irreducible component. To see this, note that after allowing some coefficients to be zero, we may write them as $\{(C_i,k_i)\}$ and $\{(C_i,k_i')\}$. Let $\{(C_i,\min\{k_i,k_i'\})\}$ be the common part of these two subvarieties and $B$ be the class of this common part ($B\neq \tilde{A}$). Then $(\tilde{A}-B)^2$ can be interpreted as the intersection number between two subvarieties with no common irreducible components, which must be $\geq 0$ by positivity of intersection. Since $\tilde{A}$ has an embedded representative, positivity of intersection also implies that $\tilde{A}\cdot(\tilde{A}-B)= 0$. By light cone lemma (\cite[Lemma 3.7]{McDuffGT}), $\tilde{A}$ is a multiple of $\tilde{A}-B$, which is impossible unless $B=0$. Therefore, we have a bijective correspondence between $\mathcal{M}_{\tilde{A}}$ and $C_{E_k}$. 
    
    Since $\tilde{A}^2=0$, each reducible representative of $\tilde{A}$ must contain a component of negative self-intersection. By the observation above, there are only finitely many reducible elements in $\mathcal{M}_{\tilde{A}}$ since $b_2^-(\tilde{X})$ is finite.

    Finally, for any component $\tilde{C}$ of $\tilde{D}$ other than $C_{E_k}$, we have $[\tilde{C}]\cdot \tilde{A}=0$. However, there is a $J$-holomorphic subvariety in class $\tilde{A}$ passing through the point on $\tilde{C}$ since $SW(\tilde{A})\neq 0$ and $I(A)$=2. Again, positivity of intersection implies that $\tilde{C}$ is contained as an irreducible component of that subvariety.
\end{proof}

\begin{rmk}
    Zhang \cite[Theorem 1.3]{Zhangmoduli} actually shows a much more general result that if $J$ is {\bf any} tame almost complex structure on a rational manifold and $A\in H_2(X;\ZZ)$ is a primitive class represented by an embedded $J$-holomorphic sphere, then the moduli space of $J$-holomorphic subvarieties in class $A$ must be homeomorphic to $\CC\PP^l$ where $l=\max\{0,A^2+1\}$. Zhang's proof involves the choice of a $J$-nef spherical class $B$ with $B\cdot A=1$ and interprets $\CC\PP^l$ as the symmetric product $\text{Sym}^l(S)$ where $S$ is an embedded $J$-holomorphic sphere in class $A+B$. In the proof of Lemma \ref{lem:homologicallyaffineruled} we have a natural candidate $C_{E_k}$ which plays the role of this sphere $S$.
\end{rmk}

The following lemma relates the homologically and geometrically affine-ruledness.

\begin{lemma}\label{lem:homologicallyaffineruled}
    Let $(X,\omega,D)$ be a homologically affine-ruled pair and $A\in H_2(X;\ZZ)$ be the class in Definition \ref{def:homaffruled}. Let $\tilde{X}$ be the topological blowup of $X$ according to the pattern of normal crossing resolution of a $(p,q)$-cusp at the intersection point $D_a\cap D_b$ and $\tilde{D}\subseteq \tilde{X}$ be the total transform of $D$. Assume that the resolution class $\tilde{A}$ is $\tilde{D}$-good. Then there is an open dense subset in $X\setminus D$ foliated by $(p,q)$-unicuspidal rational curves. In particular, $(X,\omega,D)$ is symplectic affine-ruled.
\end{lemma}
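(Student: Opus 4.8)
The plan is to transport the whole problem to the resolution $\tilde{X}$, where $\tilde{A}$ plays exactly the role the fiber class $F$ played in the irrational ruled case, and then to push the resulting fibration back down to $X$ via the curve correspondence of Proposition \ref{prop:modulirelation}. First I would produce the embedded curve upstairs. We already computed $\tilde{A}^2=0$ and $\tilde{A}\cdot K_{\tilde{J}}=-2$, so $\tilde{A}^2+K_{\tilde{J}}\cdot\tilde{A}=-2$; combined with the hypothesis that $\tilde{A}$ is $\tilde{D}$-good, McDuff--Opshtein's Theorem \ref{thm:MO15} supplies a residual set $\mathcal{J}_{\text{emb}}(\tilde{D},\tilde{A})\subseteq\mathcal{J}(\tilde{D})$ on which $\tilde{A}$ has an embedded representative, and I would fix such a $\tilde{J}$. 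Since every element of $\mathcal{J}(\tilde{D})$ is integrable near $\tilde{D}$ and the exceptional divisors of the resolution are components of $\tilde{D}$, the complex blowdown of $\tilde{J}$ along them is well defined and yields a $J\in\mathcal{J}(D)$ with $J_k=\tilde{J}$, which is precisely the setting in which Proposition \ref{prop:modulirelation} applies. Condition (1) of $\tilde{D}$-goodness also gives $SW(\tilde{A})\neq 0$, so both hypotheses of Lemma \ref{lem:tildeAmoduli} hold.

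Next I would extract the fibration. Lemma \ref{lem:tildeAmoduli} identifies $\mathcal{M}_{\tilde{A}}$ with the section $C_{E_k}$, with only finitely many reducible members (each a tree of embedded spheres by Theorem \ref{thm:lizhangnef}), and shows that every component of $\tilde{D}$ other than $C_{E_k}$ lies inside some reducible member. Because $\tilde{A}^2=0$ and the members are spheres with $-K_{\tilde{J}}\cdot\tilde{A}=2>0$, automatic transversality applies curvewise exactly as in the proof of Theorem \ref{thm:ruledmain}, so the parametrized moduli space is smooth. I would then run the evaluation-map argument of Theorem \ref{thm:ruledmain} verbatim, with $\tilde{A}$ in place of $F$, the section $C_{E_k}$ in place of $C$, and base genus $g=0$: deleting the finitely many points of $C_{E_k}$ carrying reducible members (together with a basepoint if needed for trivialization) and writing $\tilde{S}_1,\dots,\tilde{S}_l$ for the non-$\tilde{D}$ components of the reducible fibers, the irreducible members foliate the complement and yield
\[
\tilde{X}\setminus(\tilde{D}\cup\tilde{S}_1\cup\cdots\cup\tilde{S}_l)\;\cong\;\mathring{\Sigma}_0\times(S^2\setminus\{pt\}),
\]
where each puncture is the intersection of a fiber with $C_{E_k}$ and where triviality of the $S^2$-bundle over the open surface $\mathring{\Sigma}_0$ splits off the puncture.

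Finally I would descend to $X$. The blowdown $\pi$ restricts to a diffeomorphism $\tilde{X}\setminus\tilde{D}\cong X\setminus D$ since $\tilde{D}=\pi^{-1}(D)$, and under Proposition \ref{prop:modulirelation} each irreducible member of $\mathcal{M}_{\tilde{A}}$ corresponds to an injective curve in class $A$ with a single $(p,q)$-cusp at $x=D_a\cap D_b$, i.e.\ a $(p,q)$-unicuspidal rational curve. As $A\cdot[D_a]=p$ and $A\cdot[D_b]=q$ are entirely absorbed by the cusp while $A$ has trivial intersection with the remaining components, such a curve $C$ meets $D$ only at $x$, so $C\setminus\{x\}$ is a once-punctured sphere in $X\setminus D$; it is $J$-holomorphic, hence symplectic, of area $\omega(A)$ independent of the member. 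Setting $S_i:=\pi(\tilde{S}_i)\cap(X\setminus D)$, the image of the product decomposition above is the desired open dense subset of $X\setminus D$ foliated by $(p,q)$-unicuspidal rational curves, verifying Definition \ref{def:sympaffruled} with all fibers symplectic of equal area.

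The main obstacle is not the existence of the curve, which is packaged cleanly into Theorem \ref{thm:MO15} and Lemma \ref{lem:tildeAmoduli}, but the compatibility of the two descent maps: one must check that the diffeomorphism $\tilde{X}\setminus\tilde{D}\cong X\setminus D$ and the moduli correspondence of Proposition \ref{prop:modulirelation} fit together so that the smooth product fibration upstairs descends to a genuine foliation by \emph{embedded, pairwise disjoint} punctured leaves downstairs. The delicate geometric point is that distinct fibers, which are disjoint in $\tilde{X}$ and meet $\tilde{D}$ only along $C_{E_k}$, all acquire their common cusp at $x$ after blowing down; using injectivity of $\pi$ off the exceptional locus, one confirms that this tangency is confined to the single point $x\in D$, so the leaves $C\setminus\{x\}$ stay disjoint in $X\setminus D$ and retain finite equal area.
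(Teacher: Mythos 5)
Your proposal is correct and follows essentially the same route as the paper: McDuff--Opshtein's Theorem \ref{thm:MO15} to produce the embedded representative of $\tilde{A}$ upstairs, Lemma \ref{lem:tildeAmoduli} and Theorem \ref{thm:lizhangnef} to control the moduli space and its finitely many reducible members, the evaluation-map argument of Theorem \ref{thm:ruledmain} to obtain the trivial $(S^2\setminus\{\mathrm{pt}\})$-bundle, and Proposition \ref{prop:modulirelation} together with the identification $\tilde{X}\setminus\tilde{D}\cong X\setminus D$ to descend to a foliation by $(p,q)$-unicuspidal curves. The only point worth making explicit, which the paper does, is that one must first equip the topological blowup $(\tilde{X},\tilde{D})$ with a symplectic form $\tilde{\omega}$ satisfying $K_{\tilde{\omega}}=K_\omega+\sum_{i=1}^kE_i$ before Theorem \ref{thm:MO15} and the space $\mathcal{J}(\tilde{D})$ even make sense.
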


\begin{proof}
   We can make the topological pair $(\tilde{X},\tilde{D})$ into a symplectic divisor $(\tilde{X},\tilde{\omega},\tilde{D})$ by choosing some symplectic form $\tilde{\omega}$ with $K_{\tilde{\omega}}=K_\omega+\sum_{i=1}^kE_i$. Then by McDuff-Opshtein's criterion Theorem \ref{thm:MO15}, for generic $\tilde{J}\in \mathcal{J}(\tilde{D})$, $\tilde{A}$ can be represented by an embedded $\tilde{J}$-holomorphic sphere since $\tilde{A}$ is assumed to be $\tilde{D}$-good. The condition $\tilde{A}^2=0$ further implies that $\tilde{A}$ is $\tilde{J}$-nef so that any $\tilde{J}$-holomorphic subvariety in class $\tilde{A}$ must be a tree of embedded spheres by Theorem \ref{thm:lizhangnef}. By Lemma \ref{lem:tildeAmoduli}, there will be finitely many embedded $\tilde{J}$-holomorphic embedded spheres $C_1,\cdots,C_l$ such that $(\cup C_i)\cup(D\setminus C_{E_k})$ is the union of configurations of all reducible elements in $\mathcal{M}_{\tilde{A}}$. Now consider the evaluation map
\[\text{ev}:\mathring{\mathcal{M}}_{0,1}(\tilde{A};\tilde{J})\rightarrow \tilde{X}\setminus ((\cup C_i)\cup(\tilde{D}\setminus C_{E_k}))\]
where $\mathring{\mathcal{M}}_{0,1}(\tilde{A};\tilde{J})$ denotes the moduli space $\mathcal{M}_{0,1}(\tilde{A};\tilde{J})$ with finitely many elements represented by components in $\tilde{D}$ removed.\footnote{As in the proof of Theorem \ref{thm:ruledmain}, to make the $(S^2\setminus\{\text{pt}\})$-bundle trivial, we may need to choose an additional sphere and delete the corresponding elements from $\mathcal{M}_{0,1}(\tilde{A};\tilde{J})$ when $\tilde{D}\setminus C_{E_k}$ is empty.}
 The map $\text{ev}$ must be bijective by Lemma \ref{lem:tildeAmoduli}. The same trick in the proof of Theorem \ref{thm:ruledmain} would then imply that $\text{ev}$ is indeed a diffeomorphism. After further removing the section sphere $C_{E_k}$, we see that $\tilde{X}\setminus{((\cup C_i)\cup \tilde{D})}$ must be diffeomorphic to a trivial $(S^2\setminus\{\text{pt}\})$-bundle whose fibers are $\tilde{J}$-holomorphic.
   
   Since $\tilde{J}$ is integrable near $\tilde{D}$, it naturally corresponds to some $J\in\mathcal{J}(D)$ by contracting holomorphic exceptional spheres. By Proposition \ref{prop:modulirelation}, embedded $\tilde{J}$-holomorphic spheres in class $\tilde{A}$ will descend to $(p,q)$-unicuspidal rational curves in class $A$. By the identification between $(\tilde{X}\setminus{\tilde{D}},\tilde{J}|_{\tilde{X}\setminus\tilde{D}})$ and $(X\setminus D,J_{X\setminus D})$, we can also view $C_i$ as $J$-holomorphic spheres in $X$. Then $X\setminus((\cup C_i)\cup D)$ will be foliated by $(p,q)$-unicuspidal rational curves and diffeomorphic to the trivial $(S^2\setminus\{\text{pt}\})$-bundle whose fibers are $J$-holomorphic. In particular, all the fibers are $\omega$-symplectic with the same symplectic area $\omega(A)$. Therefore $(X,\omega,D)$ is symplectic affine-ruled.
\end{proof}

\begin{rmk}
    Let us emphasize that in the above proof, we do not treat $(\tilde{X},\tilde{\omega},\tilde{D})$ as the symplectic blowup of $(X,\omega,D)$ as Section \ref{sec:divisoroperation}. Instead, it should be viewed as the topological blowup $(\tilde{X},\tilde{D})$, using the construction of complex blowups, equipped with some symplectic form $\tilde{\omega}$.
\end{rmk}

\begin{corollary}\label{cor:blowupaffineruled}
    Under the same settings and assumptions as Lemma \ref{lem:homologicallyaffineruled}, the non-toric, toric, half-toric or exterior blowup $(X',\omega',D')$ of $(X,\omega,D)$ is still symplectic affine-ruled. 
\end{corollary}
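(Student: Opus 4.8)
The plan is to show that the blown-up pair $(X',\omega',D')$ is again homologically affine-ruled and that its resolution class is $\tilde{D}'$-good, so that Lemma \ref{lem:homologicallyaffineruled} applies verbatim to produce the desired foliation. The two technical inputs are Lemma \ref{lem:criterionforDgood}, which propagates $D$-goodness under blowups, and the structural observation that a blowup of $(X,\omega,D)$ either commutes with the normal crossing resolution of the cusp at $D_a\cap D_b$, or is absorbed as its very first step.

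First I would produce the new homologically affine-ruled class. Write $E$ for the exceptional class, so $H_2(X';\ZZ)=H_2(X;\ZZ)\oplus\ZZ E$ and $K_{\omega'}=K_\omega+E$. In every case except a toric blowup centered at the cusp point $D_a\cap D_b$, the ball is supported away from that intersection; viewing $A\in H_2(X;\ZZ)\subseteq H_2(X';\ZZ)$ and using $A\cdot E=0$, one checks that $A$ still satisfies $A\cdot[D_a']=p$, $A\cdot[D_b']=q$ (the proper transforms $D_a',D_b'$ still meeting with multiplicity one), has trivial intersection with every other component of $D'$ and with $C_E$ when present, and keeps $A^2=pq$ and $A\cdot K_{\omega'}=A\cdot K_\omega=-p-q-1$. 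Thus $(X',\omega',D')$ is homologically affine-ruled with the same class and cusp, where in the degenerate cases $(p,q)=(1,0),(0,1)$ one merely relabels the intersecting pair. When instead the blowup is toric and centered at $D_a\cap D_b$ with $p,q\ge 1$, say $p\ge q$, the proper transforms separate, and since a $(p,q)$-cusp has multiplicity $\min(p,q)=q$ the relevant class becomes $A':=A-qE$; a direct computation gives $A'\cdot[D_a']=p-q$, $A'\cdot[C_E]=q$, trivial intersections elsewhere, $(A')^2=(p-q)q$, and $A'\cdot K_{\omega'}=-(p-q)-q-1$, so $(X',\omega',D')$ is homologically affine-ruled with cusp $(p-q,q)$ at $D_a'\cap C_E$.

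Next I would verify goodness of the resolution class. In the first family the blowup point lies outside the locus modified by the resolution of the $(p,q)$-cusp, so the blowup lifts to a blowup of $(\tilde{X},\tilde{\omega},\tilde{D})$: concretely $\tilde{X}'=\tilde{X}\#\overline{\CC\PP}^2$ and the resolution class is unchanged, $\tilde{A}'=A-\sum_i m_iE_i=\tilde{A}$. Since $\tilde{A}$ is $\tilde{D}$-good with $\tilde{A}^2=0\ge 0$ and $\tilde{X}$ is rational (so $b_2^+=1$), Lemma \ref{lem:criterionforDgood} shows $\tilde{A}$ is $\tilde{D}'$-good. In the toric-at-cusp case the blowup $X'\to X$ is exactly the first blowup in the normal crossing resolution, and $\mathcal{W}(p-q,q)$ is the tail of $\mathcal{W}(p,q)$ with dropped entry $m_1=q$; hence resolving the $(p-q,q)$-cusp on $X'$ reassembles the same manifold $\tilde{X}'=\tilde{X}$ with $\tilde{D}'=\tilde{D}$, and the resolution class is $\tilde{A}'=A'-\sum_{i\ge 2}m_iE_i=A-qE_1-\sum_{i\ge 2}m_iE_i=\tilde{A}$, which is $\tilde{D}$-good by hypothesis.

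In all cases $(X',\omega',D')$ is homologically affine-ruled with a $\tilde{D}'$-good resolution class, so Lemma \ref{lem:homologicallyaffineruled} concludes that it is symplectic affine-ruled. I expect the main obstacle to be precisely the toric blowup centered at the cusp point: there the blowup genuinely interferes with the resolution, forcing the change of class from $A$ to $A-qE$ and the drop of the cusp type, and one must check both that the numerical conditions of Definition \ref{def:homaffruled} survive and that the two regrouped blowup sequences assemble into the original resolution $\tilde{X}$ with unchanged $\tilde{A}$ and $\tilde{D}$; the remaining blowup types are routine once $A\cdot E=0$ is used.
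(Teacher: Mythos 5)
Your proposal is correct and follows essentially the same route as the paper's proof: the same dichotomy between a toric blowup at $D_a\cap D_b$ (where the class becomes $A-m_1e=A-qE$, the cusp type drops to $(\min\{p,q\},|p-q|)$, and the resolution is identified with the original $(\tilde{X},\tilde{D})$) and all other blowups (where $A$ is kept, the resolution becomes a blowup of $(\tilde{X},\tilde{D})$, and Lemma \ref{lem:criterionforDgood} propagates $\tilde{D}'$-goodness), followed by an application of Lemma \ref{lem:homologicallyaffineruled}. Your verification of the numerical conditions in Definition \ref{def:homaffruled} is more explicit than the paper's, but the ideas coincide.
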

    
\begin{proof}
    Write $H_2(X';\ZZ)=H_2(X;\ZZ)\oplus \ZZ e$. If the blowup is a toric blowup at $D_a\cap D_b$, then we can just take the pair of relatively prime numbers $(\text{min}\{p,q\},|p-q|)$ and  $A':=A-m_1e\in H_2(X';\ZZ)$ and choose either the proper transform $D_a'$ (if $p>q$) or $D'_b$ (if $p<q$) along with the exceptional curve to be the intersecting components for $A'$. For such choices, the normal crossing resolution can be identified with $(\tilde{X},\tilde{D})$ and the resolution class of $A'$ is $\tilde{A}$. So the $\tilde{D}$-good condition is naturally satisfied by the assumption in Lemma \ref{lem:homologicallyaffineruled} and we know $(X',\omega',D')$ is symplectic affine-ruled by Lemma \ref{lem:homologicallyaffineruled}.
    
    If the blowup is not a toric blowup at $D_a\cap D_b$, then we just take the same relatively prime numbers $(p,q)$, class $A$ viewed as in $H_2(X';\ZZ)$ and the proper transforms of $D_a,D_b$ as the intersecting components. Note that the normal crossing resolution $(\tilde{X}',\tilde{D}')$ can also be viewed as the blowup of $(\tilde{X},\tilde{D})$ and the resolution class $\tilde{A'}$ is the same as $\tilde{A}$ viewed as class in $H_2(\tilde{X'};\ZZ)=H_2(\tilde{X};\ZZ)\oplus \ZZ e$. $\tilde{A}'$ is then $\tilde{D}'$-good by Lemma \ref{lem:criterionforDgood} and we have the symplectic affine-ruledness.
\end{proof}

\begin{rmk}
    The above Corollary \ref{cor:blowupaffineruled} provides some clues for the birational invariance of symplectic affine-ruledness, generalizing the well-known birational invariance of symplectic ruledness in the absolute settings. In general, it is an interesting question whether this birational invariance still holds for non-connected divisors or outside the category of log Kodaira dimension $-\infty$. 
\end{rmk}

 Let us now deal with quasi-minimal pairs of the first kind. Our goal is to find such a class $A$ in Lemma \ref{lem:homologicallyaffineruled} for all such pairs. The main idea to to select $A$ as a positive linear combination of some components in $D$, since we wish them to be the configuration of some subvariety representing the class $A$. Let us introduce some notions first. If $\{a_i\}_{1\leq i\leq k}$ is a sequence of integers, we define another sequence $\{c_i\}_{1\leq i\leq k}$ called its {\bf associated sequence} as follows: let 
\begin{align*}
c_1&=1,\\
c_2&=a_1,\\
c_{i}&=a_{i-1}c_{i-1}-c_{i-2} \text{ for } i\geq 3.
\end{align*}

For a quasi-minimal pair $(X,\omega,D)$ of first kind, since $D$ is a chain of spheres, we can label its components as $D_1,\cdots,D_n$ such that $[D_i]\cdot [D_{i+1}]=1$ for $1\leq i\leq n-1$ and $[D_i]\cdot[D_j]=0$ if $|i-j|>1$. By reversing the indices, there are two ways to label the components. We say $(X,\omega,D)$ contains an {\bf admissible subchain} if there is a labeling of $D$ and $1\leq k<n$ such that the associated sequence $\{c_i\}_{1\leq i\leq k}$ of $\{a_i=-[D_i]^2\}_{1\leq i\leq k}$ satisfies (we make the convention that $c_0=0$. )
\begin{itemize}
    \item $c_i\geq 0$ for all $1\leq i\leq k$;
    \item $c_{k-1}-c_ka_k>0$.
\end{itemize}

\begin{lemma}\label{lem:chainimplyhaffruled}
If the quasi-minimal pair of first kind $(X,\omega,D)$  contains an admissible subchain, then it must be homologically affine-ruled.    
\end{lemma}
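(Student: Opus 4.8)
The plan is to exhibit the class $A$ required by Definition \ref{def:homaffruled} explicitly as a nonnegative integral combination of the components of the admissible subchain. After relabeling so that the admissible subchain is $D_1\cup\cdots\cup D_k$ with associated sequence $\{c_i\}$ of $\{a_i=-[D_i]^2\}$, I would set
\[A:=\sum_{i=1}^k c_i[D_i],\]
which is a nonnegative combination since $c_i\ge 0$ for all $1\le i\le k$ by admissibility, and propose the two intersecting components to be $D_a:=D_k$ and $D_b:=D_{k+1}$ (which exists and satisfies $[D_k]\cdot[D_{k+1}]=1$ because $k<n$ and $D$ is a chain), together with $p:=c_{k-1}-a_kc_k$ and $q:=c_k$.

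First I would check the intersection pattern. Since $D$ is a chain, $[D_i]\cdot[D_j]$ vanishes unless $|i-j|\le 1$, while $[D_i]^2=-a_i$. Adopting the convention $c_0=0$, for every $1\le j\le k-1$ the pairing $A\cdot[D_j]=c_{j-1}-a_jc_j+c_{j+1}$ vanishes identically by the defining recursion $c_{j+1}=a_jc_j-c_{j-1}$ (the case $j=1$ being covered by $c_0=0$, $c_2=a_1$). The only surviving pairings are $A\cdot[D_k]=c_{k-1}-a_kc_k=p$, where there is no $c_{k+1}$ term precisely because $D_{k+1}$ lies outside the support of $A$, and $A\cdot[D_{k+1}]=c_k=q$; every $D_j$ with $j\ge k+2$ is non-adjacent to the support and hence orthogonal to $A$. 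The admissibility hypotheses $c_k\ge 0$ and $c_{k-1}-a_kc_k>0$ then give $q\ge 0$ and $p>0$.

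Next I would verify the two numerical identities. Because $A\cdot[D_i]=0$ for $i<k$, the self-intersection telescopes immediately: $A^2=\sum_i c_i\,(A\cdot[D_i])=c_k\,p=qp$, giving the required $A^2=pq$. For the canonical pairing I would use that every $D_i$ is an embedded sphere by Proposition \ref{prop:treeofsphere}, so adjunction yields $K_\omega\cdot[D_i]=a_i-2$; hence $A\cdot K_\omega=\sum_i c_i a_i-2\sum_i c_i$. Substituting $a_ic_i=c_{i+1}+c_{i-1}$ for $1\le i\le k-1$ turns the first sum into a telescoping expression that collapses to $-1-c_{k-1}-c_k+a_kc_k$, which equals exactly $-p-q-1$.

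Finally, relative primality of $p$ and $q$ follows from the gcd-invariance of consecutive terms: the relation $c_{i+1}\equiv -c_{i-1}\pmod{c_i}$ shows $\gcd(c_i,c_{i+1})=\gcd(c_{i-1},c_i)$, so all these equal $\gcd(c_0,c_1)=1$; in particular $\gcd(p,q)=\gcd(c_{k-1}-a_kc_k,c_k)=\gcd(c_{k-1},c_k)=1$. I expect the main obstacle to be purely bookkeeping, namely the telescoping computation of $A\cdot K_\omega$ together with the careful treatment of the boundary terms $c_0$ and the absent $c_{k+1}$, rather than any conceptual difficulty: once the associated sequence is chosen and the recursion invoked, every requirement of Definition \ref{def:homaffruled} is forced.
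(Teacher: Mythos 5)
Your proposal is correct and follows essentially the same route as the paper: the identical choice $A=\sum_{i=1}^k c_i[D_i]$, $(p,q)=(c_{k-1}-a_kc_k,\,c_k)$, $(D_a,D_b)=(D_k,D_{k+1})$, with the same recursion-based verification of the intersection pattern, of $A^2=pq$, of $A\cdot K_\omega=-p-q-1$ (via adjunction, using that the components are spheres), and of coprimality. Your only deviations are harmless and in fact slightly cleaner: you compute $A^2$ by bilinearity from the already-established vanishings $A\cdot[D_i]=0$ for $i<k$ (so $A^2=c_k\,p$ at once) instead of the paper's direct expansion, and you spell out the Euclidean gcd-invariance $\gcd(c_i,c_{i+1})=\gcd(c_{i-1},c_i)$ that the paper dismisses as ``by induction.''
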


\begin{proof}
    Let us make the choice
    \begin{align}\label{choice}\tag{$*$}
    \begin{split}
        (p,q) &=(c_{k-1}-c_ka_k,c_k);\\
        D_a &=D_k,D_b=D_{k+1};\\
        A &=\sum_{i=1}^kc_i[D_i].
    \end{split}
    \end{align}
        
    Thus, we can verify that
    \begin{itemize}
        \item $p,q\in \ZZ_+$ and are relatively prime by induction;
        \item $[D_{k}]\cdot[D_{k+1}]=1$;
        \item the intersection number between $A$ and components in $D$: 
        \[A\cdot[D_{k+1}]=(\sum_{i=1}^kc_i[D_i])\cdot[D_{k+1}]=c_k=q;\]
         \[A\cdot[D_k]=(\sum_{i=1}^kc_i[D_i])\cdot[D_{k}]=c_{k-1}-c_ka_k=p;\]
         \[A\cdot[D_j]=(\sum_{i=1}^kc_i[D_i])\cdot[D_{j}]=\begin{cases}c_{j-1}-c_ja_j+c_{j+1}=0,\text{for } j<k;\\
0,\text{for } j>k+1
         \end{cases}\]
        \item by the definition of associated sequence we have 
        \begin{align*}
            A^2&=(\sum_{i=1}^kc_i[D_i])^2=-\sum_{i=1}^kc_i^2a_i+2\sum_{i=1}^{k-1}c_{i}c_{i+1}\\
            &=-a_1-c_k^2a_k-\sum_{i=2}^{k-1}c_i(c_{i+1}+c_{i-1})+2\sum_{i=1}^{k-1}c_{i}c_{i+1}\\
            &=-a_1-c_k^2a_k+c_1c_2+c_{k-1}c_k\\
            &=c_k(c_{k-1}-c_ka_k)=pq;
        \end{align*}
        \item since $D_i$'s are spheres, by adjunction formula we have 
        \begin{align*}
            A\cdot K_{\omega}&=(\sum_{i=1}^kc_i[D_i])\cdot K_{\omega}=\sum_{i=1}^kc_i(-2+a_i)\\
            &=-2\sum_{i=1}^kc_i+\sum_{i=2}^{k-1}(c_{i-1}+c_{i+1})+c_1a_1+c_ka_k\\
            &=-c_1-c_2-c_{k-1}-c_k+c_1a_1+c_ka_k=-p-q-1.
        \end{align*}
    \end{itemize}
    Therefore, our choice meets all the requirements in Definition \ref{def:homaffruled}.
\end{proof}

\begin{lemma}\label{lem:resolutionDgood}
    Let $(X,\omega,D)$ be a quasi-minimal pair of first kind with $[\omega]\cdot(K_\omega+[D])<0$. With the choice (\ref{choice}) made in Lemma \ref{lem:chainimplyhaffruled}, let $\tilde{X}$ be the blowups of $X$ according to the pattern of the normal crossing resolution of a $(p,q)$-cusp at $D_a\cap D_b$ and $\tilde{D}\subseteq \tilde{X}$ be the total transform  of $D\subseteq X$. Then the resolution class $\tilde{A}:=A-\sum_{i=1}^lm_iE_i\in H_2(\tilde{X};\ZZ)$ is $\tilde{D}$-good, where $(m_1,\cdots,m_l)$ is the weight sequence $\mathcal{W}(p,q)$.
\end{lemma}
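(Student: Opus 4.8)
The plan is to verify the four defining conditions of $\tilde{D}$-goodness (Definition \ref{def:Dgood}) for $\tilde{A}$ one at a time, exploiting the homological identities already recorded for the resolution class. Recall from the discussion preceding Lemma \ref{lem:tildeAmoduli} that $\tilde{A}^2=0$, that $\tilde{A}\cdot K_{\tilde{J}}=-2$ with $K_{\tilde{J}}=K_\omega+\sum_{i=1}^l E_i$, and that $\tilde{A}$ meets the last exceptional component $C_{E_l}$ of $\tilde{D}$ once and every other component of $\tilde{D}$ in zero. This intersection pattern disposes of conditions (2) and (4) at once: condition (4) is immediate since all these intersection numbers are $0$ or $1$, and condition (2) follows because a class pairing to $1$ with $[C_{E_l}]$ cannot be a nontrivial integer multiple of another integral class.

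For condition (1) I would first establish $SW(A)\neq 0$ downstairs on $X$ and then push it up through the resolution by the blowup formula. On $X$ the index is $I(A)=A^2-K_\omega\cdot A=pq+p+q+1=(p+1)(q+1)\geq 0$. Moreover $\omega\cdot K_\omega<0$ because $\omega\cdot(K_\omega+[D])<0$ while $\omega\cdot[D]>0$, and $\omega\cdot A=\sum_{i=1}^k c_i\,\omega\cdot[D_i]>0$ since the $c_i$ are non-negative with $c_1=1$; hence $\omega\cdot(K_\omega-A)<0$ and Corollary \ref{cor:SWnonzero} yields $SW(A)\neq 0$. To transfer this to $\tilde{X}$ I would peel off the exceptional classes one at a time and apply the blowup formula of Theorem \ref{thm:liliu}, which requires the intermediate indices to be non-negative. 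A direct computation using Fact \ref{fact:boxdiagram} gives, for each $0\leq j\leq l$,
\[I\Bigl(A-\sum_{i=1}^j m_iE_i\Bigr)=\sum_{i=j+1}^l(m_i^2+m_i)+2\geq 2,\]
so every application of the blowup formula is legitimate and $SW(\tilde{A})=SW(A)\neq 0$.

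The crux is condition (3), the non-negativity of $\tilde{A}\cdot E$ against every symplectic exceptional class $E\neq\tilde{A}$; since $\tilde{A}^2=0$ but $E^2=-1$, the light cone lemma does not apply and a genuinely geometric input is needed. The key step will be to show that $\tilde{A}$ is a non-negative integral combination of the irreducible components of $\tilde{D}$. Writing $A=\sum_{i=1}^k c_i[D_i]$ and noting that the resolution only modifies $D_k$ and $D_{k+1}$, the components $[\tilde{D}_i]=[D_i]$ for $i<k$ are untouched, while $c_k[D_k]-\sum_{i=1}^l m_iE_i=q[\tilde{D}_k]+\bigl(q([D_k]-[\tilde{D}_k])-\sum_{i=1}^l m_iE_i\bigr)$; here $q=c_k$ and, by Lemma \ref{lem:positivecombination} applied to $D_k$ (which carries contact order $p$), the parenthesized class is a non-negative combination of components of the total transform of $D_k$. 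Combined with the admissibility hypothesis $c_i\geq 0$, this exhibits $\tilde{A}=\sum_{i=1}^{k-1}c_i[\tilde{D}_i]+q[\tilde{D}_k]+(\text{non-negative combination of exceptional components})$ as a non-negative combination of $\tilde{D}$-components.

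Granting this expansion, condition (3) follows by positivity of intersection. For any symplectic exceptional class $E\neq\tilde{A}$ and any $\tilde{J}\in\mathcal{J}(\tilde{D})$, the non-vanishing $SW(E)\neq 0$ (Corollary \ref{cor:SWnonzero}) produces a $\tilde{J}$-holomorphic subvariety $\sum_j n_j[C_j]$ in class $E$. Each irreducible $C_j$ is either a component of $\tilde{D}$, in which case $\tilde{A}\cdot[C_j]\geq 0$ by condition (4), or is not contained in $\tilde{D}$, in which case positivity of intersection gives $[C_j]\cdot[\tilde{D}_i]\geq 0$ for every component $\tilde{D}_i$ and hence $\tilde{A}\cdot[C_j]\geq 0$ by the non-negative expansion above. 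Summing over $j$ gives $\tilde{A}\cdot E\geq 0$. The main obstacle is thus concentrated in establishing the non-negative expansion of $\tilde{A}$, for which Lemma \ref{lem:positivecombination} together with the admissible subchain condition is precisely the tool prepared in advance.
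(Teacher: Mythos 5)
Your proof is correct and follows essentially the same approach as the paper's: conditions (2) and (4) are read off from the intersection pattern of $\tilde{A}$ with the components of $\tilde{D}$ exactly as in the paper, and for the crux, condition (3), you use precisely the paper's argument — the decomposition $\tilde{A}=\sum_{i=1}^{k}c_i[\tilde{D}_i]+c_k([D_k]-[\tilde{D}_k])-\sum_{i=1}^lm_iE_i$ combined with Lemma \ref{lem:positivecombination} to exhibit $\tilde{A}$ as a non-negative combination of $\tilde{D}$-components, followed by positivity of intersection to conclude $\tilde{A}$ is $\tilde{J}$-nef and hence pairs non-negatively with every $J$-effective exceptional class.

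The one place you diverge is condition (1): you prove $SW(A)\neq 0$ downstairs on $X$ via Corollary \ref{cor:SWnonzero} (using $I(A)=(p+1)(q+1)\geq 0$ and $\omega\cdot(K_\omega-A)<0$) and transfer it to $\tilde{X}$ by $l$ applications of the blowup formula, verifying the intermediate indices $I\bigl(A-\sum_{i=1}^j m_iE_i\bigr)=\sum_{i=j+1}^l(m_i^2+m_i)+2\geq 2$ — a computation which is correct, by Fact \ref{fact:boxdiagram}. The paper instead works directly upstairs: it equips $\tilde{X}$ with a form $\tilde{\omega}$ whose exceptional areas are small enough that $\tilde{\omega}\cdot K_{\tilde{\omega}}<0$, computes $I(\tilde{A})=2$ in one shot from Fact \ref{fact:boxdiagram}, and invokes Corollary \ref{cor:SWnonzero} once on $\tilde{X}$. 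Both mechanisms are legitimate within the paper's framework (for $b_2^+=1$ the relevant $SW$ function depends only on the choice of canonical class $K_\omega+\sum_{i=1}^lE_i$), so this difference is purely cosmetic; your version trades the small-area normalization for an index bookkeeping argument of comparable length.
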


\begin{proof}
    Let us equip $\tilde{X}$ with a symplectic form $\tilde{\omega}$ with $K_{\tilde{\omega}}=K_\omega+\sum_{i=1}^l E_i$ making $\tilde{D}$ a symplectic divisor. By the symplectic blowup construction, we may assume that all exceptional classes $E_i$'s have sufficiently small $\tilde{\omega}$-symplectic area so that $[\tilde{\omega}]\cdot K_{\tilde{\omega}}<0$. Then we have the following observations.
    \begin{itemize}
        \item $I(\tilde{A})=\tilde{A^2}-\tilde{A}\cdot K_{\tilde{\omega}}=\tilde{A}^2-\tilde{A}\cdot(K_{\omega}+E_1+\cdots+E_l)=2$ by Equation (\ref{equation:index}), and $[\tilde{\omega}]\cdot(K_{\tilde{\omega}}-\tilde{A})<0$ since $[\tilde{\omega}]\cdot K_{\tilde{\omega}}<0$. So $SW(\tilde{A})\neq 0$ by Corollary \ref{cor:SWnonzero}.
        \item $\tilde{A}$ must be primitive since $\tilde{A}\cdot E_l=1$.
        \item By the property of normal crossing resolution and the fact that $A$ only has non-trivial intersection with $D_a$ and $D_b$, $\tilde{A}$ has intersection pairing $1$ with $E_l$ and $0$ with all the classes of other components in $\tilde{D}$.
        \item Let $\tilde{D}_j$ be the proper transform of $D_j$. Then we can write \begin{align*}
            \tilde{A}&=A-\sum_{i=1}^lm_iE_i=\sum_{i=1}^kc_i[D_i]-\sum_{i=1}^lm_iE_i=\sum_{i=1}^kc_i[\tilde{D}_i]+c_k([D_k]-[\tilde{D}_k])-\sum_{i=1}^lm_iE_i.
        \end{align*} By Lemma \ref{lem:positivecombination}, $c_k([D_k]-[\tilde{D}_k])-\sum_{i=1}^lm_iE_i$ is a non-negative linear combination of the components of $\tilde{D}$ and so is $\tilde{A}$ since all $c_i$'s are non-negative. Choose any $J\in\mathcal{J}(\tilde{D})$. If $C$ is an irreducible $J$-holomorphic curve which is not a component of $\tilde{D}$, we have $\tilde{A}\cdot [C]\geq 0$ by positivity of intersection between $C$ and components in $\tilde{D}$. On the other hand, the previous bullet implies that $\tilde{A}\cdot [\tilde{D_i}]\geq 0$ for any component $\tilde{D_i}$ of $\tilde{D}$. This shows that $\tilde{A}$ is $\tilde{J}$-nef. As a consequence, for any $E\in \mathcal{E}_{\omega}$, since $SW(E)\neq 0$, we must have $\tilde{A}\cdot E\geq 0$.
        
    \end{itemize}
    Therefore, we see that $\tilde{A}$ satisfies all the conditions for $\tilde{D}$-goodness in Definition \ref{def:Dgood}.
\end{proof}

We now need the following lemma which helps us to find admissble subchains. It is a consequence from the observation of the minimal models for log Calabi-Yau pairs in Section \ref{section:divisorsinminimal}.

\begin{lemma}\label{lem:partialminimalgoodchain}
    For a quasi-minimal pair $(X,\omega,D)$ of first kind which is also partially minimal, we can label the components in $D$ as $D_1\cup\cdots\cup D_l$ with $[D_i]\cdot [D_{i+1}]=1$ for all $1\leq i\leq l-1$ and find some $1\leq k\leq l-1$ such that 
    
    \begin{itemize}
        \item either $[D_s]^2\leq -2$ for all $1\leq s\leq k-1$\footnote{If $k=1$, then there is no requirement for $[D_s]^2\leq -2$.} and $[D_k]^2\geq 0$;
        \item or $k=2$ and $[D_1]^2=-1,[D_2]^2\geq 0$. 
    \end{itemize}
\end{lemma}
\begin{proof}
    If we complete $D$ into the log Calabi-Yau divisor $D':=D\cup C_{E_{\text{min}}}$, then by the partially minimal condition, we know the minimal reduction procedure for $D'$ does not involve non-toric blowdowns. Moreover, all toric blowdowns are performed at the components which come from the total transform of an intersection point between two components in a minimal model. In other words, there will be a self-intersection sequence $(a_1,\cdots,a_m)$ of some minimal model (so $2\leq m\leq 4$) and a toric blowup sequence $(t_1,\cdots,t_n)$ in the sense of Definition \ref{def:toricblowupseq} such that the self-intersection sequence of $D'$ is $(a_1+t_n,\cdots,a_m+t_1,t_2,\cdots,t_{n-1})$. The partially minimal condition requires that there is exactly one $2\leq i\leq n-1$ such that $t_i=-1$ corresponding to $C_{E_{\text{min}}}$ and all the other $t_j$'s are $\leq-2$ for $j\neq i$ and $2\leq j\leq n-1$. By Fact \ref{fact:blowupseq}, either $t_1$ or $t_n$ must be equal to $-1$. Without loss of generality, assume $t_1=-1$, and split into cases according to the value of $a_m+t_1$.
    
     When $a_m+t_1\geq 0$, we obtain the first bullet by taking $D_k,D_{k-1},\dots, D_1$ to be the components of $D$ corresponding to the self-intersection sequence $(a_m+t_1,t_2,\dots,t_{i-1})$.

     When $a_m+t_1=-1$, the second partially minimal condition forces $i=2$. Fact \ref{fact:blowupseq} then implies $(t_1,\dots,t_n)=(-1,-1,-1)$. If the first bullet still fails, then the minimal model for $D$ must be of type (B2) or (B3), with $(a_1,\dots,a_m)=(0,2,0)$ or $(0,0,0,0)$; in either case, the second bullet follows.
     
      Suppose now that $a_m+t_1\leq -2$. If the minimal model is not of type (B1) or (C1), then one checks case-by-case (using $a_m\leq -1$) that the first bullet holds.

      It remains to consider types (B1) and (C1). We claim that, in these cases, the first condition in the partially minimal assumption forces $(t_1,\cdots,t_n)=(-1,-1,-1)$. Let $e_1$ (resp. $C_{e_1}$) denote the exceptional class (resp. exceptional divisor) introduced by the first toric blowup from the minimal model to $D'$. If $n\geq 4$, then the proper transform $\tilde{C}_{e_1}\subset D'$ of $C_{e_1}$ has self-intersection number $<-1$ and hence must be a component of $D$. In this situation, the exceptional class $f_1-e_1$ (resp. $f-e_1$) has non-negative intersection with every component of $D'$ and satisfies $(f_1-e_1)\cdot E_{\text{min}}=0$, contradicting partial minimality. Therefore $n\leq 3$, and the claim follows.

      When $(t_1,\cdots,t_n)=(-1,-1,-1)$, we may simply take $l=1$ and choose $D_1$ to be the component with $[D_1]^2\geq 0$, which yields the first bullet.
\end{proof}

\begin{lemma}\label{lem:admsubchain}
    A quasi-minimal pair of first kind which is also partially minimal contains an admissible subchain.
\end{lemma}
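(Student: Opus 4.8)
The plan is to feed the structural dichotomy of Lemma \ref{lem:partialminimalgoodchain} into a direct verification of the two conditions defining an admissible subchain. First I would apply Lemma \ref{lem:partialminimalgoodchain} to fix a labeling $D_1,\dots,D_l$ of the chain and an index $k$ realizing one of its two bullets; I set $a_i=-[D_i]^2$ and let $\{c_i\}$ be the associated sequence. Everything then reduces to computing $\{c_i\}$ from the recurrence $c_i=a_{i-1}c_{i-1}-c_{i-2}$ and checking the two signs $c_i\geq 0$ and $c_{k-1}-c_k a_k>0$; note that $\{c_i\}$ is exactly the numerator sequence of a Hirzebruch--Jung continued fraction, so its positivity is governed by the lengths $a_s$.

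The core case is the first bullet, where $a_s\geq 2$ for $1\leq s\leq k-1$ and $a_k=-[D_k]^2\leq 0$. Here I would prove by induction that $c_1<c_2<\cdots<c_k$ with every $c_i\geq 1$: the base values $c_1=1$ and $c_2=a_1\geq 2$ are clear, and since $c_{i+1}=a_ic_i-c_{i-1}\geq 2c_i-c_{i-1}=c_i+(c_i-c_{i-1})>c_i$ whenever $a_i\geq 2$, strict monotonicity and positivity propagate for all $i\leq k$. This immediately gives the first condition $c_i\geq 0$. For the second I would rewrite $c_{k-1}-c_k a_k=c_{k-1}+c_k[D_k]^2$; since $[D_k]^2\geq 0$ and $c_{k-1}\geq 1$ as soon as $k\geq 2$, this is strictly positive. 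The second bullet is a one-line check: with $k=2$, $a_1=1$, $a_2=0$ one gets $c_1=1$, $c_2=1$, hence $c_{k-1}-c_k a_k=1-0=1>0$, which corresponds to the transverse datum $(p,q)=(1,1)$.

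The delicate point, which I expect to be the main obstacle, is the boundary of the first bullet at $k=1$, where $c_{k-1}-c_k a_k=[D_1]^2$ is positive precisely when $[D_1]^2>0$; when $[D_1]^2=0$ the length-one subchain fails the strict inequality. If $l\geq 3$ I would instead take $k'=2$: then $c_1=1$, $c_2=a_1=0$ are both nonnegative and $c_1-c_2a_2=1>0$, producing the admissible datum $(p,q)=(1,0)$ with $D_a=D_2$, $D_b=D_3$, matching the convention recorded after Definition \ref{def:homaffruled}. The only residual configuration is $l=2$ with a $0$-sphere at an end, which I would dispose of directly: after possibly reversing the labeling I may assume no end of the two-chain has positive square, whence both ends are $0$-spheres, forcing the ambient manifold to be $\CC\PP^2\#4\overline{\CC\PP}^2$ (since $\sum[D_i]^2=K_\omega^2-6$ for the anticanonical three-cycle $D\cup C_{E_{\min}}$, with the values $0,0,-1$). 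A short computation with this cycle then exhibits an exceptional class meeting $E_{\min}$ trivially and both components of $D$ nonnegatively, contradicting condition (1) of partial minimality in Definition \ref{def:partialmin}. Hence this degenerate endpoint does not occur, and the two bullets of Lemma \ref{lem:partialminimalgoodchain} always yield a genuine admissible subchain.
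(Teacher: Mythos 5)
Your core argument is the paper's own: feed the dichotomy of Lemma \ref{lem:partialminimalgoodchain} into the recursion for the associated sequence, prove positivity and monotonicity by induction under the first bullet, and check the second bullet by hand. You go beyond the paper in isolating the boundary case $k=1$, $[D_1]^2=0$, where the required strict inequality $c_{k-1}-c_ka_k>0$ degenerates to $[D_1]^2>0$ and fails; the paper's proof is indeed silent there (its phrase ``$c_2>c_1>0$'' presumes $k\geq 2$). Your repair for $l\geq 3$ --- switching to the subchain $D_1\cup D_2$, which gives $(c_1,c_2)=(1,0)$, hence the admissible datum $(p,q)=(1,0)$ --- is correct and consistent with the convention recorded after Definition \ref{def:homaffruled}.

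The genuine gap is in your residual case $l=2$. The inference ``no end of the two-chain has positive square, whence both ends are $0$-spheres'' is a non sequitur: Lemma \ref{lem:partialminimalgoodchain} only guarantees that \emph{one} end has square $\geq 0$, so you must also exclude a two-chain with self-intersections $(0,e)$, $e<0$. This cannot be waved away, because for such a chain no admissible subchain exists at all: with $l=2$ the definition forces $k=1$, and for either labeling $c_0-c_1a_1=[D_1]^2\leq 0$. So unless that configuration is shown to be unrealizable by a partially minimal pair, the lemma itself would be false on it. It can be excluded, for instance via the toric-model structure established in the proof of Lemma \ref{lem:partialminimalgoodchain}: for $l=2$ the chain is either $(s_1-1,s_2-1)$ for a two-component minimal model $(s_1,s_2)$, or arises from the type (C2) model $(3,0,-1)$ by deleting the $(-1)$-component, and inspection of the lists (A1), (B1), (C1), (C2) shows one entry is always strictly positive, which after relabeling puts you in the unproblematic case $k=1$, $[D_1]^2>0$. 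The same inspection also disposes of your $(0,0)$ case, whose exclusion you only assert through an unperformed ``short computation.'' As written, the $(0,e)$, $e<0$ configuration is unaddressed, and the proof is incomplete at precisely the point where completeness is essential.
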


\begin{proof}
    Let us choose the subchain $D_1\cup\cdots\cup D_k$ satisfying the requirements in Lemma \ref{lem:partialminimalgoodchain}. Let $a_i:=-[D_i]^2$. We can verify the admissible condition by discussing the following two cases.
    \begin{itemize}
        \item When $a_j\geq 2$ for all $1\leq j\leq k-1$ and $a_k\leq 0$, then we have $c_2>c_1>0$. Assume $c_{l}>c_{l-1}>0$ holds, then $c_{l+1}=a_lc_l-c_{l-1}\geq 2c_l-c_{l-1}>c_l>0$. By inductions, we see that the associated sequence $c_i$'s will be a positive increasing sequence. Also, since $a_k\leq 0$, $c_{k-1}-c_ka_k>0$ holds.
        \item When $k=2$ and $a_1=1,a_2\leq 0$, the associated sequence $(c_1,c_2)=(1,1)$ which is positive and $c_1-c_2a_2\geq c_1>0$.
    \end{itemize}
\end{proof}

We now deal with the additional cases arising from blowups of pairs with $b_2\le 2$, where the affine rulings can be chosen to be smooth.
\begin{proof}[Proof of Lemma \ref{lem:threecases}]
    When $D$ is comb-like (cases $(B1)'$ and $(C1)'$), we can still use the fiber class $f_2$ or $f$ to foliate the divisor complement as what we did in the irrational ruled cases. Note that $D$ can be viewed as a $(1,0)$-homologically affine-ruled divisor by choosing the $D$-good class $A$ to be $f$ or $f_2$. For all the other cases except for $(A3)'$, first notice that case $(A1)'$ can be subsumed by case $(A2)'$ since the definition of symplectic affine-ruledness allows us to remove some symplectic submanifolds. Then we can choose the class $A:=[D_1]$ in each case to make the pair $(1,[D_1]^2)$-homologically affine-ruled. The resolution class $\tilde{A}$ would just be $[\tilde{D}_1]$, the class of the proper transform of $D_1$. Since $[\tilde{D}_1]$ is represented by the embedded $J$-holomorphic sphere as the component in the total transform $\tilde{D}$ for any $J\in\mathcal{J}(\tilde{D})$, it must be $\tilde{D}$-good. Thus, by Lemma \ref{lem:homologicallyaffineruled} and Corollary \ref{cor:blowupaffineruled}, all blowups of these cases are symplectic affine-ruled.

    Case $(A3)'$ requires additional care since we can not find a class $A$ which makes it homologically affine-ruled (see also Example \ref{example:A3} below). Assume $(X,\omega,D)$ is obtained from some non-toric, half-toric or exterior blowups of a sphere in class $2h$ in $\CC\PP^2$. We can further topologically perform a half-toric blowup at a point $x\in D$ (with exceptional class $e_1$) and three toric blowups (with exceptional class $e_1,e_2,e_3$) to get the configuration $D':=\tilde{D}\cup C_{e_4}\cup C_{e_3-e_4}\cup C_{e_2-e_3}\cup C_{e_1-e_2}$ where $\tilde{D}$ is the proper transform of $D$. Then we take the class $A:=2h-e_1-e_2-e_3-e_4$ which can be easily checked to be $D'$-good. Then by Theorem \ref{thm:MO15} there is some $J'\in\mathcal{J}_{\text{emb}}(D',A)$ which corresponds to a $J\in\mathcal{J}(D)$. We can identify the moduli space of $J'$-holomorphic subvarieties in class $A$ with the sphere $C_{e_4}$ by the same argument as Lemma \ref{lem:tildeAmoduli}. Any smooth representative of $A$ will descend to a smooth rational $J$-holomorphic curve $u$ in class $2h$ with $\text{ord}(u,D;x)=4$ by Proposition \ref{prop:modulirelation}. After throwing out all the descendants of the reducible representatives of $A$, we see that the remaining part is foliated by $J$-holomorphic punctured spheres. Therefore, we can verify the symplectic affine-ruledness for $(X,\omega,D)$ by the same argument as Lemma \ref{lem:homologicallyaffineruled}.
\end{proof}

\begin{figure}[ht]
		\centering\includegraphics*[height=4cm, width=12cm]{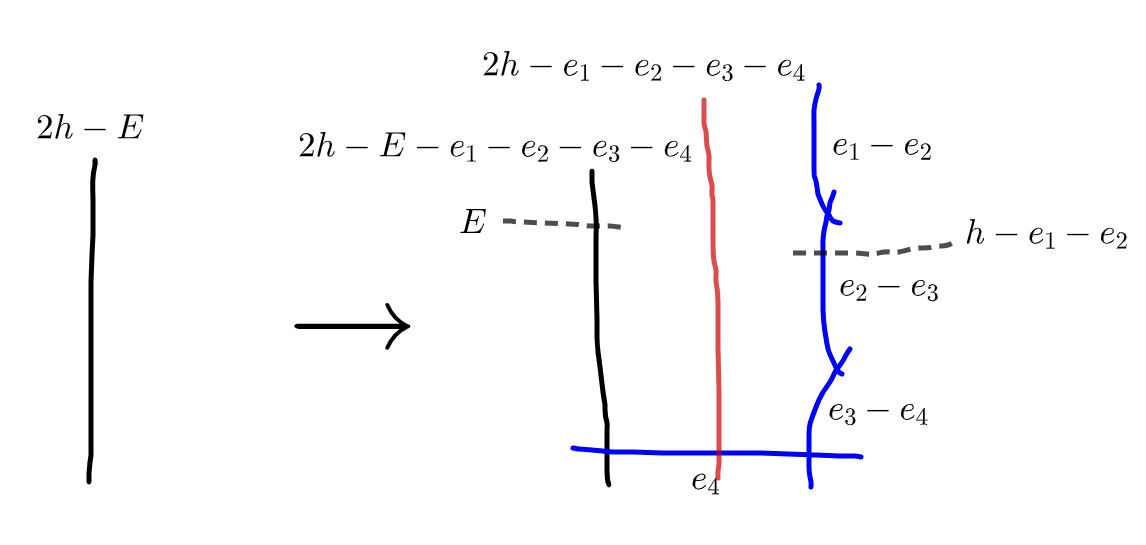}
 
		\caption{The blowup for case $(A3)'$ in the proof of Lemma \ref{lem:threecases}. \label{fig:2h}}
	\end{figure}

\begin{example}\label{example:A3}
   Consider a single embedded sphere of class $2h-E$ in $\CC\PP^2\#\overline{\CC\PP}^2$, which is the non-toric blowup of case $(A3)'$. Figure \ref{fig:2h} shows the configuration after $4$ blowups. The class $A:=2h-e_1-e_2-e_3-e_4$ can be used to foliate the complement. Note that we can visualize two reducible representatives of $A$ in the figures: $A=(2h-E-e_1-e_2-e_3-e_4)+E$ and $A=2(h-e_1-e_2)+(e_1-e_2)+2(e_2-e_3)+(e_3-e_4)$.
\end{example}

\subsection{Proof of symplectic affine-ruledness}\label{secton:proofofaffineruled}
With all the necessary lemmas in place, we are now ready to prove the main result of this section.

\begin{theorem}\label{thm:rationalmain}
    Any connected pair $(X,\omega,D)$ where $X$ is a rational manifold with $[\omega]\cdot([D]+K_{\omega})<0$ is symplectic affine-ruled.
\end{theorem}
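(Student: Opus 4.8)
The plan is to run a case analysis driven by the blowdown reductions established throughout this section, transporting symplectic affine-ruledness \emph{back up} along blowups. The key mechanism is Corollary \ref{cor:blowupaffineruled}: if a pair is homologically affine-ruled with a $\tilde{D}$-good resolution class, then so is any of its toric/non-toric/half-toric/exterior blowups, which are therefore again symplectic affine-ruled. Thus it suffices, for each possible ``base'' produced by the reduction, to exhibit such a homologically affine-ruled structure (or to invoke a lemma that already packages it) and then to iterate this corollary finitely many times. To begin, I would apply Lemma \ref{lem:quasiminred} to present $(X,\omega,D)$ as a finite sequence of blowups of a base pair $(X_0,\omega_0,D_0)$ that is either quasi-minimal or satisfies $b_2(X_0)\leq 2$, and then split according to the type of this base, the quasi-minimal case being further divided into first and second kind via Proposition \ref{prop:-K-D=Emin}.

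The two easy cases are handled by direct appeals. If $b_2(X_0)\leq 2$, then $(X_0,\omega_0,D_0)$ is one of the explicitly enumerated configurations $(A1)'$--$(C3)'$, and Lemma \ref{lem:threecases} asserts precisely that every blowup of such a pair --- in particular $(X,\omega,D)$ --- is symplectic affine-ruled. If $(X_0,\omega_0,D_0)$ is quasi-minimal of second kind, then Corollary \ref{cor:quasiminmalsecondkindreduction} exhibits it as a sequence of blowups of a $b_2\leq 2$ divisor; composing these with the blowups producing $(X,\omega,D)$ reduces this situation to the previous one.

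The substantive case is when $(X_0,\omega_0,D_0)$ is quasi-minimal of first kind. Here Lemma \ref{lem:partialminimalred} writes it as a sequence of toric/non-toric blowups of either a $b_2\leq 2$ divisor (reduce to the first case) or a partially minimal quasi-minimal pair $(X_1,\omega_1,D_1)$, which remains of first kind since the partial-minimalization never modifies the minimal-area sphere $C_{E_{\text{min}}}$. For such a partially minimal first-kind pair the chain of lemmas applies in sequence: Lemma \ref{lem:admsubchain} supplies an admissible subchain, Lemma \ref{lem:chainimplyhaffruled} upgrades this to homological affine-ruledness, and Lemma \ref{lem:resolutionDgood} verifies that the resolution class $\tilde{A}$ is $\tilde{D}$-good; hence Lemma \ref{lem:homologicallyaffineruled} shows $(X_1,\omega_1,D_1)$ is symplectic affine-ruled. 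Iterating Corollary \ref{cor:blowupaffineruled} along the tower of blowups from $(X_1,\omega_1,D_1)$ up to $(X,\omega,D)$ then yields the conclusion.

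The main point to watch is bookkeeping rather than fresh analysis. One must confirm that the trichotomy (first kind / second kind / $b_2\leq 2$) arising from Lemma \ref{lem:quasiminred} together with Proposition \ref{prop:-K-D=Emin} is exhaustive, that the first/second-kind distinction is genuinely preserved under the partial-minimalization of Lemma \ref{lem:partialminimalred}, and --- most importantly --- that each base carries a homologically affine-ruled structure whose $\tilde{D}$-goodness is maintained under blowup, so that Corollary \ref{cor:blowupaffineruled} propagates symplectic affine-ruledness across the entire tower rather than just a single step. All the genuine geometric content has already been isolated in the preceding lemmas, so this final step is purely the assembly of those inputs.
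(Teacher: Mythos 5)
Your proposal is correct and follows essentially the same route as the paper's own proof: the same reduction via Lemma \ref{lem:quasiminred}, the same trichotomy (first kind, second kind, $b_2\leq 2$) handled by Lemma \ref{lem:threecases}, Corollary \ref{cor:quasiminmalsecondkindreduction}, and the chain Lemma \ref{lem:partialminimalred} $\to$ Lemma \ref{lem:admsubchain} $\to$ Lemma \ref{lem:chainimplyhaffruled} $+$ Lemma \ref{lem:resolutionDgood} $\to$ Lemma \ref{lem:homologicallyaffineruled}, with Corollary \ref{cor:blowupaffineruled} iterated to climb back up the blowup tower. Your bookkeeping caveats (preservation of first kind under partial minimalization, and that Corollary \ref{cor:blowupaffineruled} can be iterated because the blowup pair is again homologically affine-ruled with $\tilde{D}$-good resolution class) are exactly the points implicit in the paper's argument.
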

 
\begin{proof}
By Lemma \ref{lem:quasiminred}, any such pair is from the blowups of a pair with $b_2\leq 2$ or a quasi-minimal pair. In the former case, we can just use Lemma \ref{lem:threecases} to see the symplectic affine-ruledness. In the latter case, the quasi-minimal pair is either of first kind or second kind. For the case of second kind, Corollary \ref{cor:quasiminmalsecondkindreduction} will lead us to the case of $b_2\leq 2$ and the symplectic affine-ruledness is again guaranteed by Lemma \ref{lem:threecases}. For the case of first kind, by Lemma \ref{lem:partialminimalred}, we can further reduce them into partially minimal pairs or $b_2\leq 2$ cases by blowdowns. If $b_2\leq 2$ we apply Lemma \ref{lem:threecases} again to see the symplectic affine-ruledness. Otherwise, note that Lemma \ref{lem:admsubchain} says that its partially minimal reduction must contain an admissible subchain and Lemma \ref{lem:chainimplyhaffruled} combined with Lemma \ref{lem:resolutionDgood} shows that the existence of admissible subchain will imply the homologically affine-ruledness with resolution class being $\tilde{D}$-good. Next, we can apply Lemma \ref{lem:homologicallyaffineruled} to see the symplectic affine-ruledness of all partially minimal pairs of first kind and then apply Corollary \ref{cor:blowupaffineruled} to include all quasi-minimal pairs of first kind. Finally we use Corollary \ref{cor:blowupaffineruled} again to include all connected pair $(X,\omega,D)$ where $X$ is a rational manifold with $[\omega]\cdot([D]+K_{\omega})<0$. See the diagram below for a visualization of the logic behind our argument.
\end{proof}

%\adjustbox{scale=0.7,center}
%{\begin{tikzcd}
%	&& {[\omega]\cdot(K+[D])<0} \\
%	& {\text{quasi-minimal of first kind}} & {b_2\leq 2} & {\text{quasi-minimal of second kind}}\\
	%& {\text{partially minimal}} \\
  %& {\text{admissible subchain}} \\
 % {\text{homologically affine-ruled}}  {\text{resolution class }\tilde{A} \text{ is }\tilde{D}\text{-good}}\\
%	& {\text{symplectic affine-ruled}} &&&& {\text{symplectic affine-ruled}} \\
%	&&& {\text{Theorem \ref{thm:rationalmain}}}
%	\arrow["{\text{blowdown}}"', from=1-3, to=2-2]
%	\arrow["{\text{blowdown}}", from=1-3, to=2-3]
%	\arrow["{\text{blowdown}}", from=1-3, to=2-4]
 %   \arrow["{\text{blowdown}}", from=2-2, to=3-2]
%	\arrow["{\text{Lemma \ref{lem:threecases}}}"{description}, from=2-2, to=5-6]
%	\arrow["{\text{Lemma \ref{lem:partialminimalgoodchain}}}"', from=3-2, to=4-2]
%	\arrow["{\text{Lemma \ref{lem:homologicallyaffineruled}}}"{description}, from=4-1, to=5-2]
%	\arrow["{\text{Lemma \ref{lem:chainimplyhaffruled}}}"', from=4-2, to=4-1]
%	\arrow["{\text{Lemma \ref{lem:resolutionDgood}}}", from=4-2, to=4-3]
%	\arrow["{\text{Lemma \ref{lem:homologicallyaffineruled}}}"{description}, from=4-3, to=5-2]
%	\arrow["{\text{Corollary \ref{cor:blowupaffineruled}}}"{description}, from=5-2, to=6-4]
%	\arrow["{\text{Corollary \ref{cor:blowupaffineruled}}}"{description}, from=5-6, to=6-4]
%\end{tikzcd}
%}

\adjustbox{scale=0.7,center}
    {
		\begin{tikzpicture}
		\node (a) at (0,1) [][]{$[\omega]\cdot(K_\omega+[D])<0$};
		\node (b) at (-6,-1) [][]{quasi-minimal of first kind};
        \node (c) at (0,-4) [][]{$b_2\leq 2$};
        \node (d) at (6,-1) [][]{quasi-minimal of second kind};
        \node (e) at (-6,-2.5) [][]{partially minimal};
        \node (f) at (-6,-4) [][]{admissible subchian};
        \node (g) at (-9,-5.5) [][]{homologically affine-ruled};
        \node (h) at (-3,-5.5) [][]{resolution class $\tilde{A}$ is $\tilde{D}$-good};
        \node (i) at (-6,-7) [][]{symplectic affine-ruled};
        \node (j) at (0,-8.5) [][]{Theorem \ref{thm:rationalmain}};
        \draw[->] (a) -- (b) node[midway,above left] {Lemma \ref{lem:quasiminred}};
        \draw[->] (a) -- (c) node[midway,right] {Lemma \ref{lem:quasiminred}};
        \draw[->] (a) -- (d) node[midway,above right] {Lemma \ref{lem:quasiminred}};
        \draw[->] (b) -- (e) node[midway,left] {Lemma \ref{lem:partialminimalred}};
        \draw[->] (e) -- (f) node[midway,left] {Lemma \ref{lem:admsubchain}};
         \draw[->] (f) -- (g) node[midway,above left] {Lemma \ref{lem:chainimplyhaffruled}};
          \draw[->] (f) -- (h) node[midway,above right] {Lemma \ref{lem:resolutionDgood}};
          \draw[-] (g) -- (-6,-6) node[midway,above] {};
          \draw[-] (h) -- (-6,-6) node[midway,above] {};
          \draw[->] (-6,-6) -- (i) node[midway,left] {Lemma \ref{lem:homologicallyaffineruled}};
           \draw[->] (i) -- (j) node[midway,above] {Corollary \ref{cor:blowupaffineruled}};
			 \draw[->] (b) -- (c) node[midway,above right] {Lemma \ref{lem:partialminimalred}};
              \draw[->] (d) -- (c) node[midway,below right] {Corollary \ref{cor:quasiminmalsecondkindreduction}};
              \draw[->] (c) -- (j) node[midway,right] {Lemma \ref{lem:threecases}};
		\end{tikzpicture}
	}	

\begin{example}
    We provide an example that illustrates the reduction procedure. The first configuration in Figure \ref{fig:example} is a divisor $D$ in $\CC\PP^2\#13\overline{\CC\PP}^2$ where \[[D]=3H-E_1-E_2-E_3-E_4-E_5-E_6-2E_7-E_8-E_{12}.\]Thus, if we assume $\omega(E_i)\gg \omega(E_{i+1})$ for all $i$'s, $K_{\omega}+[D]=-E_7+E_9+E_{10}+E_{11}+E_{13}$ will have negative pairing with $\omega$. The second configuration in Figure \ref{fig:example} denotes its quasi-minimal reduction, obtained by exterior blowdown $E_{13}$, toric blowdown $E_{12}$, half-toric blowdown $E_{11},E_{10},E_9$ and non-toric blowdown $E_8$. This quasi-minimal pair is of first kind. By further toric blowdown $E_6,E_5$ and non-toric blowdown $E_4$, we will get the third configuration in Figure \ref{fig:example}, which is the partially minimal reduction. Then we have an admissible subchain $([D_1],[D_2],[D_3])=(E_3-E_7,E_2-E_3,2H-E_1-E_2)$ with negative self-intersection sequence $(a_1,a_2,a_3)=(2,2,-2)$. The associated sequence $(c_1,c_2,c_3)$ is then $(1,2,3)$. So, the class for the unicuspidal rational curve is \begin{align*}
        A&=c_1[D_1]+c_2[D_2]+c_3[D_3]=(E_3-E_7)+2(E_2-E_3)+3(2H-E_1-E_2)\\
        &=6H-3E_1-E_2-E_3-E_7.
    \end{align*}Note that $A\cdot [D_3]=8$ and $A\cdot (H-E_1)=3$. This implies that the cusp is of type $(3,8)$.
\end{example}

\begin{figure}[ht]
		\centering\includegraphics*[height=7cm, width=15cm]{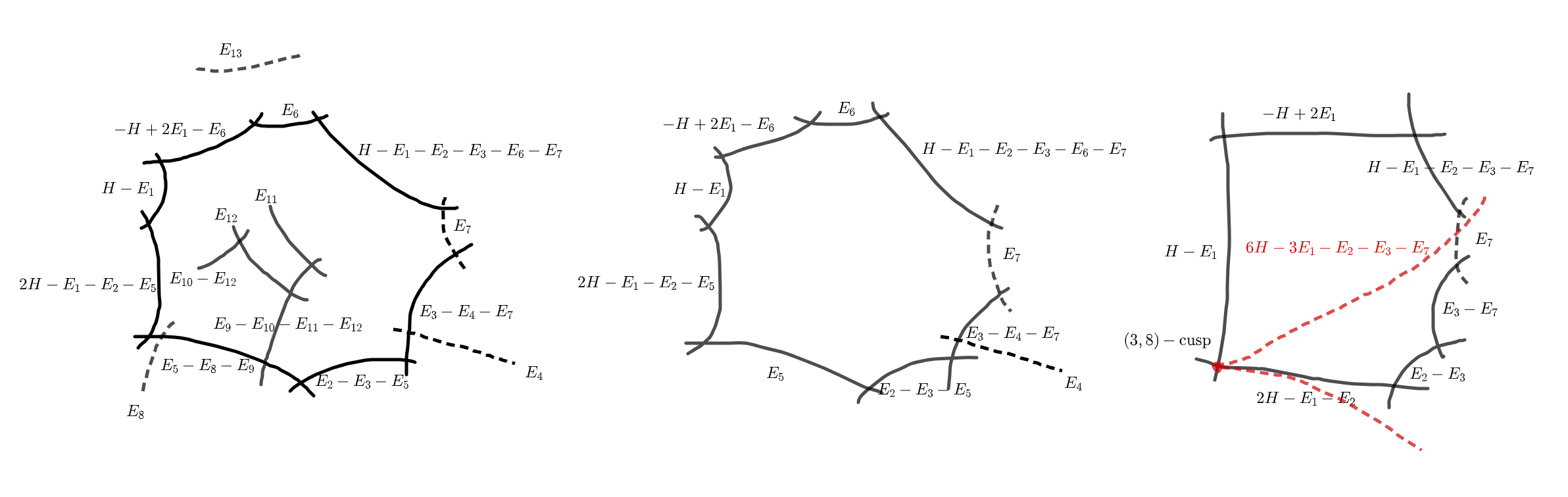}
 
		\caption{The reduction procedure for finding a $(3,8)$-unicuspidal rational curve. Dashed curves are not included as part of the symplectic divisors in each graph. \label{fig:example}}
	\end{figure}

\section{Symplectic deformation aspect and relation with K\"ahler pairs}\label{section:deformation}
In this section, we extend the Li-Mak's deformation result (\cite{LiMakLCY,LiMakICCM}) for symplectic log Calabi-Yau pairs to the divisors satisfying $[\omega]\cdot (K_{\omega}+[D])<0$. Let us start by introducing the notions for deformation equivalence between symplectic divisors.

\begin{definition} 
A {\bf symplectic homotopy} (resp. {\bf isotopy}) of the pair $(X,\omega,D)$ is a smooth one-parameter family of symplectic divisors $(X,\omega_t,D_t)$ with $(X,\omega_0,D_0)=(X,\omega,D)$ (resp. such that in addition $\omega_t=\omega$ for all $t$). A {\bf $D$-symplectic homotopy} (resp. {\bf isotopy}) of the pair $(X,\omega,D)$ is a smooth one-parameter family of symplectic forms $\omega_t$ with $\omega_0=\omega$, keeping components in $D$ as symplectic submanifolds throughout (resp. such that in addition $[\omega_t]$ is constant). Two pairs $(X,\omega,D)$ and $(X',\omega',D')$ are called {\bf symplectic deformation equivalent} if there is a diffeomorphism $f:X\rightarrow X'$ such that there is a symplectic homotopy interpolating between $(X,\omega,D)$ and $(X,f^*\omega',f^{-1}(D'))$. The symplectic deformation equivalence is called {\bf strict} if the symplectic homotopy is a symplectic isotopy.
\end{definition}

As observed in \cite[Lemma 2.2]{LiMakLCY}, it is a consequence of the smooth isotopy extension theorem that any symplectic homotopy can actually be viewed as a $D$-symplectic homotopy up to a one-parameter family of symplectomorphisms. By the main result in \cite{LiMakLCY}, there is a Torelli theorem for symplectic log Calabi-Yau divisors which states that their symplectic deformation classes are fully determined by the homology classes of the components. In \cite[Proposition 2.10]{Enumerate}, the Torelli theorem was further refined to show that two symplectic log Calabi-Yau pairs are symplectomorphic if and only if there is an integral isometry between their cohomology lattice which preserves their symplectic classes and components in the divisors. \cite{LiMakICCM} later explored the connections with divisors in the K\"ahler category. We say that a symplectic divisor $(X,\omega,D)$ is a {\bf K\"ahler pair} if there exists an integrable complex structure $J$ compatible with $\omega$ making all components $J$-holomorphic. \cite{LiMakICCM} proved that any symplectic log Calabi-Yau divisor is symplectic deformation equivalent to a K\"ahler pair. The proof relies on the fact that blowup\footnote{The `blowup' in this section refers to any type (exterior, toric, non-toric, half-toric).} and blowdown operations can be performed within the K\"ahler category, as well as several key results in the proof of the Torelli theorem. In this section, we will generalize the deformation K\"ahlerness results in \cite{LiMakICCM} from log Calabi-Yau divisors to divisors with $[\omega]\cdot(K_{\omega}+[D])<0$.

\begin{theorem}\label{thm:deformation}
    Let $D\subseteq (X,\omega)$ be a symplectic divisor with $[\omega]\cdot(K_{\omega}+[D])<0$. If $X$ is a rational manifold, further assume that $D$ is connected. Then $(X,\omega,D)$ is symplectic deformation equivalent to a K\"ahler pair with $\overline{\kappa}(X\setminus D)=-\infty$.
\end{theorem}
 We start with a preparation lemma which is an adaptation of \cite[Lemma 5.5 B]{MP94} where the statement is originally formulated for tamed symplectic forms.

\begin{lemma}\label{lem:flatKahler}
    Let $\omega$ be a K\"ahler form on $B(1)$ with respect to the standard complex structure. Denote by $\omega_{\text{std}}$ the standard K\"ahler form. Then there is a smooth family of K\"ahler forms $\omega_t$ such that $\omega_0=\omega$, and $\omega_1$ agrees with $\omega$ near the boundary of the ball and is a constant multiple of $\omega_{\text{std}}$ near $0$ (the associated metric is flat).
\end{lemma}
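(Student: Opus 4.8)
The plan is to work with K\"ahler potentials and to localize the modification near the origin, following the scheme of \cite[Lemma 5.5 B]{MP94} but carrying it out in the integrable (rather than merely $\omega$-tamed) setting. Since $B(1)$ is contractible and $\omega$ is a closed real $(1,1)$-form, the local $\partial\bar{\partial}$-lemma provides a smooth strictly plurisubharmonic function $\phi$ on a neighborhood of $0$ with $\omega = i\partial\bar{\partial}\phi$ there. Let $\psi$ denote the second-order Taylor polynomial of $\phi$ at $0$; its holomorphic quadratic part is pluriharmonic, so the only contribution to $i\partial\bar{\partial}\psi$ comes from the Hermitian quadratic term, and $\omega_0 := i\partial\bar{\partial}\psi$ is the constant-coefficient K\"ahler form whose value is $\omega(0)$, a form whose associated metric is flat. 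Set $g := \phi - \psi$, which vanishes to third order at $0$; thus on a small ball $B(\delta)$ one has the pointwise bounds $|g| = O(\delta^3)$, $|\partial g| = O(\delta^2)$ and $|\partial\bar{\partial} g| = O(\delta)$.

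Next I would interpolate between $\phi$ and $\psi$ by cutting off the higher-order remainder $g$. Fix a smooth cutoff $\rho$ with $\rho \equiv 1$ on $B(\delta/2)$ and $\rho \equiv 0$ outside $B(\delta)$, so that $|\partial\rho| = O(\delta^{-1})$ and $|\partial\bar{\partial}\rho| = O(\delta^{-2})$, and define
\[
\phi_t := \phi - t\,\rho\,g, \qquad \omega_t := i\partial\bar{\partial}\phi_t,
\]
extended by $\omega_t := \omega$ outside $B(\delta)$, where $\rho g \equiv 0$ so the two prescriptions agree and glue smoothly. Each $\omega_t$ is automatically a closed real $(1,1)$-form, hence K\"ahler once positive. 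By construction $\omega_0 = \omega$; near $\partial B(1)$ (indeed everywhere outside $B(\delta)$) one has $\omega_t = \omega$ for all $t$; and on $B(\delta/2)$ one has $\phi_1 = \phi - g = \psi$, so $\omega_1 = \omega_0$ is the flat constant form there. It remains only to verify positivity of each $\omega_t$.

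The heart of the argument is this positivity, and it is where the third-order vanishing of $g$ is essential; this is the step I expect to be the only genuine obstacle, the rest being formal. Expanding $i\partial\bar{\partial}(\rho g)$ by the Leibniz rule, each of the three types of terms is controlled on $B(\delta)$ by the bounds above: $\rho\, i\partial\bar{\partial} g = O(\delta)$, the mixed terms $\partial\rho\wedge\bar{\partial} g$ and $\partial g\wedge\bar{\partial}\rho$ are $O(\delta^{-1})\cdot O(\delta^2) = O(\delta)$, and $(i\partial\bar{\partial}\rho)\,g = O(\delta^{-2})\cdot O(\delta^3) = O(\delta)$. Hence $\|i\partial\bar{\partial}(\rho g)\|_{C^0(B(\delta))} = O(\delta)$. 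On the other hand, by continuity $\omega$ converges to the positive definite form $\omega(0)$ as $\delta\to 0$, so its smallest eigenvalue on $B(\delta)$ is bounded below by a positive constant $\mu$ once $\delta$ is small. Since $\omega_t = \omega - t\,i\partial\bar{\partial}(\rho g)$ on $B(\delta)$ with $t\in[0,1]$, choosing $\delta$ small enough that the $O(\delta)$ error is $<\mu$ forces $\omega_t > 0$ throughout. This produces the desired smooth family of K\"ahler forms with the two prescribed boundary behaviors.

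Finally, near $0$ the form $\omega_1$ equals the constant Hermitian form $\omega(0)$, whose metric is flat. If a literal scalar multiple of $\omega_{\text{std}}$ is wanted, observe that $\omega(0)$ is carried to $c\,\omega_{\text{std}}$ by a $\CC$-linear change of coordinates; since this is merely a normalization up to complex-linear equivalence, flatness of the metric near $0$ is the essential output needed in the sequel.
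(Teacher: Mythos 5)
Your construction through the positivity estimate is correct: cutting off the third-order Taylor remainder $g=\phi-\psi$ gives errors of size $O(\delta)$ (the bounds $|g|=O(\delta^3)$, $|\partial g|=O(\delta^2)$, $|\partial\overline{\partial}g|=O(\delta)$ against $|\partial\rho|=O(\delta^{-1})$, $|\partial\overline{\partial}\rho|=O(\delta^{-2})$ balance exactly), so for small $\delta$ you do obtain a smooth family of K\"ahler forms, constant and equal to $\omega(0)$ near the origin. This mechanism is genuinely different from the paper's, which does not freeze the form at its original value but instead glues in a huge multiple $(K^2-\varepsilon^2)\omega_{\text{std}}$ of the standard form (via the radial reparametrization $\tau_K=h^*(\varepsilon^2\omega_{\text{std}})$) and cuts off the entire potential $\phi$, using the quadratic growth in $K$ to dominate the cutoff errors.

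The gap is in your final paragraph: what you prove is strictly weaker than the lemma, and the claim that upgrading $\omega(0)$ to $c\,\omega_{\text{std}}$ is ``merely a normalization'' is false for the way the lemma is used. In Propositions \ref{prop:limakminKahler}, \ref{prop:defminimal} and \ref{prop:smallKahler}, the lemma is applied in holomorphic coordinates in which the two divisor branches through an intersection point are the coordinate axes, precisely to make that intersection $\omega$-orthogonal (as Definition \ref{def:sympdivisor} requires) or to produce a holomorphic Darboux ball relative to $D$. Orthogonality of two complex lines with respect to a constant form is coordinate-invariant, and your $\omega_1$ near the origin equals $\omega(0)$, the value the form already had at that point; hence the branches are $\omega_1$-orthogonal after your modification if and only if they were $\omega$-orthogonal before, so your version of the lemma gives no improvement at exactly the points where it is needed. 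The $\CC$-linear change of coordinates you invoke does not help: it carries $\omega(0)$ to $c\,\omega_{\text{std}}$ but simultaneously moves the axes to two lines that are $c\,\omega_{\text{std}}$-orthogonal if and only if the axes were $\omega(0)$-orthogonal. Nor can you rerun your argument with $\psi$ a potential of $c\,\omega_{\text{std}}$: then $g=\phi-\psi$ has nonvanishing complex Hessian at $0$, the terms $g\,\partial\overline{\partial}\rho$ and $\partial\rho\wedge\overline{\partial}g$ become $O(1)$ rather than $O(\delta)$, and positivity is lost. Overcoming exactly this is the role of the paper's large parameter $K$ (alternatively, one could repair your argument by interpolating from $\omega(0)$ to $c\,\omega_{\text{std}}$ through constant forms in finitely many small steps on nested balls, but that is an additional argument, not a normalization).
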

\begin{proof}
   By the local $\partial\overline{\partial}$ lemma, after rescaling the radius of the ball, we may assume $\omega=\partial\overline{\partial}\phi$ for some $\phi\in C^{\infty}(B(1),\CC)$. Choose two parameters $K$ and $\varepsilon$ such that $K$ is large and $\varepsilon$ is close to $0$. Take a strictly increasing function $f$ with $f(r)=\frac{K}{\varepsilon}r$ for $r\leq \frac{\varepsilon}{2K}$ and $f(r)=r$ for $r$ close to $1$ and the diffeomorphism $h:B(1)\rightarrow B(1)$ defined by $h(r,p)=(f(r),p)$ in the spherical coordinate $(r,p)\in \RR_{\geq0}\times S^3$. Then there will be another K\"ahler form $\tau_K:=h^*(\varepsilon^2\omega_{\text{std}})$ which is equal to $K^2\omega_0$ in $B(\frac{\varepsilon}{2K})$ and $\varepsilon^2\omega_{\text{std}}$ near the boundary. We also fix a bump function $\rho$ on $\CC^2$ supported in $B(1)$ which is equal to $1$ near the origin and define $\rho_K(z):=\rho(\frac{2K}{\varepsilon}z)$. Consider the deformation of $(1,1)$-forms
   \[\omega_t:=\omega+t(\tau_K-\varepsilon^2\omega_{\text{std}}-\partial\overline{\partial}(\rho_K\phi)).\]
   Note that outside the ball $B(\frac{\varepsilon}{2K})$, $\omega_t=\omega+t(\tau_K-\varepsilon^2\omega_{\text{std}})$. When $\varepsilon$ is sufficiently small such that $\omega-\varepsilon^2\omega_{\text{std}}$ is positive on $B(1)$, it follows that $\omega_t$ will also be positive outside the ball $B(\frac{\varepsilon}{2K})$. Inside the ball $B(\frac{\varepsilon}{2K})$, we have 
     \[\tau_K-\varepsilon^2\omega_{\text{std}}-\partial\overline{\partial}(\rho_K\phi)=(K^2-\varepsilon^2)\omega_{\text{std}}-\rho_K\omega-\frac{2K}{\varepsilon}\alpha,\]
     where $\alpha=(\partial\overline{\partial}\rho)\wedge \phi-\overline{\partial}\rho\wedge\partial\phi+\partial\rho\wedge\overline{\partial}\phi$ is a two-form independent of $K$ and $\varepsilon$. From the quadratic growth in $K$ we can see that when $K$ is sufficiently large, $\omega_t$ is positive inside $B(\frac{\varepsilon}{2K})$. Therefore $\omega_t$ is a deformation of K\"ahler forms and it is easy to see that $\omega_1$ satisfies the requirement.
\end{proof}

We address the following result concerning the minimal models of log Calabi-Yau pairs which is essentially proved in \cite[Proposition 3.6]{LiMakLCY}.

\begin{proposition}\label{prop:limakminKahler}
    Let $(X,\omega)$ be a symplectic rational manifold with $b_2(X)\leq 2$ and $D\subseteq (X,\omega)$ be a symplectic log Calabi-Yau divisor. Then $(X,\omega,D)$ is a K\"ahler pair.
\end{proposition}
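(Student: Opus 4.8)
The plan is to run a case-by-case verification over the finitely many minimal log Calabi-Yau configurations. Since $b_2(X)\le 2$, the manifold $X$ is diffeomorphic to one of $\CC\PP^2$, $S^2\times S^2$ or $\CC\PP^2\#\overline{\CC\PP}^2$, and the condition $[D]=-K_\omega$ forces $D$ to be one of the minimal models whose dual graphs are listed in Section \ref{section:LCY} (cases $(A1)$ through $(C3)$). The strategy for each case is first to produce an explicit Kähler pair on the same smooth manifold realizing the given configuration holomorphically, and then to show the symplectic pair $(X,\omega,D)$ is symplectomorphic, \emph{as a pair}, to this model; pulling back the integrable complex structure of the model then exhibits $(X,\omega,D)$ as a Kähler pair.

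For the construction of the model I would equip $X$ with its standard integrable complex structure -- the Fubini--Study structure on $\CC\PP^2$, the product structure on $S^2\times S^2$, and the Hirzebruch structure on $\CC\PP^2\#\overline{\CC\PP}^2=\mathbb{F}_1$ -- and realize each component of $D$ by a holomorphic curve in the prescribed homology class (lines and conics in the $\CC\PP^2$ cases; fibers and section/bidegree curves in the $S^2\times S^2$ and $\mathbb{F}_1$ cases), arranged to meet in the cyclic or chain intersection pattern dictated by the dual graph. The toric configurations, such as three lines in $\CC\PP^2$ or the four-cycle on $S^2\times S^2$, are immediate since the toric boundary is holomorphic for a toric Kähler form. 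For the non-toric configurations -- most notably the line-plus-conic case $(A1)$ -- one checks directly that generic holomorphic curves in the relevant classes meet transversally (e.g.\ a line and a smooth conic in two points by B\'ezout), and after a $C^1$-small adjustment of the metric (using Lemma \ref{lem:flatKahler} to flatten near the intersection points) the intersections can be made $\omega$-orthogonal. Since $[\omega]$ pairs positively with every component class and these classes span $H_2(X;\ZZ)$, the class $[\omega]$ lies in the Kähler cone of the model, so we may choose the Kähler form $\omega_0$ of the model with $[\omega_0]=[\omega]$.

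The transfer step is where the global rigidity of these rational surfaces enters. Since $[\omega_0]=[\omega]$ and any two cohomologous symplectic forms on $\CC\PP^2$, $S^2\times S^2$ or $\CC\PP^2\#\overline{\CC\PP}^2$ are symplectomorphic \cite{spaceofsymp}, there is a diffeomorphism $\psi$ with $\psi^*\omega_0=\omega$; then $\psi(D)$ is a symplectic log Calabi-Yau divisor for $\omega_0$ with the same component homology classes as $D_0$. The Torelli theorem for symplectic log Calabi-Yau divisors \cite{LiMakLCY} identifies $\psi(D)$ with $D_0$ up to symplectic deformation, and arranging this deformation to keep the cohomology class fixed and applying Moser's argument produces a symplectomorphism $\phi$ of $(X,\omega_0)$ carrying $\psi(D)$ to $D_0$. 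The composition $\phi\circ\psi$ is then a symplectomorphism of pairs from $(X,\omega,D)$ to the Kähler model $(X,\omega_0,D_0)$, so $(\phi\circ\psi)^*J_0$ is an $\omega$-compatible integrable complex structure making every component of $D$ holomorphic.

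I expect the main obstacle to be the \emph{simultaneous} realization of the whole configuration in holomorphic position while keeping $\omega$ fixed. Isotoping a single symplectic sphere in a class such as $h$, $f$ or a section to a holomorphic one is classical, but moving an entire cycle of spheres while preserving all the normal-crossing intersections and their $\omega$-orthogonality throughout requires the connectedness of the space of such configurations, which is exactly the content of the Torelli theorem invoked above. The delicate point in the upgrade is to ensure the deformation can be performed with fixed cohomology class (rather than merely fixed homology classes of the components), since only then does Moser's trick promote deformation equivalence to a genuine symplectomorphism of pairs; checking case by case that the component areas can be matched along the way is the technical heart of the argument.
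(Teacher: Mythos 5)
Your overall strategy coincides with the paper's: build a K\"ahler model realizing the same homological configuration in the same cohomology class, use the fact that cohomologous symplectic forms on these manifolds are diffeomorphic/symplectomorphic to transfer the problem to the model, and then invoke the Li--Mak isotopy results (plus the upgrade from isotopy to symplectomorphism of pairs in \cite{Enumerate}) to pull back the integrable complex structure. However, there is a genuine gap at the model-construction step. You fix the \emph{standard} complex structures -- the product structure on $S^2\times S^2$ and $\mathbb{F}_1$ on $\CC\PP^2\#\overline{\CC\PP}^2$ -- and claim every configuration can be realized by holomorphic "fibers and section/bidegree curves." This fails: the minimal models $(B1)$--$(B3)$ and $(C1)$--$(C3)$ are infinite families parametrized by $k\in\ZZ$, containing components in classes such as $-kf_1+f_2$ (self-intersection $-2k$) and $s-(k+1)f$ (self-intersection $-2k-1$). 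On $\PP^1\times\PP^1$ every irreducible holomorphic curve has non-negative bidegree, and on $\mathbb{F}_1$ the only irreducible holomorphic curve of negative self-intersection is the $(-1)$-section, so for $|k|\geq 2$ these classes admit no holomorphic representative at all with respect to your fixed complex structures. This is precisely why the paper instead takes "the complex structure from the Hirzebruch surface with a section whose self-intersection number matches the smallest one among components of $D$": one must let the complex structure vary over all $\mathbb{F}_n$ (of the appropriate parity), using the negative holomorphic section of $\mathbb{F}_n$ to realize the most negative component and sections of the form $e+bf$ with $b\geq n$ for the rest.

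Two smaller points. First, your justification that $[\omega]$ lies in the K\"ahler cone -- "pairs positively with every component class and these classes span $H_2$" -- is not a valid inference in general (and in case $(B1)$ with $k=1$ the two component classes do not even span); the correct argument uses the standard description of the K\"ahler cone of $\mathbb{F}_n$ as the classes positive on the negative section and on the fiber, both of which have positive $\omega$-area by the characterization of the symplectic cone of rational ruled surfaces. Second, your transfer step ("arrange the deformation to keep the cohomology class fixed and apply Moser") is the vaguest part of the write-up; the paper makes it precise by citing \cite[Proposition 3.6]{LiMakLCY} for a symplectic \emph{isotopy} (fixed form) between cohomologous minimal configurations and \cite[Proposition 2.10]{Enumerate} to convert that isotopy into a symplectomorphism of pairs. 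These two issues are repairable, but the complex-structure gap above is a step of your argument that would genuinely fail as written.
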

\begin{proof}
     Observe that the homological configuration of any pair listed in Section \ref{section:LCY} can be realized by a K\"ahler pair $(X,\omega',D')$ with $[\omega']=[\omega]$: this is clear for $\CC\PP^2$ since the K\"ahler cone of the standard complex structure on $\CC\PP^2$ contains $[\omega]$; for $S^2\times S^2$ or $\CC\PP^2\#\overline{\CC\PP^2}$, we choose the complex structure coming from an appropriate Hirzebruch surface so that there is a section whose self-intersection equals the minimum self-intersection among the components of $D$, whose K\"ahler cone also contains $[\omega]$ by Nakai-Moishezon criterion. By Lemma \ref{lem:flatKahler}, we can modify the K\"ahler form in a small neighborhood of intersection points to guarantee the $\omega'$-orthogonality within the same K\"ahler class. Since cohomologous symplectic forms on rational ruled manifolds are diffeomorphic (\cite{spaceofsymp}), we may assume $\omega'=\omega$. Then $(X,\omega,D)$ will be symplectic isotopic to $(X,\omega,D')$ by \cite[Proposition 3.6]{LiMakLCY}. They are furthermore symplectomorphic by \cite[Proposition 2.10]{Enumerate}. Since $(X,\omega,D')$ is a K\"ahler pair, so is $(X,\omega,D)$. 
\end{proof}

Now, we extend the above result to divisors with $[\omega]\cdot(K_{\omega}+[D])<0$.

\begin{proposition}\label{prop:defminimal}
    Let $D\subseteq (X,\omega)$ be a symplectic divisor in a rational or ruled manifold $X$ with $b_2(X)\leq 2$. If $X$ is rational manifold, assume that $D$ is connected and $[\omega]\cdot(K_{\omega}+[D])<0$; if $X$ is irrational ruled, assume $D$ is comb-like. Then $(X,\omega,D)$ is a K\"ahler pair.
\end{proposition}
\begin{proof}
   First, assume $X$ is rational and the configuration is not comb-like (case $(B1)'$ or $(C1)'$ in Section \ref{secton:proofofaffineruled}). Note that the cases $(A2)'/(A2)/(A1)$ can be obtained from $(A1)'/(A2)'/(A3)'$ by adding one sphere in the divisor configuration, in the class $h$; the cases $(B2)/(B3)/(C2)/(C3)$ can be obtained from $(B2)'/(B3)'/(C2)'/(C3)'$ by adding one sphere in the divisor configuration, in the fiber class: the class $h$ and the fiber class are known to have non-trivial SW invariants and satisfy the homological conditions in the $D$-good definition so that we can apply McDuff-Opshtein's criterion (Theorem \ref{thm:MO15}) to obtain their embedded $J$-holomorphic representatives $C$. After perturbing $C$ to satisfy the $\omega$-orthogonal condition, we can complete $D$ into a log Calabi-Yau divisor $D\cup C$ in $(X,\omega)$. Then $(X,\omega,D)$ must be a K\"ahler pair by the previous Proposition \ref{prop:limakminKahler}.
   
   Now, assume the configuration is comb-like ($X$ is irrational ruled or case $(B1)'$ or $(C1)'$ when $X$ is rational). By the result of Coffey \cite{Coffey} (see also \cite[Section 9]{HindIvrii}), the group $Symp_h(X,\omega)$ of homologically trivial symplectomorphisms acts transitively on the set of symplectic surfaces in a section class. Thus, by using the model of the projectivization of a rank $2$ holomorphic vector bundle over the Riemann surface (see Section \ref{section:appendix}), we can construct a K\"ahler pair $(X,\omega,D')$ (again, by the fact that cohomologous symplectic forms are diffeomorphic) whose homological configuration, the component in the section class and all the intersection points are the same as $(X,\omega,D)$. To find the symplectic isotopy between $D$ and $D'$, it suffices to isotope the spherical components in the fiber class and then apply \cite[Proposition 2.10]{Enumerate}. This can be achieved by the standard pseudo-holomorphic curve argument by choosing a regular path of almost complex structures $\{J_t\}$, where $J_0,J_1$ make the spherical component in $D,D'$ holomorphic, and analyzing the $J_t$-holomorphic curves in the fiber class passing through a fixed point. We only have to rule out the possible degenerations. When $X$ is irrational ruled, this is clear from Theorem \ref{thm:fiberclass}. When $X=\CC\PP^2\#\overline{\CC\PP^2}$, this can be seen from \cite[Lemma 1.6, Theorem 1.7]{LLWnef} where it is proved that the fiber class is $\omega$-nef and thus has an embedded $J$-holomorphic representative for any $\omega$-tame $J$.  When $X=S^2\times S^2$, any degeneration of a sphere in class $f_2$ would produce a sphere in class $f_2-af_1$ with $a>0$.
   Thus it suffices to have $\omega(f_2)\le \omega(f_1)$ to rule out degenerations.
   In case $(B1)'$, since $[D_1]=f_1+kf_2$, this inequality holds automatically when $k<0$.
   When $k\ge 0$, the assumption $[\omega]\cdot(K_{\omega}+[D])<0$ implies (in case $(B1)'$) that $\omega((k+n-3)f_2)<\omega(f_1)$.
   Unless $k+n\le 3$ (in which case $D$ can be completed to a log Calabi--Yau divisor and the K\"ahlerness follows from Proposition \ref{prop:limakminKahler}), we still obtain $\omega(f_2)\le \omega(f_1)$, and hence degenerations are excluded.
\end{proof}

Next, we need several lemmas for the symplectic deformation equivalence under blowups.

\begin{lemma}\label{lem:shrinkingsizeblowup}
   Let $(X,\omega,D)$ be a pair with symplectic embedding $I:B(\delta)\rightarrow X$ relative to $D$. Then the symplectic blowups of $(X,\omega,D)$ by using embeddings $I|_{B(\delta')}$ are symplectic deformation equivalent for any $\delta'\leq \delta$.
\end{lemma}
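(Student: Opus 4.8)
The plan is to fix, once and for all, a single smooth model for the blown-up manifold together with its proper transforms and exceptional sphere, and then to realize all the competing blowup forms on this fixed model and connect them by a smooth family. First I would choose an $\omega$-compatible almost complex structure $J$ that is integrable on $I(B(\delta))$ and for which $I$ is holomorphic; this is possible by pushing forward the standard integrable structure under the symplectic embedding $I$, and since $I$ is relative to $D$, the components of $D$ meeting the ball correspond to coordinate planes and are therefore $J$-holomorphic (compare the discussion preceding Corollary \ref{cor:divisorblowupcompare}). Performing the complex blowup of the point $I(0)$ produces a manifold $\tilde{X}$ with blowdown map $\pi\colon \tilde{X}\to X$ that is \emph{independent of} $\delta'$; on it the proper transforms $\tilde{D}_i=\overline{\pi^{-1}(D_i\setminus\{I(0)\})}$ and the exceptional sphere $e=\pi^{-1}(I(0))$ are fixed smooth $J$-holomorphic submanifolds, and we write $\tilde{D}=(\cup_i\tilde{D}_i)\cup e$. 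Since the diffeomorphism type of the blowup is thus independent of the radius, the diffeomorphism required in the definition of symplectic deformation equivalence may be taken to be the identity, and it suffices to produce a symplectic homotopy of pairs between $(\tilde{X},\tilde{\omega}_{\delta'},\tilde{D})$ and $(\tilde{X},\tilde{\omega}_{\delta''},\tilde{D})$ for arbitrary $0<\delta',\delta''\le \delta$.

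Next I would invoke the explicit local model for the symplectic blowup from \cite[Theorem 7.1.21]{MSintro} and Section \ref{sec:divisoroperation}: on the blown-up ball $\pi^{-1}(I(B(\delta)))$ the blowup form can be written as $\pi^*\omega+\mu\, p^*\sigma$, where $p$ denotes the projection to the exceptional $\CC\PP^1$, $\sigma$ is a Fubini-Study form, and the weight $\mu=\mu(\delta')$ is a smooth increasing function of the blowup radius arranged so that the exceptional sphere acquires area $\delta'$, matching the normalization of Section \ref{sec:divisoroperation}; away from this region the form is simply $\omega$. I would then pick a smooth monotone path $s\mapsto \delta_s$, $s\in[0,1]$, with $\delta_0=\delta'$ and $\delta_1=\delta''$ lying entirely in $(0,\delta]$, and set $\tilde{\omega}_s:=\pi^*\omega+\mu(\delta_s)\,p^*\sigma$ on the blown-up ball, extended by $\omega$ elsewhere. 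Because all of the surgery takes place inside the fixed chart $I(B(\delta))$ and $\mu(\delta_s)>0$ varies smoothly, each $\tilde{\omega}_s$ is a symplectic (indeed K\"ahler, with respect to the blown-up integrable structure) form on the single manifold $\tilde{X}$, and the assignment $s\mapsto\tilde{\omega}_s$ is smooth.

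It remains to check that $\tilde{D}$ stays a symplectic divisor along the family. Since each $\tilde{D}_i$ and $e$ is $J$-holomorphic and every $\tilde{\omega}_s$ tames $J$, all components of $\tilde{D}$ are $\tilde{\omega}_s$-symplectic for all $s$; moreover, in the K\"ahler local model the intersections among the $\tilde{D}_i$ and with $e$ remain positively transverse and can be kept $\tilde{\omega}_s$-orthogonal throughout, so that $(\tilde{X},\tilde{\omega}_s,\tilde{D})$ is a bona fide one-parameter family of symplectic divisors. This is exactly a symplectic homotopy connecting the two blowups, which gives the asserted symplectic deformation equivalence. The hard part is entirely the uniformity of the local model in the parameter, namely verifying that the weight $\mu$ can be taken to depend smoothly on $\delta'$ while preserving both symplecticity of the glued form and the $\omega$-orthogonality of $\tilde{D}$ along the whole path; this, however, is precisely what the standard K\"ahler blowup model supplies, so the obstacle is bookkeeping rather than a substantive difficulty. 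Finally I would note that one cannot expect strictness here, since the pairing $\tilde{\omega}_s(e)=\delta_s$ changes with $s$, so the cohomology class $[\tilde{\omega}_s]$ is genuinely nonconstant and only deformation equivalence is available.
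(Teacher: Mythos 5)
Your proposal is correct and follows essentially the same route as the paper: choose a compatible $J$ making $I$ holomorphic, identify the symplectic blowups with the fixed almost complex blowup model at $I(0)$, and connect the blowup forms of different sizes by the standard family of weighted forms $\pi^*\omega+\mu(\delta_s)\,p^*\sigma$, with the total transform of $D$ remaining symplectic because its components are $\tilde J$-holomorphic and every form in the family tames $\tilde J$ --- this family is exactly what the paper's citation of \cite[Theorem 7.1.23]{MSintro} supplies. The only quibble is your parenthetical that the forms are ``indeed K\"ahler'': $J$ is only integrable on $I(B(\delta))$, so globally one only has taming/compatibility, which is all the argument needs.
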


\begin{proof}
We can choose compatible almost complex structure $J$ such that $I$ is also a holomorphic embedding. Then we can identify the symplectic blowup using $I$ with the model of almost complex blowup at $I(0)$.  By \cite[Theorem 7.1.23]{MSintro}, under this identification, there will be a deformation of symplectic forms on the blowup manifold. This deformation will keep the total transform of $D$ as symplectic divisors by its construction.
\end{proof}

\begin{lemma}\label{lem:strongD}
    Let $(X,\omega_t,D)$ be a $D$-symplectic homotopy and $I_i:B(\delta_i)\rightarrow X$ be $\omega_i$-symplectic embeddings relative to $D$ for $i=0,1$ with $I_0(0)=I_1(0)$. Then there exists another $D$-symplectic homotopy $(X,\omega_t',D)$ such that $\omega_i'=\omega_i$ for $i=0,1$ and a smooth family $I_t:B(\varepsilon)\rightarrow X$ of $\omega_t'$-symplectic embeddings for some $\varepsilon\ll \min\{\delta_0,\delta_1\}$.
\end{lemma}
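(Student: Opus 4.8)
The plan is to reduce the statement to a local problem near the common centre $p := I_0(0) = I_1(0)$ and to build the family $I_t$ by concatenating three pieces: a path of $\omega_0$-symplectic embeddings joining $I_0$ to a reference embedding, the reference family itself (which follows the given homotopy), and a path of $\omega_1$-symplectic embeddings joining the reference to $I_1$. The price for bending the embeddings into place is paid by reparametrising the time variable of $\omega_t$, which is exactly what the freedom of replacing $\omega_t$ by a new $D$-symplectic homotopy $\omega_t'$ with the same endpoints permits.

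Concretely, I would first invoke the relative Darboux (symplectic neighbourhood) theorem with parameters: since the $\omega_t$ vary smoothly and keep every component of $D$ symplectic, there is a smooth family of $\omega_t$-symplectic charts $\psi_t \colon B(R) \to X$ with $\psi_t(0) = p$ and $\psi_t^{-1}(D)$ equal to the standard (one or two) coordinate planes. Restricting these gives a smooth reference family $\phi_t := \psi_t|_{B(\varepsilon_0)}$ of $\omega_t$-symplectic embeddings relative to $D$ through $p$.

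Second --- and this is the heart of the matter --- I would prove that for a \emph{fixed} symplectic form the space $\mathcal{E}(\omega)$ of $\omega$-symplectic embeddings $B(\varepsilon) \to X$ relative to $D$ with centre $p$ is connected, once $\varepsilon$ is small. After composing with a fixed chart $\psi$ this reduces to connectivity of the space of $\omega_{\mathrm{std}}$-symplectic embeddings $F \colon B(\varepsilon) \to B(R)$ with $F(0)=0$ sending the relevant coordinate planes into themselves. Such a space retracts onto its linearisations at $0$ through the Alexander rescaling $F_s(x) := \tfrac{1}{s} F(sx)$, $s \in (0,1]$: the quadratic scaling of $\omega_{\mathrm{std}}$ makes each $F_s$ symplectic, the coordinate planes are preserved by the dilations, and $F_0 = dF_0 \in Sp(4,\RR)$. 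The limiting frames $dF_0$ fill out (a coset of) the subgroup of $Sp(4,\RR)$ preserving the prescribed coordinate planes; this is all of $Sp(4,\RR)$ when $p\notin D$, and the block subgroup $SL(2,\RR)\times SL(2,\RR)$ when $p$ lies on one component or at an $\omega$-orthogonal intersection of two components --- connected in every case --- so $\mathcal{E}(\omega)$ is connected.

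Finally, applying this connectivity at $t=0$ and $t=1$ produces a path of $\omega_0$-symplectic embeddings from $I_0|_{B(\varepsilon)}$ to $\phi_0$ and a path of $\omega_1$-symplectic embeddings from $\phi_1$ to $I_1|_{B(\varepsilon)}$. Choosing a smooth nondecreasing $\sigma\colon[0,1]\to[0,1]$ with $\sigma\equiv 0$ on $[0,\tfrac13]$ and $\sigma\equiv 1$ on $[\tfrac23,1]$, I set $\omega_t' := \omega_{\sigma(t)}$ and let $I_t$ traverse the first connecting path on $[0,\tfrac13]$, the reference family $\phi_{\sigma(t)}$ on $[\tfrac13,\tfrac23]$, and the second connecting path on $[\tfrac23,1]$, arranging (by slowing down near the junctions so that all derivatives vanish there) that the pieces glue $C^\infty$-ly. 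Then $\omega_t'$ is a $D$-symplectic homotopy with $\omega_0'=\omega_0$ and $\omega_1'=\omega_1$, and each $I_t$ is an $\omega_t'$-symplectic embedding of $B(\varepsilon)$; shrinking $\varepsilon$ throughout keeps $\varepsilon \ll \min\{\delta_0,\delta_1\}$. The main obstacle is the connectivity claim: one must ensure the Alexander retraction stays inside the fixed chart for all $s$ (true after shrinking $\varepsilon$, since $F$ has bounded derivative) and verify the linear subgroups case-by-case --- here the $\omega$-orthogonality hypothesis on $D$ is precisely what forces the two coordinate planes to be symplectically complementary, so that the relevant group is the connected $SL(2,\RR)\times SL(2,\RR)$.
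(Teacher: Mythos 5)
Your architecture (a reference family of relative Darboux charts, connectivity of the space of small relative ball embeddings at a fixed form via the Alexander rescaling, glued by a time reparametrization) is genuinely different from the paper's argument, but its first step contains a real gap, and it sits exactly where the paper's proof does its real work. The parametric relative Darboux claim is false when the common center $p=I_0(0)=I_1(0)$ is an intersection point $D_a\cap D_b$. A $D$-symplectic homotopy only keeps the \emph{components} of $D$ symplectic for each $\omega_t$; it does \emph{not} keep the intersection $\omega_t$-orthogonal at intermediate times. But if $\psi_t$ is an $\omega_t$-symplectic chart with $\psi_t^{-1}(D)$ equal to the two coordinate planes, then $d\psi_t^{-1}$ is a linear symplectomorphism carrying $(T_pD_a,T_pD_b)$ to the coordinate planes, which are $\omega_{\text{std}}$-orthogonal; hence $T_pD_a\perp_{\omega_t}T_pD_b$ is a necessary condition for such a chart to exist. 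Orthogonality is guaranteed only at $t=0,1$ (you even observe at the end that orthogonality is what makes the linear stabilizer connected, but you do not notice it can fail along the homotopy). Moreover, your modified homotopy $\omega_t'=\omega_{\sigma(t)}$ is a mere time reparametrization, so it runs through the same forms and inherits the same failure; the relativity of the middle segment $I_t=\phi_{\sigma(t)}$ cannot be rescued this way.

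This is precisely why the lemma allows replacing $\omega_t$ by \emph{another} $D$-symplectic homotopy, and why the paper's proof (following \cite[Lemma 2.9]{LiMakLCY}) uses that freedom in an essential, non-reparametrization way: it takes non-relative parametric Moser--Darboux balls $\psi_t$ joining $I_0$ to $I_1$, perturbs the divisor to a two-parameter family $D_{s,t}$ of $\omega_t$-symplectic divisors, equal to $D$ at $s=0$ and at $t=0,1$, such that $D_{1,t}$ is straight (in particular $\omega_t$-orthogonal) inside the balls, extends this to ambient diffeomorphisms $\Delta_{s,t}$ by the isotopy extension theorem, and then sets $\omega_t':=\Delta_{1,t}^*\omega_t$ and $I_t:=\Delta_{1,t}^{-1}\circ\psi_t$. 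Pulling the forms back by the ambient isotopy is exactly what restores orthogonality of $D$ at $p$ for all $t$. Your steps 2 and 3 (Alexander-trick connectivity at the fixed forms $\omega_0,\omega_1$, and the smooth gluing) are sound --- modulo the minor point that at an intersection point the space of relative embeddings has two path components indexed by which coordinate plane maps to which component, so connectivity holds only after matching labelings --- and your whole scheme does work in the exterior and non-toric cases, where the parametric relative Darboux step is valid. But in the toric case, which the paper's applications require, the proposal is missing the key idea.
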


\begin{proof}
    This is essentially proved in \cite[Lemma 2.9]{LiMakLCY}. First, by the one-parameter family version of Moser lemma, there is a family of Darboux balls centered at $I_0(0)$. Then choose a two-parameter family of perturbations $D_{s,t}$ of the divisor $D$ such that $D_{s,t}$ is an $\omega_t$-symplectic divisors, $D_{s,0}=D_{s,1}=D$ and the preimages of $D_{s,0},D_{s,1},D_{1,t}$ in the Darboux balls (after shrinking the size) are coordinate planes. By \cite[Lemma 2.2]{LiMakLCY}, there will be a two-parameter family of diffeomorphisms $\Delta_{s,t}:X\rightarrow X$ such that $\Delta_{s,t}(D_{0,t})=D_{s,t}$ and $\Delta_{s,1}=Id_X$. If we take $\omega_t':=\Delta_{1,t}^*\omega_t$, then $\omega_0'=\omega_0 ,\omega_1'=\omega_1$ and the composition of the Darboux embeddings with $\Delta_{1,t}$ gives the desired embeddings $I_t$. 
\end{proof}

\begin{lemma}\label{lem:blowupdeformation}
Let $(X,\omega_t,D)$ be a $D$-symplectic homotopy and $I_t:B(\varepsilon)\rightarrow X$ be a smooth family of $\omega_t$-symplectic embeddings relative to $D$ with constant $I_t(0)$. Then the blowup pairs of $(X,\omega_1,D)$ and $(X,\omega_2,D)$ by using $I_1$ and $I_2$ respectively are symplectic deformation equivalent.
\end{lemma}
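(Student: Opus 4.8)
The plan is to realize the entire family of symplectic blowups as a single symplectic homotopy on one fixed blowup manifold, following the template of Lemma \ref{lem:shrinkingsizeblowup} but carrying the parameter $t$ along. First I would upgrade the family of embeddings to a family of holomorphic data. Since each $I_t$ is an $\omega_t$-symplectic embedding, the pushforward $(I_t)_*J_{\text{std}}$ is an $\omega_t$-compatible integrable almost complex structure on $I_t(B(\varepsilon))$ for which $I_t$ is holomorphic; invoking the contractibility of the space of $\omega_t$-compatible almost complex structures, I extend these to a smooth family $J_t$ on $X$, each integrable on $I_t(B(\varepsilon))$ and making $I_t$ holomorphic. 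Because $I_t$ is relative to $D$, the pieces of $D$ meeting $I_t(B(\varepsilon))$ are images of coordinate planes and hence $J_t$-holomorphic there.

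Next I would pass to the almost complex blowup at the center, as in the discussion preceding Corollary \ref{cor:divisorblowupcompare}. The decisive point is that $p:=I_t(0)$ is \emph{constant}, so all blowups occur at a single point. In the coordinates supplied by $I_t$ the blowup is literally the standard complex blowup of $B(\varepsilon)\subseteq\CC^2$ at the origin, independent of $t$, while away from $I_t(B(\varepsilon/2))$ nothing is modified. This allows me to identify the almost complex blowups $(\overline{X}_t,\overline{J}_t)$ with a single fixed smooth manifold $\tilde{X}\cong X\#\overline{\CC\PP}^2$ via a smooth family of diffeomorphisms equal to the identity outside $I_t(B(\varepsilon))$, transporting $\overline{J}_t$ to a smooth family $\tilde{J}_t$ on $\tilde{X}$ and the total transform of $D$ to a smooth family $\tilde{D}_t$ of $\tilde{J}_t$-holomorphic configurations.

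Then I would apply \cite[Theorem 7.1.21]{MSintro} in the family to obtain a smooth one-parameter family of symplectic forms $\tilde{\omega}_t$ on $\tilde{X}$, each compatible with $\tilde{J}_t$, such that $(\tilde{X},\tilde{\omega}_t)$ is symplectomorphic to the symplectic blowup of $(X,\omega_t)$ using $I_t$ and such that $\tilde{D}_t$ is $\tilde{\omega}_t$-symplectic (it is $\tilde{J}_t$-holomorphic near the exceptional sphere and unchanged, hence $\omega_t$-symplectic, elsewhere). The family $(\tilde{X},\tilde{\omega}_t,\tilde{D}_t)$ is then a symplectic homotopy whose values at the two endpoints $t=1,2$ are symplectomorphic to the blowup pairs of $(X,\omega_1,D)$ and $(X,\omega_2,D)$ built from $I_1$ and $I_2$; composing with these symplectomorphisms yields the asserted symplectic deformation equivalence.

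I expect the main obstacle to be the smoothness bookkeeping at the blowup point: verifying that the varying almost complex blowups trivialize into a single smooth manifold through a smooth family of diffeomorphisms, and that \cite[Theorem 7.1.21]{MSintro} applies with smooth dependence on $t$. The hypothesis that $I_t(0)$ is constant is precisely what makes this possible, since it keeps the blowup locus from moving; without it one would first need an ambient isotopy returning the centers to a common point, which is exactly the role played by the preceding Lemma \ref{lem:strongD} in the larger argument.
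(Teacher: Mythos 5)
Your proposal is correct, but it takes a genuinely different route from the paper's proof. The paper never introduces almost complex structures: it applies the smooth isotopy extension theorem (\cite[Theorem 3.1, Chapter 8]{Hirsch}) to obtain diffeomorphisms $f_t$ with $f_t\circ I_t$ independent of $t$, transports the data to $((f_t^{-1})^*\omega_t,\, f_t(D))$, and then runs the cut-and-collapse symplectic blowup of Section \ref{sec:divisoroperation} over a literally fixed ball, so that the fixed underlying manifold $\tilde{X}$, the family $\tilde{\omega}_t$ and the divisors $\tilde{D}_t$ are manifestly smooth in $t$; the endpoints differ from the stated blowup pairs only by the diffeomorphisms induced by $f_1,f_2$, which the definition of symplectic deformation equivalence permits. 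You instead keep the $I_t$ as they are, encode their variation in a family of compatible almost complex structures, blow up complex-analytically at the fixed center, trivialize the resulting family of complex blowups, and appeal to a parametric version of \cite[Theorem 7.1.21]{MSintro}. This is sound, but note two costs: your trivialization of the varying complex blowups into one smooth manifold is the precise analogue of the paper's isotopy-extension step (so the constancy of $I_t(0)$ has not let you dispense with that ingredient, only relocate it to the blowup), and you need both smooth $t$-dependence of the construction in \cite[Theorem 7.1.21]{MSintro} (true, since it is explicit, but unstated there) and the divisor-respecting identification of the complex blowup with the cut-and-collapse blowup at the two endpoints (the discussion preceding Corollary \ref{cor:divisorblowupcompare}), neither of which the paper's proof requires because it works in the cut-and-collapse model throughout. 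What your route buys is a homotopy of forms compatible with an almost complex family, in the spirit of Corollary \ref{cor:divisorblowupcompare}; what the paper's buys is economy, with the isotopy extension theorem as its only nontrivial input. One small correction to your closing remark: Lemma \ref{lem:strongD} does not return the centers to a common point — it already assumes $I_0(0)=I_1(0)$ and instead produces the interpolating family $I_t$ itself (after modifying the homotopy), which is the shared starting hypothesis of both your argument and the paper's.
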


\begin{proof}
    By the smooth isotopy extension theorem (\cite[Theorem 3.1, Chapter 8]{Hirsch}), there is a family of diffeomorphisms $f_t:X\rightarrow X$ such that for all $t$, $f_t\circ I_t$ is a constant map from $B(\varepsilon)$ to $X$ and is a $(f_t^{-1})^*\omega_t$-symplectic embedding relative to $f_t(D)$. Then, the symplectic blowup construction reviewed in Section \ref{sec:divisoroperation} will yield 
    \begin{itemize}
        \item a fixed underlying smooth manifold $\tilde{X}$ by removing the ball $f_t\circ I_t(B(\varepsilon))$ and collapsing its boundary by Hopf fibration;
        \item a smooth family of symplectic forms $\tilde{\omega}_t$ on $\tilde{X}$ corresponding to $(f_t^{-1})^*\omega_t$;
        \item a smooth family of $(f_t^{-1})^*\omega_t$-symplectic divisors $\tilde{D}_t\subseteq \tilde{X}$ coming from the total transforms of $f_t(D)$.
    \end{itemize} 
    The above data provide the desired symplectic deformation equivalence for the blowup pairs.
\end{proof}

\begin{proposition}\label{prop:smallKahler}
Assume that the symplectic divisor $(X,\omega,D)$ is symplectic deformation equivalent to a K\"ahler pair. Let $I:B(\delta)\rightarrow X$ be a symplectic embedding relative to $D$. Then the symplectic blowup by using $I$ is also symplectic deformation equivalent to a K\"ahler pair.
\end{proposition}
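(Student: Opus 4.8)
The plan is to carry the K\"ahler structure through the given deformation, make it flat near the point being blown up, and then exploit the fact that a holomorphic blowup of a K\"ahler pair is again K\"ahler. First I would set $x:=I(0)$ and unpack the hypothesis. By definition of symplectic deformation equivalence there is a diffeomorphism $f$ and a symplectic homotopy between $(X,\omega,D)$ and a pullback of a K\"ahler pair; invoking the smooth isotopy extension theorem (as in \cite[Lemma 2.2]{LiMakLCY}) this may be upgraded to a $D$-symplectic homotopy $(X,\omega_t,D)$ with $\omega_0=\omega$ and $(X,\omega_1,D)$ a K\"ahler pair, say with compatible integrable $J$ making every component of $D$ holomorphic.

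Next, I would flatten the K\"ahler form near $x$. Choosing a holomorphic chart around $x$ in which $D$ is a union of (none, one or two) coordinate planes, Lemma \ref{lem:flatKahler} produces a family of K\"ahler forms (with respect to the fixed $J$) that agrees with $\omega_1$ near the boundary of the chart and is a constant multiple of the flat standard form near $x$. Since all these forms are $J$-K\"ahler and $D$ is $J$-holomorphic, $D$ remains symplectic throughout, so this is a further $D$-symplectic homotopy; concatenating it with $(X,\omega_t,D)$ yields a $D$-symplectic homotopy $(X,\Omega_t,D)$ with $\Omega_0=\omega$ and $\Omega_1=:\hat\omega$, where $(X,\hat\omega,D,J)$ is a K\"ahler pair that is flat near $x$. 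In the flat chart the inclusion of a small standard ball gives a $J$-holomorphic, $\hat\omega$-symplectic embedding $I_1:B(\delta_1)\to X$ relative to $D$ with $I_1(0)=x$. Blowing up $(X,\hat\omega,D)$ along $I_1$ is then a K\"ahler pair by Corollary \ref{cor:divisorblowupcompare}.

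Finally I would connect the two blowups. The embeddings $I$ and $I_1$ are $\Omega_0$- and $\Omega_1$-symplectic respectively, relative to $D$, and share the center $x$, so Lemma \ref{lem:strongD} furnishes a $D$-symplectic homotopy $(X,\Omega_t',D)$ with the same endpoints together with a smooth family $I_t:B(\varepsilon)\to X$ of $\Omega_t'$-symplectic embeddings relative to $D$ with constant center $x$. Lemma \ref{lem:blowupdeformation} applied to this family shows that the blowup of $(X,\omega,D)$ along $I_t|_{t=0}$ is symplectic deformation equivalent to the blowup of $(X,\hat\omega,D)$ along $I_t|_{t=1}$; Lemma \ref{lem:shrinkingsizeblowup} (to adjust the radii) together with the fact that two symplectic embeddings relative to $D$ with the same center produce deformation-equivalent blowups lets me identify these endpoint blowups with the blowups along $I$ and along $I_1$. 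Chaining the equivalences, the blowup of $(X,\omega,D)$ using $I$ is symplectic deformation equivalent to the K\"ahler blowup of $(X,\hat\omega,D)$ along $I_1$, as desired.

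The main obstacle I anticipate is the interface between the holomorphic and the symplectic pictures: one must produce a genuinely $J$-holomorphic symplectic embedding relative to $D$ at $x$ so that Corollary \ref{cor:divisorblowupcompare} applies, which is exactly why the local flattening of Lemma \ref{lem:flatKahler} must be performed compatibly with the coordinate-plane structure of $D$, and one must be careful that replacing the given embedding $I$ (resp.\ the Darboux embedding supplied by Lemma \ref{lem:strongD}) by another embedding with the same center does not change the deformation class of the blowup. Both points are governed by the already-established Lemmas \ref{lem:shrinkingsizeblowup}, \ref{lem:strongD} and \ref{lem:blowupdeformation}, so the remaining work is essentially bookkeeping of these deformation equivalences.
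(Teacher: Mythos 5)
Your proposal is correct and follows essentially the same route as the paper's proof: pass to a $D$-symplectic homotopy ending at a K\"ahler pair, flatten the K\"ahler form near $I(0)$ via Lemma \ref{lem:flatKahler} to obtain a holomorphic symplectic embedding relative to $D$, join the two embeddings with Lemma \ref{lem:strongD}, compare the blowups with Lemmas \ref{lem:blowupdeformation} and \ref{lem:shrinkingsizeblowup}, and conclude with Corollary \ref{cor:divisorblowupcompare}. The only difference is expository: you spell out the concatenation of homotopies and the same-center bookkeeping that the paper leaves implicit.
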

\begin{proof}
By the assumption and Lemma \ref{lem:flatKahler}, we may suppose there is a $D$-symplectic homotopy of $(X,\omega,D)$ towards a K\"ahler pair $(X,\omega',D)$ such that for some $\varepsilon \ll \delta$, there is an $\omega'$-symplectic embedding $I':B(\varepsilon)\rightarrow X$ relative to $D$  with $I'(0)=I(0)$ which is also holomorphic. Namely, $I'$ is a normalization in the sense of \cite[Section 7.1]{MSintro}. By Lemma \ref{lem:strongD}, we can further assume $I|_{B(\varepsilon')}$ and $I'|_{B(\varepsilon')}$ can be joint by a smooth family of symplectic embeddings for some $\varepsilon'\ll\varepsilon$ by possibly modifying the deformation. Then, Lemma \ref{lem:blowupdeformation} and Lemma \ref{lem:shrinkingsizeblowup} would imply that the blowup of $(X,\omega,D)$ by using $I$ is symplectic deformation equivalent to the blowup of $(X,\omega',D)$ by using $I'|_{B(\varepsilon')}$. Finally, note that for some $\varepsilon''<\varepsilon'$, the symplectic blowup of $(X,\omega',D)$ using $I'|_{B(\varepsilon'')}$ will be a K\"ahler pair by Corollary \ref{cor:divisorblowupcompare}. Putting all things together and applying Lemma \ref{lem:shrinkingsizeblowup} again, we see that the blowup of $(X,\omega,D)$ is symplectic deformation equivalent to a K\"ahler pair.
\end{proof}

\begin{proof}[Proof of Theorem \ref{thm:deformation}]
When $X$ is a rational manifold, the proof in Section \ref{secton:proofofaffineruled} shows that $(X,\omega,D)$ can be obtained from the blowups of either a divisor in $\CC\PP^2, S^2\times S^2$ or $\CC\PP^2\#\overline{\CC\PP}^2$ or a quasi-minimal pair of first kind. For the former case, the pair must be K\"ahler by Proposition \ref{prop:defminimal}. For the latter case, note that the pair can be completed into a symplectic log Calabi-Yau pair by adding an exceptional sphere with minimal symplectic area by Proposition \ref{prop:-K-D=Emin}, which must be deformation equivalent to a K\"ahler pair by \cite[Theorem 1.3]{LiMakICCM}. Therefore, we can apply Proposition \ref{prop:smallKahler} to show that $(X,\omega,D)$ is also deformation equivalent to a K\"ahler pair.

When $X$ is an irrational ruled manifold, by Corollary \ref{cor:no3-Ej}, we know $D$ can be completed into a divisor which can be obtained from the blowups of a comb-like configuration in $S^2\times \Sigma_g$. The conclusion of being a K\"ahler pair again follows from Propositions \ref{prop:defminimal} and \ref{prop:smallKahler}.

%the exterior/toric/non-toric/half-toric blowdown of the exceptional sphere in class $E_n$, or $E_n$ pairs $1$ with two components in $D$ and $0$ with other components. In the latter case, we claim that it is still possible to blowdown $D$. Note that the two components pairing $1$ with $E_n$ must be a fiber type $F-E_n-\cdots$ and a section type $B+kF-E_n-\cdots$ by Proposition \ref{prop:ruledconfig}. This means that no component in $D$ is in class $E_{n-1}-E_n$ and thus $E_{n-1}$ will also satisfy the conclusion in Corollary \ref{cor:no3-Ej} as $E_n$. If there is no exterior/toric/non-toric/half-toric exceptional sphere in class $E_{n-1}$, we know the section type class must be $B+kF-E_n-E_{n-1}-\cdots$. This implies there could not be a component in class $F-E_n-E_{n-1}$. Repeat this argument for the remaining exceptional classes. Eventually, this shows that the fiber type component intersecting $E_n$ must be exactly in the class $F-E_n$, which is an exterior exceptional sphere. Therefore, we just verify that we can perform successive blowdowns to arrive at the minimal ruled surface. 

Finally, suppose that we have a K\"ahler pair $(X,\omega_1,D)$ with respect to the complex structure $J$ after the symplectic deformation. To see $\overline{\kappa}(X\setminus D)=-\infty$ for this K\"ahler pair, remember that if we forget about the K\"ahler form $\omega_1$, the holomorphic pair $(X,J,D)$ is constructed from a sequence of complex blowups of another K\"ahler pair $(X',\omega',D')$, where either $b_2(X')\leq 2$ or $(X',\omega',D')$ is $D'$-symplectic homotopy to a quasi-minimal pair. In the former case, one naturally has $[\omega']\cdot(K_{\omega'}+[D'])<0$ since blowdown will preserve this condition, as mentioned in Section \ref{sec:divisoroperation}; In the latter case, $[\omega']\cdot(K_{\omega'}+[D'])=\omega'(-E_{\text{min}})$ which also must be negative since $E_{\text{min}}$ has non-trivial SW invariant. Therefore, $\overline{\kappa}(X'\setminus D')=-\infty$ since the condition $[\omega']\cdot(K_{\omega'}+[D'])<0$ implies that the linear system $|n(K_{\omega'}+[D'])|$ must be empty for any $n\geq 1$. By the birational invariance of log Kodaira dimension, $\overline{\kappa}(X\setminus D)=-\infty$ also holds true.
\end{proof}

\section{Appendix: proof of Proposition \ref{prop:kahlermain}}\label{section:appendix}

In this appendix, we provide the proof of Proposition \ref{prop:kahlermain} for irrational ruled manifolds, as outlined in the introduction. Only the non-minimal case needs to be proved, since the minimal case is well-known (See the references in \cite{spaceofsymp}).

\begin{proposition}\label{prop:c1positiveKahler}
    Let $(X,\omega)$ be a symplectic irrational ruled manifold with $[\omega]\cdot K_{\omega}<0$. Then $\omega$ is a K\"ahler form.
\end{proposition}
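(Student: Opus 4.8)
The plan is to exhibit an integrable complex structure compatible with $\omega$ by moving a reference K\"ahler class onto $[\omega]$ through $J$-compatible inflation, and then transporting the complex structure back to $\omega$. Throughout I fix the standard canonical class $K_\omega$ as in Section \ref{section:ruled}, and note that since $X$ is non-minimal irrational ruled, it is diffeomorphic to a blowup of a minimal ruled surface $\PP(E_0)\to\Sigma_g$. By Theorem \ref{thm:rationalruled} the hypothesis $\omega\cdot K_\omega<0$ is consistent with $X$ being (irrational) ruled, and by Theorem \ref{thm:fiberclass} the fiber class $F$ is $J$-nef with embedded representatives for every $\omega$-tame $J$; since $F^2=0\ge 0$, this is the class along which inflation is most naturally carried out.

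First I would construct a reference K\"ahler structure. Realize $X$ holomorphically as the blowup at $n$ points of a projectivized rank-two holomorphic bundle $\PP(E)\to\Sigma_g$, choosing $E$ and the blown-up points so that the resulting integrable complex structure $J_0$ has canonical class $K_\omega$ and so that the only irreducible curves of negative self-intersection are the exceptional curves and a single section of controlled (minimal) self-intersection. Such a $J_0$ admits K\"ahler forms, and by the Nakai--Moishezon criterion its K\"ahler cone is exactly the set of classes of positive square pairing strictly positively with every irreducible curve, i.e. with the fibers, the exceptional curves, and the distinguished negative section. The reference K\"ahler classes therefore occupy an explicit open subcone of $H^2(X;\RR)$.

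Next I would identify $[\omega]$ as a K\"ahler class for $J_0$. On the one hand, Li--Liu's description of the symplectic cone with fixed canonical class (\cite{LiLiucone}, together with \cite{LiLiuruled} for the exceptional classes) places $[\omega]$ in the forward cone cut out by positivity against $F$ and against the exceptional classes. On the other hand, starting from a reference K\"ahler class and performing $J$-compatible inflation along $F$ and along sections of non-negative self-intersection, using the $b_2^+=1$ inflation technique, I would enlarge the fiber and section areas so as to sweep out precisely this forward cone while remaining K\"ahler; Corollary \ref{cor:minimalarea} is used here to keep track of the minimal-area exceptional and section classes that cut out the walls of the cone. Matching the two descriptions shows $[\omega]$ lies in the K\"ahler cone of $J_0$, so there is a genuine K\"ahler form $\omega_K$ with $[\omega_K]=[\omega]$.

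Finally, to upgrade ``$[\omega]$ is a K\"ahler class'' to ``$\omega$ is a K\"ahler form'', I would invoke the uniqueness of cohomologous symplectic forms on ruled surfaces (\cite{spaceofsymp}): since $\omega$ and $\omega_K$ are cohomologous symplectic forms on the irrational ruled manifold $X$ with the same canonical class, there is a diffeomorphism $\phi$ with $\phi^*\omega_K=\omega$, and the pullback $\phi^*J_0$ is then an integrable complex structure compatible with $\omega$, exhibiting $\omega$ as K\"ahler. The main obstacle I expect is the middle step: controlling all negative curves of $J_0$, in particular the section of least self-intersection over the genus-$g$ base, where the absence of an effective anticanonical divisor prevents the rational-surface argument of \cite{FM88}, and verifying that $J$-compatible inflation along $F$ and the non-negative section classes genuinely reaches the entire Li--Liu forward cone rather than a proper subcone. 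Ensuring that the inflation stays \emph{compatible} (not merely taming) and preserves integrability, which is exactly the content of the $b_2^+=1$ inflation input, is the delicate technical point on which the argument rests.
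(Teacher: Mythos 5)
Your overall skeleton --- build a reference complex structure, show $[\omega]$ lies in its K\"ahler cone via inflation, then transport the complex structure back by uniqueness of cohomologous symplectic forms --- shares its two outer ingredients with the paper (Theorem \ref{thm:inflation} and the classification of forms up to symplectomorphism), but the middle step, which you yourself flag as the main obstacle, is a genuine gap, and it is exactly where the whole difficulty sits. Inflation along $J_0$-holomorphic curves tautologically produces forms compatible with $J_0$, so ``sweeping out the forward cone by inflation'' is not a mechanism but a restatement of the assertion that the K\"ahler cone of your single fixed $J_0$ contains the entire Li--Liu symplectic cone. By Nakai--Moishezon, that cone equality forces the only irreducible negative curves of $J_0$ to be those in the classes $E_i$ and $F-E_i$; this is a Friedman--Morgan style ``good generic'' theorem for blowups of irrational ruled surfaces at arbitrarily many points, which is available nowhere in the paper's references (as you note, \cite{FM88} relies on an effective anticanonical divisor and is a rational-surface argument), and proving that generic many-point blowups of $\PP(\mathcal{E})\to\Sigma_g$ carry no unexpected negative multisections is a nontrivial dimension-count problem that your sketch does not engage with. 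Your appeal to Corollary \ref{cor:minimalarea} is also misplaced here: it produces $J$-holomorphic representatives of minimal-area exceptional classes for arbitrary tame $J$; it says nothing about the walls of a K\"ahler cone.

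The paper avoids this difficulty by a reduction step your proposal omits entirely: by Holm--Kessler (Theorem \ref{thm:ruledreducedcone}, Cremona moves realized by diffeomorphisms), every $\omega$ with $\omega\cdot K_\omega<0$ is, after rescaling and a symplectomorphism, a form whose normalized area vector lies in the reduced region $\mathcal{P}^n_g$, which carries the extra constraints $\dd_1\geq\cdots\geq\dd_n$ and $\dd_1+\dd_2<1$ that are absent from the symplectic cone. One then only needs to realize reduced vectors as K\"ahler classes, and for that the paper deliberately does \emph{not} take generic blowups: it inductively blows up points lying on previously constructed curves, so that each $(X_{g,n},J^n)$ contains, by construction, explicit holomorphic curves in the classes $E_i$, $F-E_{2k-1}$, $F-E_{2l-1}-E_{2l}$ and $B-E_2-E_4-\cdots-E_{2k-2}$, and it then fills $\mathcal{P}^n_1\supseteq\mathcal{P}^n_g$ by explicit inflations along these curves (including a zig-zag scheme needed to keep all areas positive along the way), starting from K\"ahler classes supplied by the openness of the K\"ahler cone under small blowups. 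To rescue your plan you would have to either prove the good-generic statement for irrational ruled surfaces, or do as the paper does: first normalize $[\omega]$ by Cremona symplectomorphisms so that only a fundamental domain, rather than the full symplectic cone, has to be covered by the K\"ahler cone of a single, tailored, non-generic complex structure.
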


Let us introduce some notions first. Suppose $\Sigma_g$ is a Riemann surface of genus $g\geq 1$. Let $X_{g,n}=(\Sigma_g\times S^2)\#n\overline{\CC\PP}^2$, $B,F,E_1,\cdots,E_n\in H_2(X_{g,n};\ZZ)$ be the standard basis such that $B^2=0,F^2=0,B\cdot F=1$, and $\tilde{X}_{g,0}=S^2 \tilde{\times} \Sigma_g$ with $B_1,F\in H_2(\tilde{X}_{g,0};\ZZ)$ such that $B_1^2=1,F^2=0,B_1\cdot F=1$ as Section \ref{section:ruled}. Consider the following regions in \[\RR_{>0}^{n+1}=\{(\dd_B,\dd_1,\cdots,\dd_n)\,|\,\dd_B,\dd_i>0\}\] which characterize the moduli of symplectic forms on irrational ruled manifolds:
\[\mathcal{P}^n\supseteq \mathcal{P}_1^n\supseteq \mathcal{P}_2^n\supseteq\cdots\text{ for $n\geq 0$}.\]

\begin{itemize}
    \item When $n=0$, they are intervals $\mathcal{P}^0=(0,\infty)$, $\mathcal{P}^0_g=(g,\infty)$ for $g\geq 1$.
    \item When $n=1$, $\mathcal{P}^1=\{2\dd_B-\dd_1^2>0,\dd_1<1\}$ and $\mathcal{P}_g^1=\{2-2g+2\dd_B-\dd_1>0,\dd_1<1\}$.
    \item When $n\geq 2$, $\mathcal{P}^n=\{2\dd_B-\sum_{i=1}^n\dd_i^2>0,\dd_1+\dd_2<1,\dd_1\geq \dd_2\geq \cdots\geq \dd_n\}$ and $\mathcal{P}^n_g=\{2-2g+2\dd_B-\sum_{i=1}^n\dd_i>0, \dd_1+\dd_2<1,\dd_1\geq \dd_2\geq \cdots\geq \dd_n\}$.
\end{itemize}

\begin{figure}[ht]
\centering
\begin{tikzpicture} [yscale=0.5]
    \begin{axis}[
  axis x line=center,
  axis y line=center,
  xtick={0,1,2,3},
  ytick={0,1},
  xlabel={$\dd_B$},
  ylabel={$\dd_1$},
  xlabel style={below right},
  ylabel style={above left},
  xmin=0,
  xmax=4.1,
  ymin=0,
  ymax=1.1]
    \addplot[name path=f,samples=300, domain=0:4.1,dashed] {min(sqrt(2*x),1)};
   \addplot[name path=g,samples=300, domain=0:4.1,dashed] {min(2*x,1)};
   \addplot[name path=h,samples=300, domain=1:4.1,dashed] {min(2*x-2,1)};
   \addplot[name path=k,samples=300, domain=2:4.1,dashed] {min(2*x-4,1)};
   \addplot[name path=l,samples=300, domain=3:4.1,dashed] {min(2*x-6,1)};
   
   \path[name path=axis] (axis cs:0,0) -- (axis cs:4.1,0);
   
    \addplot [
        thick,
        color=black,
        fill=black, 
        fill opacity=0.2
    ]
    fill between[
        of=f and axis,
        %soft clip={domain=0:1},
    ];

\addplot [
        thick,
        color=black,
        fill=black, 
        fill opacity=0.2
    ]
    fill between[
        of=g and axis,
        %soft clip={domain=0:1},
    ];

\addplot [
        thick,
        color=black,
        fill=black, 
        fill opacity=0.2
    ]
    fill between[
        of=h and axis,
        %soft clip={domain=0:1},
    ];

\addplot [
        thick,
        color=black,
        fill=black, 
        fill opacity=0.2
    ]
    fill between[
        of=k and axis,
        %soft clip={domain=0:1},
    ];

\addplot [
        thick,
        color=black,
        fill=black, 
        fill opacity=0.2
    ]
    fill between[
        of=l and axis,
        %soft clip={domain=0:1},
    ];

    \node at (axis cs:  .2,  .5) {$\mathcal{P}^1$};
    \node at (axis cs:  .8,  .5) {$\mathcal{P}_1^1$};
    \node at (axis cs:  1.8,  .5) {$\mathcal{P}_2^1$};
    \node at (axis cs:  2.8,  .5) {$\mathcal{P}_3^1$};
    \node at (axis cs:  3.8,  .5) {$\mathcal{P}_4^1$};
    %\node [rotate=90] at (axis cs:  1.05,  .25) {$x=1$};
    \end{axis}
\end{tikzpicture}  
\caption{Symplectic regions for $n=1$}\label{fig:regionforn=1}
\end{figure}

A vector in $\mathcal{P}^n$ is called a {\bf normalized reduced vector}, which encodes the symplectic area $\omega(B),\omega(E_1),\cdots,\omega(E_n)$ for some symplectic form $\omega$ with $\omega(F)=1$. Its subset $\mathcal{P}^n_g$ consists of vectors encoding the symplectic area for symplectic forms with $[\omega]\cdot K_\omega<0$ (see Figure \ref{fig:regionforn=1} for the case $n=1$). Holm-Kessler \cite{HolmKessler} study the Cremona transformation on irrational ruled manifolds and \cite[Theorem 2.14]{HolmKessler} can be reformulated as the following result which we will rely on.

\begin{theorem}[\cite{HolmKessler}]\label{thm:ruledreducedcone}
  There is a bijection between symplectic forms (resp. symplectic forms with $[\omega]\cdot K_\omega<0$) on $X_{g,n}$ modulo symplectomorphism and constant rescaling with $\mathcal{P}^n$ (resp. $\mathcal{P}^n_g$) by taking the symplectic area on the classes $B,E_1,\cdots,E_n$.   
\end{theorem}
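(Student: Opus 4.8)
The plan is to deduce the statement by translating Holm-Kessler's description of the reduced symplectic cone of $X_{g,n}$ into symplectic areas on the distinguished classes $B, E_1,\dots,E_n$, and then matching the inequalities cutting out $\mathcal{P}^n$ and $\mathcal{P}^n_g$ with the two geometric constraints $\omega^2>0$ and $\omega\cdot K_\omega<0$. First I would normalize: since we work modulo constant rescaling, each symplectic form may be scaled so that $\omega(F)=1$, and then recorded as the vector $(\delta_B,\delta_1,\dots,\delta_n)=(\omega(B),\omega(E_1),\dots,\omega(E_n))$. Each entry is positive because $B$ and the $E_i$ are represented by symplectic surfaces (a section and exceptional spheres). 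The pairings $\omega(F)=1$, $\omega(B)=\delta_B$ and $\omega(E_i)=\delta_i$ force the Poincar\'e dual class to be $[\omega]=B+\delta_B F-\sum_i\delta_i E_i$, so the two basic numerical invariants become
\[
[\omega]^2=2\delta_B-\sum_{i=1}^{n}\delta_i^2,\qquad \omega\cdot K_\omega=(2g-2)-2\delta_B+\sum_{i=1}^{n}\delta_i,
\]
using $K_\omega=-2B+(2g-2)F+\sum_i E_i$. Thus $\omega^2>0$ is exactly the defining inequality $2\delta_B-\sum_i\delta_i^2>0$ of $\mathcal{P}^n$, while $\omega\cdot K_\omega<0$ is exactly $2-2g+2\delta_B-\sum_i\delta_i>0$, the extra inequality cutting out $\mathcal{P}^n_g$.

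Next I would account for the remaining inequalities $\delta_1\ge\cdots\ge\delta_n>0$ and $\delta_1+\delta_2<1$ (resp. $\delta_1<1$ when $n=1$), which encode the \emph{reduced} (normal) form of the class $[\omega]$. These should arise as a fundamental domain for the action on $H_2(X_{g,n};\ZZ)$ of the group of orientation-preserving diffeomorphisms fixing $K_\omega$, generated by permutations of the $E_i$ together with the relevant Cremona reflections; this is precisely the content of \cite[Theorem 2.14]{HolmKessler}. Surjectivity of the area map onto $\mathcal{P}^n$ would then follow from the characterization of the symplectic cone by Li-Liu \cite{LiLiucone}: any class with $[\omega]^2>0$ pairing positively with all exceptional classes lies in the symplectic cone with canonical class $K_\omega$, and the reduced inequalities are exactly what guarantee positivity against the fiber class and the exceptional classes (for instance $[\omega]\cdot(F-E_1-E_2)=1-\delta_1-\delta_2>0$). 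Injectivity, after quotienting the domain by symplectomorphism, would follow because cohomologous symplectic forms on a rational or ruled $4$-manifold in a fixed deformation class are symplectomorphic (as in \cite{spaceofsymp}), so a form is determined up to symplectomorphism by its reduced area vector.

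The hard part will be the fundamental-domain/uniqueness step underlying \cite{HolmKessler}: one must show that every symplectic area vector can be carried into the reduced chamber by a \emph{symplectomorphism} realizing the abstract Cremona moves geometrically, and that distinct reduced vectors give non-symplectomorphic forms. This is where the $b_2^+=1$ inflation technique and the McDuff-type uniqueness results enter, and it is the substantive input I would simply quote from Holm-Kessler rather than reprove. Once that is in place, the refinement from $\mathcal{P}^n$ to $\mathcal{P}^n_g$ is automatic, since every Cremona move preserves $K_\omega$ and hence the sign of $\omega\cdot K_\omega$, so the subregion $\mathcal{P}^n_g$ is cut out of $\mathcal{P}^n$ by the single inequality identified above. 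The twisted bundle and the low-$n$ intervals $\mathcal{P}^0,\mathcal{P}^0_g$ would be handled by the same normalization with $B_1$ in place of $B$, noting that for $n\ge 1$ the two $S^2$-bundles over $\Sigma_g$ become diffeomorphic after one blowup, so no separate argument is required.
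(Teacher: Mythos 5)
Your proposal is correct and takes essentially the same route as the paper, which offers no independent argument and simply cites \cite[Theorem 2.14]{HolmKessler}: your normalization $\omega(F)=1$, the identification of the Poincar\'e dual class $B+\delta_B F-\sum_i \delta_i E_i$, and the matching of $[\omega]^2>0$ and $\omega\cdot K_\omega<0$ with the inequalities cutting out $\mathcal{P}^n$ and $\mathcal{P}^n_g$ are exactly the reformulation the paper has in mind, with the substantive fundamental-domain and uniqueness inputs quoted from Holm--Kessler, Li--Liu \cite{LiLiucone} and the cohomologous-implies-symplectomorphic results, just as the paper does. One minor slip worth noting: $F-E_1-E_2$ is a $(-2)$-class rather than an exceptional class, so the inequality $\delta_1+\delta_2<1$ is a reduction (fundamental-domain) constraint coming from the Cremona reflection and not a Li--Liu cone constraint, which is how the rest of your argument correctly treats it.
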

   
Therefore, it suffices to realize any vector in $\mathcal{P}^n_g$ by some K\"ahler class to prove Proposition \ref{prop:c1positiveKahler}. The technique we will apply is the so-called $b_2^+=1$ $J$-compatible inflation theorem highlighted by Anjos-Li-Li-Pinsonnault in \cite[Lemma 1.2]{ALLP} and Buse-Li in \cite[Theorem 3.7]{BuseLi}.

\begin{theorem}[{\bf $b_2^+=1$ $J$-compatible inflation}]\label{thm:inflation}
    Let $(X,\omega)$ be a symplectic $4$-manifold with $b_2^+(X)=1$ and $J$ be a compatible almost complex structure. If $Z$ is an embedded $J$-holomorphic submanifold, then there exists a symplectic form $\omega'$ compatible with $J$ such that $[\omega']=[\omega]+t\text{PD}([Z])$, $t\in[0,\lambda)$ whenever 
$\omega'([Z])>0$. Or equivalently, $\lambda=\infty$ if $[Z]^2\geq 0$ and $\lambda=\frac{\omega([Z])}{-[Z]^2}$ if $[Z]^2<0$.
\end{theorem}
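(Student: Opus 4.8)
The plan is to factor the inflation into two parts: a local, curvature-adapted construction of a closed form that makes $\omega+t\rho$ \emph{tame} $J$, and an upgrade from tamed to compatible that is exactly where the hypothesis $b_2^+(X)=1$ enters.

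First I would run the Lalonde--McDuff inflation construction, adapted to track positivity near $Z$. Since $Z$ is an embedded $J$-holomorphic surface, its normal bundle $N_Z\to Z$ is a complex line bundle of degree $[Z]^2$, with complex structure induced by $J|_Z$. The decisive choice is the connection: I would pick a Hermitian connection on $N_Z$ whose curvature $\Omega$ is a constant multiple of $\omega|_Z$, namely
\[\Omega=\frac{2\pi[Z]^2}{\omega([Z])}\,\omega|_Z,\]
which is admissible because $\int_Z\Omega=2\pi[Z]^2=2\pi\,e(N_Z)$. Feeding this connection and a radial cutoff into the standard Thom-form recipe on a tubular neighborhood of $Z$ produces a closed $2$-form $\rho$, supported near $Z$, with $[\rho]=\mathrm{PD}([Z])$ and $\iota_Z^*\rho=\frac{[Z]^2}{\omega([Z])}\,\omega|_Z$.

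Next I would check that $\omega_t:=\omega+t\rho$ tames $J$ for $t\in[0,\lambda)$. Splitting a tangent vector into a component tangent to $Z$ and a normal (fiber) component, $\rho$ restricts in the fiber directions to a positive multiple of the fiber area form, so $\omega_t(v,Jv)>0$ there for every $t\ge 0$; along $Z$ the curvature choice gives $\omega_t|_Z=\bigl(1+t\tfrac{[Z]^2}{\omega([Z])}\bigr)\omega|_Z$, which is positive precisely when $t<\lambda$; and the mixed terms, being supported in the tubular neighborhood, are dominated by shrinking its radius. This produces the family of tame forms for $t<\lambda$ and shows that $\lambda$ is sharp, since any form taming $J$ must assign positive area to the $J$-holomorphic curve $Z$, forcing $\omega_t([Z])=\omega([Z])+t[Z]^2>0$; thus $\lambda=\infty$ when $[Z]^2\ge 0$ and $\lambda=\frac{\omega([Z])}{-[Z]^2}$ when $[Z]^2<0$.

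The main obstacle is upgrading ``tames $J$'' to ``compatible with $J$''. The constructed $\rho$ is of type $(1,1)$ only for the bundle almost complex structure, which agrees with $J$ along $Z$ but not in the interior of the neighborhood, so $\omega_t(\cdot,J\cdot)$ need not be symmetric off $Z$. This is exactly the point at which $b_2^+(X)=1$ is needed: for an almost complex structure on a $4$-manifold with $b_2^+=1$, the $J$-tamed cone and the $J$-compatible cone in $H^2(X;\RR)$ coincide. Since $\omega_t$ tames $J$, its class $[\omega]+t\,\mathrm{PD}([Z])$ lies in the $J$-tamed cone, hence in the $J$-compatible cone, and is therefore represented by a genuine $J$-compatible symplectic form $\omega'$. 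This $\omega'$ satisfies $[\omega']=[\omega]+t\,\mathrm{PD}([Z])$ and is compatible with $J$, as required, for every $t\in[0,\lambda)$.
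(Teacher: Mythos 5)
Your proposal is correct and follows essentially the same route as the sources the paper quotes this theorem from without proof (\cite{ALLP}, Lemma 1.2 and \cite{BuseLi}, Theorem 3.7): a Lalonde--McDuff/Buse-type inflation in which the Hermitian connection on $N_Z$ is normalized to have curvature proportional to $\omega|_Z$ (this is exactly what produces the sharp threshold $\lambda=\frac{\omega([Z])}{-[Z]^2}$ in the negative case), yielding forms that tame $J$, followed by the Li--Zhang $b_2^+=1$ comparison of the tamed and compatible cones to upgrade tameness to compatibility in the same cohomology class. One precision you should add: the cone coincidence is not true verbatim for an arbitrary tamed $J$ --- Li--Zhang prove $\mathcal{K}^t_J=\mathcal{K}^c_J+H_J^{-,\RR}$ when $b_2^+=1$ --- but in your situation $J$ is compatible with $\omega$, so every closed $J$-anti-invariant form is self-dual and pointwise orthogonal to $\omega$, whence a nonzero such form would give a class of positive square orthogonal to $[\omega]$, contradicting the light cone lemma; thus $H_J^{-,\RR}=0$ and the equality $\mathcal{K}^t_J=\mathcal{K}^c_J$ you invoke does hold in this almost K\"ahler setting.
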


By the above theorem, if $J$ is a complex structure and $\omega$ is some K\"ahler form, we can deform the K\"ahler class $[\omega]$ along the directions in the classes of embedded $J$-holomorphic submanifolds. Our strategy is to construct a sequence of complex structures $J^1,J^2,\cdots$ on $X_{g,1},X_{g,2},\cdots$ such that for each $k$, $(X_{g,k+1},J^{k+1})$ is obtained by the blowup of $(X_{g,k},J^k)$ and they contain enough smooth holomorphic divisors to inflate.

Now, we review some facts in the theory of ruled surfaces which can be found in \cite[Section \RomanNumeralCaps{5}.2]{Hartshorne}. A rank $2$ holomorphic vector bundle $\mathcal{E}$ over the genus $g$ Riemann surface $\Sigma_g$ is called {\bf normalized} if $H^0(\mathcal{E})\neq 0$ but $H^0(\mathcal{E}\otimes\mathcal{L})=0$ for all line bundles $\mathcal{L}$ on $\Sigma_g$ with $\text{deg}\mathcal{L}<0$. If $\text{deg}\mathcal{E}$ is odd (resp. even), then the projectivization $\PP(\mathcal{E})$ is a complex manifold $(X,J)$ diffeomorphic to $\tilde{X}_{g,0}$ (resp. $X_{g,0}$). When $\mathcal{E}$ is normalized, the integer $e:=-\text{deg}\mathcal{E}$ is an invariant of $(X,J)$. The importance of this $e$-invariant is that any holomorphic section of $(X,J)$ will have self-intersection $\geq -e$, and the equality holds for a canonical section. By \cite[Exercise \RomanNumeralCaps{5}.2.5 (c)]{Hartshorne}, there always exists a complex structure $J$ on $\tilde{X}_{g,0}$ with $e$-invariant equal to $-1$ for any $g\geq 1$. This implies there is a $J$-holomorphic section in class $B_1$. We blowup a point on this section to get our desired complex structure $J^1$ on $X_{g,1}$.

 We first show how our method works for $X_{g,1}$. Note that by our construction of $J^1$, there will be an embedded $J^1$-holomorphic submanifold in class $B$ which is the proper transform of the section of self-intersection $1$ in $\tilde{X}_{g,0}$. The effect of inflation along $B$ will turn the normalized symplectic area vector $(\delta_B,\dd_1)$ into $(\frac{\dd_B}{1+t},\frac{\dd_1}{1+t})$ for any $t\in[0,\infty)$. By \cite[Proposition 3.24]{Voisinbook}, if $[\omega]$ is a K\"ahler class on $(\tilde{X}_{g,0},J)$, then $[\omega]+\varepsilon\text{PD}(F-E_1)$ will be a K\"ahler class on $(X_{g,1},J^1)$ for sufficiently small $\varepsilon$. Hence, by \cite[Proposition \RomanNumeralCaps{5}.2.21]{Hartshorne}, there will be an open neighborhood of the ray $\{(x,1)\,|\,x>\frac{{1}}{2}\}$ in $\mathcal{P}_1^1$ which can realized as the K\"ahler class of $J^1$. The inflation along $B$ initiating at the vectors in this small open neighborhood can then fulfill the entire $\mathcal{P}_1^1$. See Figure \ref{fig:inflationalongB}.

\begin{figure}[ht]
\centering
\begin{tikzpicture} [xscale=0.7,yscale=0.7]
    \begin{axis}[
  axis x line=center,
  axis y line=center,
  xtick={0,1},
  ytick={0,1},
  xlabel={$\dd_B$},
  ylabel={$\dd_1$},
  xlabel style={below right},
  ylabel style={above left},
  xmin=0,
  xmax=1.1,
  ymin=0,
  ymax=1.1]
    \addplot[name path=f,samples=300, domain=0:4.1,dashed] {min(sqrt(2*x),1)};

   \addplot[name path=g,samples=300, domain=0.5:4.1,white] {1-0.06*sqrt(x-0.5)};
   \addplot[name path=h,samples=300, domain=0.5:4.1,white] {1}; 
      
   \path[name path=axis] (axis cs:0,0) -- (axis cs:4.1,0);

\addplot [
        thick,
        color=black,
        fill=black, 
        fill opacity=0.2
    ]
    fill between[
        of=g and h,
        %soft clip={domain=0:1},
    ];

    \node at (axis cs:  0.77,1) {Small blowup};
    \node at (axis cs:  0.5,0.5) {Inflate $B$};
 \draw [red, -> ] (axis cs:0.52,0.99) -- (axis cs:0.01,0.01) ; 
 \draw [red, -> ] (axis cs:0.72,0.99) -- (axis cs:0.01,0.01) ; 
  \draw [red, -> ] (axis cs:0.92,0.99) -- (axis cs:0.01,0.01) ; 
  \draw [red, -> ] (axis cs:1.07,0.99) -- (axis cs:0.01,0.01) ; 
   
    \end{axis}
\end{tikzpicture}  
\caption{Inflation along class B}\label{fig:inflationalongB}
\end{figure}

We then proceed to choose $J^2$ on $X_{g,2}$ by the blowup of $(X_{g,1},J^1)$ at the intersection point between the exceptional sphere in class $F-E_1$ and the section in class $B$. This choice will produce the smooth curves in classes $F-E_1-E_2$ and $B-E_2$. The effect of inflation along $F-E_1-E_2$ will turn $(x,y,z)\in\mathcal{P}_1^2$ into $(x+t,y+t,z+t)$ for any $t<\frac{1-x-y}{2}$. Given any $(\dd_B,\dd_1,\dd_2)\in\mathcal{P}_g^2$, we can repeat the small blowup argument as above to choose small $\varepsilon$ such that $(\dd_B-\dd_2+\varepsilon,\dd_1-\dd_2+\varepsilon,\varepsilon)$ is a K\"ahler class since the K\"ahler cone is open. Then the inflation along $F-E_1-E_2$ with $t=\dd_2-\varepsilon$ will realize $(\dd_B,\dd_1,\dd_2)$ as a K\"ahler class.

\begin{figure}[ht]
		\centering\includegraphics*[height=6cm, width=12cm]{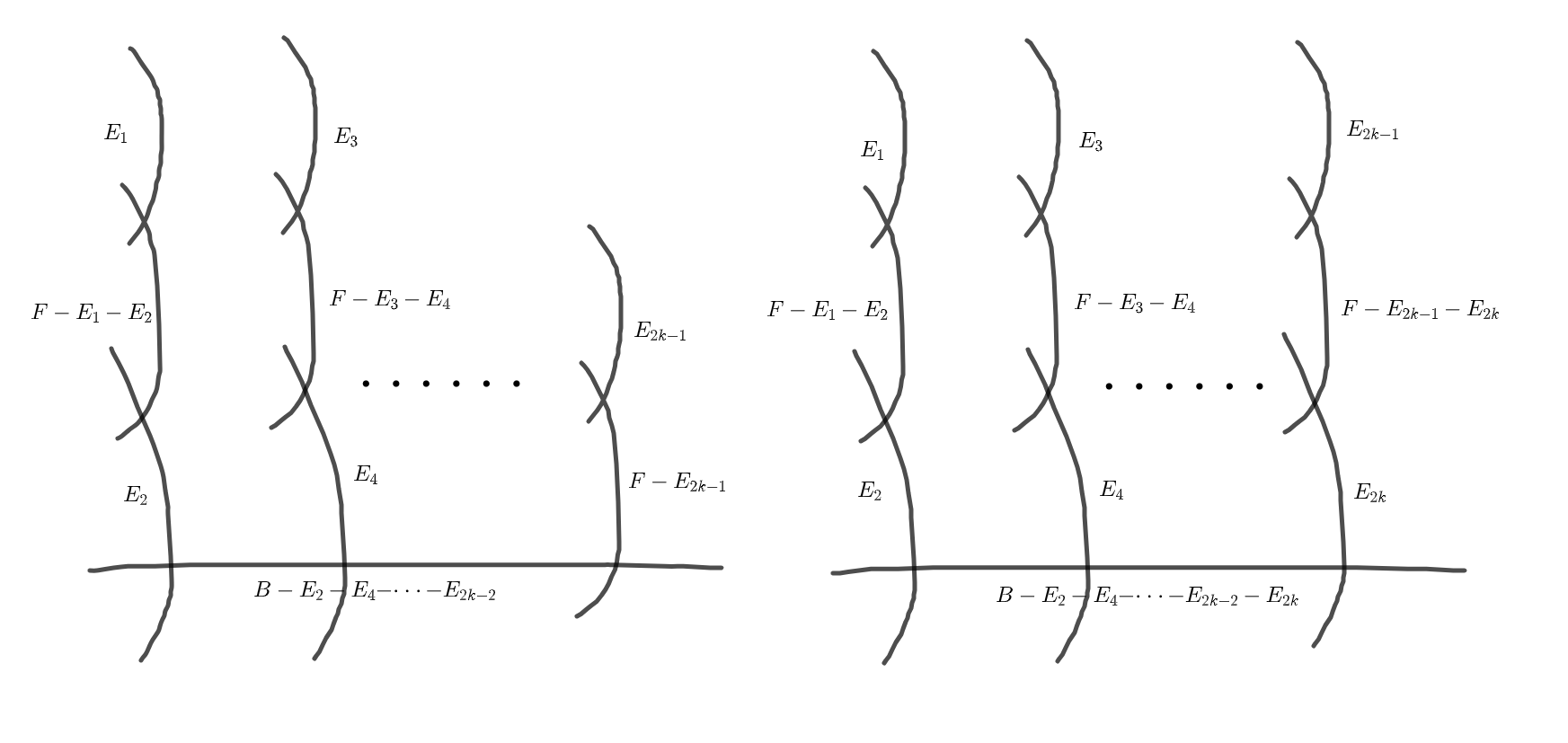}
 
		\caption{The curve configurations in $(X_{g,2k-1},J^{2k-1})$ and $(X_{g,2k},J^{2k})$.  \label{fig:inflationdraw}}
	\end{figure}
        
\begin{proof}[Proof of Proposition \ref{prop:c1positiveKahler}]
     For $k\geq 2$, we inductively choose $J^{2k-1},J^{2k}$ as follows. Assume that $(X_{g,2k-2},J^{2k-2})$ has smooth curves in classes $B-E_2-E_4-\cdots-E_{2k-2},F-E_1-E_2,F-E_3-E_4,\cdots,F-E_{2k-3}-E_{2k-2},E_1,\cdots,E_{2k-2}$. See Figure \ref{fig:inflationdraw} for the curve configurations that we construct to prepare for the inflation.
     
     We construct $J^{2k-1}$ as the blowup of $(X_{g,2k-2},J^{2k-2})$ at a point which is disjoint from these curves. This produces new exceptional spheres in classes $E_{2k-1}$ and $F-E_{2k-1}$. Suppose there is a vector $(\dd_B,\dd_1,\cdots,\dd_{2k-1})\in\mathcal{P}^{2k-1}_1$. Take $t$ to be a number less than but very close to $\frac{\dd_{2k-1}}{1-\dd_{2k-1}}$ and consider the vector \[v:=((1+t)\dd_B-(k-1)t,(1+t)\dd_1,(1+t)\dd_2-t,\cdots,(1+t)\dd_{2k-1}-t).\]
     When the last term $(1+t)\dd_{2k-1}-t$ is sufficiently close to $0$ (equivalently, $t$ is sufficiently close to $\frac{\dd_{2k-1}}{1-\dd_{2k-1}}$), we know $v\in \mathcal{P}_1^{2k-1}$ is a K\"ahler class by small blowup argument. Note that for any $i\neq j$, \begin{equation}\label{equation:Fij}
         (1+t)\dd_i+(1+t)\dd_j-t\leq (1+t)(\dd_1+\dd_2)-t<1+t-t=1.
     \end{equation}  
     This implies that $(1+t)\dd_{2k-1}<1$ which allows us to inflate along the classes $F-E_{2k-1}$ by $t$ to obtain the K\"ahler class  \[((1+t)\dd_B-(k-2)t,(1+t)\dd_1,(1+t)\dd_2-t,\cdots,(1+t)\dd_{2k-2}-t,(1+t)\dd_{2k-1}).\]
     Next, we want to inflate along the classes $E_{2l}$ and $F-E_{2l-1}-E_{2l}$ by $t$ for all $2\leq l\leq k-1$. Since there is no guarantee that $(1+t)\dd_{2l}-2t>0$ or $(1+t)(\dd_{2l-1}+\dd_{2l})<1$, a direct inflation by $t$ may not be allowed due to the area constraints. However, by (\ref{equation:Fij}), this can be overcome by a zig-zag inflation shown in Figure \ref{fig:zigzag}. Therefore, we can still arrive at a K\"ahler class
     \[((1+t)\dd_B,(1+t)\dd_1,(1+t)\dd_2-t,(1+t)\dd_3,(1+t)\dd_4-t,\cdots,(1+t)\dd_{2k-3},(1+t)\dd_{2k-2}-t,(1+t)\dd_{2k-1}).\]
     Finally, it remains to inflate along the class $B-E_2-E_4-\cdots-E_{2k}$ by $t$ which will give us $(\dd_B,\dd_1,\cdots,\dd_{2k-1})$. Note that there is no issue with the area since by the conditions for $\mathcal{P}_1^{2k-1}$, \[\dd_2+\dd_4+\cdots+\dd_{2k-2}\leq \frac{1}{2}(\dd_1+\dd_2+\cdots+\dd_{2k-2})<\frac{1}{2}(2\dd_B)=\dd_B.\]

\begin{figure}[ht]
\centering
\adjustbox{scale=0.8,center}{
\begin{tikzpicture} [xscale=1,yscale=1]
    \begin{axis}[
  axis x line=center,
  axis y line=center,
  xtick={0,1},
  ytick={0,1},
  xlabel={},
  ylabel={},
  xlabel style={below right},
  ylabel style={above left},
  xmin=0,
  xmax=1.1,
  ymin=0,
  ymax=1.1]
    \addplot[name path=f,samples=300, domain=0:4.1,dashed] {1-x};
   
    \node at (axis cs:  0.2,0.2) {$\bigstar$};
    \node at (axis cs:  0.6,0.2) {$\bullet$};
    \draw [blue, -> ] (axis cs:0.42,0.73) -- (axis cs:0.42,0.67) ;
     \node at (axis cs:  0.6,0.7) {Inflate $E_{2l}$};
     \draw [red, -> ] (axis cs:0.2,0.58) -- (axis cs:0.25,0.63) ; 
      \node at (axis cs:  0.6,0.6) {Inflate $F-E_{2l-1}-E_{2l}$};
    \node at (axis cs:  0.6,0.8) {$\bullet=((1+t)\dd_{2l-1},(1+t)\dd_{2l}-t)$};
    \node at (axis cs:  0.6,0.9) {$\bigstar=((1+t)\dd_{2l-1}-t,(1+t)\dd_{2l}-t)$};
    \draw [red, -> ] (axis cs:0.2,0.2) -- (axis cs:0.3,0.3) ; 
    \draw [red, -> ] (axis cs:0.3,0.2) -- (axis cs:0.4,0.3) ; 
    \draw [red, -> ] (axis cs:0.4,0.2) -- (axis cs:0.5,0.3) ; 
    \draw [red, -> ] (axis cs:0.5,0.2) -- (axis cs:0.6,0.3) ;
    \draw [blue, -> ] (axis cs:0.3,0.3) -- (axis cs:0.3,0.2) ;
    \draw [blue, -> ] (axis cs:0.4,0.3) -- (axis cs:0.4,0.2) ;
    \draw [blue, -> ] (axis cs:0.5,0.3) -- (axis cs:0.5,0.2) ;
    \draw [blue, -> ] (axis cs:0.6,0.3) -- (axis cs:0.6,0.2) ;
 
    \end{axis}
\end{tikzpicture}  }
\caption{ (\ref{equation:Fij}) guarantees that $\bigstar$ and $\bullet$ are inside the triangle. A direct inflation along $E_{2l}$ or $F-E_{2l-1}-E_{2l}$ by $t$ may extend beyond the triangle, which makes the area negative. A zig-zag inflation shown above would solve this issue.}\label{fig:zigzag}
\end{figure}

     We construct $J^{2k}$ as the blowup of $(X_{g,2k-1},J^{2k-1})$ at the intersection point between the curve in class $B-E_2-E_4-\cdots-E_{2k-2}$ and $F-E_{2k-1}$. This produces the smooth curve in class $F-E_{2k-1}-E_{2k}$ along which we can inflate. A similar argument as the case $k=1$ would realize all vectors in $\mathcal{P}^{2k}_1$ as K\"ahler classes.
\end{proof}

\begin{rmk}
    We have actually proved that any vector in $\mathcal{P}_1^n$ can be realized as the K\"ahler class of $X_{g,n}$ for all $g\geq 1$, which is stronger than the statement in Proposition \ref{prop:c1positiveKahler}.
\end{rmk}

\bibliographystyle{amsalpha}
\bibliography{mybib}{}

\end{document}